\documentclass[11pt]{article}
\usepackage{fullpage,amsfonts,amsmath,amssymb,amsthm,bm, xspace}
\usepackage{multirow,graphicx, algorithm,algorithmic,caption,subcaption,url,cite,float}
\usepackage{mathtools}
\usepackage[colorlinks=true,urlcolor=blue,citecolor=blue]{hyperref}
\usepackage{todonotes}
\usepackage{pgfplots}
\usepackage{array} 

\newtheorem{theorem}{Theorem}[section]
\newtheorem{lemma}[theorem]{Lemma}
\newtheorem{corollary}[theorem]{Corollary}
\newtheorem{proposition}[theorem]{Proposition}
\newtheorem{definition}[theorem]{Definition}

\theoremstyle{remark}

\numberwithin{equation}{section}

\newcommand{\A}{\mathcal{A}}
\newcommand{\C}{\mathbb{C}}
\newcommand{\R}{\mathbb{R}}
\newcommand{\E}{\mathbb{E}}
\newcommand{\Z}{\mathbb{Z}}
\renewcommand{\Pr}{\mathbb{P}}

\newcommand{\vct}[1]{\bm{#1}}

\newcommand{\norm}[1]{\left\lVert{#1}\right\rVert}

\newcommand{\defeq}{\vcentcolon=}

\newcommand{\tmop}{\operatorname*}

\newcommand{\cfg}[1]{\textcolor{blue}{#1}}
\newcommand{\Id}{\text{\em I}}
\newcommand{\MAT}[1]{\begin{bmatrix} #1 \end{bmatrix}}

\newcommand{\abs}[1]{\left| #1 \right|}
\newcommand{\keys}[1]{\left\{ #1 \right\}}
\newcommand{\sqbr}[1]{\left[ #1 \right]}
\newcommand{\brac}[1]{\left( #1 \right) }
\newcommand{\ml}[1]{\mathcal{ #1 } }
\newcommand{\op}[1]{ \operatorname{#1} }
\newcommand{\normInf}[1]{\left|\left| #1 \right|\right| _{\infty}}
\newcommand{\normTwo}[1]{\left|\left| #1 \right|\right| _{2}}
\newcommand{\normOne}[1]{\left|\left| #1 \right|\right| _{1}}
\newcommand{\normTV}[1]{\left|\left| #1 \right|\right| _{\op{TV}}}
\newcommand{\normF}[1]{\left|\left| #1 \right|\right| _{F}}
\newcommand{\atomicnorm}[1]{\left|\left| #1 \right| \right|_{\mathcal{A}}}
\newcommand{\der}[2]{\frac{\text{d}#2}{\text{d}#1}}
\newcommand{\derTwo}[2]{\frac{\mathrm{d} ^2#2}{\mathrm{d}#1^2}}
\newcommand{\set}[2]{ \keys{ #1 \; | \; #2 } }
\newcommand{\PROD}[2]{\left \langle #1, #2\right \rangle}
\newcommand{\diff}[1]{ \, \operatorname{d}#1 }

\newcommand{\signx}{\vct{h}}
\newcommand{\signz}{\vct{r}}

\newcommand{\mindisthalf}{1.26}
\newcommand{\mindist}{2.52}
\newcommand{\gammaOne}{0.247}
\newcommand{\gammaTwo}{0.339}
\newcommand{\gammaThree}{0.414}

\parindent = 0 pt
\parskip = 6 pt

\title{Demixing Sines and Spikes:\\ Robust Spectral Super-resolution in the Presence of Outliers}

\author{Carlos
  Fernandez-Granda\thanks{Courant Institute of Mathematical Sciences and Center for Data Science,
    NYU, New York, NY} , Gongguo Tang\thanks{Department of Electrical Engineering and Computer Science, Colorado School of Mines, Golden, CO} , Xiaodong Wang\thanks{Electrical Engineering Department, Columbia University, New York, NY} \,  and Le Zheng\footnotemark[3]}

\date{September 2016}

\begin{document}

\maketitle

\vspace{-0.3in}

\begin{abstract}
We consider the problem of super-resolving the line spectrum of a multisinusoidal signal from a finite number of samples, some of which may be completely corrupted. Measurements of this form can be modeled as an additive mixture of a sinusoidal and a sparse component. We propose to demix the two components and super-resolve the spectrum of the multisinusoidal signal by solving a convex program. Our main theoretical result is that-- up to logarithmic factors-- this approach is guaranteed to be successful with high probability for a number of spectral lines that is linear in the number of measurements, even if a constant fraction of the data are outliers. The result holds under the assumption that the phases of the sinusoidal and sparse components are random and the line spectrum satisfies a minimum-separation condition. We show that the method can be implemented via semidefinite programming and explain how to adapt it in the presence of dense perturbations, as well as exploring its connection to atomic-norm denoising. In addition, we propose a fast greedy demixing method which provides good empirical results when coupled with a local nonconvex-optimization step.  
\end{abstract}

{\bf Keywords.} Atomic norm, continuous dictionary, convex optimization, greedy methods, line spectral estimation, outliers, semidefinite programming, sparse recovery, super-resolution.

\section{Introduction}


The goal of \emph{spectral super-resolution} is to estimate the spectrum of a multisinusoidal signal from a finite number of samples. This is a problem of crucial importance in signal-processing applications, such as target identification from radar measurements~\cite{berni1975target,carriere1992high}, digital filter design~\cite{smith2008introduction}, underwater acoustics~\cite{beatty1978use}, seismic imaging~\cite{borcea2002imaging}, nuclear-magnetic-resonance spectroscopy~\cite{viti1997prony} and power electronics~\cite{leonowicz2003advanced}. In this paper, we study spectral super-resolution in the presence of perturbations that completely corrupt a subset of the data. The corrupted samples can be interpreted as \emph{outliers} that do not follow the same multisinusoidal model as the rest of the measurements and complicate significantly the task of super-resolving the spectrum of the signal of interest. Depending on the application, outliers may appear due to sensor failures, interference from other signals or impulsive noise. For instance, radar measurements can be corrupted by lightning discharges, spurious radio emissions or telephone switching transients~\cite{faxin2001effective,lu2010impulsive}. 

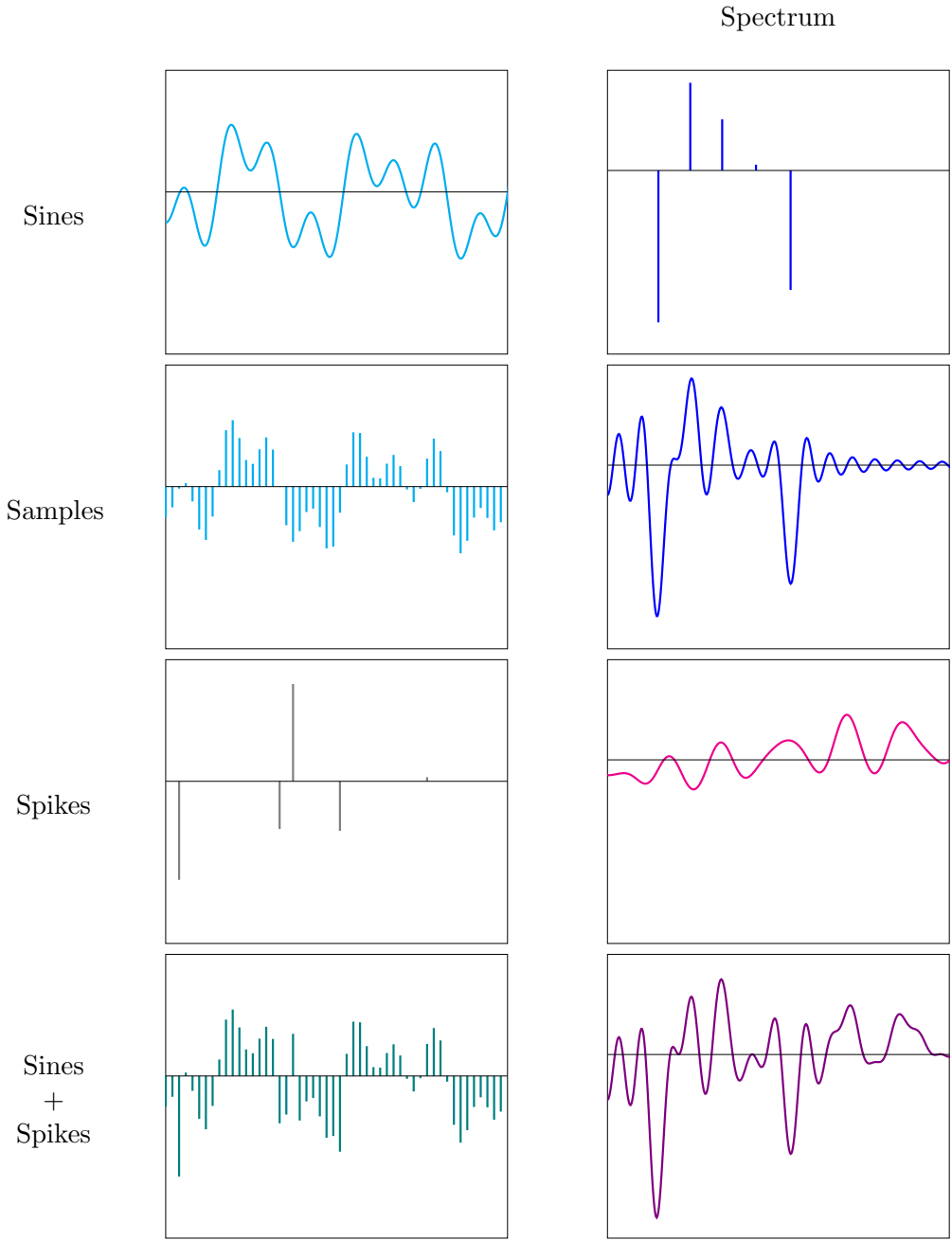
\begin{figure}[tp]
\centering
\begin{tabular}{  >{\centering\arraybackslash}m{0.08\linewidth} >{\centering\arraybackslash}m{0.35\linewidth} >{\centering\arraybackslash}m{0.35\linewidth}  }
 && Spectrum\\ \\
Sines &
\begin{tikzpicture}[scale=0.7]
\begin{axis}[ticks=none,ymin=-8,ymax=6,xmin=0,xmax=51]
\addplot[very thick,cyan] file {sines.dat};
\addplot[black] coordinates {(0,0) (100,0)};
\end{axis}
\end{tikzpicture}  
&  
\begin{tikzpicture}[scale=0.7]
\begin{axis}[ticks=none,ymin=-1.1,ymax=0.6,xmin=0,xmax=0.3]
\addplot+[ycomb,mark=none,very thick,blue] file {sines_spectrum.dat};
\addplot[black] coordinates {(0,0) (0.3,0)};
\end{axis}
\end{tikzpicture} 
\\
Samples &
\begin{tikzpicture}[scale=0.7]
\begin{axis}[ticks=none,ymin=-8,ymax=6,xmin=0,xmax=51]
\addplot+[ycomb,mark=none,very thick,cyan] file {sines_samples.dat};
\addplot[black] coordinates {(0,0) (100,0)};
\end{axis}
\end{tikzpicture}  
&  
\begin{tikzpicture}[scale=0.7]
\begin{axis}[ticks=none,ymin=-1.1,ymax=0.6,xmin=0,xmax=0.3]
\addlegendimage{ no markers, blue, very thick}
\addplot[very thick,blue] file {sines_lowres.dat};
\addplot[black] coordinates {(0,0) (0.3,0)};
\end{axis}
\end{tikzpicture} 
\\
Spikes &
\begin{tikzpicture}[scale=0.7]
\begin{axis}[ticks=none,ymin=-8,ymax=6,xmin=0,xmax=51]
\addplot+[ycomb,mark=none,very thick,gray] file {spikes.dat};
\addplot[black] coordinates {(0,0) (51,0)};
\end{axis}
\end{tikzpicture} 
&
\begin{tikzpicture}[scale=0.7]
\begin{axis}[ticks=none,ymin=-1.1,ymax=0.6,xmin=0,xmax=0.3]
\addplot[very thick,magenta] file {spikes_spectrum.dat};
\addplot[black] coordinates {(0,0) (0.3,0)};
\end{axis}
\end{tikzpicture}  
\\
Sines \hspace{2cm} 
+ 
Spikes 
& 
\begin{tikzpicture}[scale=0.7]
\begin{axis}[ ticks=none,ymin=-8,ymax=6,xmin=0,xmax=51]
\addplot+[ycomb,mark=none,very thick,teal] file {sinesspikes.dat};
\addplot[black] coordinates {(0,0) (51,0)};
\end{axis}
\end{tikzpicture}  
&
\begin{tikzpicture}[scale=0.7]
\begin{axis}[ ticks=none,ymin=-1.1,ymax=0.6,xmin=0,xmax=0.3]
\addplot[very thick,violet] file {sinesspikes_spectrum.dat};
\addplot[black] coordinates {(0,0) (0.3,0)};
\end{axis}
\end{tikzpicture}  
\end{tabular}
\caption{The top row shows a multisinusoidal signal (left) and its sparse spectrum (right). The minimum separation of the spectrum is $2.8 / (n - 1)$ (see Section~\ref{sec:minimum_separation}). On the second row, truncating the signal to a finite interval after measuring $n:= 101$ samples at the Nyquist rate (left) results in aliasing in the frequency domain (right). The third row shows some impulsive noise (left) and its corresponding spectrum (right). The last row shows the superposition of the multisinusoidal signal and the sparse noise, which yields a mixture of \emph{sines} and \emph{spikes} depicted in the time (left) and frequency domains (right). For ease of visualization, the amplitudes of the spectrum of the sines and of the spikes are real (we only show half of the spectrum and half of the spikes because their amplitudes and positions are symmetric).}
\label{fig:sines_spikes}
\end{figure}

Figure~\ref{fig:sines_spikes} illustrates the problem of performing spectral super-resolution in the presence of outliers. The top row shows a superposition of sinusoids and its corresponding sparse spectrum. In the second row, the multisinusoidal signal is sampled at the Nyquist rate over a finite interval, which induces spectral aliasing and makes it challenging to resolve the individual spectral lines. The sparse signal in the third row represents an additive perturbation that corrupts some of the samples. Finally, the bottom row shows the available measurements: a mixture of \emph{sines} (samples from the multisinusoidal signal) and \emph{spikes} (the sparse perturbation). Our objective is to \emph{demix} these two components and super-resolve the spectrum of the sines.  

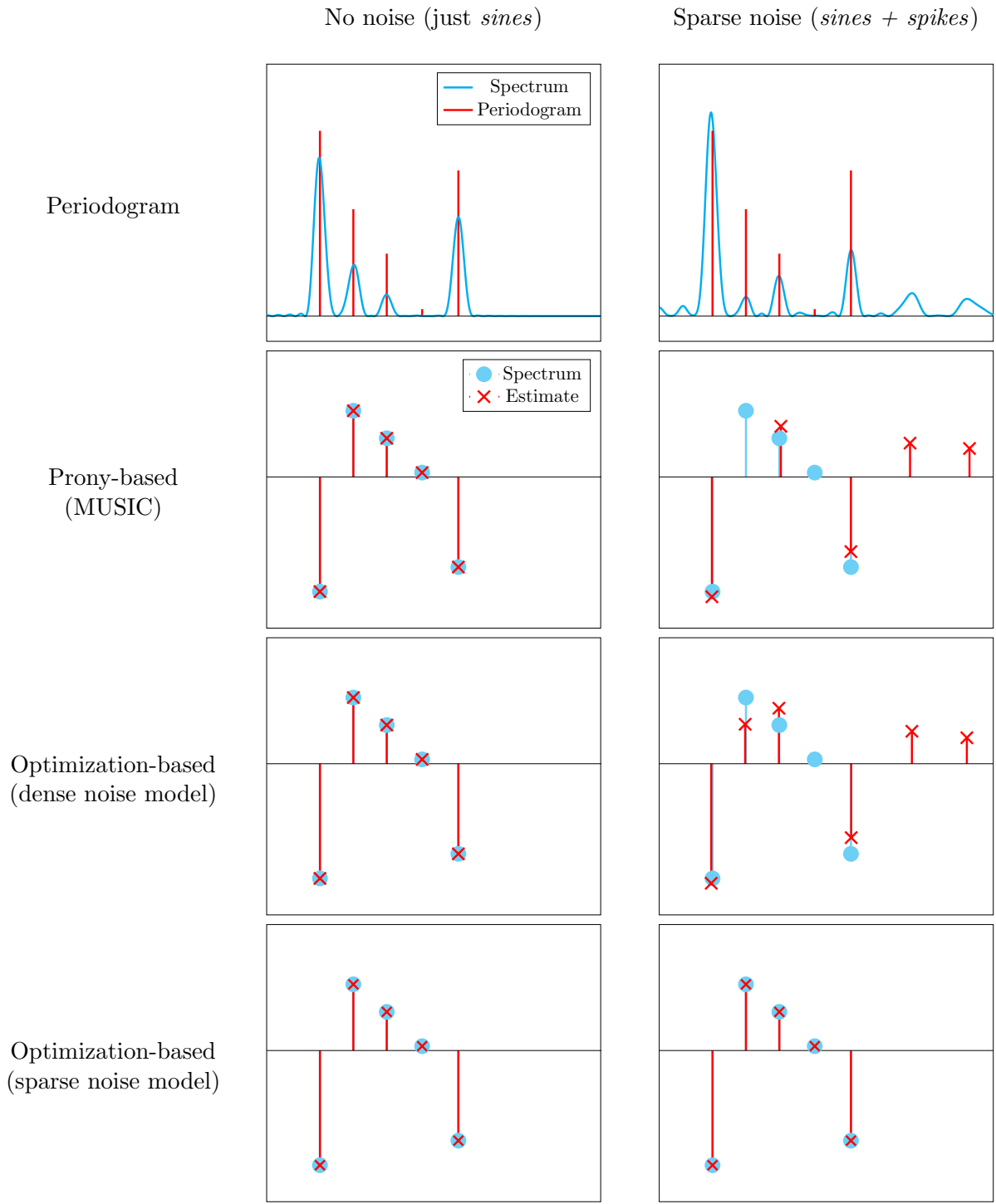
\begin{figure}[tp]
\centering
\begin{tabular}{  >{\centering\arraybackslash}m{0.21\linewidth} >{\centering\arraybackslash}m{0.36\linewidth} >{\centering\arraybackslash}m{0.35\linewidth}  }
 & No noise (just \emph{sines}) & Sparse noise (\emph{sines + spikes}) \\ \\
Periodogram &
\begin{tikzpicture}[scale=0.78]
\begin{axis}[ticks=none,ymax=19,xmin=0,xmax=0.28, legend pos=north east]
\addplot+[ycomb,mark=none,very thick,red,forget plot] file {periodogram_amp.dat};
\addplot[very thick,cyan,forget plot] file {periodogram_clean.dat};
\addplot[black,forget plot] coordinates {(0,0) (0.3,0)};
\addlegendentry{Spectrum}
\addlegendimage{very thick,cyan}
\addlegendentry{Periodogram}
\addlegendimage{very thick, red}
\end{axis}
\end{tikzpicture} 
&
\begin{tikzpicture}[scale=0.78]
\begin{axis}[ticks=none,ymax=19,xmin=0,xmax=0.28]
\addplot[very thick,cyan] file {periodogram.dat};
\addplot+[ycomb,mark=none,very thick,red] file {periodogram_amp.dat};
\addplot[black] coordinates {(0,0) (0.3,0)};
\end{axis}
\end{tikzpicture} 
\\
Prony-based (MUSIC) &
\begin{tikzpicture}[scale=0.78]
\begin{axis}[ticks=none, legend pos=north east,
ymin=-1.2,ymax=1,xmin=0,xmax=0.28]
\addplot+[ycomb,mark=*, cyan!50!white,very thick,mark options={solid,scale=2}]  file {sines_spectrum.dat};
\addplot+[ycomb,mark=x, red,mark options={fill=red,scale=2.5},very thick]  file {music_clean.dat};
\addplot[black,forget plot] coordinates {(0,0) (0.3,0)};
\addlegendentry{Spectrum}
\addlegendimage{mark=x, dashed,blue,very thick,mark options={solid,scale=2.5},line width=4pt}
 \addlegendentry{Estimate}
\addlegendimage{mark=*, red,mark options={ fill=red,scale=2.5},very thick}
\end{axis}
\end{tikzpicture} 
&
\begin{tikzpicture}[scale=0.78]
\begin{axis}[ticks=none,legend style={font=\small,at={(0,0)},anchor=south west,legend columns=-1,/tikz/every even column/.append style={column sep=0.25cm},draw=none},ymin=-1.2,ymax=1,xmin=0,xmax=0.28]
\addplot+[ycomb,mark=*, cyan!50!white,very thick,mark options={solid,scale=2}]  file {sines_spectrum.dat};
\addplot+[ycomb,mark=x, red,mark options={fill=red,scale=2.5},very thick]  file {music.dat};
\addplot[black,forget plot] coordinates {(0,0) (0.3,0)};
\end{axis}
\end{tikzpicture} 
 \\ Optimization-based (dense noise model)
  &
\begin{tikzpicture}[scale=0.78]
\begin{axis}[ticks=none,legend style={font=\small,at={(0,0)},anchor=south west,legend columns=-1,/tikz/every even column/.append style={column sep=0.25cm},draw=none},ymin=-1.2,ymax=1,xmin=0,xmax=0.28]
\addplot+[ycomb,mark=*, cyan!50!white,very thick,mark options={solid,scale=2}]  file {sines_spectrum.dat};
\addplot+[ycomb,mark=x, red,mark options={fill=red,scale=2.5},very thick]  file {sdp_clean.dat};
\addplot[black,forget plot] coordinates {(0,0) (0.3,0)};
\end{axis}
\end{tikzpicture} 
 &
\begin{tikzpicture}[scale=0.78]
\begin{axis}[ticks=none,legend style={font=\small,at={(0,0)},anchor=south west,legend columns=-1,/tikz/every even column/.append style={column sep=0.25cm},draw=none},ymin=-1.2,ymax=1,xmin=0,xmax=0.28]
\addplot+[ycomb,mark=*, cyan!50!white,very thick,mark options={solid,scale=2}]  file {sines_spectrum.dat};
\addplot+[ycomb,mark=x, red,mark options={scale=2.5},very thick]  file {sdp.dat};
\addplot[black,forget plot] coordinates {(0,0) (0.3,0)};
\end{axis}
\end{tikzpicture} 
 \\
 Optimization-based (sparse noise model)
 &
\begin{tikzpicture}[scale=0.78]
\begin{axis}[ticks=none,legend style={font=\small,at={(0,0)},anchor=south west,legend columns=-1,/tikz/every even column/.append style={column sep=0.25cm},draw=none},ymin=-1.2,ymax=1,xmin=0,xmax=0.28]
\addplot+[ycomb,mark=*, cyan!50!white,very thick,mark options={solid,scale=2}]  file {sines_spectrum.dat};
\addplot+[ycomb,mark=x, red,mark options={fill=red,scale=2},very thick]  file {sdp_robust_clean.dat};
\addplot[black,forget plot] coordinates {(0,0) (0.3,0)};
\end{axis}
\end{tikzpicture} 
&
\begin{tikzpicture}[scale=0.78]
\begin{axis}[ticks=none,legend style={font=\small,at={(0,0)},anchor=south west,legend columns=-1,/tikz/every even column/.append style={column sep=0.25cm},draw=none},ymin=-1.2,ymax=1,xmin=0,xmax=0.28]
\addplot+[ycomb,mark=*, cyan!50!white,very thick,mark options={solid,scale=2}]  file {sines_spectrum.dat};
\addplot+[ycomb,mark=x, red,mark options={fill=red,scale=2},very thick]  file {sdp_robust.dat};
\addplot[black,forget plot] coordinates {(0,0) (0.3,0)};
\end{axis}
\end{tikzpicture} 
\end{tabular}
\caption{Estimate of the sparse spectrum of the multisinusoidal signal from Figure~\ref{fig:sines_spikes} when outliers are absent from the data (left column) and when they are present (right column). The estimates are shown in red; the true location of the spectra is shown in blue. Methods that do not account for outliers fail to recover all the spectral lines when impulsive noise corrupts the data, whereas an optimization-based estimator incorporating a sparse-noise model still achieves exact recovery.}
\label{fig:methods}
\end{figure}

Broadly speaking, there are three main approaches to spectral super-resolution: linear nonparametric methods~\cite{Stoica:2005wf}, techniques based on Prony's method~\cite{deProny:tg, Stoica:2005wf} and optimization-based methods~\cite{tang2014minimax,superres_new,Bhaskar:2012tq}. The first three rows of Figure~\ref{fig:methods} show the results of applying a representative of each approach to a spectral super-resolution problem when there are no outliers in the data (left column) and when there are (right column). 

In the absence of corruptions, the periodogram-- a linear nonparametric technique that uses windowing to reduce spectral aliasing~\cite{windows_harris}-- locates most of the relevant frequencies, albeit at a coarse resolution. In contrast, both the Prony-based approach-- represented by the Multiple Signal Classification (MUSIC) algorithm~\cite{music_bienvenu,music_schmidt}-- and the optimization-based method-- based on total-variation norm minimization~\cite{superres_noisy,tang2014minimax,Bhaskar:2012tq}-- recover the true spectrum of the signal perfectly. All of these techniques are designed to allow for small Gaussian-like perturbations to the data and hence their performance degrades gracefully when such noise is present (not shown in the figure). However, as we can see in the right column of Figure~\ref{fig:methods}, when outliers are present in the data their performance is severely affected: none of the methods detect the fourth spectral line of the signal and they all hallucinate two large spurious spectral lines to the right of the true spectrum. 

The subject of this paper is an optimization-based method that leverages sparsity-inducing norms to perform spectral super-resolution and simultaneously detect outliers in the data. The bottom row of Figure~\ref{fig:methods} shows that this approach is capable of super-resolving the spectrum of the multisinusoidal signal in Figure~\ref{fig:sines_spikes} exactly from the corrupted measurements, in contrast to techniques that do not account for the presence of outliers in the data. Below is a brief roadmap of the paper.

\begin{itemize}
\item 
Section~\ref{sec:main_results} describes our methods and main results. In Section~\ref{sec:math_model} we introduce a mathematical model of the spectral super-resolution problem. Section~\ref{sec:minimum_separation} justifies the need of a minimum-separation condition on the spectrum of the signal for spectral super-resolution to be well posed. In Section~\ref{sec:main_result} we present our optimization-based method and provide a theoretical characterization of its performance. Section~\ref{sec:reg_param} discusses the robustness of the technique to the choice of regularization parameter. Section~\ref{sec:dense_noise} explains how to adapt the method when the data are perturbed by dense noise. Section~\ref{sec:denoising} establishes a connection between our method and atomic-norm denoising. Finally, in Section~\ref{sec:related_work} we review the related literature.
\item Our main theoretical contribution-- Theorem~\ref{theorem:main}-- establishes that solving the convex program introduced in Section~\ref{sec:main_result} allows to super-resolve up to $k$ spectral lines exactly in the presence of $s$ outliers (i.e. when $s$ measurements are completely corrupted) with high probability from a number of data that is linear both in $k$ and $s$ up to logarithmic factors. Section~\ref{sec:proof} is dedicated to the proof of this result, which is non-asymptotic and holds under several assumptions that are described in Section~\ref{sec:main_result}.
\item Section~\ref{sec:algorithms} focuses on demixing algorithms. In Sections~\ref{sec:sdp} and \ref{sec:sdp_noise} we explain how to implement the methods discussed in Sections~\ref{sec:main_result} and \ref{sec:dense_noise} respectively by recasting the dual of the corresponding optimization problems as a tractable semidefinite program. In Section~\ref{sec:greedy} we propose a greedy demixing technique that achieves good empirical results when combined with a local nonconvex-optimization step. Section~\ref{sec:atomic_norm} describes the implementation of atomic-norm denoising in the presence of outliers using semidefinite programming. Matlab code of all the algorithms discussed in this section is available online\footnote{\url{http://www.cims.nyu.edu/~cfgranda/scripts/spectral_superres_outliers.zip}}.
\item Section~\ref{sec:numerical} reports numerical experiments illustrating the performance of the proposed approach. In Section~\ref{sec:phase_transitions} we investigate under what conditions our optimization-based method achieves exact demixing empirically. In Section~\ref{sec:experiments_denoising} we compare atomic-norm denoising to an alternative approach based on matrix completion. 
\item We conclude the paper outlining several future research directions in Section~\ref{sec:conclusion}.
\end{itemize}
\section{Robust spectral super-resolution via convex programming}
\label{sec:main_results}
\subsection{Mathematical model}
\label{sec:math_model}
We model the multisinusoidal signal of interest as a superposition of $k$ complex exponentials 
\begin{align}
\label{eq:multisinusoidal}
g\brac{t} & := 
\sum_{j =1 }^{k} \vct{x}_j \exp \brac{i 2 \pi f_j t},
\end{align}
where $\vct{x} \in \C^{k}$ is the vector of complex amplitudes and $\vct{x}_j$ is its $j$th entry. The spectrum of $g$ consists of spectral lines, modeled by Dirac measures that are supported on a subset $T:=\keys{f_1, \ldots, f_k}$ of the unit interval $\sqbr{0,1}$
\begin{align}
\label{eq:line_spectrum}
\mu & = \sum_{f_j \in T } \vct{x}_j \, \delta \brac{f - f_j},
\end{align}
where $\delta \brac{f - f_j}$ denotes a Dirac measure located at $f_j$. Sparse spectra of this form are often called \emph{line spectra} in the literature. Note that a simple change of variable allows to apply this model to signals with spectra restricted to any interval $\sqbr{f_{\min},f_{\max}}$.

By the Nyquist-Shannon sampling theorem we can recover $g$, and consequently $\mu$, from an infinite sequence of regularly spaced samples $\keys{g\brac{l},\; l \in \Z}$ by sinc interpolation. The aim of spectral super-resolution is to estimate the support of the line spectrum $T$ and the amplitude vector $\vct{x}$ from a \emph{finite} set of $n$ contiguous samples instead. Note that $\keys{g\brac{l}, \; l \in \Z}$ are the Fourier-series coefficients of $\mu$, so mathematically we seek to recover an atomic measure from a subset of its Fourier coefficients. As described in the introduction, we are interested in tackling this problem when a subset of the data is completely corrupted. These corruptions are modeled as additive impulsive noise, represented by a sparse vector $\vct{z} \in \C^{n}$ with $s$ nonzero entries. The data are consequently of the form 
\begin{align}
\label{eq:model_data}
\vct{y}_{l} & = g\brac{l} + \vct{z}_{l}, \quad 1 \leq l \leq n.
\end{align}
To represent the measurement model more compactly, we define an operator $\mathcal{F}_{n}$ that maps a measure to its first $n$ Fourier series coefficients,  
\begin{align}
\label{eq:model_matrix_form}
\vct{y} & = \mathcal{F}_{n} \, \mu + \vct{z}.
\end{align}
Intuitively, $\mathcal{F}_{n}$ maps the spectrum $\mu$ to $n$ regularly spaced samples of the signal $g$ in the time domain.

\subsection{Minimum-separation condition}
\label{sec:minimum_separation}

Even in the absence of any noise, the problem of recovering a signal from $n$ samples is vastly underdetermined: we can fill in the missing samples $g\brac{0}, g \brac{-1}, \ldots$ and $g\brac{n+1}, g \brac{n+2}, \ldots$ any way we like and then apply sinc interpolation to obtain an estimate 
that is consistent with the data. For the inverse problem to make sense we need to leverage additional assumptions about the structure of the signal. In spectral super-resolution the usual assumption is that the spectrum of the signal is sparse. This is reminiscent of compressed sensing~\cite{Candes:2006eq}, where signals are recovered robustly from randomized measurements by exploiting a sparsity prior. 

A crucial insight underlying compressed-sensing theory is that the randomized operator obeys the \emph{restricted-isometry property} (RIP), which ensures that the measurements preserve the energy of any sparse signal with high probability~\cite{Candes:2005cs}. Unfortunately, this is not the case for our measurement operator of interest. The reason is that signals consisting of clustered spectral lines may lie almost in the null space of the sampling operator, even if the number of spectral lines is small. Additional conditions beyond sparsity are necessary to ensure that the problem is well posed. To this end, we define the \emph{minimum separation} of the support of a signal, as introduced in~\cite{Candes:2012uf}.

\begin{definition}[Minimum separation] For a set of points $T \subset  \sqbr{0,1}$, the minimum separation (or minimum distance) is defined as the closest distance between any two elements from $T$,
  \begin{equation}
    \label{eq:min_distance}
    \Delta(T) = \inf_{(f_1, f_2) \in T \, : \, f_1 \neq f_2} \, \, |f_2 - f_1|. 
  \end{equation} 
To be clear, this is the wrap-around distance so that the distance between $f_1 = 0$ and $f_2 = 3/4$ is equal to $1/4$.
\end{definition}

If the minimum distance is too small with respect to the number of measurements then it may be impossible to resolve a signal even under very small levels of noise. A fundamental limit in this sense is $\Delta^{\ast} := \frac{2}{n-1}$, which is the width of the main lobe of the periodized sinc kernel that is convolved with the spectrum when we truncate the number of samples to $n$. This limit arises because for minimum separations just below $\Delta^{\ast} / 2$ there exist signals that are \emph{almost} suppressed by the sampling operator $\mathcal{F}_{n}$. If such a signal $d$ corresponds to the difference between two different signals $s_1$ and $s_2$ so that $s_1 - s_2 = d$, it will be very challenging to distinguish $s_1$ and $s_2$ from the available data\footnote{For a concrete example of two signals with a minimum separation of $0.9 \Delta^{\ast}$ that are almost indistinguishable from data consisting of $n = 2 \, 10^{3}$ samples see Figure 2 of~\cite{superres_new}}. This phenomenon can be characterized theoretically in an asymptotic setting using Slepian's prolate-spheroidal sequences~\cite{slepian} (see also Section 3.2 in \cite{Candes:2012uf}). More recently, Theorem 1.3 of~\cite{moitra_superres} provides a non-asymptotic analysis and other works have obtained lower bounds on the minimum separation necessary for convex-programming approaches to succeed~\cite{tang_resolution,peyreduval}.

\subsection{Robust spectral super-resolution via convex programming}
\label{sec:main_result}

Spectral super-resolution in the presence of outliers boils down to estimating $\mu$ and $\vct{z}$ in the mixture model~\eqref{eq:model_matrix_form}. Without additional constraints, this is not very ambitious: data consistency is trivially achieved, for instance, by setting the sines to zero and declaring every sample to be a spike. Our goal is to fit the two components \emph{in the simplest way possible}, i.e. so that the spectrum of the multisinusoidal signal-- the \emph{sines}-- is restricted to a small number of frequencies and the impulsive noise-- the \emph{spikes}-- only affects a small subset of the data. 

Many modern signal-processing methods rely on the design of cost functions that (1) encode prior knowledge about signal structure and (2) can be minimized efficiently. In particular, penalizing the $\ell_1$-norm is an efficient and robust method for obtaining sparse estimates in denoising~\cite{chen2001atomic}, regression~\cite{tibshirani1996regression} and inverse problems such as compressed sensing~\cite{donoho2006compressed,candes2006near}. In order to fit a mixture model where both the spikes and the spectrum of the sines are sparse, we propose minimizing a cost function that penalizes the $\ell_1$ norm of both components (or rather a continuous counterpart of the $\ell_1$ norm in the case of the spectrum, as we explain below). We would like to note that this approach was introduced by some of the authors of the present paper in~\cite{superres_new,tang2014robust}, but without any theoretical analysis, and applied to multiple-target tracking from radar measurements in~\cite{le_paper}. Similar ideas have been previously leveraged to separate low-rank and sparse matrices~\cite{chandrasekaran2011rank,candes2011robust}, perform compressed sensing from corrupted data~\cite{li2013compressed} and demix signals that are sparse in different bases~\cite{mccoy2014sharp}. 


Recall that the spectrum of the sinusoidal component in our mixture model is modeled as a measure that is supported on a continuous interval. Its $\ell_1$-norm is therefore not well defined. In order to promote sparsity in the estimate, we resort instead to a continuous version of the $\ell_1$ norm: the total-variation (TV) norm\footnote{\emph{Total variation} often also refers to the $\ell_1$ norm of the discontinuities of a piecewise-constant function, which is a popular regularizer in image processing and other applications~\cite{tv}.}. If we consider the space of measures supported on the unit interval, this norm is dual to the infinity norm, so that
\begin{align}
\normTV{ \nu } := \sup_{\normInf{h}\leq 1,h \in C\brac{\mathbb{T}}} \op{Re} \sqbr{  \int_{\mathbb{T}}\overline{h \brac{f}} \nu \brac{\text{d}f}}.
\end{align}
for any measure $\nu$ (for a different definition see Section A in the appendix of~\cite{Candes:2012uf}). In the case of a superposition of Dirac deltas as in~\eqref{eq:line_spectrum}, the total-variation norm is equal to the $\ell_1$ norm of the coefficients, i.e. $\normTV{ \mu }=\normOne{ \vct{x}}$. Spectral super-resolution via TV-norm minimization, introduced in~\cite{Candes:2012uf,supportPursuit} (see also~\cite{bredies2013inverse}), has been shown to achieve exact recovery under a minimum separation of $\frac{\mindist}{ n-1 }$ in~\cite{superres_new} and to be robust to missing data in~\cite{tang2012offgrid}. 

Our proposed method minimizes the sum of the $\ell_1$ norm of the spikes and the TV norm of the spectrum of the sines subject to a data-consistency constraint:
\begin{align}
  \min_{ \tilde{\mu}, \vct{ \tilde{z} } } &   \normTV{ \tilde{\mu}} + \lambda \normOne{ \vct{\tilde{z}} } \quad
  \text{subject to} \quad   \ml{F}_n \, \tilde{\mu} + \vct{\tilde{z}} = \vct{y} .  \label{eq:opt_problem}
\end{align}
$\lambda > 0$ is a regularization parameter that governs the weight of each penalty term. This optimization program is convex. Section~\ref{sec:sdp} explains how to solve it by reformulating its dual as a semidefinite program. Our main theoretical result is that solving~\eqref{eq:opt_problem} achieves perfect demixing with high probability under certain assumptions. 


\begin{theorem}[Proof in Section~\ref{sec:proof}]
\label{theorem:main} 
Suppose that we observe $n$ samples of the form
\begin{align}\label{eqn:signalmodel}
\vct{y} & = \mathcal{F}_{n} \, \mu + \vct{z},
\end{align}
where each entry in $\vct{z}$ is nonzero with probability $\frac{s}{n}$ (independently of each other) and the support $T:=\keys{f_1, \ldots, f_k}$ of
\begin{align}
\mu & := \sum_{f_j \in T} \vct{x}_j \delta \brac{f - f_j},
\end{align}
has a minimum separation lower bounded by 
\begin{align}
\label{condition:minimum_separation}
\Delta_{\min} := \frac{\mindist}{ n-1 }.
\end{align}
If the phases of the entries in $\vct{x} \in \C^{k}$ and the nonzero entries in $\vct{z} \in \C^{n}$ are iid random variables uniformly distributed in $\sqbr{0,2\pi}$, then the solution to Problem~\eqref{eq:opt_problem} with $\lambda = 1/\sqrt{n}$ is exactly equal to $\mu$ and $\vct{z}$ with probability $1-\epsilon$ for any $\epsilon>0$ as long as
\begin{align}
k & \leq C_k \brac{\log \frac{n}{\epsilon}}^{-2} n, \label{eq:cond_k}\\
s & \leq C_s \brac{\log \frac{n}{\epsilon}}^{-2} n, \label{eq:cond_s}
\end{align}
for fixed numerical constants $C_k$ and $C_s$ and $n \geq 2 \times 10^3$.
\end{theorem}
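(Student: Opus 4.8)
The plan is to certify that $(\mu, \vct{z})$ is the unique optimum of~\eqref{eq:opt_problem} by exhibiting an appropriate dual certificate, and then to construct that certificate explicitly and show that it exists with the stated probability. First I would write down the optimality conditions via convex duality. The dual variable is a vector $\vct{c} \in \C^n$; associating to it the trigonometric polynomial $Q := \mathcal{F}_n^{*} \vct{c}$, standard subgradient arguments for the sum of the TV norm and the weighted $\ell_1$ norm show that $(\mu, \vct{z})$ is the unique solution provided there is a $\vct{c}$ with
\begin{align}
Q(f_j) &= \signx_j, \quad f_j \in T, \\
\abs{Q(f)} &< 1, \quad f \in \sqbr{0,1} \setminus T, \\
\vct{c}_l &= \lambda\, \signz_l, \quad l \in S, \\
\abs{\vct{c}_l} &< \lambda, \quad l \notin S,
\end{align}
where $S$ denotes the random support of $\vct{z}$ and $\signx, \signz$ are the sign (phase) patterns of $\vct{x}, \vct{z}$. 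The first pair of conditions is the super-resolution requirement that $Q$ interpolate the sine signs while staying strictly dominated by $1$ off $T$; the second pair is the sparse-recovery requirement that $\vct{c}$ match the scaled spike signs on $S$ and be strictly dominated by $\lambda$ off $S$.

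Next I would construct a candidate $\vct{c}$ satisfying the equality constraints. Following the interpolation scheme of~\cite{Candes:2012uf,superres_new}, I would build $Q$ from an interpolation kernel (the squared Fej\'er kernel) and its derivative localized at the nodes in $T$, imposing $Q(f_j)=\signx_j$ together with $Q'(f_j)=0$ so that each $f_j$ is a strict local maximum of $\abs{Q}$; the vanishing-derivative conditions are what upgrade the bound to a strict inequality in a neighborhood of $T$. The essential new ingredient is that the \emph{same} $\vct{c}$ must equal $\lambda \signz$ on $S$. I would therefore solve a coupled interpolation problem in the sample domain, pinning the entries of $\vct{c}$ on $S$ while fitting the $2k$ interpolation and derivative conditions on $T$. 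The minimum-separation hypothesis~\eqref{condition:minimum_separation} guarantees, as in~\cite{Candes:2012uf}, that the associated kernel Gram matrix is close to the identity, so that this system is invertible and the interpolation coefficients are well controlled.

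The remaining and most delicate step is the probabilistic verification of the two strict inequalities. Here I would exploit the randomness of the phases: conditioned on $T$ and $S$, both $Q(f)$ for $f \notin T$ and the entries $\vct{c}_l$ for $l \notin S$ are linear combinations of the independent random signs $\signx_j$ and $\signz_l$, so I would bound them using Hoeffding- and Bernstein-type concentration for sums of independent bounded random variables. To control $\sup_{f \notin T} \abs{Q(f)}$ over the continuum I would first establish the bound on a fine grid $\Og$ and then extend it to all $f$ by bounding $Q'$ through Bernstein's inequality for trigonometric polynomials, paying a union-bound price over the grid; this union bound together with the Gaussian-type tails is what produces the $\brac{\log \tfrac{n}{\epsilon}}^{-2}$ factors in~\eqref{eq:cond_k} and~\eqref{eq:cond_s}. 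The choice $\lambda = 1/\sqrt{n}$ is what balances the typical magnitudes of the two families of cross terms and lets both $k$ and $s$ grow linearly in $n$.

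The hard part will be controlling the coupling between the two components. The correction to $\vct{c}$ that enforces $\vct{c}_S = \lambda \signz_S$ perturbs $Q$ away from $T$, while conversely the interpolation on $T$ contributes to the entries of $\vct{c}$ off $S$; one must show that both of these cross contributions stay strictly below their respective thresholds. Quantitatively, this reduces to establishing a near-orthogonality (incoherence) between the exponential atoms sampled on $T$ and the spikes indexed by $S$, uniformly over the random support $S$, and it is precisely the concentration of these cross terms under the random signs that makes the simultaneous linear-in-$n$ scaling of $k$ and $s$ possible.
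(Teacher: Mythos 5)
Your overall skeleton matches the paper's: exact demixing is certified by a dual polynomial satisfying precisely your four conditions (Proposition~\ref{proposition:dual_polynomial}), the off-support bound $\abs{Q(f)}<1$ is verified on a fine grid and extended to all of $\sqbr{0,1}$ via Bernstein's polynomial inequality, and the bound $\abs{\vct{c}_l}<\lambda$ off the spike support is obtained from Hoeffding's inequality applied to the random phases. The genuine gap is in the construction step, which is the crux of the argument. You propose to build $Q$ as a combination of $2k$ shifted copies of a deterministic kernel and its derivative, \emph{and} to pin the $s$ Fourier coefficients $\vct{c}_l$, $l \in S$, to $\lambda\signz_l$. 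As stated this is inconsistent: if $Q$ is parametrized by the $2k$ kernel coefficients, those are already determined (generically) by the $2k$ interpolation and derivative conditions on $T$, leaving no freedom to satisfy the $s$ equality constraints on $S$, and the coefficients of the resulting polynomial on $S$ will not equal $\lambda\signz$ in general; if instead you take all $n$ entries of $\vct{c}$ as unknowns, the system is underdetermined and you lose the kernel structure on which every subsequent bound rests. The paper resolves this with an idea your proposal lacks, imported from compressed sensing off the grid~\cite{tang2012offgrid}: decompose $Q = Q_{\op{aux}} + R$, where $R$ is the fixed polynomial whose coefficients on $\Omega$ equal $\lambda\signz$ (and vanish elsewhere), and build $Q_{\op{aux}}$ from a \emph{random} kernel $K$ obtained by deleting from the deterministic kernel $\bar{K}$ exactly the Fourier coefficients indexed by $\Omega$. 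Since $K$ and $K^{(1)}$ have spectrum contained in $\Omega^c$ by construction, condition~\eqref{eqn:condition:q1} holds automatically, and the $2k$ free coefficients of $Q_{\op{aux}}$ are exactly what is needed to interpolate $\signx$ on $T$ after correcting for the presence of $R$.

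This missing ingredient also invalidates your appeal to~\cite{Candes:2012uf} for invertibility: the relevant interpolation matrix $D$ is random (it is assembled from the decimated kernel $K$), so the deterministic near-identity property of the Gram matrix does not apply to it directly. One must instead show that $D$ concentrates around $\frac{n-s}{n}\bar{D}$ via the matrix Bernstein inequality, and this is precisely one of the places where the Bernoulli model for the support of $\vct{z}$ and the conditions~\eqref{eq:cond_k}--\eqref{eq:cond_s} are consumed; the randomness of the support is needed for the concentration of the kernel itself, not only for the cross terms between the two components that you discuss. A more minor point: with the squared Fej\'er kernel you propose, the off-support bound on the deterministic polynomial is only known under a separation of $4/(n-1)$; to obtain the constant $\mindist/(n-1)$ claimed in the theorem you must use the product-of-three-Dirichlet-kernels construction of~\cite{superres_new}, whose numerical bounds the paper imports.
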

The theorem guarantees that our method is able to super-resolve a number of spectral lines that is proportional to the number of measurements, even if the data contain a \emph{constant fraction of outliers}, up to logarithmic factors. The proof is presented in Section~\ref{sec:proof}; it is based on the construction of a random trigonometric polynomial that certifies exact demixing. 
Our result is non-asymptotic and holds with high probability under several assumptions, which we now discuss in more detail. 

\begin{itemize}
\item The support of the sparse corruptions follows a Bernoulli model where each entry is nonzero with probability $s/n$ independently from each other. This model is essentially equivalent to choosing the support of the outliers uniformly at random from all possible subsets of cardinality $s$, as shown in Section 7.1 of~\cite{candes2011robust} (see also~\cite[Section 2.3]{Candes:2006eq} and~\cite[Section 8.1]{Candes:2010jb}). 
\item The phases of the amplitudes of the spectral lines are assumed to be iid uniform random variables (note however that the amplitudes can take any value). Modeling the phase of the spectral components of a multisinusoidal signal in this way is a common assumption in signal processing, see for example \cite[Chapter 4.1]{Stoica:2005wf}. 
\item The phases of the amplitudes of the additive corruptions are also assumed to be iid uniform random variables (the amplitudes can again take any value). If we constrain the corruptions to be real, the derandomization argument in~\cite[Section 2.2]{candes2011robust} allows to obtain guarantees for arbitrary sign patterns.
\item We have already discussed the minimum-separation condition on the spectrum of the multisinusoidal component in Section~\ref{sec:minimum_separation}.
\end{itemize}

Our assumptions model a non-adversarial situation where the outliers are not designed to cancel out the samples from the multisinusoidal signal. In the absence of any such assumption it is possible to concoct instances for which the demixing problem is ill posed, even if the number of spectral lines and outliers is small. We illustrate this with a simple example, based on the \emph{picket-fence} sequence used as an extremal function for signal-decomposition uncertainty principles in~\cite{donoho1989uncertainty,donoho2001uncertainty}. Consider $k'$ spectral lines with unit amplitudes with an equispaced support
\begin{align}
\mu' := \frac{1}{k'}\sum_{j=0}^{k'-1} \delta \brac{f - j /k' }.
\end{align}
The samples of the corresponding multisinusoidal signal $g'$ are zero except at multiples of $k'$
\begin{align}
g'\brac{l} & = 
\begin{cases} 
1 \quad & \text{if $ l/k' \in \Z$}, \\
0 \quad & \text{otherwise}.
\end{cases}
\end{align}
If we choose the corruptions $\vct{z'}$ to cancel out these nonzero samples
\begin{align}
\vct{z'}_{l} & = 
\begin{cases} 
-1 \quad & \text{if $ l/k' \in \Z$}, \\
0 \quad & \text{otherwise},
\end{cases}
\end{align}
then the corresponding measurements are all equal to zero! For these data the demixing problem is obviously impossible to solve by any method. Set $k':= \sqrt{n}$ so that the number of measurements $n$ equals $\brac{k'}^2$. Then the number of outliers is just $n/k' = \sqrt{n}$ and the minimum separation between the spikes is $1/\sqrt{n}$, which amply satisfies the minimum-separation condition~\ref{condition:minimum_separation}. This shows that additional assumptions beyond the minimum-separation condition are necessary for the inverse problem to make sense. A related phenomenon arises in compressed sensing, where random measurement schemes avoid similar adversarial situations (see~\cite[Section 1.3]{Candes:2006eq} and~\cite{tropp2008linear}). An interesting subject for future research is whether it is possible to establish the guarantees for exact demixing provided by Theorem~\ref{theorem:main} without random assumptions on the phase of the different components, or if these assumptions are necessary for the demixing problem to be well posed.

\subsection{Regularization parameter}
\label{sec:reg_param}
A question of practical importance is whether the performance of our demixing method is robust to the choice of the regularization parameter $\lambda$ in Problem~\eqref{eq:opt_problem}. Theorem~\ref{theorem:main} indicates that this is the case in the following sense. If we set $\lambda$ to a fixed value that is proportional to $1/\sqrt{n}$\footnote{To be precise, Theorem~\ref{theorem:main} assumes $\lambda:=1/\sqrt{n}$, but one can check that the whole proof goes through if we set $\lambda$ to $c/\sqrt{n}$ for any positive constant $c$. The only effect is a change in the constants $C_s$ and $C_k$ in~\eqref{eq:cond_k} and~\eqref{eq:cond_s}.}, then exact demixing occurs for a number of spectral lines $k$ and a number of outliers $s$ that range from zero to a certain maximum value proportional to $n$ (up to logarithmic factors). 

In this section we provide additional theoretical evidence for the robustness of our method to the choice of $\lambda$. If exact recovery occurs for a certain pair $\keys{\mu, \vct{z}}$ and a certain $\lambda$ then it will also succeed for \emph{any trimmed version} $\keys{\mu', \vct{z'}}$ (obtained by removing some elements of the support of $\mu$ or $\vct{z}$, or both) for \emph{the same value of $\lambda$}.

\begin{lemma}[Proof in Section~\ref{proof:trimmed_sol}]
\label{lemma:trimmed_sol}
Let $\vct{z}$ be a vector with support $\Omega$ and let $\mu$ be an arbitrary measure such that
\begin{align}
\vct{y} = \mathcal{F}_{n} \, \mu + \vct{z}.
\end{align}
Assume that the pair $\keys{\mu,\vct{z}}$ is the unique solution to Problem~\eqref{eq:opt_problem} and consider the data
\begin{align}
\vct{y'} = \mathcal{F}_{n} \, \mu' + \vct{\vct{z'}}.
\end{align}
$\mu'$ is a trimmed version of $\mu$: it is equal to $\mu$ on a subset of its support $T' \subseteq T$ and is zero everywhere else. Similarly, $\vct{z'}$ equals $\vct{z}$ on a subset of entries $\Omega' \subseteq \Omega$ and is zero otherwise. For any choice of $T'$ and $\Omega'$, $\keys{\mu,\vct{\vct{z'}}}$ is the unique solution to Problem~\eqref{eq:opt_problem} if we set the data vector to equal $\vct{y'}$ for the same value of $\lambda$.
\end{lemma}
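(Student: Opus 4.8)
The plan is to prove the lemma by a direct convexity argument---an ``add-back'' (or completion) construction together with the triangle inequality---rather than by manipulating dual certificates. The starting observation is that trimming removes mass on \emph{disjoint} pieces of the support: $\mu-\mu'$ is supported on $T\setminus T'$ and $\vct{z}-\vct{z'}$ on $\Omega\setminus\Omega'$, so the norms split exactly,
\begin{align}
\normTV{\mu}=\normTV{\mu'}+\normTV{\mu-\mu'}, \qquad \normOne{\vct{z}}=\normOne{\vct{z'}}+\normOne{\vct{z}-\vct{z'}}. \nonumber
\end{align}
The second ingredient is that the two data vectors differ by exactly the trimmed-away components: from $\vct{y}=\mathcal{F}_{n}\mu+\vct{z}$ and $\vct{y'}=\mathcal{F}_{n}\mu'+\vct{z'}$ we get $\vct{y}-\vct{y'}=\mathcal{F}_{n}(\mu-\mu')+(\vct{z}-\vct{z'})$. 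Consequently ``adding back'' the trimmed parts sends any pair $\keys{\tilde\mu,\tilde{\vct{z}}}$ feasible for $\vct{y'}$ to the pair $\keys{\tilde\mu+(\mu-\mu'),\,\tilde{\vct{z}}+(\vct{z}-\vct{z'})}$, which is feasible for $\vct{y}$; and applied to $\keys{\mu',\vct{z'}}$ this map returns exactly $\keys{\mu,\vct{z}}$.

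With these two facts the argument is short. First I would show that $\keys{\mu',\vct{z'}}$ is optimal for $\vct{y'}$: if some feasible $\keys{\hat\mu,\hat{\vct{z}}}$ had strictly smaller objective, then adding back the trimmed parts and bounding with the triangle inequality would produce a pair feasible for $\vct{y}$ whose objective is strictly below $\normTV{\mu}+\lambda\normOne{\vct{z}}$, contradicting the optimality of $\keys{\mu,\vct{z}}$; here the exact splitting of the norms displayed above is what makes the bookkeeping close. Next, for \emph{uniqueness}, I would take any optimizer $\keys{\hat\mu,\hat{\vct{z}}}$ of the $\vct{y'}$ problem, add back the trimmed parts to obtain $\keys{\breve\mu,\breve{\vct{z}}}$ feasible for $\vct{y}$, and use the triangle inequality together with the equality $\normTV{\hat\mu}+\lambda\normOne{\hat{\vct{z}}}=\normTV{\mu'}+\lambda\normOne{\vct{z'}}$ to conclude that the objective of $\keys{\breve\mu,\breve{\vct{z}}}$ is at most $\normTV{\mu}+\lambda\normOne{\vct{z}}$. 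Thus $\keys{\breve\mu,\breve{\vct{z}}}$ attains the optimum for $\vct{y}$, and the uniqueness hypothesis forces $\breve\mu=\mu$ and $\breve{\vct{z}}=\vct{z}$, i.e. $\hat\mu=\mu'$ and $\hat{\vct{z}}=\vct{z'}$, as desired.

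The delicate points here are essentially bookkeeping: one must verify that the add-back map preserves the linear constraint (this is exactly the identity $\vct{y}-\vct{y'}=\mathcal{F}_{n}(\mu-\mu')+(\vct{z}-\vct{z'})$) and that the triangle-inequality bounds combine with the disjoint-support additivity of the norms so that the constants $\normTV{\mu-\mu'}$ and $\lambda\normOne{\vct{z}-\vct{z'}}$ cancel cleanly against the gap between $\normTV{\mu}+\lambda\normOne{\vct{z}}$ and $\normTV{\mu'}+\lambda\normOne{\vct{z'}}$. I would deliberately avoid the alternative route of recycling a dual certificate of $\keys{\mu,\vct{z}}$: such a certificate saturates $|Q(f)|=1$ on all of $T$, and hence fails the strict inequality off the \emph{smaller} support $T'$ that would be needed to certify uniqueness for the $\vct{y'}$ problem---so it would deliver only optimality, not uniqueness. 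The completion argument sidesteps this obstacle by transporting the uniqueness already assumed for $\vct{y}$. (I note that the solution in the statement, written $\keys{\mu,\vct{z'}}$, should read $\keys{\mu',\vct{z'}}$, since the recovered measure must satisfy the constraint $\mathcal{F}_{n}\mu'+\vct{z'}=\vct{y'}$.)
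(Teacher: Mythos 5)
Your proof is correct and follows essentially the same route as the paper's: the paper also takes an arbitrary solution $\keys{\hat\mu,\vct{\hat z}}$ of the problem with data $\vct{y'}$, adds back the trimmed components $\mu_{T\setminus T'}$ and $\vct{z}_{\Omega\setminus\Omega'}$ to produce a feasible pair for $\vct{y}$, and combines the triangle inequality with the disjoint-support additivity of the norms and the assumed uniqueness for $\vct{y}$ to force $\hat\mu=\mu'$ and $\vct{\hat z}=\vct{z'}$ (your split into an optimality step followed by a uniqueness step is just a reorganization of the paper's single ``unless'' argument). Your remark about the typo is also right: the paper's own proof concludes that $\keys{\mu',\vct{z'}}$, not $\keys{\mu,\vct{z'}}$, is the unique solution for the data $\vct{y'}$.
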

This result and its proof are inspired by Theorem 2.2 in~\cite{candes2011robust}. As illustrated by Figures~\ref{fig:phase_transitions_mindist} and~\ref{fig:phase_transitions_lambda}, our numerical experiments corroborate the lemma: we consistently observe that if exact demixing occurs for most signals with a certain number of spectral lines and outliers, then it also occurs for most signals with less spectral lines and less corruptions (as long as the minimum separation is the same) for a fixed value of $\lambda$.

\subsection{Stability to dense perturbations}
\label{sec:dense_noise}
One of the advantages of our optimization-based framework is that we can account for additional assumptions on the problem structure by modifying either the cost function or the constraints of the optimization problem used to perform demixing. In most applications of spectral super-resolution, the data will deviate from the multisinusoidal model~\eqref{eq:multisinusoidal} because of measurement noise and other perturbations, even in the absence of outliers. We model such deviations as a dense additive perturbation $\vct{w}$, such that $\normTwo{ \vct{w}} \leq \sigma$ for a certain noise level $\sigma$,
\begin{align}
\label{eq:dense_noise}
\vct{y} = \mathcal{F}_{n} \, \mu + \vct{z}+ \vct{w}.
\end{align}
Problem~\eqref{eq:opt_problem} can be adapted to this measurement model by relaxing the equality constraint that enforces data consistency to an inequality which takes into account the noise level
\begin{align}
  \min_{ \tilde{\mu}, \vct{ \tilde{z} } } &   \normTV{ \tilde{\mu}} + \lambda \normOne{ \vct{\tilde{z}} } \quad
  \text{ subject to } \quad  \normTwo{ \vct{y} - \ml{F}_n \, \tilde{\mu} + \vct{\tilde{z}} } \leq \sigma.  \label{eq:opt_problem_noise}
\end{align}
Just like Problem~\eqref{eq:opt_problem}, this optimization problem can be solved by recasting its dual as a tractable semidefinite program, as we explain in detail in Section~\ref{sec:sdp_noise}.

\subsection{Atomic-norm denoising}
\label{sec:denoising}
Our demixing method is closely related to atomic-norm denoising of multisinusoidal samples. Consider the $n$-dimensional vector $ \vct{g} := \ml{F}_n \, \mu $ containing \emph{clean} samples from a signal $g$ defined by~\eqref{eq:multisinusoidal}. The assumption that the spectrum $\mu$ of $g$ consists of $k$ spectral lines is equivalent to $ \vct{g}$ having a sparse representation in an infinite dictionary of $n$-dimensional sinusoidal \emph{atoms} $\vct{a} \brac{f, \phi} \in \C^{n}$ parameterized by frequency $f \in [0, 1)$ and phase $\phi \in [0, 2\pi)$, 
\begin{align}
\label{eq:atoms}
  \vct{a} \brac{ f, \phi }_l & := \frac{1}{\sqrt{n}} e^{i\phi}e^{i2\pi l f},  \quad 1 \leq l \leq n.
\end{align}
Indeed, $ \vct{g}$ can be expressed as a linear combination of $k$ atoms
\begin{align}
\label{eq:g}
\vct{g} & = \sqrt{n}\sum_{j=1}^{k} \abs{  \vct{x}_j }   \vct{a} \brac{ f_j, \phi_j }, \quad \vct{x}_j := \abs{\vct{x}_j} e^{i 2 \pi \phi_j}.
\end{align}
This representation can be leveraged in an optimization framework using the atomic norm, an idea introduced in~\cite{chandrasekaran2012convex} and first applied to spectral super-resolution in~\cite{Bhaskar:2012tq}. The atomic norm induced by a set of atoms $\ml{A}$ is equal to the gauge of $\ml{A}$ defined by
\begin{align}
  \atomicnorm{ \vct{u} } & := \inf \left\{ t > 0: \vct{u} \in t \tmop{conv}  \brac{ \mathcal{A} } \right\},
\end{align}
which is a norm as long as $\mathcal{A}$ is centrally symmetric around the origin (as is the case for~\eqref{eq:atoms}). Geometrically, the unit ball of the atomic norm is the convex hull of the atoms in $\mathcal{A}$, just like the $\ell_1$-norm ball is the convex hull of unit-norm one-sparse vectors. As a result, signals consisting of a small number of atoms tend to have a smaller atomic norm (just like sparse vectors tend to have a smaller $\ell_1$-norm). 

Consider the problem of denoising the samples of $g$ from corrupted data of the form~\eqref{eq:model_matrix_form}, 
\begin{align}
\label{eq:data_denoising}
\vct{y} = \vct{g} + \vct{z}.
\end{align}
To be clear, the aim is now to separate $\vct{g}$ from the corruption vector $\vct{z}$ instead of directly estimating the spectrum of $\vct{g}$. In order to demix the two signals we penalize the atomic norm of the multisinusoidal component and the $\ell_1$ norm of the sparse component,
\begin{align}
  \min_{ \vct{\tilde{g}},\vct{\tilde{z}}} &  \;  \frac{1}{\sqrt{n}}\atomicnorm{ \vct{\tilde{g}} } + \lambda \normOne{ \vct{\tilde{z}}} \quad
  \text{subject to } \quad  \vct{\tilde{g}} + \vct{\tilde{z}} = \vct{y} ,  \label{eq:atomic_norm_min}
\end{align}
where $\lambda > 0$ is a regularization parameter. 

Problems~\ref{eq:opt_problem_noise} and~\ref{eq:atomic_norm_min} are closely related. Their convex cost functions are designed to exploit sparsity assumptions on the spectrum of $g$ and on the corruption vector $\vct{z}$ in ways that are essentially equivalent. More formally, both problems have the same dual, as implied by the following lemma and Lemma~\ref{lemma:dual_sdp}. 

\begin{lemma}[Proof in Section~\ref{proof:atomic_norm_dual}]
\label{lemma:atomic_norm_dual}
The dual of Problem~\eqref{eq:atomic_norm_min} is
\begin{align}
\label{eq:dual_atomic_norm}
\max_{ \vct{\eta} \in \C^{n}} \;   \PROD{\vct{y}}{\vct{\eta}}  \quad \text{subject to}
\quad & \normInf{\mathcal{F}_{n}^{\ast} \, \vct{\eta}}  \leq 1, \\
&  \normInf{\vct{\eta}}  \leq \lambda,
\end{align}
where the inner product is defined as $\PROD{ \vct{y}}{ \vct{\eta}} : = \op{Re}\brac{\vct{y}^{\ast}\vct{\eta}}$. 
\end{lemma}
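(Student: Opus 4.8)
The plan is to compute the dual by standard Lagrangian duality, exploiting the fact that Problem~\eqref{eq:atomic_norm_min} is a convex program whose only constraint is the affine equality $\vct{\tilde{g}} + \vct{\tilde{z}} = \vct{y}$. First I would attach a dual variable $\vct{\eta} \in \C^n$ to this constraint and form the Lagrangian
\begin{align*}
L\brac{\vct{\tilde{g}}, \vct{\tilde{z}}, \vct{\eta}} = \frac{1}{\sqrt{n}}\atomicnorm{\vct{\tilde{g}}} + \lambda \normOne{\vct{\tilde{z}}} + \PROD{\vct{\eta}}{\vct{y} - \vct{\tilde{g}} - \vct{\tilde{z}}},
\end{align*}
with the real inner product $\PROD{\cdot}{\cdot} = \op{Re}\brac{\cdot^{\ast}\cdot}$. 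Since both the objective and the constraint split additively across $\vct{\tilde{g}}$ and $\vct{\tilde{z}}$, minimizing $L$ over the primal variables separates into the constant term $\PROD{\vct{\eta}}{\vct{y}}$ plus two independent infima, one in $\vct{\tilde{g}}$ and one in $\vct{\tilde{z}}$.

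The workhorse is the conjugate of a scaled norm: for any norm $\norm{\cdot}$ with dual norm $\norm{\cdot}_{\ast}$ and any $c > 0$, the infimum $\inf_{\vct{u}}\brac{c\norm{\vct{u}} - \PROD{\vct{\eta}}{\vct{u}}}$ equals $0$ when $\norm{\vct{\eta}}_{\ast} \leq c$ and $-\infty$ otherwise. Applied to the $\ell_1$ term with $c = \lambda$, this gives the constraint $\normInf{\vct{\eta}} \leq \lambda$, because $\ell_\infty$ is dual to $\ell_1$. Applied to the atomic-norm term with $c = 1/\sqrt{n}$, it produces the constraint $\atomicnorm{\vct{\eta}}^{\ast} \leq 1/\sqrt{n}$, where $\atomicnorm{\cdot}^{\ast}$ denotes the dual atomic norm.

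The main computation is to evaluate this dual atomic norm and show it reduces to $\normInf{\mathcal{F}_n^{\ast}\vct{\eta}}$. Because the unit ball of $\atomicnorm{\cdot}$ is the (closed) convex hull of the compact atom set $\A$, a linear functional attains its maximum over the ball at an atom, so $\atomicnorm{\vct{\eta}}^{\ast} = \sup_{f, \phi}\PROD{\vct{\eta}}{\vct{a}\brac{f,\phi}}$. I would first maximize over the phase $\phi$, using $\sup_\phi \op{Re}\brac{e^{i\phi}w} = \abs{w}$ to turn the real part into a modulus, and then read off the adjoint from $\brac{\mathcal{F}_n\mu}_l = \int e^{i2\pi f l}\mu\brac{\text{d}f}$, which yields $\brac{\mathcal{F}_n^{\ast}\vct{\eta}}\brac{f} = \sum_{l=1}^{n}\vct{\eta}_l e^{-i2\pi f l}$. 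Combining these gives
\begin{align*}
\atomicnorm{\vct{\eta}}^{\ast} = \frac{1}{\sqrt{n}}\sup_{f \in [0,1)}\abs{\sum_{l=1}^{n}\vct{\eta}_l e^{-i2\pi f l}} = \frac{1}{\sqrt{n}}\normInf{\mathcal{F}_n^{\ast}\vct{\eta}},
\end{align*}
so the constraint $\atomicnorm{\vct{\eta}}^{\ast} \leq 1/\sqrt{n}$ collapses to $\normInf{\mathcal{F}_n^{\ast}\vct{\eta}} \leq 1$. Assembling the objective $\PROD{\vct{y}}{\vct{\eta}}$ with the two constraints recovers~\eqref{eq:dual_atomic_norm}, and since the primal is feasible and its only constraint is affine, strong duality holds with zero gap, so this maximization is genuinely the dual.

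The step I expect to be most delicate is the dual-norm evaluation: pinning down $\mathcal{F}_n^{\ast}$ under the real-inner-product convention, and justifying that the supremum over the atomic-norm ball reduces to a supremum over $\A$ itself (which relies on compactness of $\A$ so that its convex hull is closed and its extreme points are the atoms). The remaining steps are routine convex-duality bookkeeping.
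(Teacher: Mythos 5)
Your proposal is correct and follows essentially the same route as the paper: the core step in both is evaluating the dual atomic norm by reducing the supremum over the unit ball to a supremum over the atoms, maximizing over the phase to obtain a modulus, and identifying the result with $\normInf{\mathcal{F}_{n}^{\ast}\,\vct{\eta}}/\sqrt{n}$. The only difference is presentational: the paper outsources the Lagrangian bookkeeping (equality constraint, separable infima via norm conjugates) to a standard argument cited from Chandrasekaran et al., whereas you carry it out explicitly.
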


The fact that the two optimization problems share the same dual has an important consequence established in Section~\ref{proof:atomic_denoising}: the same dual certificate can be used to prove that they achieve exact demixing. As a result, the proof of Theorem~\ref{theorem:main} immediately implies that solving Problem~\eqref{eq:atomic_norm_min} is successful in separating $\vct{g}$ and $\vct{z}$ under the conditions described in Section~\ref{sec:main_result}. 

\begin{corollary}[Proof in Section~\ref{proof:atomic_denoising}]
\label{cor:atomic_denoising}
Under the assumptions of Theorem~\ref{theorem:main}, $\vct{g} := \ml{F}_n \, \mu $ and $\vct{z}$ are the unique solutions to Problem~\eqref{eq:atomic_norm_min}.
\end{corollary}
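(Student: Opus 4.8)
The plan is to reuse the dual certificate already constructed in the proof of Theorem~\ref{theorem:main}, exploiting that Problems~\eqref{eq:opt_problem} and~\eqref{eq:atomic_norm_min} have the same dual. By Lemma~\ref{lemma:atomic_norm_dual} the dual of~\eqref{eq:atomic_norm_min} maximizes $\PROD{\vct{y}}{\vct{\eta}}$ over $\keys{\vct{\eta}\in\C^n : \normInf{\ml{F}_n^{\ast}\vct{\eta}}\leq 1,\ \normInf{\vct{\eta}}\leq\lambda}$, and combining this with Lemma~\ref{lemma:dual_sdp} shows that this is exactly the dual feasible set for~\eqref{eq:opt_problem}. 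Consequently the random low-frequency trigonometric polynomial certificate produced in Section~\ref{sec:proof} is dual-feasible here as well, and I would simply re-interpret its defining properties as the subgradient optimality conditions for the atomic-norm formulation.

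First I would record what the certificate gives: a vector $\vct{\eta}$ whose associated polynomial $Q:=\ml{F}_n^{\ast}\vct{\eta}$ satisfies $Q\brac{f_j}=\vct{x}_j/\abs{\vct{x}_j}$ for $f_j\in T$ and $\abs{Q\brac{f}}<1$ off $T$, while $\vct{\eta}_l=\lambda\,\vct{z}_l/\abs{\vct{z}_l}$ on the support $\Omega$ of $\vct{z}$ and $\abs{\vct{\eta}_l}<\lambda$ off $\Omega$. The constraint $\normInf{\ml{F}_n^{\ast}\vct{\eta}}\leq 1$ is precisely the statement that $\PROD{\vct{u}}{\vct{\eta}}\leq\tfrac{1}{\sqrt{n}}\atomicnorm{\vct{u}}$ for every $\vct{u}$, and the interpolation identity $Q\brac{f_j}=\vct{x}_j/\abs{\vct{x}_j}$ together with the atomic decomposition~\eqref{eq:g} turns this into an equality for $\vct{g}$, i.e. $\PROD{\vct{g}}{\vct{\eta}}=\tfrac{1}{\sqrt{n}}\atomicnorm{\vct{g}}$; similarly $\normInf{\vct{\eta}}\leq\lambda$ gives $\PROD{\vct{\tilde z}}{\vct{\eta}}\leq\lambda\normOne{\vct{\tilde z}}$ for all $\vct{\tilde z}$, with equality for $\vct{z}$ because of the sign matching on $\Omega$. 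The standard weak-duality sandwich then proves optimality: for any feasible $\keys{\vct{\tilde g},\vct{\tilde z}}$ with $\vct{\tilde g}+\vct{\tilde z}=\vct{y}$ we have $\tfrac{1}{\sqrt{n}}\atomicnorm{\vct{\tilde g}}+\lambda\normOne{\vct{\tilde z}}\geq\PROD{\vct{\tilde g}+\vct{\tilde z}}{\vct{\eta}}=\PROD{\vct{y}}{\vct{\eta}}$, and the candidate $\keys{\vct{g},\vct{z}}$ attains this lower bound.

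The step that needs the most care is uniqueness. Supposing another optimizer $\keys{\vct{\tilde g},\vct{\tilde z}}$ attains the optimum, equality must hold in both bounds above; the strict inequalities $\abs{Q\brac{f}}<1$ off $T$ and $\abs{\vct{\eta}_l}<\lambda$ off $\Omega$ then force $\vct{\tilde z}$ to be supported on $\Omega$ and $\vct{\tilde g}$ to be a combination of atoms at frequencies in $T$, say $\vct{\tilde g}=\ml{F}_n\tilde\mu$ with $\tilde\mu$ supported on $T$. The data constraint gives $\ml{F}_n\brac{\tilde\mu-\mu}=\vct{z}-\vct{\tilde z}$, so the samples of a measure supported on $T$ vanish off $\Omega$. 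The hard part is then to invoke the injectivity already proved inside Theorem~\ref{theorem:main} --- namely that, under the minimum-separation condition~\eqref{condition:minimum_separation} and the control of $\vct{\eta}$ on $\Omega^c$, the only measure supported on $T$ whose sample vector is supported on $\Omega$ has vanishing samples --- to conclude $\ml{F}_n\tilde\mu=\ml{F}_n\mu$ and $\vct{\tilde z}=\vct{z}$, hence $\vct{\tilde g}=\vct{g}$. I expect no genuinely new estimate beyond Theorem~\ref{theorem:main}; the only new bookkeeping is keeping track of the $\tfrac{1}{\sqrt{n}}$ scaling relating the TV-norm and atomic-norm penalties, which is exactly matched by the $\tfrac{1}{\sqrt{n}}$ in front of $\atomicnorm{\cdot}$ in~\eqref{eq:atomic_norm_min}.
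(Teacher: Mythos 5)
Your proposal is correct and follows essentially the same route as the paper: it reuses the dual certificate from Proposition~\ref{prop:dual_pol}, reinterprets conditions \eqref{eqn:condition:Q1}--\eqref{eqn:condition:q2} as optimality conditions for Problem~\eqref{eq:atomic_norm_min} via the dual-norm identity of Lemma~\ref{lemma:atomic_norm_dual}, and settles uniqueness through the strict bounds off $T$ and $\Omega$ together with the full-rank property of $\MAT{F_T & \Id_{\Omega}}$ when $k+s\leq n$ (the paper's Lemma~\ref{lemma:F_T_I}). One small correction: the injectivity step needs only $k+s\leq n$ and the Vandermonde structure, not the minimum-separation condition or any control of $\vct{\eta}$ on $\Omega^c$ as you suggest, but this does not affect the validity of the argument.
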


Problem~\eqref{eq:atomic_norm_min} can be adapted to denoise data that is perturbed by both outliers and dense noise, which follows the measurement model~\eqref{eq:dense_noise}. Inspired by previous work on line-spectra denoising via atomic-norm minimization~\cite{Bhaskar:2012tq,tang2014minimax}, we remove the equality constraint and add a regularization term to ensure consistency with the data,
\begin{align}
  \min_{ \vct{\tilde{g}},\vct{\tilde{z}}} &   \;  \frac{1}{\sqrt{n}} \atomicnorm{ \vct{\tilde{g}} } + \lambda \normOne{ \vct{\tilde{z}}} + \frac{\gamma}{2} \normTwo{ \vct{y} - \vct{\tilde{g}} - \vct{\tilde{z}} }^2 ,  \label{eq:atomic_norm_noise}
\end{align}
where $\gamma > 0$ is a regularization parameter with a role analogous to $\sigma$ in Problem~\eqref{eq:opt_problem_noise}. 

In Section~\ref{sec:atomic_norm} we discuss how to implement atomic-norm denoising by reformulating Problems~\ref{eq:atomic_norm_min} and~\ref{eq:atomic_norm_noise} as semidefinite programs.

\subsection{Related work}
\label{sec:related_work}
Most previous works analyzing the problem of demixing sines and spikes make the assumption that the frequencies of the sinusoidal component lie on a grid with step size $1/n$, where $n$ is the number of samples. In that case, demixing reduces to a discrete sparse decomposition problem in a dictionary formed by the concatenation of an identity and a discrete-Fourier-transform matrix~\cite{donoho1989uncertainty}. Bounds on the coherence of this dictionary can be used to derive guarantees for basis pursuit~\cite{donoho2001uncertainty} and also techniques based on Prony's method~\cite{dragotti2014sparse}. Coherence-based bounds do not reflect the fact that most sparse subsets of the dictionary are well conditioned~\cite{tropp2008linear}, which can be exploited to obtain stronger guarantees for $\ell_1$-norm based methods under random assumptions~\cite{li2013compressed,su_corrupted_fourier}. In this paper we depart from this previous literature by considering a sinusoidal component whose spectrum may lie on arbitrary points of the unit interval. 

Our work draws from recent developments on the super-resolution of point sources and line spectra via convex optimization. In \cite{Candes:2012uf} (see also~\cite{supportPursuit}), the authors establish that TV minimization achieves exact recovery of measures satisfying a minimum separation of $\frac{4}{n-1}$, a result that is sharpened to $\frac{\mindist}{n-1}$ in~\cite{superres_new}. In~\cite{tang2012offgrid} the method is adapted to a compressed-sensing setting, where a large fraction of the measurements may be missing. The proof of Theorem~\ref{theorem:main} builds upon the techniques developed in~\cite{superres_new,Candes:2012uf,tang2012offgrid}. We would like to point out that stability guarantees for TV-norm-based approaches established in subsequent works~\cite{support_detection, azais2015spike,superres_noisy, tang2014minimax,peyreduval} hold only for small perturbations and do not apply when the data may be perturbed by sparse noise of arbitrary amplitude, as is the case in this paper. 

In~\cite{Chi2014spectral}, a spectral super-resolution approach based on robust low-rank matrix recovery is shown to be robust to outliers under some incoherence assumptions, which are empirically related to our minimum-separation condition (see Section A in~\cite{Chi2014spectral}). Ignoring logarithmic factors, the guarantees in~\cite{Chi2014spectral} allow for exact denoising of up to $\ml{O}\brac{\sqrt{n}}$ spectral lines in the presence of $\ml{O}\brac{n}$ outliers, where $n$ is the number of measurements. Corollary~\ref{cor:atomic_denoising}, which follows from our main result Theorem~\ref{theorem:main}, establishes that our approach succeeds in denoising up to  $\ml{O}\brac{n}$ spectral lines also in the presence of $\ml{O}\brac{n}$ outliers (again ignoring logarithmic factors). 
In Section~\ref{sec:experiments_denoising} we compare both techniques empirically. Finally, we would like to mention another method exploiting optimization and low-rank matrix structure~\cite{zeng2013_} and an alternative approach to gridless spectral super-resolution~\cite{stoica2011new}, which has been recently adapted to account for missing data and impulsive noise~\cite{yang2015gridless}. In both cases, no theoretical results guaranteeing exact recovery in the presence of outliers are provided.
\section{Proof of Theorem~\ref{theorem:main}} 
\label{sec:proof}

\subsection{Dual polynomial}
\label{sec:optimality}

We prove Theorem~\ref{theorem:main} by constructing a trigonometric polynomial whose existence certifies that solving Problem~\eqref{eq:opt_problem} achieves exact demixing. We refer to this object as a \emph{dual polynomial}, because its vector of coefficients is a solution to the dual of Problem~\eqref{eq:opt_problem}. This vector is known as a \emph{dual certificate} in the compressed-sensing literature~\cite{Candes:2006eq}, . 


\begin{proposition}[Proof in Section~\ref{proof:dual_polynomial}]
\label{proposition:dual_polynomial}
Let $T \subset \sqbr{0,1}$ be the nonzero support of $\mu$ and $\Omega \subset \keys{1,2,\ldots,n}$ the nonzero support of $\vct{z}$. If there exists a trigonometric polynomial of the form
\begin{align}
    Q \brac{ f } & =  \mathcal{F}_n^{\ast} \, \vct{q} \\
    & = \sum_{j =1}^{n} \vct{q}_j \, e^{-i 2 \pi j f}, \label{eqn:dualpoly}
\end{align}
which satisfies
  \begin{alignat}{2}
  &  Q \brac{ f_j }  =  \frac{\vct{x}_j}{\abs{\vct{x}_j}}, \qquad && \forall f_j \in T,  \label{eqn:condition:Q1}\\
  &  \abs{ Q \brac{ f } }  <  1, && \forall f \in T^c, 
    \label{eqn:condition:Q2}\\
  &  \vct{q}_l  = \lambda \frac{\vct{z}_l}{\abs{\vct{z}_l}}, && \forall l \in \Omega, 
     \label{eqn:condition:q1}\\
  &  \abs{ \vct{q}_l }  <  \lambda , && \forall l \in \Omega^c,  \label{eqn:condition:q2}
  \end{alignat}
then $\brac{\mu,\vct{z}}$ is the unique solution to Problem~\ref{eq:opt_problem} as long as $k+s \leq n$.
\end{proposition}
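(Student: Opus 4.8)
The plan is to treat the pair $\brac{Q,\vct q}$ as a dual certificate and argue in two stages: a weak-duality bound showing that $\brac{\mu,\vct z}$ is optimal, followed by strict complementary slackness together with an injectivity argument to upgrade optimality to uniqueness. First I would record the two bounds the argument needs: conditions~\eqref{eqn:condition:Q1}--\eqref{eqn:condition:Q2} give $\normInf{Q}=\sup_f\abs{Q(f)}=1$, and conditions~\eqref{eqn:condition:q1}--\eqref{eqn:condition:q2} give $\normInf{\vct q}=\lambda$. Then, for any feasible $\brac{\tilde\mu,\vct{\tilde z}}$ satisfying $\mathcal{F}_{n}\tilde\mu+\vct{\tilde z}=\vct y$, the duality between the TV norm and the sup norm and between $\ell_1$ and $\ell_\infty$ yields
\begin{align}
\normTV{\tilde\mu}+\lambda\normOne{\vct{\tilde z}}\;\ge\;\real\!\int\overline{Q(f)}\,\tilde\mu(\diff f)+\real\brac{\vct q^{\ast}\vct{\tilde z}}\;=\;\real\brac{\vct q^{\ast}\brac{\mathcal{F}_{n}\tilde\mu+\vct{\tilde z}}}\;=\;\real\brac{\vct q^{\ast}\vct y},
\end{align}
where the first equality uses the adjoint identity $\int\overline{Q(f)}\,\tilde\mu(\diff f)=\vct q^{\ast}\mathcal{F}_{n}\tilde\mu$ coming from $Q=\mathcal{F}_n^{\ast}\vct q$, and the last uses feasibility. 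The crucial point is that the lower bound $\real\brac{\vct q^{\ast}\vct y}$ is the same for every feasible point.

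Plugging in $\brac{\mu,\vct z}$, the interpolation identities~\eqref{eqn:condition:Q1} and~\eqref{eqn:condition:q1} make both inequalities tight, since $\int\overline{Q(f)}\,\mu(\diff f)=\sum_{f_j\in T}\overline{\brac{\vct x_j/\abs{\vct x_j}}}\,\vct x_j=\normOne{\vct x}=\normTV{\mu}$ and likewise $\vct q^{\ast}\vct z=\lambda\normOne{\vct z}$. Hence $\brac{\mu,\vct z}$ attains the bound and is a minimizer. For uniqueness I would take an arbitrary optimal $\brac{\tilde\mu,\vct{\tilde z}}$, for which every inequality above must hold with equality, and read off support information. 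Equality in $\real\int\overline{Q}\,\tilde\mu(\diff f)=\normTV{\tilde\mu}$ forces $\abs{Q}=1$ on $\op{supp}\brac{\tilde\mu}$; combined with $\abs{Q(f)}<1$ on $T^c$ from~\eqref{eqn:condition:Q2}, this gives $\op{supp}\brac{\tilde\mu}\subseteq T$. Symmetrically, equality in $\real\brac{\vct q^{\ast}\vct{\tilde z}}=\lambda\normOne{\vct{\tilde z}}$ together with $\abs{\vct q_l}<\lambda$ on $\Omega^c$ from~\eqref{eqn:condition:q2} gives $\op{supp}\brac{\vct{\tilde z}}\subseteq\Omega$. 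This is exactly where the strict inequalities in the certificate are used.

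It remains to show that no second optimal solution can share these supports. Setting $d:=\tilde\mu-\mu$ (a measure on $T$) and $\vct v:=\vct{\tilde z}-\vct z$ (a vector supported on $\Omega$), subtracting the two data constraints gives $\mathcal{F}_{n}d+\vct v=\vct 0$, so $\mathcal{F}_{n}d$ is supported on $\Omega$. Restricting this identity to the complementary indices yields $\brac{\mathcal{F}_{n}d}_l=0$ for all $l\in\Omega^c$, and the whole question reduces to showing that this forces $d=0$ (whence $\vct v=0$). Because $\mathcal{F}_{n}$ maps a measure on the $k$ points of $T$ through an $n\times k$ Vandermonde matrix $\mtx A$ with distinct nodes $e^{i2\pi f_j}$, this is precisely the assertion that the row-restricted block $\mtx A_{\Omega^c}$ has trivial kernel, equivalently that the $n\times\brac{k+s}$ matrix formed by adjoining $\mtx A$ and the $s$ standard-basis columns indexed by $\Omega$ has full column rank.

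I expect this last linear-independence step to be the main obstacle. The hypothesis $k+s\le n$ supplies the necessary dimension count ($\abs{\Omega^c}=n-s\ge k$), and it is the only place that hypothesis enters, but counting alone does \emph{not} settle injectivity: as the picket-fence example following Theorem~\ref{theorem:main} shows, a signal built from $k$ frequencies can have $s$-sparse samples with $k+s\le n$, in which case $\mtx A_{\Omega^c}$ is rank-deficient and the demixing is genuinely ambiguous. I would therefore close this step not by bare counting but by exploiting the Vandermonde structure of $\mtx A$ together with the minimum-separation and support hypotheses operative in Theorem~\ref{theorem:main}, which preclude a nonzero exponential polynomial with frequencies in $T$ from vanishing on all of $\Omega^c$. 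Establishing $d=0$ then yields $\vct v=\vct 0$ and hence $\tilde\mu=\mu$, $\vct{\tilde z}=\vct z$, completing the uniqueness proof.
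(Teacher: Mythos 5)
Your first two stages coincide with the paper's own proof in Section~\ref{proof:dual_polynomial}: the same weak-duality pairing of a feasible point against $\vct{q}$ and $Q=\mathcal{F}_n^{\ast}\vct{q}$ (with equality on $T$ and $\Omega$ forced by the interpolation conditions), and the same reduction of uniqueness to the injectivity of the $n\times\brac{k+s}$ matrix $\MAT{F_T & \Id_{\Omega}}$, which is exactly the content of the paper's Lemma~\ref{lemma:F_T_I}. The only cosmetic difference is that the paper runs optimality and uniqueness as a single strict-inequality chain -- invoking Lemma~\ref{lemma:F_T_I} to guarantee that any competitor distinct from $\brac{\mu,\vct{z}}$ puts mass off $T$ or off $\Omega$, where the strict bounds \eqref{eqn:condition:Q2} and \eqref{eqn:condition:q2} bite -- whereas you prove optimality first and then analyze the equality case.

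The divergence, and the genuine gap in your proposal, is in how the injectivity step is closed. The paper does it in one line: if $k+s\le n$ then $\MAT{F_T & \Id_{\Omega}}$ has full column rank ``because $F_T$ is a submatrix of a Vandermonde matrix'' (Section~\ref{proof:F_T_I}). Your suspicion that counting plus Vandermonde structure cannot suffice is correct -- the picket-fence configuration of Section~\ref{sec:main_result} satisfies $k+s\le n$ and makes this very matrix rank-deficient (that is precisely what $\mathcal{F}_n\,\mu'+\vct{z'}=0$ says) -- so your objection in fact applies to the paper's own justification. But your proposed repair does not work either: the picket-fence support has minimum separation $1/\sqrt{n}$, which, as the paper itself notes, amply satisfies \eqref{condition:minimum_separation}. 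Hence ``Vandermonde structure together with the minimum-separation hypothesis'' cannot preclude a nonzero exponential polynomial with frequencies in $T$ from vanishing on all of $\Omega^{c}$; the only hypothesis of Theorem~\ref{theorem:main} that can do this is the randomness of $\Omega$, and exploiting it is not a routine detail. The natural route is through Lemma~\ref{lemma:D_bounds}: on the high-probability event that the interpolation matrix $D$ is invertible, every vector of values on $T$ is interpolated by a polynomial with coefficients supported on $\Omega^{c}$, and pairing the interpolant of $\vct{a}_j/\abs{\vct{a}_j}$ against a putative kernel vector $\vct{a}$ of the rows indexed by $\Omega^{c}$ forces $\sum_j\abs{\vct{a}_j}=0$, i.e.\ $\vct{a}=0$. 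That, however, yields a probabilistic, conditional uniqueness claim rather than the deterministic proposition as stated. So your plan leaves the decisive step unproved (and, if carried out via Theorem~\ref{theorem:main}'s assumptions, proves a weaker statement than the one in question); note also that neither your argument nor the paper's settles whether the proposition is literally true as written -- for that one would need to show that existence of the certificate by itself excludes sign-consistent vectors in the kernel of $\MAT{F_T & \Id_{\Omega}}$.
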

The dual polynomial can be interpreted as a subgradient of the TV norm at the measure $\mu$, in the sense that
\begin{align}
 \normTV{ \mu + \nu} & \geq \normTV{ \mu } +\PROD{Q}{\nu} , \quad \PROD{Q}{\nu} := \op{Re}\sqbr{ \int_{\sqbr{0,1}} \overline{Q \brac{f}} \, \diff{\nu \brac{f}}},
\end{align}
for any measure $\nu$ supported in the unit interval. In addition, weighting the coefficients of $Q$ by $1/\lambda$ yields a subgradient of the $\ell_1$ norm at the vector $\vct{z}$. This means that for any other feasible pair $\brac{ \mu', \vct{z}'}$ such that $\vct{y} = \mathcal{F}_n \,  \mu' + \vct{z}' $
\begin{align}
 \normTV{ \mu' } + \lambda \normOne{ \vct{z}' } & \geq \normTV{ \mu } + \PROD{Q}{\mu' - \mu } + \lambda \normOne{ \vct{z}} +\lambda \PROD{\frac{1}{\lambda} \vct{q}}{\vct{z}' -\vct{z}} \\
 & \geq \normTV{ \mu } + \PROD{\mathcal{F}_n^{\ast} \, \vct{q} }{\mu' - \mu } + \lambda \normOne{ \vct{z}} + \PROD{ \vct{q}}{\vct{z}' -\vct{z}} \\
 & = \normTV{ \mu } + \lambda \normOne{ \vct{z} } + \PROD{ \vct{q}}{\mathcal{F}_n  \brac{\mu' - \mu} + \vct{z}' -\vct{z}}\\
  & = \normTV{ \mu } + \lambda \normOne{ \vct{z} } \quad \text{since $\mathcal{F}_n \,  \mu' + \vct{z}' = \mathcal{F}_n \,  \mu + \vct{z}$}.
\end{align}
The existence of $Q$ thus implies that $\brac{\mu,\vct{z}}$ is a solution to Problem~\ref{eq:opt_problem}. In fact, as stated Proposition~\ref{proposition:dual_polynomial}, it implies that $\brac{\mu,\vct{z}}$ is the unique solution. 
The rest of this section is devoted to showing that a dual polynomial exists with high probability, as formalized by the following proposition. 
\begin{proposition}[Existence of dual polynomial]
\label{prop:dual_pol}
Under the assumptions of Theorem~\ref{theorem:main} there exists a dual polynomial associated to  $\mu$ and $\vct{z}$ with probability at least $1-\epsilon$.
\end{proposition}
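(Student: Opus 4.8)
The plan is to exhibit a trigonometric polynomial $Q = \mathcal{F}_n^{\ast}\,\vct{q}$ satisfying the four conditions \eqref{eqn:condition:Q1}--\eqref{eqn:condition:q2} of Proposition~\ref{proposition:dual_polynomial}, via a random construction that succeeds with probability at least $1-\epsilon$. The governing idea is to split the coefficient vector as $\vct{q} = \vct{q}^{(1)} + \vct{q}^{(2)}$, where $\vct{q}^{(2)}$ is fixed to equal $\lambda\,\signz_l := \lambda\,\vct{z}_l/\abs{\vct{z}_l}$ on $\Omega$ and $0$ on $\Omega^c$, while $\vct{q}^{(1)}$ is free but constrained to be supported on $\Omega^c$. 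With this split, conditions \eqref{eqn:condition:q1}--\eqref{eqn:condition:q2} reduce to automatic equalities on $\Omega$ (since $\vct{q}^{(1)}$ vanishes there) together with the bound $\abs{\vct{q}^{(1)}_l} < \lambda$ on $\Omega^c$ (since $\vct{q}^{(2)}$ vanishes there), and conditions \eqref{eqn:condition:Q1}--\eqref{eqn:condition:Q2} become an interpolation problem for $Q^{(1)} := \mathcal{F}_n^{\ast}\vct{q}^{(1)}$: namely $Q^{(1)}(f_j) = \signx_j - Q^{(2)}(f_j)$ on $T$, together with $\abs{Q^{(1)}(f) + Q^{(2)}(f)} < 1$ off $T$, where $Q^{(2)} := \mathcal{F}_n^{\ast}\vct{q}^{(2)}$ is a random trigonometric polynomial whose randomness stems from the phases and the Bernoulli support of $\vct{z}$.

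First I would construct $Q^{(1)}$ as a linear combination of a localized interpolation kernel and its derivative centered at the points of $T$, but with coefficient vector constrained to $\Omega^c$; this is precisely the ``missing-data'' super-resolution certificate analyzed in~\cite{tang2012offgrid,superres_new}, the only twist being that the interpolated values are the perturbed signs $\signx_j - Q^{(2)}(f_j)$ rather than $\signx_j$. Imposing the interpolation equations $Q^{(1)}(f_j) = \signx_j - Q^{(2)}(f_j)$ and $\left(Q^{(1)}\right)'(f_j) = -\left(Q^{(2)}\right)'(f_j)$ yields a linear system whose matrix is a random restriction (to the index set $\Omega^c$) of the well-conditioned interpolation matrix from the noiseless theory. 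I would then show, via a matrix Bernstein inequality together with the minimum-separation condition \eqref{condition:minimum_separation}, that this matrix concentrates around its deterministic counterpart and is invertible with high probability, so that the solving coefficients exist, are unique, and concentrate around those of the classical certificate.

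With the coefficients in hand, the remaining work is to verify the two strict inequalities uniformly. For $\abs{Q(f)} < 1$ on $T^c$ I would split the frequency line into a neighborhood of $T$ and its complement: near each $f_j$ one shows $\abs{Q}$ attains a strict local maximum of $1$ using $Q(f_j) = \signx_j$, $Q'(f_j) = 0$, and a negative-definite bound on the second derivative of $\abs{Q}^2$ (all from kernel estimates plus concentration of the random corrections), while far from $T$ one bounds $\abs{Q}$ directly. Since $Q = Q^{(1)} + Q^{(2)}$ is random, both bounds rest on scalar Bernstein/Hoeffding tail estimates for the random sums defining $Q$ and its first two derivatives, combined with a union bound over a sufficiently fine grid and a Bernstein-type inequality controlling the values between grid points. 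The coefficient bound $\abs{\vct{q}^{(1)}_l} < \lambda$ on $\Omega^c$ is handled analogously, as a concentration statement for a random linear functional of the solved coefficients, and a final union bound over the invertibility event, the near/far modulus bounds, and the coefficient bounds delivers the probability $1-\epsilon$.

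The main obstacle is the uniform bound $\abs{Q(f)} < 1$ over the continuum $f \in T^c$: unlike in the discrete setting one cannot simply union-bound over coordinates, and $Q$ is doubly random (random phases on $T$ and $\Omega$, random support $\Omega$) with interpolation coefficients that are themselves random solutions of a random linear system. Controlling this requires simultaneously (i) showing that the perturbation $Q^{(2)}$ and the deviation of $Q^{(1)}$ from the classical certificate are uniformly small in sup-norm, so that the strict inequality of the noiseless construction survives, and (ii) transferring a grid-based tail bound to the whole interval via derivative control. Keeping the dependence of all these concentration events on $k$, $s$, $n$ and $\epsilon$ sharp enough to yield the linear-in-$n$ scalings \eqref{eq:cond_k}--\eqref{eq:cond_s} (up to the squared-logarithmic factors) is where the bulk of the technical effort lies.
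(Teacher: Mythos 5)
Your proposal follows essentially the same route as the paper's proof: the split $\vct{q} = \vct{q}^{(1)} + \vct{q}^{(2)}$ is exactly the paper's decomposition $Q = Q_{\op{aux}} + R$ with $R$ fixed to $\lambda\,\signz$ on $\Omega$, the interpolation of the perturbed signs $\signx_j - R(f_j)$ by a kernel whose coefficients are restricted to $\Omega^c$ is the paper's random-kernel construction borrowed from~\cite{tang2012offgrid,superres_new}, and the verification steps (matrix Bernstein for invertibility of the interpolation system, Hoeffding-type grid bounds transferred to the continuum via Bernstein's polynomial inequality, the near/far split with the negative second-derivative bound on $\abs{Q}^2$, the separate concentration bound for $\abs{\vct{q}_l}<\lambda$ on $\Omega^c$, and the final union bound) match Lemmas~\ref{lemma:D_bounds}, \ref{lemma:boundB}, \ref{lemma:v_vbar} and Propositions~\ref{proposition:Qbound1}, \ref{proposition:qboundgamma}. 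The plan is correct as an outline of that argument.
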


In order to simplify notation in the sequel, we define the vectors $\signx \in \C^{k}$ and $\signz \in \C^{s}$ and an integer $m$ such that 
\begin{align}
\signx_j &:= \frac{\vct{x}_j}{\abs{\vct{x}_j}} \quad 1 \leq j \leq k, \\
\signz_l & := \frac{ \vct{z}_l}{\abs{ \vct{z}_l} } \quad l \in \Omega, \\
m & := \begin{cases}
\frac{n-1}{2} \quad & \text{if $n$ is odd,}\\
\frac{n}{2}-1 \quad & \text{if $n$ is even.}
\end{cases}
\end{align} 
Applying a simple change of variable, we express $Q$ as 
\begin{align}
Q\brac{f} & = \sum_{l = -m}^{m} \vct{q}_{l} \, e^{-i2\pi l f}.
\end{align}
In a nutshell, our goal is (1) to construct a polynomial of this form so that $Q$ interpolates $\signx$ on $T$ and $\vct{q}$ interpolates $\signz$ on $\Omega$, and (2) to verify that the magnitude of $Q$ is strictly bounded by one on $T^c$ and the magnitude of $\vct{q}$ is strictly bounded by $\lambda$ on $\Omega^c$.

\subsection{Construction via interpolation}
\label{sec:interpolation}
We now take a brief detour to introduce a basic technique for the construction of dual polynomials. Consider the spectral super-resolution problem when the data are of the form $\vct{\bar{y}} := \ml{F}_n \, \mu$, i.e. when there are no outliers. A simple corollary to Proposition~\ref{proposition:dual_polynomial} is that the existence of a dual polynomial of the form
\begin{align}
\label{eq:Qbar_qbar}
\bar{ Q }\brac{f} & = \sum_{l = -m}^{m} \vct{\bar{q}}_{l} \, e^{-i2\pi l f}
\end{align}
such that
 \begin{alignat}{2}
  &  \bar{ Q } \brac{ f_j }  =  \signx_j, \qquad && \forall f_j \in T,  \label{eqn:condition:Qbar1}\\
  &  \abs{ \bar{ Q } \brac{ f } }  <  1, && \forall f \in T^c, \label{eqn:condition:Qbar2}
   \end{alignat}    
implies that TV-norm minimization achieves exact recovery in the absence of noise. In this section we describe how to construct such a polynomial using interpolation. This technique was introduced in~\cite{Candes:2012uf} to obtain guarantees for super-resolution under a minimum-separation condition. 

The basic idea is to use a kernel $\bar{ K }$ and its derivative $\bar{ K}^{\brac{1}}$ to interpolate $\signx$ while forcing the derivative of the polynomial to equal zero on $T$. Setting the derivative to zero induces a local extremum which ensures that the magnitude of the polynomial stays bounded below one in the vicinity of $T$ (see Figure 11 in~\cite{superres_new} for an illustration). More formally,
\begin{align}
 \bar{ Q } \brac{ f } & :=  \sum_{j =1}^{k} \vct{\bar{\alpha}}_j \, \bar{ K } \brac{ f - f_j } + \kappa \sum_{j =1}^{k} \vct{\bar{\beta}}_j \, \bar{ K}^{\brac{1}}  \brac{ f - f_j },
\end{align}
where
\begin{align}
\label{eq:kappa}
\kappa := \frac{1}{\sqrt{ \abs{ \bar{K}^{\brac{2}} \brac{0}}}}
\end{align}
is the value of the second derivative of the kernel at the origin. This quantity will appear often in the proof to simplify notation. $\vct{\bar{\alpha}} \in \C^{k}$ and $\vct{\bar{\beta}} \in \C^{k}$ are coefficient vectors set so that 
\begin{alignat}{2}
\bar{ Q } \brac{ f_j } &= 
\signx_j, \quad   && f_j \in T,   \label{eq:interp1_Qbar}\\
\bar{ Q}_R^{\brac{1}}\brac{ f_j } + i \, \bar{ Q }_I^{\brac{1}}\brac{ f_j } & =  
0, \quad  && f_j \in T, \label{eq:interp2_Qbar}
\end{alignat}
where $\bar{ Q}_R^{\brac{1}}$ denotes the real part of $\bar{Q}^{\brac{1}}$ and $\bar{ Q}_I^{\brac{1}}$ the imaginary part. In matrix form, $\vct{\bar{\alpha}}$ and $\vct{\bar{\beta}} $ are the solution to the system
\begingroup
\renewcommand*{\arraystretch}{1.5}
\begin{align}
\MAT{ \bar{D}_0 & \bar{D}_1\\ -\bar{D}_1 & \bar{D}_2 } \MAT{ \vct{\bar{\alpha}} \\ \vct{\bar{\beta}} } & = \MAT{\signx \\ \vct{0} }
\end{align}
\endgroup
where
\begin{align}
\brac{\bar{D}_0}_{jl} =\bar{K} \brac{f_j - f_l}, \quad \brac{\bar{D}_1}_{jl} =
\kappa \, \bar{K} ^{\brac{1}}\brac{f_j - f_l}, \quad \brac{\bar{D}_2}_{jl} = -\kappa^2 \, \bar{K} ^{\brac{2}}\brac{f_j - f_l}.
\end{align}
In~\cite{Candes:2012uf} $\bar{Q}$ is shown to be a valid dual polynomial for a minimum separation equal to $\frac{4}{n-1}$ when the interpolation kernel is a squared Fej\'er kernel. The required minimum separation is sharpened to $\frac{\mindist}{n-1}$ in~\cite{superres_new} by using a different kernel, which will be our choice for $\bar{K}$ in this paper. Consider the Dirichlet kernel of order $\tilde{m} >0$
\begin{align}
\ml{D}_{\tilde{m}} \brac{f} := \frac{1}{2 \, \tilde{m}+1} \sum_{l= -\tilde{m}}^{\tilde{m}}  e^{i2\pi l f} 
  =
  \begin{cases}
   1 & \text{if } f=0\\
   \frac{\sin \brac{\brac{2 \, \tilde{m} +1} \pi f}}{\brac{2 \, \tilde{m}+1}\sin \brac{\pi f}} & \text{otherwise } 
  \text{.}
  \end{cases}
\label{eq:Dirichlet_kernel} 
\end{align}
Following~\cite{superres_new}, we define $\bar{K}$ as the product of three different Dirichlet kernels with different orders
\begin{align}
\bar{ K } \brac{f} & := \ml{D}_{\gammaOne \, m} \brac{ f} \ml{D}_{\gammaTwo \, m} \brac{ f} \ml{D}_{\gammaThree \, m} \brac{ f}\\
&  = \sum_{l = -m}^{m} \vct{c}_l \, e^{i2\pi l f} 
\label{eq:kbar} 
\end{align}
where $\vct{c} \in \C^{n}$ is the convolution of the Fourier coefficients of the three Dirichlet kernels. The choice of the width of the three kernels might seem rather arbitrary; it is chosen to optimize the bound on the minimum separation by achieving a good tradeoff between the  \emph{spikiness} of $\bar{K}$ in the vicinity of the origin and its asymptotic decay~\cite{superres_new}. For simplicity we assume that $\gammaOne \, m$, $\gammaTwo \, m$ and $\gammaThree \, m$ are all integers.\footnote{To avoid this assumption one can adapt the width of the three kernels so that the length of their convolution equals $2m$ and then recompute the bounds that we borrow from~\cite{superres_new}.} Figure~\ref{fig:kernel} shows $\bar{K}$ and its first derivative.

We end the section with two lemmas bounding $\kappa$ and the magnitude of the coefficients of $\vct{q}$, which will be useful at different points of the proof. 
\begin{lemma}
\label{lemma:bound_kappa}
If $m \geq 10^3$, the constant $\kappa$, defined by~\eqref{eq:kappa}, satisfies
\begin{align}
\frac{0.467}{ m} \leq \kappa \leq \frac{0.468}{ m}.
\end{align}
\end{lemma}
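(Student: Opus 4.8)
The plan is to compute $\bar{K}^{(2)}(0)$ in closed form and then simply read off $\kappa = 1/\sqrt{|\bar{K}^{(2)}(0)|}$. Since $\bar{K} = \ml{D}_{\gammaOne m}\,\ml{D}_{\gammaTwo m}\,\ml{D}_{\gammaThree m}$ is a product of three Dirichlet kernels, I would first exploit two elementary properties of each factor $\ml{D}_{\tilde m}$: it is even, so its first derivative vanishes at the origin, and $\ml{D}_{\tilde m}(0) = 1$. Differentiating the product twice by the Leibniz rule and evaluating at $f = 0$, every term containing exactly one first derivative drops out by evenness, leaving the clean identity $\bar{K}^{(2)}(0) = \ml{D}_{\gammaOne m}^{(2)}(0) + \ml{D}_{\gammaTwo m}^{(2)}(0) + \ml{D}_{\gammaThree m}^{(2)}(0)$.

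Next I would evaluate each summand directly from the Fourier representation. Differentiating $\ml{D}_{\tilde m}(f) = \frac{1}{2\tilde m + 1}\sum_{l=-\tilde m}^{\tilde m} e^{i2\pi l f}$ twice brings down a factor $-4\pi^2 l^2$, and summing $l^2$ over the symmetric range via $\sum_{l=1}^{\tilde m} l^2 = \tilde m(\tilde m+1)(2\tilde m+1)/6$ gives $\ml{D}_{\tilde m}^{(2)}(0) = -\frac{4\pi^2}{3}\,\tilde m(\tilde m + 1)$. Here I invoke the standing assumption that $\gammaOne m$, $\gammaTwo m$, $\gammaThree m$ are integers, so this closed form applies to each order. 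Substituting the three orders and collecting terms yields
\[
\abs{ \bar{K}^{(2)}(0) } = \frac{4\pi^2}{3}\Bigl( (\gammaOne^2 + \gammaTwo^2 + \gammaThree^2)\,m^2 + (\gammaOne + \gammaTwo + \gammaThree)\, m\Bigr),
\]
where the numerical constants evaluate to $\gammaOne + \gammaTwo + \gammaThree = 1$ and $\gammaOne^2 + \gammaTwo^2 + \gammaThree^2 = 0.347326$.

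Finally I would convert this into the stated bounds on $\kappa$. Writing $m\kappa = (A + B/m)^{-1/2}$ with $A = \frac{4\pi^2}{3}(\gammaOne^2+\gammaTwo^2+\gammaThree^2)$ and $B = \frac{4\pi^2}{3}$, it is immediate that $m\kappa$ is strictly increasing in $m$, so over the range $m \geq 10^3$ it is squeezed between its value at $m = 10^3$ and its limit $A^{-1/2}$ as $m \to \infty$. Evaluating these two endpoints numerically gives $0.467 \leq m\kappa < 0.468$, which is precisely the claimed bound after dividing through by $m$.

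This computation is elementary, and there is no conceptual obstacle; the only care needed is twofold. First, one must confirm that the odd cross-terms in the second derivative of the product genuinely vanish, which rests entirely on the evenness of each Dirichlet factor. Second, one must carry the arithmetic to enough significant figures that the two monotone endpoints land strictly inside $[0.467, 0.468]$ — in particular, checking that the finite-$m$ correction $B/m$ evaluated at $m = 10^3$ does not drag the lower endpoint below $0.467$. Both are routine verifications rather than genuine difficulties.
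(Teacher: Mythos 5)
Your proposal is correct and follows essentially the same route as the paper: the paper's one-line proof invokes the very same formula $\ml{D}_{\tilde{m}}^{\brac{2}}\brac{0} = -4\pi^2\tilde{m}\brac{1+\tilde{m}}/3$ and cites equation (C.19) of~\cite{superres_new} for the identity $\bar{K}^{\brac{2}}\brac{0} = \ml{D}_{\gammaOne m}^{\brac{2}}\brac{0}+\ml{D}_{\gammaTwo m}^{\brac{2}}\brac{0}+\ml{D}_{\gammaThree m}^{\brac{2}}\brac{0}$, which you instead derive directly from the Leibniz rule and evenness of the Dirichlet factors. Your explicit monotonicity-in-$m$ argument and endpoint evaluations ($m\kappa \approx 0.46708$ at $m=10^3$, tending to $\approx 0.46775$) are accurate and simply fill in the details the paper delegates to the citation.
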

\begin{proof}
The bound follows from the fact that $\ml{D}_{\tilde{m}}^{\brac{2}} \brac{0} := -4 \pi^2 \tilde{m} \brac{1+\tilde{m}}/3$ and equation (C.19) in~\cite{superres_new} (see also Lemma 4.8 in~\cite{superres_new}).
\end{proof}
\begin{lemma}[Proof in Section~\ref{proof:c_amp}]
\label{lemma:c_amp}
The coefficients of $\bar{ K } $ satisfy
\begin{align}
\normInf{ \vct{c} } & \leq \frac{1.3}{m}.
\end{align}
\end{lemma}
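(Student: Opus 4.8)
The plan is to avoid computing $\bar K$ in closed form and instead exploit that, by \eqref{eq:kbar}, $\vct c$ is the discrete convolution of three nonnegative, finitely supported sequences. Reading off \eqref{eq:Dirichlet_kernel}, the Fourier coefficient sequence of the Dirichlet kernel $\ml{D}_{\tilde m}$ equals $\frac{1}{2\tilde m+1}$ on the index set $\keys{-\tilde m,\dots,\tilde m}$ and vanishes elsewhere. Since $\bar K$ is the product of the three Dirichlet kernels of orders $\gammaOne\, m$, $\gammaTwo\, m$ and $\gammaThree\, m$, its coefficient vector $\vct c$ is the convolution of the three corresponding sequences. In particular every entry of $\vct c$ is nonnegative, so $\normInf{\vct c}=\max_l \vct c_l$ and it suffices to bound a single coefficient uniformly in $l$.

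First I would write each coefficient as a normalized lattice-point count. Writing $N_1:=2\cdot\gammaOne\, m+1$, $N_2:=2\cdot\gammaTwo\, m+1$ and $N_3:=2\cdot\gammaThree\, m+1$ for the three support widths, the convolution gives, for every index $l$,
\begin{align}
\vct{c}_l = \frac{1}{N_1 N_2 N_3}\,\#\keys{(a,b,d)\in\Z^3 : a+b+d=l,\; \abs{a}\le \gammaOne\, m,\; \abs{b}\le \gammaTwo\, m,\; \abs{d}\le \gammaThree\, m}. \nonumber
\end{align}
The key step is then a one-line bound: for fixed $l$ the value of $d$ is determined by $(a,b)$ through $d=l-a-b$, so the triples being counted inject into the pairs $(a,b)$ with $\abs{a}\le \gammaOne\, m$ and $\abs{b}\le \gammaTwo\, m$, of which there are exactly $N_1 N_2$. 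Dropping the widest constraint therefore yields
\begin{align}
\vct{c}_l \le \frac{N_1 N_2}{N_1 N_2 N_3} = \frac{1}{N_3} = \frac{1}{2\cdot\gammaThree\, m+1} < \frac{1}{0.828\, m} < \frac{1.3}{m}, \nonumber
\end{align}
where the last step uses $1/0.828 = 1.207\ldots < 1.3$. Since this holds for every $l$ and the $\vct c_l$ are nonnegative, $\normInf{\vct c}\le 1/N_3<1.3/m$, as claimed.

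The argument is short, so there is no serious obstacle; the only point requiring a moment's thought is which of the three constraints to discard. Eliminating the constraint attached to the \emph{widest} kernel (order $\gammaThree\, m$, the largest of the three) leaves $N_3$ in the denominator and hence produces the smallest of the three available bounds $1/N_1,\,1/N_2,\,1/N_3$; this is what makes the crude estimate land below the target $1.3/m$. The estimate is in fact loose: the peak coefficient is $\vct c_0$ (the convolution of symmetric boxes is symmetric and unimodal, so $\normInf{\vct c}$ is attained at the origin), and evaluating it exactly by comparing the relevant lattice-point count with the area of the hexagon $\keys{(a,b): \abs{a}\le\gammaOne\, m,\ \abs{b}\le\gammaTwo\, m,\ \abs{a+b}\le\gammaThree\, m}$ would sharpen the constant to roughly $1.1/m$. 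That refinement is unnecessary here, since the one-line bound already leaves comfortable slack.
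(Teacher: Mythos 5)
Your proof is correct and takes essentially the same route as the paper: both recognize $\vct{c}$ as the normalized convolution of three box sequences and bound its peak by the product of the two smaller widths over the full normalization, giving $\normInf{\vct{c}} \leq 1/\brac{2\cdot\gammaThree\, m+1} < 1.3/m$. Your injection/lattice-point count simply makes explicit the step the paper dismisses as ``simple computations,'' namely that the convolution of unit-amplitude rectangles of widths $a_1<a_2<a_3$ is bounded by $a_1 a_2$.
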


\subsection{Interpolation with a random kernel}
\label{sec:random_interp}

The trigonometric polynomial $\bar{Q}$ defined in the previous section is not a valid certificate when outliers are present in the data; it does not satisfy~\eqref{eqn:condition:q1} and \eqref{eqn:condition:q2}. In order to adapt the construction so that it meets these conditions we draw upon techniques developed in~\cite{tang2012offgrid}, which studies spectral super-resolution in a compressed-sensing scenario where a subset $\ml{S}$ of the samples is missing. To prove that TV-norm minimization succeeds in such a scenario, the authors of~\cite{tang2012offgrid} construct a bounded polynomial with coefficients restricted to the complement of $\ml{S}$, which interpolates the sign pattern of the line spectra on their support. This is achieved by using an interpolation kernel with coefficients supported on $\ml{S}^c$.  

We denote our dual-polynomial candidate by $Q$. Let us begin by decomposing $Q$ into two components 
\begin{align}
\label{eq:Q_comps}
  Q \brac{ f } & :=  Q_{\op{aux}} \brac{ f } +  R\brac{f},
\end{align}
such that the coefficients of the first component are restricted to $\Omega^c$ ,
\begin{align}
\label{eq:Q_aux_def}
Q_{\op{aux}} \brac{ f } & :=   \sum_{l \in \Omega^c } \vct{q}_{l}\, e^{- i 2 \pi l f},
\end{align}
and the coefficients of the second component are restricted to $\Omega$ and \emph{fixed to equal} $\lambda \signz$ (recall that $\lambda = 1/\sqrt{n}$),
\begin{align}
R \brac{ f } 
& := \frac{1}{\sqrt{n}} \sum_{l \in \Omega}  \signz_{l} \, e^{- i 2 \pi l f}. \label{eqn:randompoly}
\end{align}
This immediately guarantees that $Q$ satisfies~\eqref{eqn:condition:q1}. Now our task is to construct $Q_{\op{aux}} $ so that $Q$ also meets the rest of conditions in Proposition~\ref{proposition:dual_polynomial}. 

Following the interpolation technique described in Section~\ref{sec:interpolation}, we constrain $Q$ to interpolate $\signx$ and have zero derivative in $T$,
\begin{alignat}{2}
Q\brac{ f_j } &= \signx_j, \quad   && f_j \in T,   \label{eq:interp1_Q}\\
 Q_R^{\brac{1}}\brac{ f_j } + i \, Q_I^{\brac{1}}\brac{ f_j } & = 0, \quad  && f_j \in T .  \label{eq:interp2_Q}
\end{alignat}
Given that $R$ is fixed, this is equivalent to
\begin{alignat}{2}
Q_{\op{aux}}\brac{ f_j } &= \signx_j - R\brac{f_j}, \quad   && f_j \in T,   \label{eq:interp1_aux}\\
\brac{Q_{\op{aux}}}_R^{\brac{1}}\brac{ f_j } + i \, \brac{Q_{\op{aux}}}_I^{\brac{1}}\brac{ f_j } & =  - R_R^{\brac{1}}\brac{ f_j } - i\,  R_I^{\brac{1}}\brac{ f_j } , \quad  && f_j \in T ,  \label{eq:interp2_aux}
\end{alignat}
where the subscript $R$ indicates the real part of a number and the subscript $I$ the imaginary part. This interpolation problem is very similar to the one that arises in compressed sensing off the grid~\cite{tang2012offgrid}: we must interpolate a certain vector with a polynomial whose coefficients are restricted to a certain subset, in our case $\Omega^c$. Following~\cite{tang2012offgrid} we employ an interpolation kernel $K$ obtained by selecting the coefficients of $\bar{K}$ in $\Omega^c$,
\begin{align}
 K \brac{f} & := \sum_{l \in \Omega^c} \vct{c}_l \, e^{i2\pi l f} \\
 & =  \sum_{l = -m}^{m} \delta_{\Omega^c} \brac{l} \vct{c}_l \, e^{i2\pi l f},
\label{eq:k} 
\end{align}
where $\delta_{\Omega^c}$ is an indicator random variable that is equal to one if $l \in \Omega^c$ and to zero otherwise. Under the assumptions of Theorem~\ref{theorem:main} these are independent Bernoulli random variables with parameter $\frac{n-s}{n}$, so that the mean of $K$ is equal to a scaled version of $\bar{K}$, 
\begin{align}
\E \brac{ K \brac{f} } & := \frac{n-s}{n}\sum_{l = -m}^{m}  \vct{c}_l \, e^{i2\pi l f} \\
& = \frac{n-s}{n} \bar{K} \brac{f}.
\end{align}
$K$ and its derivatives concentrate around $\bar{K}$ and its derivatives (scaled by $\frac{n-s}{n}$) near the origin, but they don't display the same asymptotic decay. This is illustrated in Figure~\ref{fig:kernel}. 

\begin{figure}
\begin{tabular}{ >{\centering\arraybackslash}m{0.14\linewidth} >{\centering\arraybackslash}m{0.4\linewidth}   >{\centering\arraybackslash}m{0.4\linewidth} }
&
\begin{tikzpicture}[scale=0.9]
\begin{axis}[xlabel={$m \, f $},ytick=\empty, legend pos=north east]
\addplot[very thick,blue] file {K_dat.dat};
\addplot[very thick,red,dashed] file {K_rand_real_dat.dat};
\addplot[very thick,green!60!black,dashed] file {K_rand_imag_dat.dat};
\addlegendentry{$\frac{n-s}{n}\bar{K}$}
\addlegendentry{$K$ (real)}
\addlegendentry{$K$ (imag.)}
\end{axis}
\end{tikzpicture}  
&
\vspace{0.2cm}
\begin{tikzpicture}[scale=0.9]
\begin{axis}[xlabel={$m \, f $}, ytick=\empty, legend pos=north east]
\addplot[very thick,blue] file {K_der_dat.dat};
\addplot[very thick,red,dashed] file {K_der_rand_real_dat.dat};
\addplot[very thick,green!60!black,dashed] file {K_der_rand_imag_dat.dat};
\addlegendentry{$\frac{n-s}{n}\bar{K}^{\brac{1}}$}
\addlegendentry{$K^{\brac{1}}$ (real)}
\addlegendentry{$K^{\brac{1}}$ (imag.)}
\end{axis}
\end{tikzpicture}
\\
&
\vspace{0.2cm}
 \includegraphics{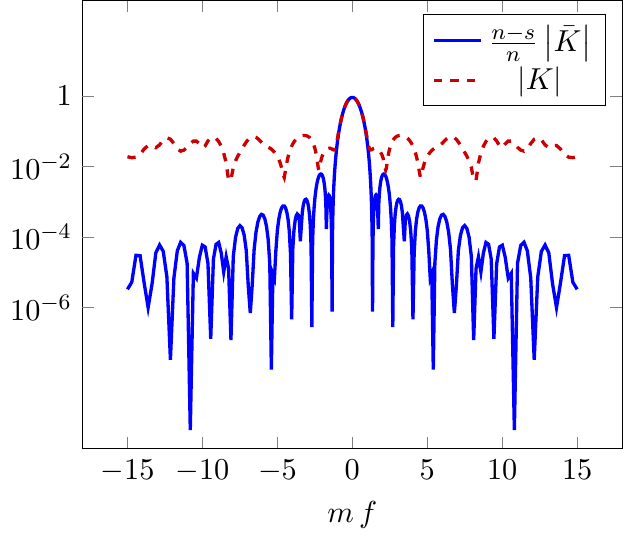}
&
\vspace{0.2cm} \hspace{-1cm}
\includegraphics{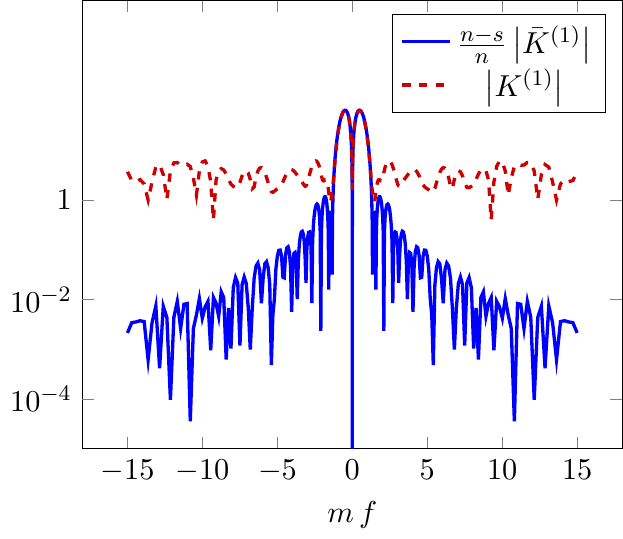}
\\
Frequency coefficients (magnitude)
&
\vspace{0.2cm}
\includegraphics{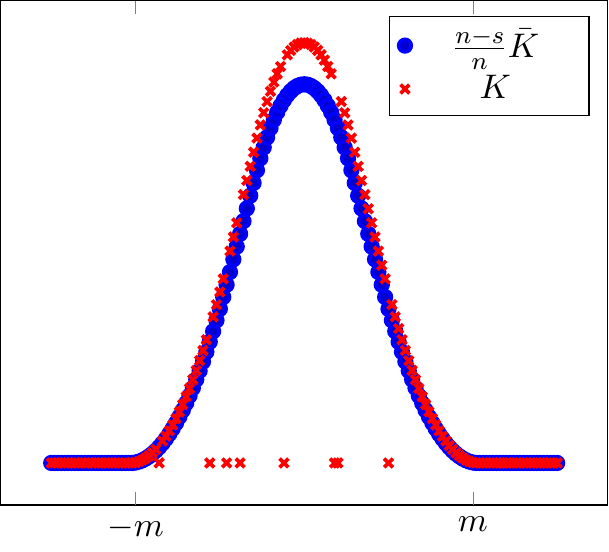}
&
\vspace{0.2cm}
\includegraphics{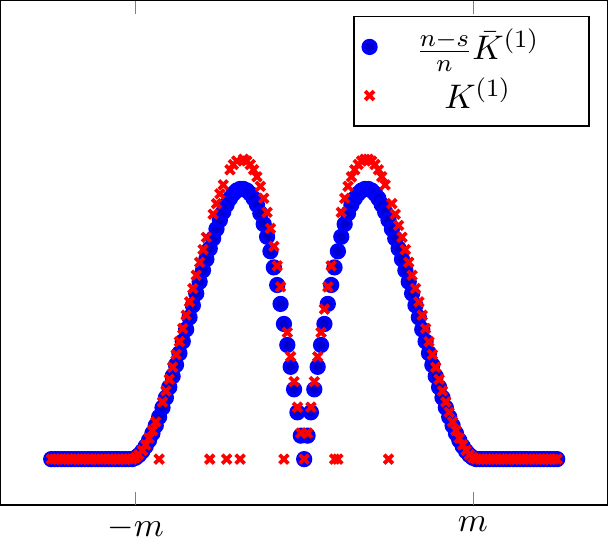}
\end{tabular}
 \caption{The top row shows the interpolating kernel $K$ and $K^{\brac{1}}$ compared to a scaled version of $\bar{K}$ and $\bar{K}^{\brac{1}}$. In the second row we see the asymptotic decay of the magnitudes of both kernels and their derivatives. The left image in the bottom row illustrates the construction of $K$: the Fourier coefficients $\vct{c}$ of $\bar{K}$ that lie in $\Omega$ are set to zero. On the right we can see the Fourier coefficients of $K^{\brac{1}}$ and a scaled version of $\bar{K}^{\brac{1}}$.}
\label{fig:kernel}
\end{figure}

Using $K$ and its first derivative $K^{\brac{1}}$ to construct $Q_{\op{aux}}$ ensures that its nonzero coefficients are restricted to $\Omega^c$. In more detail, $Q_{\op{aux}}$ is a linear combination of shifted and scaled copies of $K$ and $K^{\brac{1}}$,
\begin{align}
\label{eq:Q_aux}
Q_{\op{aux}} \brac{f} & := \sum_{j =1}^{k} \vct{\alpha}_j \, K  \brac{ f-f_j } + \kappa \, \vct{\beta}_j \, K^{\brac{1}}  \brac{f- f_j },
\end{align}
where $\vct{\alpha} \in \C^{k}$ and $\vct{\beta} \in \C^{k}$ are chosen to satisfy~\eqref{eq:interp1_aux} and \eqref{eq:interp2_aux}. 
The corresponding system of equations~\eqref{eq:interp1_aux} and \eqref{eq:interp2_aux} can be recast in matrix form:
\begin{align}
\label{eq:alpha_beta}
\MAT{ D_0 & D_1\\ -D_1 & D_2 } \MAT{ \vct{\alpha} \\
  \vct{\beta} } =\MAT{  \signx  \\ 0 } - \frac{1}{\sqrt{n}} B_{\Omega} \, \signz,
\end{align}
where
\begin{align}
\brac{D_0}_{jl} =K\brac{f_j - f_l}, \quad \brac{D_1}_{jl} =
\kappa \, K^{\brac{1}}\brac{f_j - f_l}, \quad \brac{D_2}_{jl} = -\kappa^2 \, K^{\brac{2}}\brac{f_j - f_l}.
\end{align}
Note that we have expressed the values of $R$ and $R^{\brac{1}}$ in $T$ in terms of $\signz$,  
\begingroup
\renewcommand*{\arraystretch}{1.5}
\begin{align}
\frac{1}{\sqrt{n}}B_{\Omega} \, \signz = \MAT{ R \brac{ f_{1} } & R \brac{ f_{2} }  & \cdots & R \brac{ f_{k} } & -\kappa \,R^{\brac{1}} \brac{ f_{1} } & -\kappa \,R^{\brac{1}} \brac{ f_{2} }  & \cdots & -\kappa \,R^{\brac{1}} \brac{ f_{k} } }^T,
\end{align}
\endgroup
where   
\begin{align}
\vct{b} \brac{ l } & :=  
\MAT{ e^{-i 2 \pi l f_1} &  e^{-i 2 \pi l f_2} & \cdots &  e^{-i 2 \pi l f_{k}} &  i 2 \pi l \kappa \, e^{-i 2 \pi l f_1} & \cdots & i 2 \pi l \kappa \, e^{-i 2 \pi l f_{k}} }^T,  \label{eq:b}\\
B_{\Omega} &:= \MAT{ \vct{b}\brac{i_1} & \vct{b}\brac{i_2} & \cdots & \vct{b}\brac{i_s} }, \quad \Omega = \keys{i_1, i_2, \ldots i_s}.
\end{align}
Solving this system of equations yields $\vct{\alpha}$ and $\vct{\beta}$  and fixes the dual-polynomial candidate, 
\begin{align}
Q \brac{ f } & := \sum_{j =1}^{k} \vct{\alpha}_j \, K  \brac{ f - f_j } + \kappa \sum_{j =1}^{k} \vct{\beta}_j \, K^{\brac{1}}  \brac{ f - f_j } +  R \brac{ f } \\
  & = \vct{v_{0}} \brac{f}^TD^{-1} \brac{\MAT{  \signx  \\ 0 } - \frac{1}{\sqrt{n}} B_{\Omega} \, \signz }  + R\brac{ f },
  \label{eq:Q_vl_Dinv_ur}
    \end{align}
where we define
 \begin{align}
\vct{v_{\ell}} \brac{ f } & :=\kappa^\ell \MAT{
    K^{\brac{ \ell }} \brac{ f - f_1 } & \cdots & K^{\brac{ \ell }} \brac{ f - f_k } & \kappa \,  K^{(\ell+1)} \brac{ f - f_1 } &
  \cdots & \kappa \,  K^{(\ell+1)} \brac{ f - f_k } }^T \notag
\end{align}
 for $\ell=0,1,2, \ldots$ In the next section we establish that a polynomial of this form is guaranteed to be a valid certificate with high probability. Figure~\ref{fig:dual_polynomial} illustrates our construction for a specific example (note that for ease of visualization $\signx$ is real instead of complex). 

\begin{figure}
\begin{tabular}{ >{\centering\arraybackslash}m{0.14\linewidth} >{\centering\arraybackslash}m{0.4\linewidth}   >{\centering\arraybackslash}m{0.4\linewidth} }
  & & Spectrum (magnitude) \\ \\
$R\brac{f}$ & 
\includegraphics{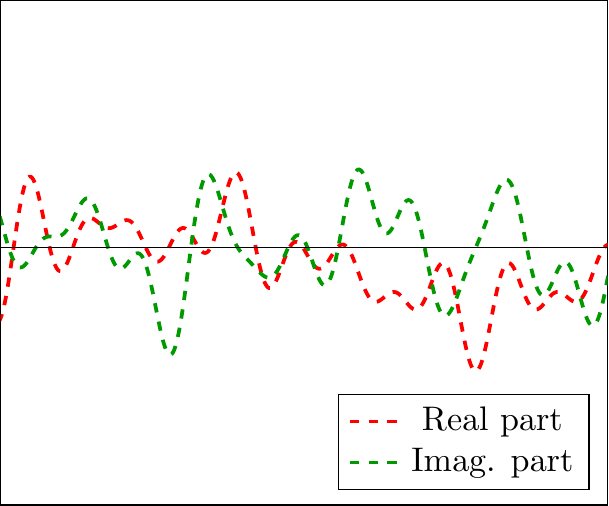}
& 
\includegraphics{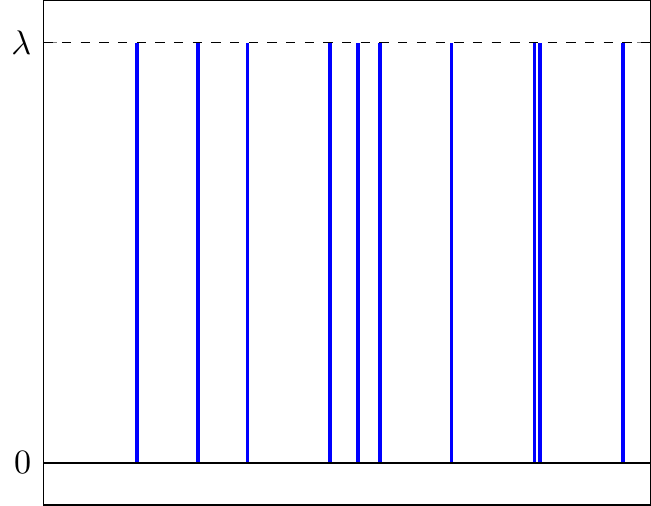}
\\ \\
$Q_{\op{aux}}\brac{f}$ & 
\includegraphics{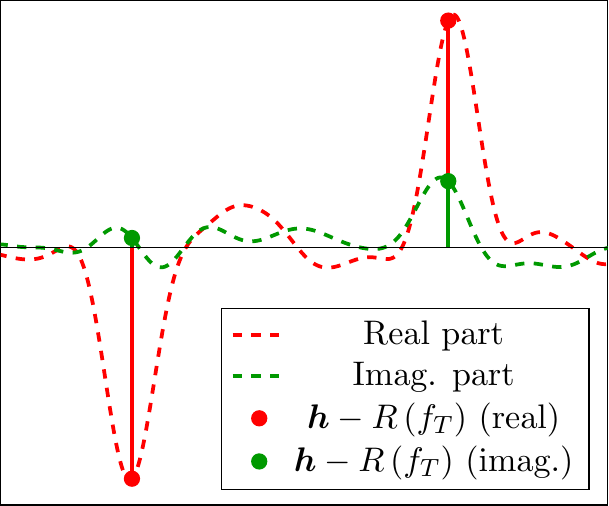}
& 
\includegraphics{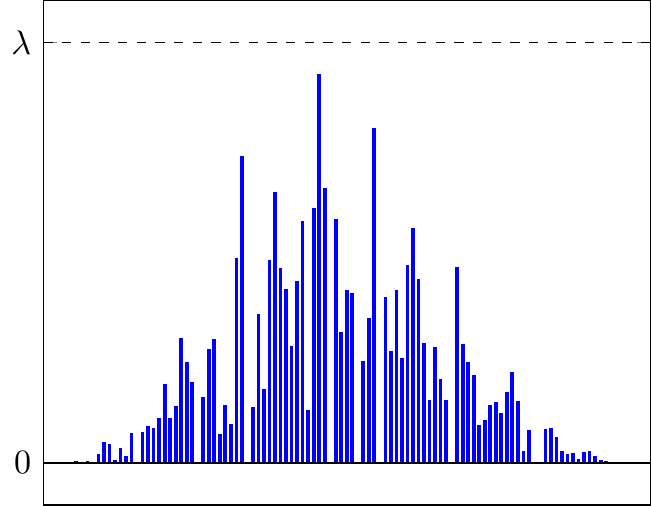}
\\ \\
$Q\brac{f}$ &
\includegraphics{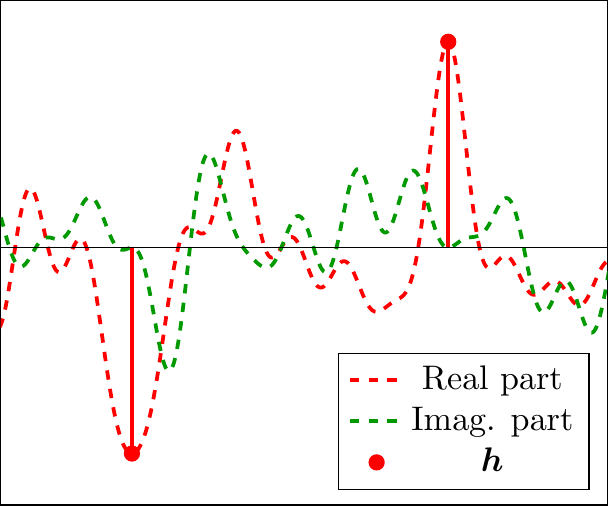}
&
\includegraphics{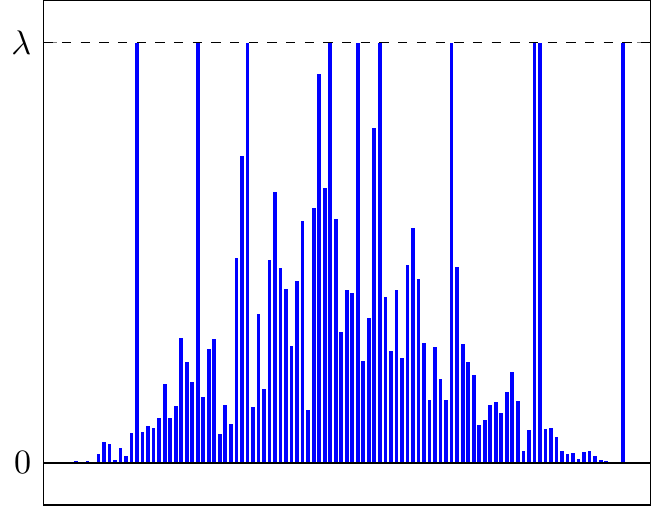}
\end{tabular}
\caption{Illustration of our construction of a dual-polynomial candidate $Q$. The first row shows $R$, the component that results from fixing the coefficients of $Q$ in $\Omega$ to equal $\signz$. The second row shows $Q_{\op{aux}}$, the component built to ensure that $Q$ interpolates $\signx$ by correcting for the presence of $R$. On the right image of the second row, we see that the coefficients of $Q_{\op{aux}}$ are indeed restricted to $\Omega^c$. Finally, the last row shows that $Q$ satisfies all of the conditions in Proposition~\ref{proposition:dual_polynomial}.}
\label{fig:dual_polynomial}
\end{figure}

Before ending this section, we record three useful lemmas concerning $\vct{b}$, $B_{\Omega}$ and $\vct{v_{\ell}}$. The first bounds the $\ell_2$ norm of $\vct{b}$.
\begin{lemma}
\label{lemma:bound_b}
If $m \geq 10^3$, for $ -m \leq l \leq m$
\begin{align}
\normTwo{\vct{b} \brac{ l }}^2 \leq 10 \, k .
\end{align}
\end{lemma}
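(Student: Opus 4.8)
The plan is to compute $\normTwo{\vct{b}(l)}^2$ directly from the definition~\eqref{eq:b} and then control it using the upper bound on $\kappa$ provided by Lemma~\ref{lemma:bound_kappa}. The vector $\vct{b}(l) \in \C^{2k}$ has a transparent block structure: its first $k$ entries are $e^{-i2\pi l f_j}$ for $1 \leq j \leq k$, each of unit modulus, while its last $k$ entries are $i2\pi l\kappa\, e^{-i2\pi l f_j}$, each of modulus $2\pi\abs{l}\kappa$. Since the frequencies $f_j$ only enter through a unit-modulus phase factor $e^{-i2\pi l f_j}$, they play no role in the norm.

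First I would sum the squared magnitudes of the two blocks. The first block contributes $\sum_{j=1}^{k}\abs{e^{-i2\pi l f_j}}^2 = k$, and the second contributes $\sum_{j=1}^{k}\abs{i2\pi l\kappa\, e^{-i2\pi l f_j}}^2 = k\brac{2\pi l\kappa}^2$, giving
\begin{align}
\normTwo{\vct{b}(l)}^2 = k\brac{1 + 4\pi^2 l^2 \kappa^2}.
\end{align}
The entire estimate therefore reduces to bounding the scalar quantity $4\pi^2 l^2\kappa^2$.

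Next I would invoke the hypothesis $\abs{l} \leq m$ together with the bound $\kappa \leq 0.468/m$ from Lemma~\ref{lemma:bound_kappa} (valid since $m \geq 10^3$), which yields $\abs{l}\kappa \leq 0.468$ and hence $4\pi^2 l^2\kappa^2 \leq 4\pi^2 \brac{0.468}^2 < 9$. Substituting back gives $\normTwo{\vct{b}(l)}^2 \leq k\brac{1 + 9} = 10\,k$, which is exactly the claimed bound.

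There is essentially no serious obstacle here: the argument is a direct computation followed by the substitution of a single numerical constant. The only point requiring minor care is ensuring the product $\abs{l}\kappa$ is controlled uniformly over the range $-m \leq l \leq m$, which is precisely why the statement assumes $m \geq 10^3$ so that Lemma~\ref{lemma:bound_kappa} applies; the factor-of-$m$ decay in the bound on $\kappa$ is exactly what cancels the factor $\abs{l} \leq m$ and keeps the derivative block from blowing up.
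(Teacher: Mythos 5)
Your proof is correct and is essentially identical to the paper's own argument: both compute $\normTwo{\vct{b}\brac{l}}^2 = k\brac{1 + \brac{2\pi l \kappa}^2}$ directly from the definition and then bound $\abs{l}\kappa \leq 0.468$ via Lemma~\ref{lemma:bound_kappa}, yielding the constant $1 + 4\pi^2\brac{0.468}^2 \approx 9.65 \leq 10$. The paper simply compresses this into a single displayed inequality, so there is nothing to add.
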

\begin{proof}
\begin{align}
\normTwo{\vct{b} \brac{ l }}^2 \leq k \brac{1 + \max_{-m \leq l \leq m} \brac{2 \pi l \kappa}^2 } 
& \leq 9.65 \, k \quad \text{by Lemma~\ref{lemma:bound_kappa}.}
\end{align}
\end{proof}
The second yields a bound on the operator norm of $B_{\Omega}$ that holds with high probability.
\begin{lemma}[Proof in Section~\ref{proof:boundB}]
\label{lemma:boundB}
Under the assumptions of Theorem~\ref{theorem:main}, the event
  \begin{align}
  \ml{E}_{B} & := \keys{ \norm{ B_{\Omega} } > C_{B} \brac{\log \frac{n}{\epsilon} }^{-\frac{1}{2} } \sqrt{ n }},
  \end{align}
where $C_{B}$ is a numerical constant defined by~\eqref{eq:C_B_def}, occurs with probability at most $\epsilon / 5$. 
\end{lemma}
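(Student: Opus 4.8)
Since $\norm{B_\Omega}^2 = \norm{B_\Omega B_\Omega^{\ast}}$, the plan is to express this Hermitian matrix as a sum of independent random rank-one terms and apply a matrix concentration inequality. Writing $\delta_{\Omega}(l) := 1 - \delta_{\Omega^c}(l)$ for the Bernoulli$(s/n)$ indicator that $l \in \Omega$, the columns $\vct{b}(l)$ defined in~\eqref{eq:b} give
\[
B_\Omega B_\Omega^{\ast} = \sum_{l=-m}^{m} \delta_{\Omega}(l)\, \vct{b}(l)\, \vct{b}(l)^{\ast},
\]
a sum of $n$ independent positive-semidefinite matrices of dimension $2k \times 2k$. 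I would center this sum and bound the fluctuation $\norm{B_\Omega B_\Omega^{\ast} - \E B_\Omega B_\Omega^{\ast}}$ using the matrix Bernstein inequality, so that $\norm{B_\Omega}^2 \leq \norm{\E B_\Omega B_\Omega^{\ast}} + \norm{B_\Omega B_\Omega^{\ast} - \E B_\Omega B_\Omega^{\ast}}$.

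The mean is $\E B_\Omega B_\Omega^{\ast} = \frac{s}{n} M$ with $M := \sum_{l} \vct{b}(l)\vct{b}(l)^{\ast}$, so the first term requires the structural estimate $\norm{M} \leq C\, n$ for a numerical constant $C$. This is where the minimum-separation condition~\eqref{condition:minimum_separation} enters: stacking the vectors $\vct{b}(l)^{\ast}$ as the rows of an $n \times 2k$ matrix $\Psi$ gives $M = \Psi^{\ast}\Psi$, and $\norm{M} = \norm{\Psi}^2$ is the squared upper frame bound of a (weighted) Vandermonde system built from the $\Delta_{\min}$-separated frequencies $f_1,\dots,f_k$. A large-sieve / Ingham-type inequality for well-separated frequencies -- of the kind already used in~\cite{tang2012offgrid,superres_new} -- yields $\norm{\Psi}^2 \leq C n$; the derivative rows only contribute the bounded weights $\abs{2\pi l \kappa} < 3$ (via Lemma~\ref{lemma:bound_kappa}), so they change $C$ but not the order. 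Consequently $\norm{\E B_\Omega B_\Omega^{\ast}} \leq C s$.

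For the Bernstein parameters I would use Lemma~\ref{lemma:bound_b} for the uniform per-term bound $\norm{\vct{b}(l)\vct{b}(l)^{\ast}} = \normTwo{\vct{b}(l)}^2 \leq 10 k =: L$, and, since $(\vct{b}\vct{b}^{\ast})^2 = \normTwo{\vct{b}}^2\, \vct{b}\vct{b}^{\ast}$, the variance proxy $\sigma^2 = \norm{\sum_l \op{Var}(\delta_\Omega(l))\, \normTwo{\vct{b}(l)}^2\, \vct{b}(l)\vct{b}(l)^{\ast}} \leq \frac{s}{n}\,(10k)\,\norm{M} \leq C' k s$. Choosing the deviation level $t = c\,(\log\frac{n}{\epsilon})^{-1} n$, the conditions~\eqref{eq:cond_k} and~\eqref{eq:cond_s} put us in the regime $Lt \gg \sigma^2$, so the Bernstein exponent behaves like $\frac{3t}{2L} = \frac{3t}{20k} \gtrsim \frac{3c}{20 C_k}\log\frac{n}{\epsilon}$. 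Taking $c$ a large enough multiple of $C_k$ makes this exponent at least $2\log\frac{n}{\epsilon}$, which dominates the dimension factor $4k \leq 2n$ and forces the tail below $\epsilon/5$ once $n \geq 2\times 10^3$. On the complementary event, $\norm{B_\Omega}^2 \leq Cs + t \leq (CC_s + c)(\log\frac{n}{\epsilon})^{-1} n$, which defines $C_B := \sqrt{CC_s + c}$ and gives~\eqref{eq:C_B_def}.

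The main obstacle is the structural bound $\norm{M} \leq C n$: unlike the matrices $\bar{D}_0, \bar{D}_1, \bar{D}_2$ of Section~\ref{sec:interpolation}, which are built from the fast-decaying product kernel $\bar{K}$ and are therefore diagonally dominant, $M$ is assembled from the plain exponentials $e^{-i2\pi l f_j}$, whose Dirichlet-kernel correlations decay only like $1/\abs{f_j-f_{j'}}$; a naive Gershgorin bound would lose a factor of $\log k$, so the frame / large-sieve viewpoint (rather than row-sum domination) is essential to keep $\norm{M}$ at order $n$. The remaining subtlety is purely bookkeeping: verifying that the regime $Lt \gg \sigma^2$ is the correct one and that the resulting $(\log\frac{n}{\epsilon})^{-1/2}$ factor -- not the smaller $(\log\frac{n}{\epsilon})^{-1}$ one might hope for from $\norm{\E B_\Omega B_\Omega^{\ast}} \approx s$ -- is genuinely forced by the $2k$-dimensional factor through the boundedness term $L = 10k$.
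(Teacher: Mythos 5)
Your proposal is correct, and its overall architecture coincides with the paper's: write $B_{\Omega}B_{\Omega}^{\ast}$ as a sum of independent Bernoulli-weighted rank-one terms, bound the mean through a structural estimate on the full deterministic Gram matrix $\bar{H} = \sum_{l=-m}^{m}\vct{b}\brac{l}\vct{b}\brac{l}^{\ast}$, and control the fluctuation with the matrix Bernstein inequality using exactly the ingredients you name (the per-term bound $10k$ from Lemma~\ref{lemma:bound_b}, a variance proxy of order $ks$, and a deviation level $t \asymp n\brac{\log\frac{n}{\epsilon}}^{-1}$, absorbed using~\eqref{eq:cond_k} and~\eqref{eq:cond_s}). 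Where you genuinely diverge is the structural estimate. The paper bounds $\norm{\bar{H}}$ by Gershgorin's circle theorem combined with the Dirichlet-kernel tail bound of Lemma~\ref{lemma:dirichlet}, which yields $\norm{\bar{H}} \leq 260\,\pi^{2} n \log k$; the $\log k$ loss is then harmless because $\log k \leq \log n \lesssim \log\frac{n}{\epsilon}$, so it is swallowed by the $\brac{\log\frac{n}{\epsilon}}^{-2}$ in~\eqref{eq:cond_s}. You instead invoke a large-sieve (Selberg/Ingham) inequality for the $\Delta_{\min}$-separated frequencies, viewing $\bar{H} = \Psi^{\ast}\Psi$ with $\Psi = \MAT{F_T & D F_T}$, $D$ diagonal with entries $i2\pi l \kappa$ of magnitude below $3$; this gives the cleaner bound $\norm{\bar{H}} \leq C n$ with no logarithmic factor. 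Both routes are valid; yours buys a sharper constant-order bound on the mean, while the paper's is more elementary and self-contained given the kernel estimates it has already developed. Note, however, that your closing claim that the large-sieve viewpoint is \emph{essential} -- that a Gershgorin bound "would fail" by losing $\log k$ -- is factually wrong: the paper's own proof is exactly that Gershgorin argument, and the $\log k$ is absorbed into the hypothesis on $s$ with no damage to the conclusion.

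One presentational point you should fix: in the lemma, $C_B$ is not a constant you get to define at the end of your argument; it is fixed a priori by~\eqref{eq:C_B_def} (it must be small enough for the Hoeffding step in the proof of Proposition~\ref{proposition:Qbound1} to go through), and the theorem's constants $C_k$, $C_s$ are then taken small enough that concentration holds at that prescribed level. Your final step, "which defines $C_B := \sqrt{CC_s + c}$," runs the logic backwards. The fix is routine -- since your $c$ is a fixed multiple of $C_k$, the quantity $CC_s + c$ can be made smaller than the prescribed $C_B^2$ by shrinking $C_k$ and $C_s$ -- but the quantifier order matters and should be stated that way.
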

The third allows to control the behavior of $\vct{v_{\ell}}$, establishing that it does not deviate much from
  \begin{align}
  \vct{\bar{v}_{\ell}} \brac{ f } & := \kappa^\ell \MAT{
    \bar{K}^{\brac{ \ell }} \brac{ f - f_1 } & \cdots & \bar{K}^{\brac{ \ell }} \brac{ f - f_k } & \kappa \,  \bar{K}^{(\ell+1)} \brac{ f - f_1 } &
  \cdots & \kappa \,  \bar{K}^{(\ell+1)} \brac{ f - f_k } }^T \notag
  \end{align}
on a fine grid with high probability. 
\begin{lemma}[Proof in Section~\ref{proof:v_vbar}]
\label{lemma:v_vbar}
Let $\ml{G} \subseteq \sqbr{0,1}$ be an equispaced grid with cardinality $ 400 \, n^2$. Under the assumptions of Theorem~\ref{theorem:main}, the event
\begin{align}
\ml{E}_{v} := \keys{ \normTwo{  \vct{v_{\ell}} \brac{f} - \frac{n-s}{n} \vct{\bar{v}_{\ell}} \brac{f} }  > C_{\vct{v}} \brac{\log \frac{n}{\epsilon} }^{-\frac{1}{2} }, \quad \text{for all $f \in \ml{G}$ and $\ell \in \keys{0,1,2,3}$} },
\end{align}
where $C_{\vct{v}}$ is a numerical constant defined by~\eqref{eq:C_v_def}, has probability bounded by $\epsilon / 5$.
\end{lemma}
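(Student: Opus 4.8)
The plan is to view $\vct{v_{\ell}}(f) - \frac{n-s}{n}\vct{\bar{v}_{\ell}}(f)$ as a sum of independent, mean-zero random vectors and to control its $\ell_2$ norm by a \emph{vector} Bernstein inequality, then close the argument with a union bound over the finite grid $\ml{G}$ and the four values of $\ell$. The reason to work with a vector (rather than scalar) concentration bound is that it lets the union bound range only over $\ml{G}$ and $\ell$, not over the $2k$ coordinates of $\vct{v_{\ell}}$.

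First I would expose the randomness. By the definition of $K$ in~\eqref{eq:k}, each coordinate of $\vct{v_{\ell}}(f)$ is of the form $\kappa^{\ell}K^{(\ell)}(f-f_j) = \sum_{l=-m}^{m}\delta_{\Omega^c}(l)\,\vct{c}_l\,(i2\pi l)^{\ell}\kappa^{\ell}e^{i2\pi l(f-f_j)}$ (and similarly with $\ell+1$ for the derivative block). Collecting the contribution of a fixed index $l$ into a deterministic vector $\vct{w}_l=\vct{w}_l(f,\ell)\in\C^{2k}$, I can write $\vct{v_{\ell}}(f)=\sum_{l=-m}^{m}\delta_{\Omega^c}(l)\,\vct{w}_l$. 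Since the $\delta_{\Omega^c}(l)$ are independent Bernoulli with mean $\frac{n-s}{n}$ and $\E[K]=\frac{n-s}{n}\bar K$, subtracting the mean gives $\vct{v_{\ell}}(f)-\frac{n-s}{n}\vct{\bar{v}_{\ell}}(f)=\sum_{l}\brac{\delta_{\Omega^c}(l)-\tfrac{n-s}{n}}\vct{w}_l$, a sum of $2m+1$ independent centered random vectors (treated as elements of $\R^{4k}$ if a real form of the inequality is preferred).

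Next I would estimate the two parameters the inequality needs. Each entry of $\vct{w}_l$ has modulus at most $|\vct{c}_l|\,(2\pi|l|\kappa)^{\ell}$ or $|\vct{c}_l|\,(2\pi|l|\kappa)^{\ell+1}$; using $\normInf{\vct{c}}\leq 1.3/m$ from Lemma~\ref{lemma:c_amp} and $|2\pi l\kappa|\leq 2\pi\cdot 0.468$ from Lemma~\ref{lemma:bound_kappa} (so that the weights $(2\pi|l|\kappa)^{2j}$ with $j\leq\ell+1\leq 4$ stay bounded by a numerical constant), the $2k$ coordinates yield a uniform bound $B=\max_l\normTwo{\vct{w}_l}\lesssim \sqrt{k}/m$. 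For the variance proxy I would take $\sigma^2=\frac{n-s}{n}\cdot\frac{s}{n}\sum_l\normTwo{\vct{w}_l}^2$, and the same entrywise bounds give $\sum_l\normTwo{\vct{w}_l}^2\lesssim k\sum_l|\vct{c}_l|^2=k\,\normTwo{\vct{c}}^2\lesssim k/m$. Since $m\asymp n$, this is $\sigma^2\lesssim \frac{s}{n}\cdot\frac{k}{n}$, and invoking the hypotheses~\eqref{eq:cond_k} and~\eqref{eq:cond_s} of Theorem~\ref{theorem:main} (at minimum $s\lesssim n(\log\frac{n}{\epsilon})^{-2}$ together with $k\leq n$) yields $\sigma^2\lesssim(\log\frac{n}{\epsilon})^{-2}$.

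Finally I would apply the vector Bernstein inequality to get, for each fixed $f\in\ml{G}$ and $\ell$, a tail bound of the form $\Pr\sqbr{\normTwo{\vct{v_{\ell}}(f)-\frac{n-s}{n}\vct{\bar{v}_{\ell}}(f)}>t}\leq \exp(-c\,t^2/\sigma^2)$, valid in the sub-Gaussian regime $t\lesssim\sigma^2/B$; one checks this regime holds for $t=C_{\vct{v}}(\log\frac{n}{\epsilon})^{-1/2}$ and $n$ large, since $\sigma^2/B\gtrsim\sqrt{n}$ up to logarithmic factors. A union bound over the $4$ values of $\ell$ and the $400\,n^2$ grid points contributes a factor whose logarithm is $O(\log\frac{n}{\epsilon})$; setting $t=C_{\vct{v}}(\log\frac{n}{\epsilon})^{-1/2}$ and using $\sigma^2\lesssim(\log\frac{n}{\epsilon})^{-2}$ makes the exponent $c\,t^2/\sigma^2\gtrsim c\,C_{\vct{v}}^2\log\frac{n}{\epsilon}$ dominate that logarithm, so for $C_{\vct{v}}$ large enough the total failure probability is at most $\epsilon/5$. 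The step I expect to be the main obstacle is precisely this parameter bookkeeping: verifying that the variance proxy is genuinely $O((\log\frac{n}{\epsilon})^{-2})$ — which is where the sparsity budgets on $k$ and $s$ must be used — and that the chosen $t$ still lies in the sub-Gaussian regime, so that the exponentially small per-point probability survives the $400\,n^2$ union bound.
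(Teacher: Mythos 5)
Your overall strategy coincides with the paper's: write $\vct{v_{\ell}}\brac{f}-\frac{n-s}{n}\vct{\bar{v}_{\ell}}\brac{f}$ as a sum over $l$ of independent centered random vectors $\brac{\delta_{\Omega^c}\brac{l}-\frac{n-s}{n}}\vct{w}_l$, bound $B=\max_l\normTwo{\vct{w}_l}\lesssim\sqrt{k}/m$ via Lemmas~\ref{lemma:bound_kappa}, \ref{lemma:c_amp} and~\ref{lemma:bound_b}, invoke the vector Bernstein inequality (Theorem~\ref{thm:vecbern}), and union bound over the $4\abs{\ml{G}}$ pairs $\brac{f,\ell}$. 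The gap sits exactly in the step you flagged as the main obstacle: the verification that $t$ lies in the validity range. Because you keep the Bernoulli variance factor, your proxy is $\sigma^2=\frac{n-s}{n}\cdot\frac{s}{n}\sum_l\normTwo{\vct{w}_l}^2\asymp\frac{sk}{n^2}$, hence $\sigma^2/B\asymp\frac{s\sqrt{k}}{n}$. Your claim that $\sigma^2/B\gtrsim\sqrt{n}$ up to logarithmic factors would require $s\sqrt{k}\gtrsim n^{3/2}$ up to logs, which holds only when \emph{both} $s$ and $k$ sit at their maximal allowed values $\asymp n\brac{\log\frac{n}{\epsilon}}^{-2}$. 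The lemma, however, must hold for every admissible pair: with $s=O(1)$, or with $k=1$ and $s$ maximal, one gets $\sigma^2/B\asymp\frac{s\sqrt{k}}{n}\ll C_{\vct{v}}\brac{\log\frac{n}{\epsilon}}^{-1/2}=t$, and Theorem~\ref{thm:vecbern} simply cannot be applied at that $t$, since its conclusion is only stated for $0\leq t\leq\sigma^2/B$. The root inconsistency is that the exponent $t^2/\brac{8\sigma^2}$ is controlled using an \emph{upper} bound on $\sigma^2$, whereas certifying the regime requires a \emph{lower} bound on $\sigma^2/B$ --- and the true variance admits no useful lower bound (it can even vanish, e.g.\ when $s=0$).

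The repair is precisely what the paper does, and it costs nothing: the Bernstein parameter $\sigma^2$ need only be an upper bound on $\sum_l\E\normTwo{\cdot}^2$, so discard the factor $\frac{n-s}{n}\cdot\frac{s}{n}\leq 1$ and take the $s$-independent proxy $\sigma^2\asymp k/m$ (the paper uses $\sigma^2:=3.25\cdot 10^4\,k/m$). Then $\sigma^2$ and $B^2$ both scale as $k/m^2$ up to an extra factor of $m$, so $\sigma^2/B\asymp\sqrt{k}$ and $\sigma/B\asymp\sqrt{m}$ uniformly over all admissible $\brac{k,s}$; with your fixed $t=C_{\vct{v}}\brac{\log\frac{n}{\epsilon}}^{-1/2}<1$ the regime constraint becomes trivial (the paper, which instead fixes $t:=\sigma\sqrt{8\brac{\frac{1}{4}+\log\frac{20\abs{\ml{G}}}{\epsilon}}}$, checks $t/\sigma\leq 0.32\sqrt{n}\leq\sigma/B$). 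The sharper variance you computed buys nothing here, because the exponent only needs to dominate $\log\frac{20\abs{\ml{G}}}{\epsilon}\lesssim\log\frac{n}{\epsilon}$, and the crude proxy already achieves this under~\eqref{eq:cond_k}. This points to a second, smaller correction: you close by taking $C_{\vct{v}}$ large enough, but $C_{\vct{v}}$ is not free --- it is pinned to $C_{\ml{U}}/8$ by~\eqref{eq:C_v_def} because of its downstream role in the Hoeffding step of Proposition~\ref{proposition:Qbound1}. The slack must instead be absorbed by the constants $C_k$ (and $C_s$) in the hypotheses of Theorem~\ref{theorem:main}: with $\sigma^2\asymp k/m$ and $k\leq C_k n\brac{\log\frac{n}{\epsilon}}^{-2}$, taking $C_k$ small enough makes the exponent $t^2/\brac{8\sigma^2}\gtrsim\frac{C_{\vct{v}}^2 m}{k\log\frac{n}{\epsilon}}$ beat the union-bound term, which is exactly how the paper concludes.
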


\subsection{Proof of Proposition~\ref{prop:dual_pol}}
\label{proof:dual_pol}
This section summarizes the remaining steps to establish that our proposed construction yields a valid certificate. A detailed description of each step is included in the appendix. First, we show that the system of equations~\eqref{eq:alpha_beta} has a unique solution with high probability, so that $Q$ is well defined. To alleviate notation, let
\begingroup
\renewcommand*{\arraystretch}{1.5}
\begin{align}
D & := \MAT{ D_0 & D_1\\ -D_1 & D_2 }, \qquad \bar{D} := \MAT{  \bar{D}_0 & \bar{D}_1\\ -\bar{D}_1 & \bar{D}_2 }.
\end{align}
\endgroup
The following result shows that $D$ concentrates around a scaled version of $\bar{D}$. As a result, it is invertible and we can bound the operator norm of its inverse leveraging results from~\cite{superres_new}. 
\begin{lemma}[Proof in Section~\ref{proof:D_bounds}]
\label{lemma:D_bounds}
Under the assumptions of Theorem~\ref{theorem:main}, the event
  \begin{align}
  \ml{E}_{D} & := \keys{ \norm{  D - \frac{n-s}{n} \bar{D} } \geq \frac{n-s}{4n} \min \keys{ 1, \frac{C_{D}}{4} \brac{\log \frac{n}{\epsilon} }^{-\frac{1}{2} }} }
  \end{align}
occurs with probability at most $\epsilon / 5$.

In addition, within the event $\ml{E}_{D}^c$, $D$ is invertible and
\begin{align}
\norm{D^{-1}} &  \leq 8, \\
\norm{ D^{-1} - \frac{n}{n-s} \bar{D}^{-1}} & \leq C_{D} \brac{\log \frac{n}{\epsilon} }^{-\frac{1}{2} }, 
\end{align}
where $C_{D}$ is a numerical constant defined by~\eqref{eq:C_D_def}. 
\end{lemma}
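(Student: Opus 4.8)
The plan is to view $D$ as a random perturbation of its expectation. The entries of $D$ are built from $K$ and its derivatives, and since $K$ is obtained from $\bar K$ by zeroing out the Fourier coefficients indexed by $\Omega$ via independent Bernoulli indicators $\delta_{\Omega^c}(l)$, we have $\E K^{(\ell)}(f) = \frac{n-s}{n}\bar K^{(\ell)}(f)$ and hence $\E D = \frac{n-s}{n}\bar D$. The first step is therefore to write each block of $D - \frac{n-s}{n}\bar D$ as a sum over $l \in \{-m,\dots,m\}$ of independent, mean-zero random matrices: each summand scales like $\kappa^{\,p}\,l^{\,p}\,\vct c_l$ times a rank-one outer product of Vandermonde-type vectors evaluated at the $f_j$, weighted by $(\delta_{\Omega^c}(l) - \tfrac{n-s}{n})$. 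I would bound the spectral norm of each summand using Lemma~\ref{lemma:bound_kappa} (to control $\kappa l \le 1$ on the relevant range) and Lemma~\ref{lemma:c_amp} (to control $\normInf{\vct c} \le 1.3/m$), which keeps the individual terms of order $1/m \sim 1/n$.

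The second step is to apply a matrix Bernstein inequality to this sum of independent centered matrices. The key quantities are the uniform bound on the summands and the matrix variance, both of which should come out to be $O(k/n)$ (roughly $k$ frequencies times $O(1/n)$ per entry, aggregated through the $2k \times 2k$ block structure). Bernstein then gives that $\norm{D - \frac{n-s}{n}\bar D}$ exceeds a threshold $t$ with probability at most $2k\exp(-c\,n\,t^2/k)$ for $t$ small. Choosing $t = \frac{n-s}{4n}\min\{1, \frac{C_D}{4}(\log\frac{n}{\epsilon})^{-1/2}\}$ and invoking the hypothesis $k \le C_k(\log\frac{n}{\epsilon})^{-2}n$ from~\eqref{eq:cond_k}, the exponent becomes $-c'\log\frac{n}{\epsilon}$ up to the choice of $C_D$, so the failure probability is bounded by $\epsilon/5$ after calibrating $C_D$; this pins down the constant referenced in~\eqref{eq:C_D_def}.

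The third step, inside $\ml E_D^c$, is a deterministic perturbation argument. From~\cite{superres_new} I would import a lower bound on the smallest singular value of $\bar D$ (equivalently an upper bound on $\norm{\bar D^{-1}}$) valid under the minimum-separation condition~\eqref{condition:minimum_separation}; this is exactly where the separation $\frac{\mindist}{n-1}$ enters. On $\ml E_D^c$ the deviation $\norm{D - \frac{n-s}{n}\bar D}$ is at most a quarter of the smallest singular value of $\frac{n-s}{n}\bar D$, so a standard Neumann-series argument shows $D$ is invertible with $\norm{D^{-1}} \le 8$, and the resolvent identity $D^{-1} - \frac{n}{n-s}\bar D^{-1} = -D^{-1}\bigl(D - \frac{n-s}{n}\bar D\bigr)\frac{n}{n-s}\bar D^{-1}$ converts the norm bound on the deviation into the claimed $O\bigl((\log\frac{n}{\epsilon})^{-1/2}\bigr)$ bound on $\norm{D^{-1} - \frac{n}{n-s}\bar D^{-1}}$.

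I expect the main obstacle to be the variance computation in the matrix Bernstein step: one must track how the $O(1/n)$ per-entry fluctuations aggregate across the $2k \times 2k$ block structure without losing a factor of $k$ in the wrong place, and verify that the derivative weights $\kappa^{\,p} l^{\,p}$ stay bounded uniformly in $l$ so that the second and higher derivative blocks ($D_2$, and the $K^{(2)}$ terms) do not blow up the variance. Getting the constants to line up so that the threshold matches $\frac{n-s}{4n}$ and the probability is genuinely $\le \epsilon/5$ — rather than merely $O(\epsilon)$ — is the delicate bookkeeping that the proof in Section~\ref{proof:D_bounds} presumably carries out in full.
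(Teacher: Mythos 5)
Your plan follows essentially the same route as the paper's proof: write $D-\frac{n-s}{n}\bar D$ as a sum over $l$ of independent centered rank-one matrices $(\delta_{\Omega^c}(l)-\frac{n-s}{n})\,\vct{c}_l\,\vct{b}(l)\vct{b}(l)^{\ast}$, bound the summands and variance via Lemmas~\ref{lemma:c_amp} and~\ref{lemma:bound_b} (both $O(k/n)$), apply matrix Bernstein with the stated threshold under~\eqref{eq:cond_k}--\eqref{eq:cond_s}, import the bound $\norm{\bar D^{-1}}\leq 1.88$ from~\cite{superres_new}, and finish with the standard perturbation/resolvent identity (the paper cites it as Lemma~\ref{lemma:AB} from~\cite{tang2012offgrid}). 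The only quibble is your claim that on $\ml{E}_D^c$ the deviation is at most a \emph{quarter} of $\sigma_{\min}(\frac{n-s}{n}\bar D)$ --- numerically it is only at most roughly half of it, but half is all the perturbation lemma requires, so the argument goes through unchanged.
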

An immediate consequence of the lemma is that there exists a solution to the system~\eqref{eq:alpha_beta} and therefore~\eqref{eqn:condition:Q1} holds as long as $\ml{E}_{D}^c$ occurs.
\begin{corollary}
\label{cor:invertible}
In $\ml{E}_{D}^c$ $Q$ is well defined and $Q\brac{ f_j } = \signx_j$ for all $f_j \in T$.
\end{corollary}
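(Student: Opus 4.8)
The plan is to read the corollary off directly from the construction in Section~\ref{sec:random_interp}, delegating the only nontrivial point---the invertibility of $D$---to Lemma~\ref{lemma:D_bounds}.

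First I would establish that $Q$ is well defined on the event $\ml{E}_D^c$. By Lemma~\ref{lemma:D_bounds}, on $\ml{E}_D^c$ the matrix $D$ is invertible and $\norm{D^{-1}} \leq 8$. Consequently the linear system~\eqref{eq:alpha_beta} has a unique solution $\brac{\vct{\alpha},\vct{\beta}}$, the coefficients entering $Q_{\op{aux}}$ in~\eqref{eq:Q_aux} are uniquely determined, and the closed form~\eqref{eq:Q_vl_Dinv_ur} involving $D^{-1}$ makes sense. Since $R$ is a fixed trigonometric polynomial, $Q = Q_{\op{aux}} + R$ is well defined.

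Next I would verify the interpolation identity $Q\brac{f_l} = \signx_l$ for each $f_l \in T$. Evaluating~\eqref{eq:Q_aux} at $f_l$ and using the definitions $\brac{D_0}_{lj} = K\brac{f_l - f_j}$ and $\brac{D_1}_{lj} = \kappa K^{\brac{1}}\brac{f_l - f_j}$, the vector $\brac{Q_{\op{aux}}\brac{f_1},\ldots,Q_{\op{aux}}\brac{f_k}}^T$ equals $D_0\vct{\alpha} + D_1\vct{\beta}$, i.e.\ the top block of $D\brac{\vct{\alpha},\vct{\beta}}^T$. The top block row of~\eqref{eq:alpha_beta} then gives $D_0\vct{\alpha}+D_1\vct{\beta} = \signx - \tfrac{1}{\sqrt n}B_\Omega \signz$ restricted to its first $k$ entries. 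Finally, comparing the definition of $B_\Omega$ and $\vct{b}$ in~\eqref{eq:b} with the expression for $R$ in~\eqref{eqn:randompoly} shows that the first $k$ entries of $\tfrac{1}{\sqrt n}B_\Omega\signz$ are exactly $R\brac{f_1},\ldots,R\brac{f_k}$, which is precisely the identification recorded just below~\eqref{eq:alpha_beta}. Combining these, $Q_{\op{aux}}\brac{f_l} = \signx_l - R\brac{f_l}$, so that $Q\brac{f_l} = Q_{\op{aux}}\brac{f_l} + R\brac{f_l} = \signx_l$.

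There is no genuine obstacle here: the substantive work---controlling the random perturbation of $D$ around $\tfrac{n-s}{n}\bar D$ and thereby guaranteeing invertibility with a uniformly bounded inverse---is entirely contained in Lemma~\ref{lemma:D_bounds}. The corollary is pure bookkeeping, and the only point requiring care is matching the two-block structure of the linear system~\eqref{eq:alpha_beta} to the two interpolation constraints~\eqref{eq:interp1_aux}--\eqref{eq:interp2_aux} and correctly identifying the entries of the right-hand side with the fixed values $R\brac{f_j}$.
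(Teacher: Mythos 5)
Your proposal is correct and follows the paper's reasoning exactly: the paper states Corollary~\ref{cor:invertible} as an immediate consequence of Lemma~\ref{lemma:D_bounds} (invertibility of $D$ on $\ml{E}_D^c$ guarantees the system~\eqref{eq:alpha_beta} is solvable, and the interpolation conditions~\eqref{eq:interp1_aux}--\eqref{eq:interp2_aux} are built into that system). Your write-up merely makes explicit the bookkeeping---identifying the top block of $D\MAT{\vct{\alpha} \\ \vct{\beta}}$ with the values $Q_{\op{aux}}\brac{f_j}$ and the first $k$ entries of $\tfrac{1}{\sqrt{n}}B_\Omega\signz$ with $R\brac{f_j}$---which the paper leaves implicit, and that bookkeeping is carried out correctly.
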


All that remains is to establish that $Q$ meets conditions~\eqref{eqn:condition:Q2} and~\eqref{eqn:condition:q2}; recall that~\eqref{eqn:condition:q1} is satisfied by construction. 


To prove~\eqref{eqn:condition:Q2}, we apply a technique from~\cite{tang2012offgrid}. We first show that $Q$ and its derivatives concentrate
around $\bar{Q}$ and its derivatives respectively on a fine grid. Then we leverage Bernstein's inequality to demonstrate that both polynomials and their respective derivatives are close on the whole unit interval. Finally, we borrow some bounds on $\bar{Q}$ and its second derivative from~\cite{superres_new} to complete the proof. The details can be found in Section~\ref{proof:Qbound1} of the appendix.

\begin{proposition}[Proof in Section~\ref{proof:Qbound1}]
\label{proposition:Qbound1}
Conditioned on $\ml{E}_{B}^{c} \cap \ml{E}_{D}^{c} \cap \ml{E}_{v}^{c}$
\begin{align}
\label{eq:bound_Q_Tc}
\abs{ Q\brac{f} } < 1 \quad \text{for all } f \in T^c
\end{align}
with probability at least $1-\epsilon/5$ under the assumptions of Theorem~\ref{theorem:main}.
\end{proposition}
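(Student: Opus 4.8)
The plan is to prove that the random polynomial $Q$ and its first two derivatives stay uniformly close to the deterministic certificate $\bar{Q}$ of Section~\ref{sec:interpolation}, and then to inherit the required behaviour of $Q$ on $T^c$ from the properties of $\bar{Q}$ established in~\cite{superres_new}. By Corollary~\ref{cor:invertible}, on $\ml{E}_{D}^c$ the polynomial $Q$ is well defined and interpolates $\signx$ on $T$ with vanishing derivative, so it suffices to bound $\abs{Q}$ on $T^c$. I would partition $T^c$ into a \emph{near region}---the union of arcs of radius of order $1/m$ around each $f_j \in T$---and a \emph{far region} containing everything else. From~\cite{superres_new} we know that $\abs{\bar{Q}(f)}$ is bounded below $1$ by a fixed constant on the far region, whereas on the near region $\bar{Q}^{\brac{2}}$ is bounded away from zero with the sign that makes each $f_j$ a strict local maximum of $\abs{\bar{Q}}$. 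Transferring these two facts to $Q$ by a perturbation argument is what yields~\eqref{eq:bound_Q_Tc}.

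The heart of the argument is a pointwise concentration estimate. Using the representation~\eqref{eq:Q_vl_Dinv_ur} and its analogues for the derivatives, for each $\ell \in \keys{0,1,2}$ and each fixed $f$ I would write
\begin{align*}
Q^{\brac{\ell}}(f) - \bar{Q}^{\brac{\ell}}(f)
= \sqbr{\vct{v_{\ell}}(f)^T D^{-1} - \vct{\bar{v}_{\ell}}(f)^T \bar{D}^{-1}} \MAT{\signx \\ 0}
- \frac{1}{\sqrt{n}}\, \vct{v_{\ell}}(f)^T D^{-1} B_{\Omega}\, \signz + R^{\brac{\ell}}(f).
\end{align*}
Conditioned on $\ml{E}_{B}^c \cap \ml{E}_{D}^c \cap \ml{E}_{v}^c$ the matrices $D$, $B_{\Omega}$ and the vectors $\vct{v_{\ell}}(f)$ are fixed (these events are measurable with respect to the Bernoulli support, which is independent of the phases), so the right-hand side is a linear form in the random phases $\brac{\signx,\signz}$ with zero mean. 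Its coefficient vector has $\ell_2$ norm of order $\brac{\log \frac{n}{\epsilon}}^{-1/2}$: the factor $\vct{v_{\ell}}(f)^T D^{-1} - \vct{\bar{v}_{\ell}}(f)^T \bar{D}^{-1}$ is small by Lemmas~\ref{lemma:D_bounds} and~\ref{lemma:v_vbar} combined with the $O(1)$ bounds on $\norm{\vct{\bar{v}_{\ell}}(f)}$ and $\norm{\bar{D}^{-1}}$ borrowed from~\cite{superres_new}; the factor $\frac{1}{\sqrt{n}}\vct{v_{\ell}}(f)^T D^{-1} B_{\Omega}$ is small because $\norm{D^{-1}} \leq 8$ and, by Lemma~\ref{lemma:boundB}, $\norm{B_{\Omega}} \leq C_B \brac{\log \frac{n}{\epsilon}}^{-1/2}\sqrt{n}$ on $\ml{E}_{B}^c$; and the coefficients of $R^{\brac{\ell}}$ are of order $\sqrt{s/n}$ once normalised by $\kappa^{\ell}$, using $\kappa m = O(1)$ from Lemma~\ref{lemma:bound_kappa}. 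Here the size conditions $k,s \lesssim \brac{\log \frac{n}{\epsilon}}^{-2} n$ are precisely what make these norms $O\brac{\brac{\log \frac{n}{\epsilon}}^{-1/2}}$. A Hoeffding bound for sums of independent bounded variables controls the deviation at each point, and a union bound over the grid $\ml{G}$ of Lemma~\ref{lemma:v_vbar} yields the estimate simultaneously on $\ml{G}$ with probability at least $1-\epsilon/5$.

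To pass from the grid to the whole circle I would invoke Bernstein's inequality for trigonometric polynomials: since $Q^{\brac{\ell}} - \bar{Q}^{\brac{\ell}}$ has degree at most $m$, its modulus of continuity is governed by $\norm{Q^{\brac{\ell+1}} - \bar{Q}^{\brac{\ell+1}}}_\infty$, which is why the grid estimate is required up to $\ell = 3$; because $\ml{G}$ has $400\,n^2$ points, this extends the bounds on $\abs{Q^{\brac{\ell}}(f) - \bar{Q}^{\brac{\ell}}(f)}$ to all $f$ with a negligible loss. On the far region this gives $\abs{Q(f)} \leq \abs{\bar{Q}(f)} + \abs{Q(f) - \bar{Q}(f)} < 1$ once the perturbation is smaller than the constant margin of $\bar{Q}$. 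On the near region I would use that $Q(f_j) = \signx_j$ with $\abs{\signx_j}=1$ and $Q^{\brac{1}}(f_j) = 0$ exactly, so that a second-order Taylor expansion of $\abs{Q}^2$, together with the closeness of $Q^{\brac{2}}$ to $\bar{Q}^{\brac{2}}$, shows that $\abs{Q}$ has a strict local maximum at $f_j$ and stays below $1$ on the surrounding arc.

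The main obstacle is the coefficient-norm bookkeeping in the second paragraph: one must check that every contribution to the linear form---the kernel approximation error, the $B_{\Omega}$ term, and the tail of $R$---assembles with exactly the right power of $\log \frac{n}{\epsilon}$, so that after the $\sqrt{\log}$ loss incurred by the union bound over $\ml{G}$ the resulting deviation is still dominated by the fixed margins of $\bar{Q}$. This is where the numerical constants $C_k$ and $C_s$ in~\eqref{eq:cond_k}--\eqref{eq:cond_s} must be taken small enough. A secondary difficulty is calibrating the near/far threshold against the quantitative bounds on $\bar{Q}$ and $\bar{Q}^{\brac{2}}$ quoted from~\cite{superres_new}, so that the two regimes together cover all of $T^c$.
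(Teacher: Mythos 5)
Your proposal is correct and takes essentially the same route as the paper's own proof: the same phase-linear decomposition of $Q^{\brac{\ell}}-\bar{Q}^{\brac{\ell}}$ (the paper merely splits your first bracket into the two pieces $I_2^{\brac{\ell}}$ and $I_3^{\brac{\ell}}$), the same conditional Hoeffding-plus-union bound on the grid $\ml{G}$ exploiting that $D$, $B_{\Omega}$ and $\vct{v_{\ell}}$ are fixed by the support events, the same Bernstein-type extension from the grid to all of $\sqbr{0,1}$ (requiring derivatives up to order three), and the same near/far partition of $T^c$ settled by the bounds on $\bar{Q}$ and on the second derivative of $\abs{\bar{Q}}^2$ borrowed from~\cite{superres_new}, including the second-order argument at the interpolation points where $Q\brac{f_j}=\signx_j$ and $Q^{\brac{1}}\brac{f_j}=0$.
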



Finally, the following proposition establishes that the remaining condition~\eqref{eqn:condition:q2} holds in $\ml{E}_{B}^{c} \cap \ml{E}_{D}^{c} \cap \ml{E}_{v}^{c}$ with high probability. The proof uses Hoeffding's inequality combined with Lemmas~\ref{lemma:D_bounds} and~\ref{lemma:boundB} to control the magnitude of the coefficients of $\vct{q}$.
\begin{proposition}[Proof in Section~\ref{proof:qboundgamma}]
\label{proposition:qboundgamma}
Conditioned on $\ml{E}_{B}^{c} \cap \ml{E}_{D}^{c} \cap \ml{E}_{v}^{c}$
\begin{align}
\label{eq:bound_q_Omegac}
\abs{\vct{q}_{l}} & < \frac{1}{\sqrt{n}} , \quad \text{for all } l \in \Omega^c,
\end{align}
with probability at least $1-\epsilon/5$ under the assumptions of Theorem~\ref{theorem:main}.
\end{proposition}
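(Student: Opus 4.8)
The plan is to obtain a closed-form expression for each coefficient $\vct{q}_l$ with $l\in\Omega^c$ as a fixed complex linear combination of the random signs $\signx$ and $\signz$ once the support $\Omega$ is fixed, and then to apply a concentration inequality to show that its magnitude is strictly below $1/\sqrt{n}$ with overwhelming probability. First I would note that, since the coefficients of $R$ are supported on $\Omega$, for $l\in\Omega^c$ the coefficient $\vct{q}_l$ comes entirely from $Q_{\op{aux}}$. Expanding $Q_{\op{aux}}$ in \eqref{eq:Q_aux} through the Fourier representation of $K$ and $K^{\brac{1}}$ in \eqref{eq:k}, the coefficient at index $l$ factors as $\vct{q}_l = \vct{c}_l\,\vct{b}\brac{l}^T\MAT{\vct{\alpha}\\ \vct{\beta}}$ with $\vct{b}\brac{l}$ the vector of \eqref{eq:b}. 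Substituting the solution of the linear system \eqref{eq:alpha_beta} then yields
\[
\vct{q}_l = \vct{c}_l\,\vct{b}\brac{l}^T D^{-1}\left(\MAT{\signx\\ 0} - \frac{1}{\sqrt{n}}B_{\Omega}\,\signz\right).
\]

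The key structural observation is that the conditioning events $\ml{E}_{B}^c$, $\ml{E}_{D}^c$ and $\ml{E}_{v}^c$ are all measurable with respect to $\Omega$ alone, so conditioning on them leaves the phases $\signx$ and $\signz$ iid uniform on the unit circle, hence zero-mean and bounded by one, and leaves $\vct{q}_l$ a zero-mean linear form in these signs. I would therefore fix a realization of $\Omega$ in the good event and apply a complex version of Hoeffding's inequality. Writing $\vct{a}^T := \vct{c}_l\,\vct{b}\brac{l}^T D^{-1}$, the relevant variance proxy is the sum of squared weights, $\normTwo{\vct{a}_{1:k}}^2 + \tfrac{1}{n}\normTwo{\vct{a}^T B_{\Omega}}^2$, where $\vct{a}_{1:k}$ denotes the first $k$ entries of $\vct{a}$. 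Bounding it requires $\abs{\vct{c}_l}\le 1.3/m$ (Lemma~\ref{lemma:c_amp}), $\normTwo{\vct{b}\brac{l}}\le\sqrt{10k}$ (Lemma~\ref{lemma:bound_b}), $\norm{D^{-1}}\le 8$ on $\ml{E}_{D}^c$ (Lemma~\ref{lemma:D_bounds}) and $\norm{B_{\Omega}}\le C_{B}\brac{\log\frac{n}{\epsilon}}^{-1/2}\sqrt{n}$ on $\ml{E}_{B}^c$ (Lemma~\ref{lemma:boundB}); note that $\ml{E}_v$ itself is not needed here, only $\ml{E}_B^c$ and $\ml{E}_D^c$. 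Chaining these gives a variance proxy of order $k/m^2$, the $B_\Omega$-term being smaller by the logarithmic factor, so Hoeffding yields
\[
\Pr\left(\abs{\vct{q}_l}\ge\tfrac{1}{\sqrt{n}}\,\middle|\,\Omega\right)\le C\exp\left(-c\,\frac{m^2}{n\,k}\right).
\]

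To conclude I would use $m\approx n/2$, so the exponent is of order $n/k$, and invoke the hypothesis $k\le C_k\brac{\log\frac{n}{\epsilon}}^{-2}n$ of Theorem~\ref{theorem:main} to make the exponent at least a constant multiple of $\brac{\log\frac{n}{\epsilon}}^2$. A union bound over the at most $n$ indices $l\in\Omega^c$ then bounds the failure probability by $n\exp\brac{-c'\brac{\log\frac{n}{\epsilon}}^2}$, which is below $\epsilon/5$ once $C_k$ is small enough: writing $u:=\log\frac{n}{\epsilon}$, this quantity equals $\epsilon\,e^{u-c'u^2}$, which drops below $\epsilon/5$ for all large $u$ and hence for $n\ge 2\times 10^3$. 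Since the conditional bound is uniform over every $\Omega$ in the good event, it transfers to the probability conditioned on $\ml{E}_{B}^c\cap\ml{E}_{D}^c\cap\ml{E}_{v}^c$, giving the claim. The main obstacle I anticipate is the bookkeeping in the variance proxy: one has to verify that the $\signz$-contribution routed through $B_{\Omega}$ is genuinely dominated by the $\signx$-contribution after the operator-norm bounds are applied, and that the numerical constants combine so that the resulting exponent beats the $\log n$ factor of the union bound with room to spare.
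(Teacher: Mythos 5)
Your proposal is correct and follows essentially the same route as the paper's proof: the same closed-form expression for $\vct{q}_l$ in terms of $\vct{c}_l$, $\vct{b}\brac{l}$, $D^{-1}$, $B_{\Omega}$, $\signx$ and $\signz$, the same four ingredients (Lemmas~\ref{lemma:c_amp}, \ref{lemma:bound_b}, \ref{lemma:D_bounds} and~\ref{lemma:boundB}), and Hoeffding concentration over the random phases; the paper likewise never uses $\ml{E}_{v}$ in this argument. Two differences are worth recording. First, the paper splits $\vct{q}_l$ into the $\signx$- and $\signz$-inner products, applies Theorem~\ref{theorem:hoeffding} to each separately with thresholds $0.18\sqrt{n}$ and $0.18\,n$, and combines them with a union bound over the two events, whereas you run a single Hoeffding application on the concatenated sign vector with variance proxy $\normTwo{\vct{a}_{1:k}}^2 + \tfrac{1}{n}\normTwo{B_{\Omega}^{\ast}\vct{a}}^2$; both are legitimate since, conditionally on $\Omega$, the entries of $\signx$ and $\signz$ are jointly iid uniform on the circle. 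Second, and this is a point in your favor, you carry out the union bound over the at most $n$ indices $l \in \Omega^c$ and verify that $n\exp\brac{-c'\brac{\log\frac{n}{\epsilon}}^2} \leq \epsilon/5$, which is what the quantifier ``for all $l \in \Omega^c$'' in the statement requires; the paper's proof as written fixes a single arbitrary $l$ and obtains failure probability $\epsilon/5$ for that $l$ only, leaving the uniformity over $l$ (and the attendant extra $\log n$ in the exponent budget, harmless given the $\brac{\log\frac{n}{\epsilon}}^{-2}$ hypothesis~\eqref{eq:cond_k}) implicit in the choice of constants. Your explicit observation that $\ml{E}_{B}$, $\ml{E}_{D}$ and $\ml{E}_{v}$ are measurable with respect to $\Omega$ alone, so that conditioning on them leaves the phases iid uniform, also makes precise a step the paper takes for granted.
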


Now, to complete the proof, let us define $\ml{E}_{Q}$ to be the event that~\eqref{eqn:condition:Q2} holds and $\ml{E}_{q}$ the event that~\eqref{eqn:condition:q2} holds. Applying De Morgan's laws, the union bound and the fact that for any pair of events $\ml{E}_A$ and $\ml{E}_B$ 
\begin{align} 
\Pr\brac{ \ml{E}_A } \leq \Pr\brac{ \ml{E}_A | \ml{E}_B^c} + \Pr\brac{ \ml{E}_B }.
\end{align}
we have
\begin{align}
\Pr\brac{ \brac{\ml{E}_{Q} \cap \ml{E}_{q}}^c} & = \Pr\brac{ \ml{E}_{Q}^c \cup \ml{E}_{q}^c } \\
& \leq \Pr\brac{ \ml{E}_{Q}^c \cup \ml{E}_{q}^c  | \ml{E}_{B}^c \cap \ml{E}_{D}^c \cap \ml{E}_{v}^c } + \Pr \brac{\ml{E}_{B} \cup \ml{E}_{D} \cup \ml{E}_{v}} \\
& \leq \Pr\brac{ \ml{E}_{Q}^c  | \ml{E}_{B}^c \cap \ml{E}_{D}^c \cap \ml{E}_{v}^c } + \Pr\brac{ \ml{E}_{q}^c  | \ml{E}_{B}^c \cap \ml{E}_{D}^c \cap \ml{E}_{v}^c }+ \Pr \brac{\ml{E}_{B} } + \Pr \brac{\ml{E}_{D} } + \Pr \brac{  \ml{E}_{v}} \\
& \leq \epsilon
\end{align}
by Lemmas~\ref{lemma:boundB}, \ref{lemma:v_vbar} and~\ref{lemma:D_bounds} and Propositions~\ref{proposition:Qbound1} and~\ref{proposition:qboundgamma}. We conclude that our construction yields a valid certificate with probability at least $1-\epsilon$.


\section{Algorithms}
\label{sec:algorithms}
In this section we discuss how to implement the techniques described in Section~\ref{sec:main_results}. In addition, we introduce a greedy demixing method which yields good empirical results. Matlab code implementing all the algorithms presented below is available online\footnote{\url{http://www.cims.nyu.edu/~cfgranda/scripts/spectral_superres_outliers.zip}}. The code allows to reproduce the figures in this section, which illustrate the performance of the different approaches through a running example. 
\subsection{Demixing via semidefinite programming}
\label{sec:sdp}
The main obstacle to solving Problem~\eqref{eq:opt_problem} is that the primal variable $\tilde{\mu}$ is infinite-dimensional. One could tackle this issue by discretizing the possible support of $\tilde{\mu}$ and replacing its TV norm by the $\ell_1$ norm of the corresponding vector \cite{tang2013discretize}. Here, we present an alternative approach, originally proposed in~\cite{superres_new}, that solves the infinite-dimensional optimization problem directly without resorting to discretization. The approach, inspired by a method for TV-norm minimization ~\cite{Candes:2012uf} (see also \cite{Bhaskar:2012tq}), relies on the fact that the dual of Problem~\eqref{eq:opt_problem} can be recast as a finite-dimensional semidefinite program (SDP).

To simplify notation we introduce the operator $\mathcal{T}$. For any vector $\vct{u}$ whose first entry $\vct{u}_1$ is positive and real, $\mathcal{T}\brac{\vct{u}}$ is a Hermitian Toeplitz matrix whose first row is equal to $\vct{u}^T$. The adjoint of $\mathcal{T}$ with respect to the usual matrix inner product $\PROD{M_1}{M_2}=\text{Tr}\brac{M_1^{\ast}M_2}$, extracts the sums of the diagonal and of the different off-diagonal elements of a matrix 
\begin{align}
\mathcal{T}^{\ast}\brac{M}_j = \sum_{i=1}^{n-j+1}M_{i,i+j-1}.
\end{align}
\begin{lemma}
\label{lemma:dual_sdp}
The dual of Problem~\eqref{eq:opt_problem} is
\begin{align}
\label{eq:dual_sinesspikes}
\max_{ \vct{\eta} \in \C^{n}} \;   \PROD{\vct{y}}{\vct{\eta}}  \quad \text{subject to}
\quad & \normInf{\mathcal{F}_{n}^{\ast} \, \vct{\eta}}  \leq 1, \\
&  \normInf{\vct{\eta}}  \leq \lambda,
\end{align}
where the inner product is defined as $\PROD{ \vct{y}}{ \vct{\eta}} : = \op{Re}\brac{\vct{y}^{\ast}\vct{\eta}}$. This problem is equivalent to the semidefinite program
\begin{align}
\label{eq:TVnormMin_sdp_sinesspikes}
\max_{\vct{\eta} \in \C^n, \, \Lambda \in \C^{n\times n}} \;   \PROD{ \vct{y}}{ \vct{\eta}} \quad
 \text{subject to} \quad & \MAT{\Lambda & \vct{\eta} \\ \vct{\eta}^{\ast} & 1} \succeq 0, \nonumber\\
 & \ml{T}^{\ast}\brac{\Lambda}= \MAT{1 \\ \vct{0}},  \nonumber \\
 & \normInf{\vct{\eta}}  \leq \lambda,
\end{align}
where $\vct{0} \in \C^{n-1}$ is a vector of zeros. 
\end{lemma}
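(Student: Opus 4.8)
The plan is to proceed in two stages: first derive the dual program~\eqref{eq:dual_sinesspikes} by Lagrangian duality, and then recast the polynomial-boundedness constraint as a semidefinite one.

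For the first stage, I would introduce a dual variable $\vct{\eta} \in \C^{n}$ for the equality constraint $\ml{F}_n \tilde{\mu} + \vct{\tilde{z}} = \vct{y}$ and form the Lagrangian
\[
L(\tilde{\mu}, \vct{\tilde{z}}, \vct{\eta}) = \normTV{\tilde{\mu}} + \lambda \normOne{\vct{\tilde{z}}} + \PROD{\vct{\eta}}{\vct{y} - \ml{F}_n \tilde{\mu} - \vct{\tilde{z}}}.
\]
Using the adjoint identity $\PROD{\vct{\eta}}{\ml{F}_n \tilde{\mu}} = \PROD{\ml{F}_n^{\ast} \vct{\eta}}{\tilde{\mu}}$ --- where the right-hand pairing is the measure/continuous-function pairing used to define $\normTV{\cdot}$ --- the Lagrangian separates into a $\tilde{\mu}$-term, a $\vct{\tilde{z}}$-term, and the constant $\PROD{\vct{\eta}}{\vct{y}}$. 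The dual function is the infimum over $\tilde{\mu}$ and $\vct{\tilde{z}}$. Since the TV norm is dual to the sup norm, $\inf_{\tilde{\mu}}\{\normTV{\tilde{\mu}} - \PROD{\ml{F}_n^{\ast}\vct{\eta}}{\tilde{\mu}}\}$ equals $0$ when $\normInf{\ml{F}_n^{\ast}\vct{\eta}} \leq 1$ and $-\infty$ otherwise; likewise, because $\ell_1$ is dual to $\ell_\infty$, the $\vct{\tilde{z}}$-infimum is $0$ exactly when $\normInf{\vct{\eta}} \leq \lambda$. This yields~\eqref{eq:dual_sinesspikes}. I would then invoke strong duality --- the constraint is affine and feasible, so there is no duality gap --- to conclude that the optimal values coincide.

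For the second stage, the only non-trivial constraint is $\normInf{\ml{F}_n^{\ast}\vct{\eta}} \leq 1$, i.e.\ the trigonometric polynomial $\ml{F}_n^{\ast}\vct{\eta}(f) = \sum_{j} \vct{\eta}_j e^{-i2\pi j f}$ has modulus at most one for all $f$. I would invoke the classical characterization (a consequence of the Fej\'er--Riesz theorem and the bounded-real lemma for trigonometric polynomials, as used in~\cite{Candes:2012uf,Bhaskar:2012tq}): $\abs{\ml{F}_n^{\ast}\vct{\eta}(f)} \leq 1$ for all $f$ if and only if there exists a Hermitian $\Lambda$ with $\MAT{\Lambda & \vct{\eta} \\ \vct{\eta}^{\ast} & 1} \succeq 0$ and $\mathcal{T}^{\ast}(\Lambda) = \MAT{1 \\ \vct{0}}$. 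The Toeplitz-trace condition encodes that the diagonal sum of $\Lambda$ equals one and all off-diagonal sums vanish; together with the Schur-complement positivity condition this is precisely the sum-of-squares certificate bounding the polynomial by one. Substituting this characterization into~\eqref{eq:dual_sinesspikes}, while leaving the constraint $\normInf{\vct{\eta}} \leq \lambda$ untouched, produces the SDP~\eqref{eq:TVnormMin_sdp_sinesspikes}.

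The main obstacle is the semidefinite reformulation of the sup-norm bound on the polynomial. The forward direction (an SDP-feasible pair $(\vct{\eta}, \Lambda)$ yields a bounded polynomial) follows by expanding $\abs{\ml{F}_n^{\ast}\vct{\eta}(f)}^2$ through the Schur complement and exploiting the Toeplitz structure via $\mathcal{T}^{\ast}$; the converse --- producing a feasible $\Lambda$ from a bounded polynomial --- rests on the Fej\'er--Riesz spectral-factorization argument and is the delicate step. I would cite the established version of this equivalence rather than reprove it, since it is by now standard in the super-resolution literature.
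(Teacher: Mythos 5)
Your proposal is correct and follows essentially the same route as the paper: a Lagrangian-duality computation resting on the TV/sup-norm and $\ell_1$/$\ell_\infty$ conjugacy to obtain~\eqref{eq:dual_sinesspikes}, followed by citing the standard semidefinite characterization of uniformly bounded trigonometric polynomials (the paper's Proposition~\ref{prop:sdp-charact}, borrowed from Dumitrescu) to pass to~\eqref{eq:TVnormMin_sdp_sinesspikes}. The only cosmetic difference is that the paper derives the dual once for the noise-constrained Problem~\eqref{eq:opt_problem_noise} (Lemma~\ref{lemma:dual_sdp_noise}, which needs an auxiliary variable and a second multiplier for the $\ell_2$ ball) and reads off this lemma as the $\sigma=0$ special case, whereas you compute it directly for the equality-constrained problem.
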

Lemma~\ref{lemma:dual_sdp}, which follows from Lemma~\ref{lemma:dual_sdp_noise} below, shows that it is tractable to compute the $n$-dimensional solution to the dual of Problem~\eqref{eq:opt_problem}. However, our goal is to obtain the primal solution, which represents the estimate of the line spectrum and the sparse corruptions. The following lemma, which is a consequence of Lemma~\ref{lemma:primaldual_noise}, establishes that we can decode the support of the primal solution from the dual solution. 
\begin{lemma} 
\label{lemma:primaldual_sinesspikes}
Let 
\begin{align}
\hat{\mu} & =\sum_{f_j \in \widehat{T}} \vct{\hat{x}}_{j} \, \delta \brac{ f - f_j}, 
\end{align}
and $\vct{\hat{z}}$ be a solution to \eqref{eq:opt_problem}, such that $\widehat{T}$ and $\widehat{\Omega}$ are the nonzero supports of the line spectrum $\hat{\mu}$ and the spikes $\vct{\hat{z}}$ respectively. If $\vct{ \hat{\eta} } \in \C^n$ is a corresponding dual solution, then for any $f_j$ in $\widehat{T}$ 
\begin{align}
 \brac{\mathcal{F}_{n}^{\ast} \, \vct{ \hat{\eta} }} \brac{f_j} =\frac{\vct{\hat{x}}_j}{\abs{\vct{\hat{x}}_j}}
\end{align}
and for any $l$ in $\widehat{\Omega}$
\begin{align}
\vct{ \hat{\eta} }_l = \lambda \frac{\vct{\hat{z}}_l}{\abs{\vct{\hat{z}}_l}}.
\end{align}
\end{lemma}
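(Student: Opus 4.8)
The plan is to read off both identities from strong duality together with complementary slackness. By Lemma~\ref{lemma:dual_sdp}, any dual optimum $\vct{\hat\eta}$ is dual-feasible, so it obeys $\normInf{\mathcal{F}_{n}^{\ast}\vct{\hat\eta}} \leq 1$ and $\normInf{\vct{\hat\eta}} \leq \lambda$. Writing $Q := \mathcal{F}_{n}^{\ast}\vct{\hat\eta}$ and using the primal data constraint $\mathcal{F}_{n}\hat\mu + \vct{\hat z} = \vct{y}$ together with the adjoint relation between $\mathcal{F}_{n}$ and $\mathcal{F}_{n}^{\ast}$ (the function is conjugated, consistently with the TV dual-norm pairing), I would expand the dual objective as
\begin{align*}
\PROD{\vct{y}}{\vct{\hat\eta}} = \PROD{\mathcal{F}_{n}\hat\mu}{\vct{\hat\eta}} + \PROD{\vct{\hat z}}{\vct{\hat\eta}} = \op{Re}\int_{\sqbr{0,1}} \overline{Q(f)}\,\diff{\hat\mu(f)} + \op{Re}\sum_{l=1}^{n} \overline{\vct{\hat z}_l}\,\vct{\hat\eta}_l.
\end{align*}

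Next I would bound each pairing separately using dual feasibility and the defining variational form of the two norms, obtaining
\begin{align*}
\op{Re}\int_{\sqbr{0,1}}\overline{Q(f)}\,\diff{\hat\mu(f)} \leq \int_{\sqbr{0,1}} \abs{Q(f)}\,\diff{\abs{\hat\mu}(f)} \leq \normTV{\hat\mu}, \qquad \op{Re}\sum_{l=1}^{n}\overline{\vct{\hat z}_l}\,\vct{\hat\eta}_l \leq \sum_{l=1}^{n} \abs{\vct{\hat z}_l}\,\abs{\vct{\hat\eta}_l} \leq \lambda \normOne{\vct{\hat z}}.
\end{align*}
Summing the two gives weak duality $\PROD{\vct{y}}{\vct{\hat\eta}} \leq \normTV{\hat\mu} + \lambda\normOne{\vct{\hat z}}$.

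The heart of the matter is strong duality. Since $(\hat\mu, \vct{\hat z})$ is primal-optimal and $\vct{\hat\eta}$ is dual-optimal, their objective values coincide, so every inequality above must be tight. Tightness of the first chain means $\int_{\sqbr{0,1}}(1 - \abs{Q(f)})\,\diff{\abs{\hat\mu}(f)} = 0$; since $\abs{Q}\leq 1$ this forces $\abs{Q(f_j)} = 1$ for every $f_j \in \widehat{T}$, and tightness of $\op{Re}\int \overline{Q}\,\diff{\hat\mu} = \int \diff{\abs{\hat\mu}}$ then pins the phase, yielding $(\mathcal{F}_{n}^{\ast}\vct{\hat\eta})(f_j) = \vct{\hat x}_j/\abs{\vct{\hat x}_j}$. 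Running the identical termwise argument on the $\ell_1$ chain (where $\op{Re}(\overline{\vct{\hat z}_l}\vct{\hat\eta}_l) = \abs{\vct{\hat z}_l}\abs{\vct{\hat\eta}_l}$ forces $\overline{\vct{\hat z}_l}\vct{\hat\eta}_l$ to be a nonnegative real and $\abs{\vct{\hat\eta}_l} = \lambda$) gives $\vct{\hat\eta}_l = \lambda\,\vct{\hat z}_l/\abs{\vct{\hat z}_l}$ for each $l \in \widehat{\Omega}$, which is the second claim.

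I expect the only delicate point to be justifying strong duality, since the primal variable $\tilde\mu$ ranges over the infinite-dimensional space of measures, so equality of the primal and dual optimal values is not automatic and requires a constraint qualification (here the equality constraint is trivially feasible, supplying the needed Slater-type condition). Rather than re-deriving this from scratch, I would obtain the present lemma as the $\sigma = 0$ specialization of Lemma~\ref{lemma:primaldual_noise}, whose noisy formulation subsumes Problem~\eqref{eq:opt_problem}; the extraction of the pointwise phase conditions above is then precisely the complementary-slackness step already carried out.
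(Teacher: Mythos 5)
Your proposal is correct and takes essentially the same route as the paper, which likewise obtains this lemma as the $\sigma = 0$ specialization of Lemma~\ref{lemma:primaldual_noise}, whose proof is precisely the strong-duality-plus-tightness (H\"older/complementary-slackness) argument you spell out. The only slight imprecision is your parenthetical on the constraint qualification---the paper's proof of Lemma~\ref{lemma:primaldual_noise} invokes nonempty interior of the \emph{dual} feasible set (a generalized Slater condition), not mere feasibility of the primal equality constraint---but since you defer exactly this point to Lemma~\ref{lemma:primaldual_noise}, your argument is unaffected.
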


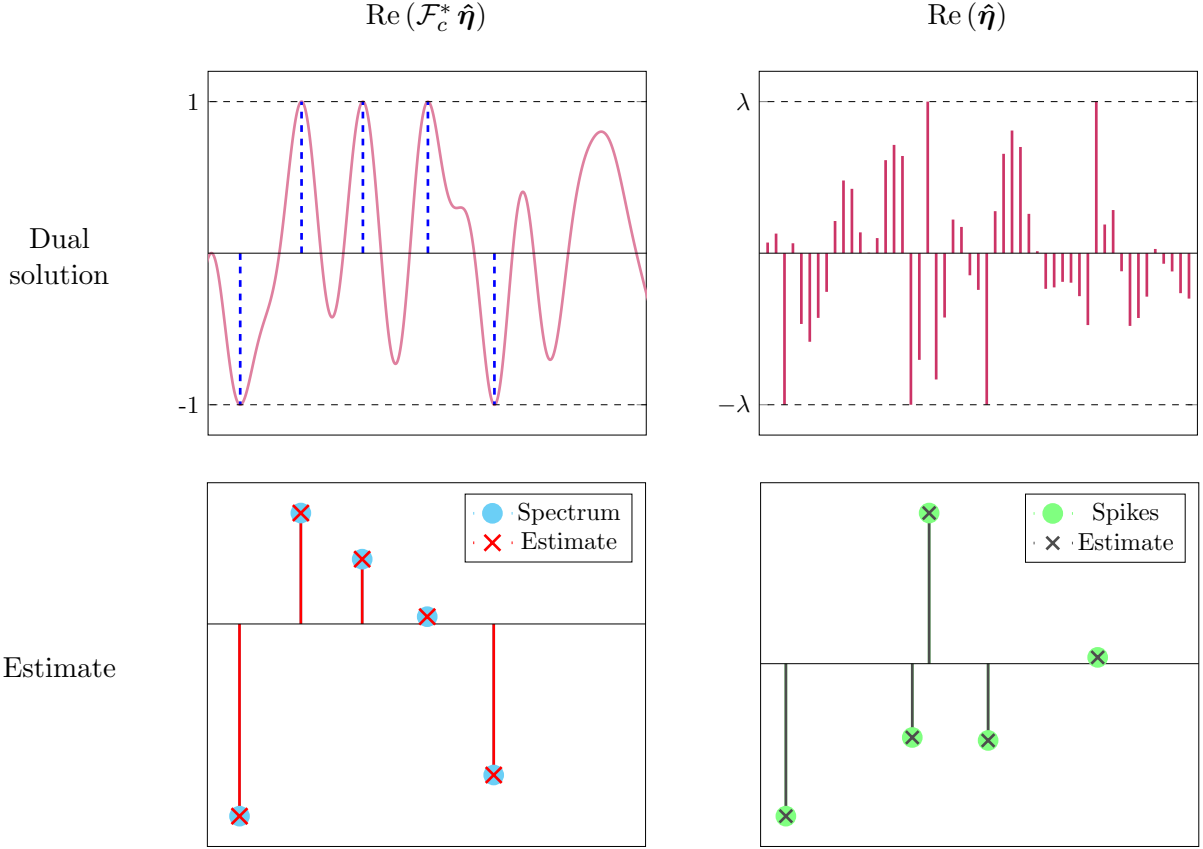
\begin{figure}[t]
\centering
\begin{tabular}{
>{\centering\arraybackslash}m{0.12\linewidth}>{\centering\arraybackslash}m{0.42\linewidth} >{\centering\arraybackslash}m{0.4\linewidth} }
 &  $\op{Re} \brac{ \mathcal{F}_c^{\ast} \, \vct{ \hat{\eta} } }$   & $\op{Re} \brac{ \vct{ \hat{\eta} } }$ \\ \\
Dual solution &
 \hspace{-0.6cm}
 \begin{tikzpicture}[scale=0.85]
\begin{axis}[ytick={-1,1}, yticklabels={-1,1},xmin=0.03,xmax=0.23, xtick=\empty]
\addplot[very thick,white!50!purple] file {sinesandspikes_dualpoly.dat};
\addplot+[ycomb,mark=none, dashed, blue,very thick]  file {sinesandspikes_sign.dat};
\addplot[black,forget plot] coordinates {(0,0) (0.27,0)};
\addplot[black,dashed,forget plot] coordinates {(0,1) (0.28,1)};
\addplot[black,dashed,forget plot] coordinates {(0,-1) (0.28,-1)};
\end{axis}
\end{tikzpicture}  
& 
\hspace{-0.5cm}
\begin{tikzpicture}[scale=0.85]
\begin{axis}[ytick={-0.0995, 0.0995}, yticklabels={$-\lambda$, $\lambda$},xmin=-1,xmax=51, xtick=\empty]
\addplot+[ycomb,mark=none,very thick,white!20!purple] file {sinesandspikes_dualpoly_spectrum.dat};
\addplot[black] coordinates {(-1,0) (51,0)};
\addplot[black,dashed,forget plot] coordinates {(0,0.0995) (51,0.0995)};
\addplot[black,dashed,forget plot] coordinates {(0,-0.0995) (51,-0.0995)};
\end{axis}
\end{tikzpicture} 
\\ \\
Estimate
&
\begin{tikzpicture}[scale=0.85]
\begin{axis}[ticks=none,xmin=0.03,xmax=0.23, legend pos=north east]
\addlegendentry{Spectrum}
 \addlegendentry{Estimate}
\addplot+[ycomb,mark=*, cyan!50!white,very thick,mark options={solid,scale=2}]  file {sines_spectrum.dat};
\addplot+[ycomb,mark=x, red,mark options={fill=red,scale=2.5},very thick]  file {sdp_robust.dat};
\addplot[black] coordinates {(0,0) (0.27,0)};
\end{axis}
\end{tikzpicture}
&
\hspace{0.2cm}
\begin{tikzpicture}[scale=0.85]
\begin{axis}[ticks=none,xmin=-1,xmax=51, legend pos=north east]
\addlegendentry{Spikes}
 \addlegendentry{Estimate}
\addplot+[ycomb,mark=*, green!50!white,very thick,mark options={solid,scale=2}]  file {spikes.dat};
\addplot+[ycomb,mark=x, black!70!white,mark options={fill=red,scale=2},very thick]  file {spikes_est.dat};
\addplot[black] coordinates {(-1,0) (51,0)};
\end{axis}
\end{tikzpicture}
\end{tabular}
\caption{Demixing of the signal in Figure~\ref{fig:sines_spikes} by semidefinite programming. Top left: the polynomial $\mathcal{F}_c^{\ast} \, \vct{ \hat{\eta} }$ (light red), where $\vct{ \hat{\eta} }$ is a solution of Problem~\eqref{eq:TVnormMin_sdp_sinesspikes}, interpolates the sign of the line spectrum of the sines (dashed red) on their support. Top right: $\lambda^{-1} \vct{ \hat{\eta} }$ interpolates the sign pattern of the spikes on their support. Bottom: locating the support of $\mu$ and $\vct{z}$ allows to demix very accurately (the circular markers represent the original spectrum of the sines and the original spikes and the crosses the corresponding estimates). The parameter $\lambda$ is set to $1/\sqrt{n}$.}
\label{fig:sines_spikes_sdp}
\end{figure}

In words, the weighted dual solution $\lambda^{-1} \vct{ \hat{\eta} }$ and the corresponding polynomial $\mathcal{F}_{n}^{\ast} \, \vct{ \hat{\eta} }$ interpolate the sign patterns of the primal-solution components $\vct{\hat{z}}$ and $\hat{\mu}$ on their respective supports, as illustrated in the top row of Figure~\ref{fig:sines_spikes_sdp}.  This suggests estimating the support of the line spectrum and the outliers in the following way. 
\begin{enumerate}
\item Solve~\eqref{eq:TVnormMin_sdp_sinesspikes} to obtain a dual solution $\vct{ \hat{\eta} }$ and compute $\mathcal{F}_c^{\ast} \, \vct{ \hat{\eta} }$. 
\item Set the estimated support of the spikes $\widehat{\Omega}$ to the set of points where $\abs{\vct{ \hat{\eta} }}$ equals $\lambda$.
\item Set the estimated support of the line spectrum $\widehat{T}$ to the set of points where $\abs{ \mathcal{F}_c^{\ast} \, \vct{ \hat{\eta} } }$ equals one.
\item Estimate the amplitudes of $\hat{\mu}$ and $\vct{\hat{\eta}}$ on $\hat{T}$ and $\hat{\Omega}$ respectively by solving a system of linear equations $\vct{y} = \mathcal{F}_n \hat{\mu} + \hat{\vct{\eta}}$.
\end{enumerate}

Figure~\ref{fig:sines_spikes_sdp} shows the results obtained by this method on the data described in Figure~\ref{fig:sines_spikes}: both components are recovered very accurately. However, we caution the reader that while the primal solution $(\hat{\mu}, \hat{\vct{z}})$ is generally unique, the dual solutions are non-unique and some of the dual solutions might produce spurious frequencies and spikes in steps 2 and 3. In fact, the dual solutions form a convex set and only those in the interior of this convex set give exact supports $\hat{\Omega}$ and $\hat{T}$, while those on the boundary generate spurious estimates. When the semidefinite program \eqref{eq:TVnormMin_sdp_sinesspikes} is solved using interior point algorithms as the case in CVX, a dual solution in the interior is returned, generating correct supports as shown in Figure \ref{fig:sines_spikes_sdp}. Refer to \cite{tang2012offgrid} for a rigorous treatment of this topic for the related missing-data case. Such technical complication will not seriously affect our estimates of the supports since the amplitudes inferred in step 4 will be zero for the extra frequencies and spikes, providing a means to eliminate them.


\subsection{Demixing in the presence of dense perturbations}
\label{sec:sdp_noise}
As described in Section~\ref{sec:dense_noise} our demixing method can be adapted to the presence of dense noise in the data by relaxing the equality constraint in Problem~\ref{eq:opt_problem} to an inequality constraint. The only effect on the dual of the optimization problem, which can still be reformulated as an SDP, is an extra term in the cost function.
\begin{lemma}[Proof in Section~\ref{proof:dual_sdp_noise}]
\label{lemma:dual_sdp_noise}
The dual of Problem~\eqref{eq:opt_problem_noise} is
\begin{alignat}{2}
\label{eq:dual_noise}
& \max_{ \vct{\eta} \in \C^{n}} \;   \PROD{\vct{y}}{\vct{\eta}} && - \sigma \normTwo{\vct{\eta}}    \\
& \text{subject to} \quad && \normInf{\mathcal{F}_{n}^{\ast} \, \vct{\eta}}  \leq 1, \label{eq:cond_one_primaldual}\\
& && \normInf{\vct{\eta}}  \leq \lambda. \label{eq:cond_lambda_primaldual}
\end{alignat}
This problem is equivalent to the semidefinite program
\begin{align}
\label{eq:TVnormMin_sdp_sinesspikes_noise}
\max_{\vct{\eta} \in \C^n, \, \Lambda \in \C^{n\times n}} \;   \PROD{ \vct{y}}{ \vct{\eta}} - \sigma \normTwo{\vct{\eta}} \quad
 \text{subject to} \quad & \MAT{\Lambda & \vct{\eta} \\ \vct{\eta}^{\ast} & 1} \succeq 0, \\
 & \ml{T}^{\ast}\brac{\Lambda}= \MAT{1 \\ \vct{0}},  \\
 & \normInf{\vct{\eta}}  \leq \lambda,
\end{align}
where $\vct{0} \in \C^{n-1}$ is a vector of zeros. 
\end{lemma}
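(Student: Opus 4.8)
The plan is to obtain the dual by standard Lagrangian duality and then to lift the sup-norm constraint on the dual polynomial into a semidefinite constraint, exactly as is done for plain TV-norm minimization; the only new ingredient relative to Problem~\eqref{eq:opt_problem} is the dense-noise budget, which contributes a single extra term to the dual objective.

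First I would introduce a slack variable for the residual. Following the noise model~\eqref{eq:dense_noise}, set $\vct{w} := \vct{y} - \ml{F}_n \, \tilde{\mu} - \vct{\tilde{z}}$ so that Problem~\eqref{eq:opt_problem_noise} becomes the minimization of $\normTV{\tilde{\mu}} + \lambda \normOne{\vct{\tilde{z}}}$ subject to the equality $\ml{F}_n \, \tilde{\mu} + \vct{\tilde{z}} + \vct{w} = \vct{y}$ together with $\normTwo{\vct{w}} \leq \sigma$. Attaching a dual variable $\vct{\eta} \in \C^n$ to the equality constraint, the Lagrangian separates additively across $\tilde{\mu}$, $\vct{\tilde{z}}$ and $\vct{w}$, and I would minimize each block using three elementary conjugate computations: the duality between $\normTV{\cdot}$ and the sup norm forces the constraint $\normInf{\mathcal{F}_n^{\ast} \, \vct{\eta}} \leq 1$ (and contributes value zero); the duality between $\lambda \normOne{\cdot}$ and the scaled $\ell_\infty$ ball forces $\normInf{\vct{\eta}} \leq \lambda$; and $\min_{\normTwo{\vct{w}} \leq \sigma} \brac{-\PROD{\vct{\eta}}{\vct{w}}} = -\sigma \normTwo{\vct{\eta}}$ by Cauchy--Schwarz, which is precisely the new term. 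Collecting the pieces yields~\eqref{eq:dual_noise}. Strong duality holds by Slater's condition, since $\tilde{\mu} = 0$, $\vct{\tilde{z}} = \vct{y}$ makes the residual vanish and is therefore strictly feasible whenever $\sigma > 0$.

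For the semidefinite reformulation I would note that only the polynomial constraint~\eqref{eq:cond_one_primaldual} requires lifting; the constraint~\eqref{eq:cond_lambda_primaldual} and the objective $\PROD{\vct{y}}{\vct{\eta}} - \sigma\normTwo{\vct{\eta}}$ pass through unchanged. Here I would invoke the characterization used for TV-norm minimization in~\cite{Candes:2012uf,superres_new,Bhaskar:2012tq}: $\normInf{\mathcal{F}_n^{\ast} \, \vct{\eta}} \leq 1$ holds if and only if there exists a Hermitian $\Lambda \in \C^{n\times n}$ with $\MAT{\Lambda & \vct{\eta} \\ \vct{\eta}^{\ast} & 1} \succeq 0$ and $\ml{T}^{\ast}\brac{\Lambda} = \MAT{1 \\ \vct{0}}$. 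The reverse implication is a direct computation bounding $\abs{\mathcal{F}_n^{\ast} \, \vct{\eta}(f)}$ pointwise from any feasible $\Lambda$; the forward implication rests on a Schur-complement argument together with the Fej\'er--Riesz / bounded-real-lemma fact that a modulus-bounded trigonometric polynomial admits such a Gram representation. Substituting this equivalence into~\eqref{eq:dual_noise} produces~\eqref{eq:TVnormMin_sdp_sinesspikes_noise}, and the special case $\sigma = 0$ recovers Lemma~\ref{lemma:dual_sdp}. The routine part is the Lagrangian bookkeeping; the one genuinely technical ingredient-- the Toeplitz-trace semidefinite characterization of the sup-norm constraint-- is already available in the literature, so the main task reduces to checking that the dense-noise slack $\vct{w}$ leaves the lifting of the polynomial constraint untouched and merely appends $-\sigma\normTwo{\vct{\eta}}$ to the dual objective.
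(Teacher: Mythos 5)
Your proof is correct, and it reaches \eqref{eq:dual_noise} by a slightly cleaner route than the paper's for the one genuinely new ingredient. Both arguments isolate the residual (your slack $\vct{w}$ is the paper's $\vct{y}-\vct{u}$ with $\vct{u}=\ml{F}_n\,\tilde{\mu}+\vct{\tilde{z}}$), and both dispose of the $\normTV{\cdot}$ and $\lambda\normOne{\cdot}$ blocks with the same conjugacy/H\"older computations, forcing \eqref{eq:cond_one_primaldual} and \eqref{eq:cond_lambda_primaldual}. The difference is the handling of the noise budget: the paper dualizes the inequality $\normTwo{\vct{y}-\vct{u}}^2\leq\sigma^2$ with a second multiplier $\nu\geq 0$, minimizes the Lagrangian over $\vct{u}$ (optimum at $\vct{u}=\vct{y}-\tfrac{1}{2\nu}\vct{\eta}$) to obtain the dual function $\PROD{\vct{y}}{\vct{\eta}}-\tfrac{1}{4\nu}\normTwo{\vct{\eta}}^2-\nu\sigma^2$, and then maximizes analytically over $\nu$ (optimum $\nu=\normTwo{\vct{\eta}}/(2\sigma)$) to produce $-\sigma\normTwo{\vct{\eta}}$; you instead keep the ball constraint $\normTwo{\vct{w}}\leq\sigma$ explicit in the inner minimization and get the same term in one line from Cauchy--Schwarz. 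This partial dualization is more elementary, needs no auxiliary multiplier, and sidesteps the degenerate cases ($\nu=0$, $\vct{\eta}=0$) that the paper's optimization over $\nu$ implicitly glosses over. The semidefinite reformulation is handled identically in both proofs: the sup-norm constraint on the dual polynomial is lifted via the Gram-matrix characterization (the paper's Proposition~\ref{prop:sdp-charact}, borrowed from~\cite{Dumitrescu:2007vw} and~\cite{superres_new}), which is exactly the Schur-complement/Fej\'er--Riesz fact you invoke, while \eqref{eq:cond_lambda_primaldual} and the objective pass through unchanged. One small remark: your appeal to Slater's condition is not needed for this lemma, since identifying the Lagrange dual does not require strong duality; the paper invokes a generalized Slater condition only later, in the proof of Lemma~\ref{lemma:primaldual_noise}, where strong duality actually matters.
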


\begin{figure}[tp]
\begin{tabular}{
>{\centering\arraybackslash}m{0.1\linewidth}>{\centering\arraybackslash}m{0.4 \linewidth} >{\centering\arraybackslash}m{0.36\linewidth} }
& $\abs{\vct{ \hat{\eta} }}$  &  $\abs{\mathcal{F}_c^{\ast} \, \vct{ \hat{\eta} }}$   \\ \\
No dense noise 
& 
\begin{tikzpicture}[scale=0.91]
\begin{axis}[ytick={0, 0.0995}, yticklabels={0, $\lambda$},xmin=6,xmax=51,  xtick=\empty]
\addplot+[ycomb,mark=none,very thick,purple] file {sinesandspikes_dualpoly_spectrum_abs.dat};
\addplot[only marks,mark=*, blue,mark options={scale=1},very thick] file {spikes_sign_abs.dat};
\addplot[black] coordinates {(-1,0) (51,0)};
\addplot[black,dashed,forget plot] coordinates {(0,0.0995) (51,0.0995)};
\end{axis}
\end{tikzpicture}
&
 \begin{tikzpicture}[scale=0.91]
\begin{axis}[ytick={0,1}, yticklabels={0,1},xmin=0.03,xmax=0.23, xtick=\empty,ymin=-0.1]
\addplot[very thick,white!50!purple] file {sinesandspikes_dualpoly_abs.dat};
\addplot+[ycomb,mark=none, dashed, blue,very thick]  file {sinesandspikes_sign_abs.dat};
\addplot[black,forget plot] coordinates {(0,0) (0.27,0)};
\addplot[black,dashed,forget plot] coordinates {(0,1) (0.28,1)};
\end{axis}
\end{tikzpicture}   \\
\\
30 dB 
& 
\begin{tikzpicture}[scale=0.91]
\begin{axis}[ytick={0, 0.0995}, yticklabels={0, $\lambda$},xmin=6,xmax=51, xtick=\empty]
\addplot+[ycomb,mark=none,very thick,white!20!purple] file {noise_sdp_dualpoly_spectrum_abs_1.dat};
\addplot[only marks,mark=*, blue,mark options={scale=1},very thick] file {lambda_sign_abs.dat};
\addplot[black] coordinates {(-1,0) (51,0)};
\addplot[black,dashed,forget plot] coordinates {(0,0.0995) (51,0.0995)};
\end{axis}
\end{tikzpicture}
&
 \begin{tikzpicture}[scale=0.91]
\begin{axis}[ytick={0,1}, yticklabels={0,1},xmin=0.03,xmax=0.23, xtick=\empty,ymin=-0.1]
\addplot[very thick,white!50!purple] file {noise_sdp_dualpoly_abs_1.dat};
\addplot+[ycomb,mark=none, dashed, blue,very thick]  file {sinesandspikes_sign_abs.dat};
\addplot[black] coordinates {(0,0) (0.27,0)};
\addplot[black,dashed,forget plot] coordinates {(0,1) (0.28,1)};
\end{axis}
\end{tikzpicture} 
\\
\\
15 dB &
\begin{tikzpicture}[scale=0.91]
\begin{axis}[ytick={0, 0.0995}, yticklabels={0, $\lambda$},xmin=6,xmax=51, xtick=\empty]
\addplot+[ycomb,mark=none,very thick,white!20!purple] file {noise_sdp_dualpoly_spectrum_abs_2.dat};
\addplot[only marks,mark=*, blue,mark options={scale=1},very thick] file {lambda_sign_abs.dat};
\addplot[black] coordinates {(-1,0) (51,0)};
\addplot[black,dashed,forget plot] coordinates {(0,0.0995) (51,0.0995)};
\end{axis}
\end{tikzpicture}
&
 \begin{tikzpicture}[scale=0.91]
\begin{axis}[ ytick={0,1}, yticklabels={0,1},xmin=0.03,xmax=0.23, xtick=\empty,ymin=-0.1]
\addplot[very thick,white!50!purple] file {noise_sdp_dualpoly_abs_2.dat};
 \addplot+[ycomb,mark=none, dashed, blue,very thick]  file {sinesandspikes_sign_abs.dat};
\addplot[black] coordinates {(0.0,0) (0.27,0)};
\addplot[black,dashed,forget plot] coordinates {(0,1) (0.28,1)};
\end{axis}
\end{tikzpicture}  
\end{tabular}
\caption{The left column shows the magnitude of the solution to Problem~\eqref{eq:dual_sinesspikes} (top row) and to Problem~\ref{eq:dual_noise} for different noise levels (second and third rows). $\abs{\vct{ \hat{\eta} }}$ is represented by red lines. Additionally, the support of the sparse perturbation $\vct{z}$ is marked in blue. The right column shows the trigonometric polynomial corresponding to the dual solutions in red, as well as the support of the spectrum of the multisinusoidal components in blue. The data are the same as in Figure~\ref{fig:sines_spikes} (except for the added noise, which is iid Gaussian). The parameters $\lambda$ and $\sigma$ are set to $1/\sqrt{n}$ and $1.5 \, \normTwo{\vct{w}}$ respectively. Note that in practice, the value of the noise level would have to be estimated, for example by cross validation.}
\label{fig:dual_poly_dense_noise}
\end{figure}
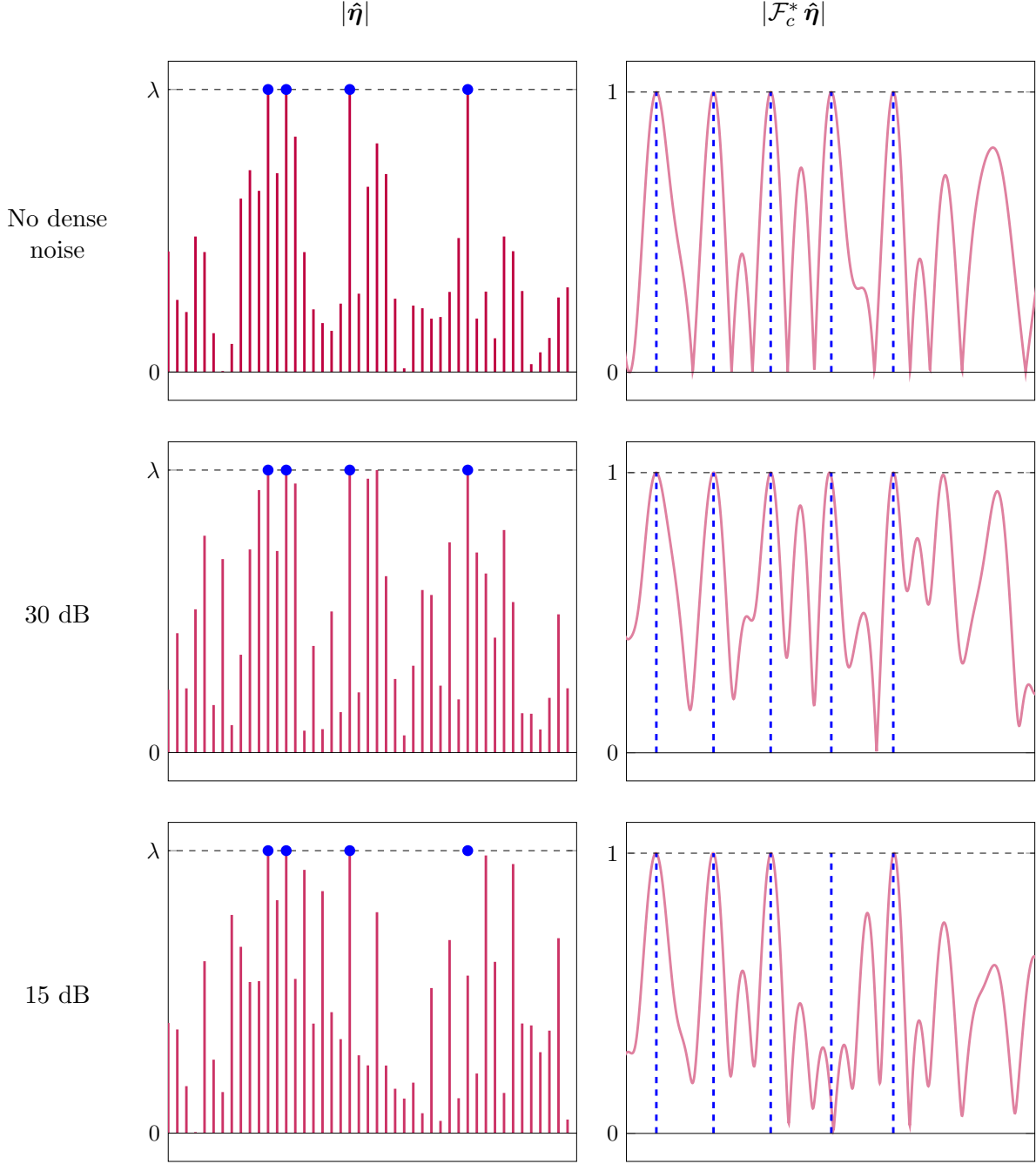

As in the case without dense noise, the support of the primal solution of Problem~\eqref{eq:opt_problem_noise} can be decoded from the dual solution. This is justified by the following lemma, which establishes that the weighted dual solution $\lambda^{-1} \vct{ \hat{\eta} }$ and the corresponding polynomial $\mathcal{F}_{n}^{\ast} \, \vct{ \hat{\eta} }$ interpolate the sign patterns of the primal-solution components $\vct{\hat{z}}$ and $\hat{\mu}$ on their respective supports. 

\begin{lemma}[Proof in Section~\ref{proof:primaldual_noise}]
\label{lemma:primaldual_noise}
Let 
\begin{align}
\hat{\mu} & =\sum_{f_j \in \widehat{T}} \vct{\hat{x}}_{j} \, \delta \brac{ f - f_j}, 
\end{align}
and $\vct{\hat{z}}$ be a solution to \eqref{eq:opt_problem_noise}, such that $\widehat{T}$ and $\widehat{\Omega}$ are the nonzero supports of the line spectrum $\hat{\mu}$ and the spikes $\vct{\hat{z}}$ respectively. If $\vct{ \hat{\eta} } \in \C^n$ is a corresponding dual solution, then for any $f_j$ in $\widehat{T}$ 
\begin{align}
\label{eq:eta_mu_noise}
 \brac{\mathcal{F}_{n}^{\ast} \, \vct{ \hat{\eta} }} \brac{f_j} =\frac{\vct{\hat{x}}_j}{\abs{\vct{\hat{x}}_j}}
\end{align}
and for any $l$ in $\widehat{\Omega}$
\begin{align}
\label{eq:eta_z_noise}
\vct{ \hat{\eta} }_l = \lambda \frac{\vct{\hat{z}}_l}{\abs{\vct{\hat{z}}_l}}.
\end{align}
\end{lemma}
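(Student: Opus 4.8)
The plan is to derive both interpolation identities from strong duality between Problem~\eqref{eq:opt_problem_noise} and its dual~\eqref{eq:dual_noise} via a termwise complementary-slackness argument. Strong duality is available: Lemma~\ref{lemma:dual_sdp_noise} identifies the dual, and Slater's condition holds for the primal because the point $\tilde{\mu}=0$, $\vct{\tilde{z}}=\vct{y}$ makes the data-consistency residual vanish and hence is strictly feasible whenever $\sigma>0$. Consequently the duality gap is zero, and since we are given a dual-optimal $\vct{\hat{\eta}}$, the primal and dual optimal values coincide:
\begin{align}
\normTV{\hat{\mu}} + \lambda \normOne{\vct{\hat{z}}} = \PROD{\vct{y}}{\vct{\hat{\eta}}} - \sigma \normTwo{\vct{\hat{\eta}}}.
\end{align}
The strategy is to expand the right-hand side, bound each flexible term using dual feasibility, observe that the resulting chain closes up to an equality, and then read off the equality conditions.

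First I would split $\vct{y} = \brac{\vct{y} - \ml{F}_n \hat{\mu} - \vct{\hat{z}}} + \ml{F}_n \hat{\mu} + \vct{\hat{z}}$ inside the pairing and move $\ml{F}_n$ onto $\vct{\hat{\eta}}$ through the adjoint identity $\PROD{\ml{F}_n \hat{\mu}}{\vct{\hat{\eta}}} = \op{Re}\sqbr{\int \overline{\brac{\ml{F}_n^{\ast}\,\vct{\hat{\eta}}}\brac{f}}\,\diff{\hat{\mu}\brac{f}}}$, to obtain
\begin{align}
\PROD{\vct{y}}{\vct{\hat{\eta}}} - \sigma \normTwo{\vct{\hat{\eta}}} = \PROD{\vct{y} - \ml{F}_n \hat{\mu} - \vct{\hat{z}}}{\vct{\hat{\eta}}} + \op{Re}\sqbr{\int \overline{\brac{\ml{F}_n^{\ast}\,\vct{\hat{\eta}}}\brac{f}}\,\diff{\hat{\mu}\brac{f}}} + \PROD{\vct{\hat{z}}}{\vct{\hat{\eta}}} - \sigma \normTwo{\vct{\hat{\eta}}}.
\end{align}
Now I would bound the three variable terms from above. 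Cauchy--Schwarz together with primal feasibility $\normTwo{\vct{y} - \ml{F}_n \hat{\mu} - \vct{\hat{z}}} \leq \sigma$ gives $\PROD{\vct{y} - \ml{F}_n \hat{\mu} - \vct{\hat{z}}}{\vct{\hat{\eta}}} \leq \sigma \normTwo{\vct{\hat{\eta}}}$. The dual constraint~\eqref{eq:cond_one_primaldual}, $\normInf{\ml{F}_n^{\ast}\,\vct{\hat{\eta}}} \leq 1$, together with the variational definition of the TV norm shows the integral term is at most $\normTV{\hat{\mu}}$. The dual constraint~\eqref{eq:cond_lambda_primaldual}, $\normInf{\vct{\hat{\eta}}} \leq \lambda$, together with Hölder's inequality gives $\PROD{\vct{\hat{z}}}{\vct{\hat{\eta}}} \leq \lambda \normOne{\vct{\hat{z}}}$. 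Summing the three bounds exactly reconstitutes $\normTV{\hat{\mu}} + \lambda \normOne{\vct{\hat{z}}}$, so equality must hold in each of the three inequalities simultaneously.

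It then remains to translate these equalities into the claimed pointwise statements. Writing $\hat{\mu} = \sum_{f_j \in \widehat{T}} \vct{\hat{x}}_j \,\delta\brac{f - f_j}$, equality in $\op{Re}\sqbr{\sum_j \overline{\brac{\ml{F}_n^{\ast}\,\vct{\hat{\eta}}}\brac{f_j}}\,\vct{\hat{x}}_j} = \sum_j \abs{\vct{\hat{x}}_j}$ under $\abs{\brac{\ml{F}_n^{\ast}\,\vct{\hat{\eta}}}\brac{f_j}} \leq 1$ is the elementary equality case of the triangle inequality $\op{Re}\brac{\bar{a}b} \leq \abs{a}\abs{b}$ applied termwise; it forces $\brac{\ml{F}_n^{\ast}\,\vct{\hat{\eta}}}\brac{f_j} = \vct{\hat{x}}_j/\abs{\vct{\hat{x}}_j}$ for each $f_j \in \widehat{T}$, which is~\eqref{eq:eta_mu_noise}. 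Likewise, equality in $\op{Re}\sqbr{\sum_{l} \overline{\vct{\hat{z}}_l}\,\vct{\hat{\eta}}_l} = \lambda \normOne{\vct{\hat{z}}}$ under $\normInf{\vct{\hat{\eta}}} \leq \lambda$ forces, for every $l \in \widehat{\Omega}$, both $\abs{\vct{\hat{\eta}}_l} = \lambda$ and $\overline{\vct{\hat{z}}_l}\,\vct{\hat{\eta}}_l \geq 0$, i.e. $\vct{\hat{\eta}}_l = \lambda\, \vct{\hat{z}}_l/\abs{\vct{\hat{z}}_l}$, which is~\eqref{eq:eta_z_noise}.

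The argument is essentially routine KKT bookkeeping, and I expect the only genuinely delicate point to be the infinite-dimensional TV-norm pairing. There one must justify that the supremum defining $\normTV{\hat{\mu}}$ is legitimately upper-bounding the integral against the continuous test function $\ml{F}_n^{\ast}\,\vct{\hat{\eta}}$ (whose sup-norm is controlled by~\eqref{eq:cond_one_primaldual}), and that the equality case pins its value down on the support $\widehat{T}$. Because $\hat{\mu}$ is a finite sum of Diracs, this collapses to the finite termwise equality condition above, so no essential difficulty arises beyond verifying strong duality at the outset; the noiseless Lemma~\ref{lemma:primaldual_sinesspikes} is then recovered by setting $\sigma = 0$.
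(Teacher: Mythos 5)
Your proof is correct and follows essentially the same route as the paper's: establish strong duality, equate the primal optimum $\normTV{\hat{\mu}} + \lambda \normOne{\vct{\hat{z}}}$ with the dual objective at $\vct{\hat{\eta}}$, bound the residual pairing by Cauchy--Schwarz and the measure/spike pairings via the dual constraints with H\"older, and read off \eqref{eq:eta_mu_noise} and \eqref{eq:eta_z_noise} from the termwise equality cases. The one point of divergence is the justification of strong duality: you apply Slater to the primal via the strictly feasible point $\tilde{\mu}=0$, $\vct{\tilde{z}}=\vct{y}$, which requires $\sigma>0$, whereas the paper applies a generalized Slater condition to the dual Problem~\eqref{eq:dual_noise}, whose feasible set contains the origin in its interior for any $\sigma \geq 0$; this matters only for your closing remark, since recovering the noiseless Lemma~\ref{lemma:primaldual_sinesspikes} by setting $\sigma=0$ would void your Slater point and would instead need the paper's dual-side argument (or a separate treatment of the affine-constrained case).
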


\begin{figure}[tp]
\centering
\begin{tabular}{  >{\centering\arraybackslash}m{0.15\linewidth} >{\centering\arraybackslash}m{0.36\linewidth} >{\centering\arraybackslash}m{0.35\linewidth}  }
SNR \hspace{0.5cm} (dense noise) &  30 dB  & 15 dB \\ 
Spectrum estimate
 &
\begin{tikzpicture}[scale=0.75]
\begin{axis}[ticks=none,legend pos = north east,xmin=0.01,xmax=0.2]
\addlegendentry{Spectrum}
\addlegendentry{Estimate}
\addplot+[ycomb,mark=*, cyan!50!white,very thick,mark options={solid,scale=2}]  file {noise_sines_spectrum_1.dat};
\addplot+[ycomb,mark=x, red,mark options={fill=red,scale=2.5},very thick]  file {noise_sdp_robust_1.dat};
\addplot[black,forget plot] coordinates {(-0.2,0) (0.45,0)};
\end{axis}
\end{tikzpicture} 
&
\begin{tikzpicture}[scale=0.75]
\begin{axis}[ticks=none,legend style={font=\small,at={(0,0)},anchor=south west,legend columns=-1,/tikz/every even column/.append style={column sep=0.25cm},draw=none},xmin=0.01,xmax=0.2]
\addplot+[ycomb,mark=*, cyan!50!white,very thick,mark options={solid,scale=2}]  file {noise_sines_spectrum_2.dat};
\addplot+[ycomb,mark=x, red,mark options={fill=red,scale=2.5},very thick]  file {noise_sdp_robust_2.dat};
\addplot[black,forget plot] coordinates {(-0.2,0) (0.45,0)};
\end{axis}
\end{tikzpicture} 
\\
Data + spike estimate &
\begin{tikzpicture}[scale=0.75]
\begin{axis}[xmin=-1,xmax=51, ticks=none]
\addlegendentry{Data}
 \addlegendentry{Outliers}
 \addlegendentry{Detected}
 \addlegendimage{blue,thick}
\addplot+[ycomb,mark=none,very thick,white!50!blue,forget plot] file {noise_y_abs_1.dat};
\addplot[only marks,mark=o, orange,mark options={fill=red,scale=2},very thick] file {noise_sparse_noise_true_1.dat};
\addplot[only marks,mark=x, red,mark options={fill=red,scale=2.5},very thick] file {noise_sparse_noise_est_1.dat};
\addplot[black] coordinates {(-2,0) (51,0)};
\end{axis}
\end{tikzpicture} 
& 
\begin{tikzpicture}[scale=0.75]
\begin{axis}[xmin=-1,xmax=51, ticks=none]
\addplot+[ycomb,mark=none,very thick,white!50!blue] file {noise_y_abs_2.dat};
\addplot[only marks,mark=o, orange,mark options={fill=red,scale=2},very thick] file {noise_sparse_noise_true_2.dat};
\addplot[only marks,mark=x, red,mark options={fill=red,scale=2.5},very thick] file {noise_sparse_noise_est_2.dat};
\addplot[black] coordinates {(-2,0) (51,0)};
\end{axis}
\end{tikzpicture} 
\\
Noise &
\begin{tikzpicture}[scale=0.75]
\begin{axis}[xmin=-1,xmax=51, ticks=none]
\addlegendentry{Sparse}
 \addlegendentry{Dense}
 \addlegendimage{orange,thick}
 \addlegendimage{black!50!green,thick}
\addplot+[ycomb,mark=none,very thick,black!50!green] file {noise_dense_noise_abs_1.dat};
\addplot+[ycomb,mark=none,very thick,orange] file {noise_sparse_noise_abs_1.dat};
\addplot[black] coordinates {(-2,0) (51,0)};
\end{axis}
\end{tikzpicture}
& 
\begin{tikzpicture}[scale=0.75]
\begin{axis}[xmin=-1,xmax=51, ticks=none]
\addplot+[ycomb,mark=none,very thick,black!50!green] file {noise_dense_noise_abs_2.dat};
\addplot+[ycomb,mark=none,very thick,orange] file {noise_sparse_noise_abs_2.dat};
\addplot[black] coordinates {(-1,0) (51,0)};
\end{axis}
\end{tikzpicture}
\end{tabular}
\caption{The top row shows the results of applying SDP-based spectral super-resolution in the presence of both dense noise and outliers (bottom row) for two different dense-noise levels (left and right columns). The second row shows the magnitude of the data, the location of the outliers and the outlier estimate produced by the method. In the bottom row we can see the magnitude of the sparse and dense noise (note that when the SNR is 15 dB the smallest sparse-noise components is below the dense-noise level). The signal is the same as in Figure~\ref{fig:sines_spikes} and the data are the same as in Figure~\ref{fig:dual_poly_dense_noise}. The parameter $\sigma$ is set to $1.5 \, \normTwo{\vct{w}}$ and $\lambda$ is set to $1/\sqrt{n}$.}
\label{fig:sdp_noisy}
\end{figure}
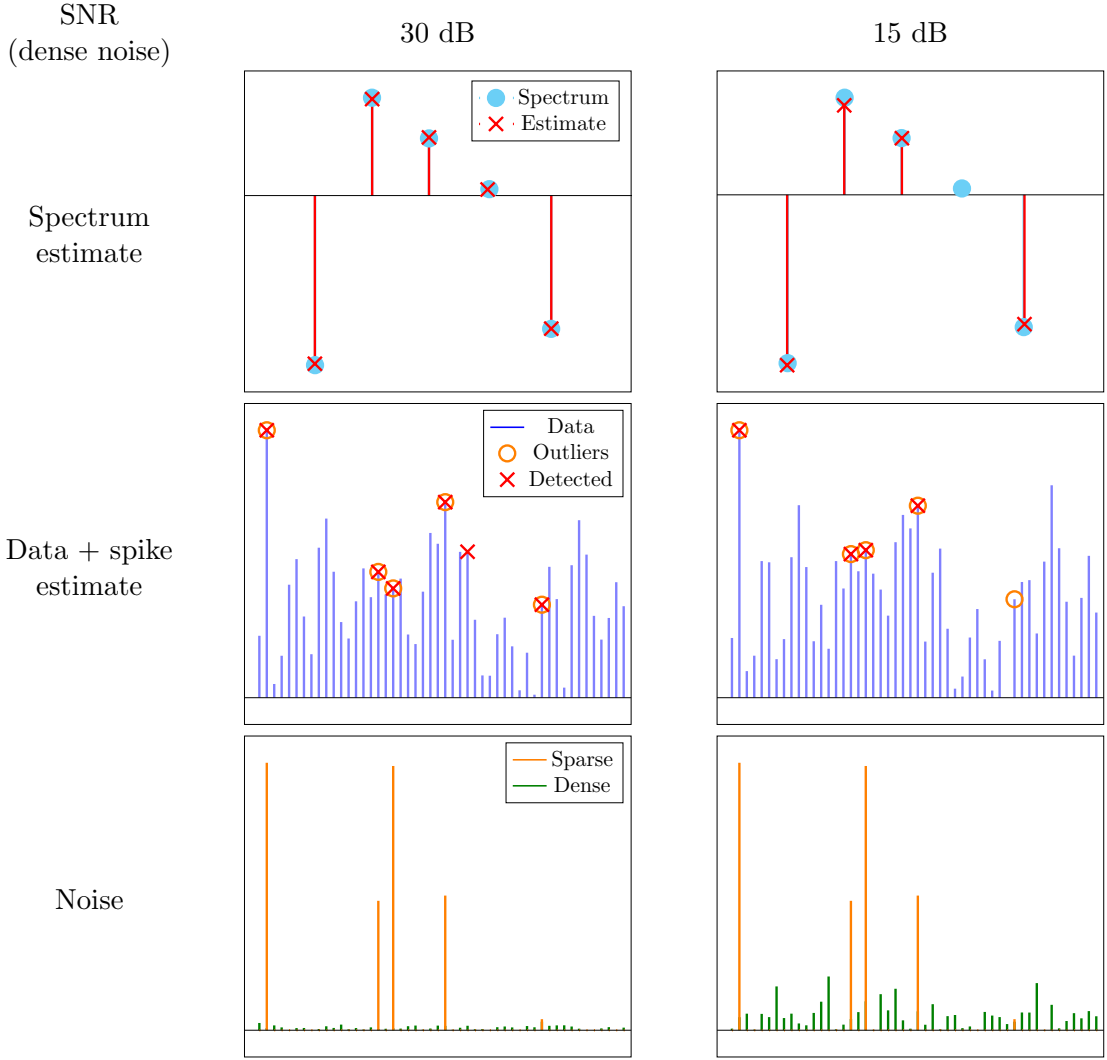
Figure~\ref{fig:dual_poly_dense_noise} shows the magnitude of the dual solutions for different values of additive noise. Motivated by the lemma, we propose to estimate the support of the outliers using $\vct{ \hat{\eta} }$ and the support of the spectral lines using $\abs{\mathcal{F}_c^{\ast} \, \vct{ \hat{\eta} }}$. Our method to perform spectral super-resolution in the presence of outliers and dense noise consequently consists of the following steps:
\begin{enumerate}
\item Solve~\eqref{eq:TVnormMin_sdp_sinesspikes_noise} to obtain a dual solution $\vct{ \hat{\eta} }$ and compute $\mathcal{F}_c^{\ast} \, \vct{ \hat{\eta} }$. 
\item Set the estimated support of the spikes $\widehat{\Omega}$ to the set of points where $\abs{\vct{ \hat{\eta} }}$ equals $\lambda$.
\item Set the estimated support of the spectrum $\widehat{T}$ to the set of points where $\abs{ \mathcal{F}_c^{\ast} \, \vct{ \hat{\eta} } }$ equals one.
\item Estimate the amplitudes of $\hat{\mu}$ by solving a least-squares problem using only the data that do not lie in the estimated support of the spikes $\widehat{\Omega}$.
\end{enumerate}

Figure~\ref{fig:sdp_noisy} shows the result of applying our method to data that includes additive iid Gaussian noise with a signal-to-noise ratio (SNR) of 30 and 15 dB. 
Despite the presence of the dense noise, our method is able to detect all spectral lines at 30 dB and all but one at 15 dB. Additionally, it is capable of detecting most of the spikes correctly: at 30 dB it detects a spurious spike and at 15 dB it misses one. Note that the spike that is not detected when the SNR is 15 dB has a magnitude small enough for it to be considered part of the dense noise.

\subsection{Greedy demixing enhanced by local nonconvex optimization}
\label{sec:greedy}
In this section we propose an alternative method for spectral super-resolution in the presence of outliers, which is significantly faster than the SDP-based approach described in the previous sections. In the spirit of matching-pursuit methods~\cite{mp,omp}, the algorithm selects the spectral lines of the signal and the locations of the outliers in a greedy fashion. This is equivalent to choosing atoms from a dictionary of the form
\begin{align}
\ml{D} := \keys{\vct{a} \brac{ f, 0 }, \, f \in \sqbr{0,1} } \cup \keys{\vct{e}\brac{l}, \, 1 \leq l \leq n}.
\end{align}
The dictionary includes the multisinusoidal atoms $\vct{a} \brac{ f, 0 }$ defined in~\eqref{eq:atoms} and $n$ \emph{spiky} atoms $\vct{e}\brac{l} \in \R^{n}$, which are equal to the one-sparse standard-basis vectors. By~\eqref{eq:data_denoising}, if the data $\vct{y}$ are of the form~\eqref{eq:model_data} then they have a $\brac{k+s}$-sparse representation in terms of the atoms in $\ml{D}$. Greedy demixing aims to find this sparse representation iteratively. 

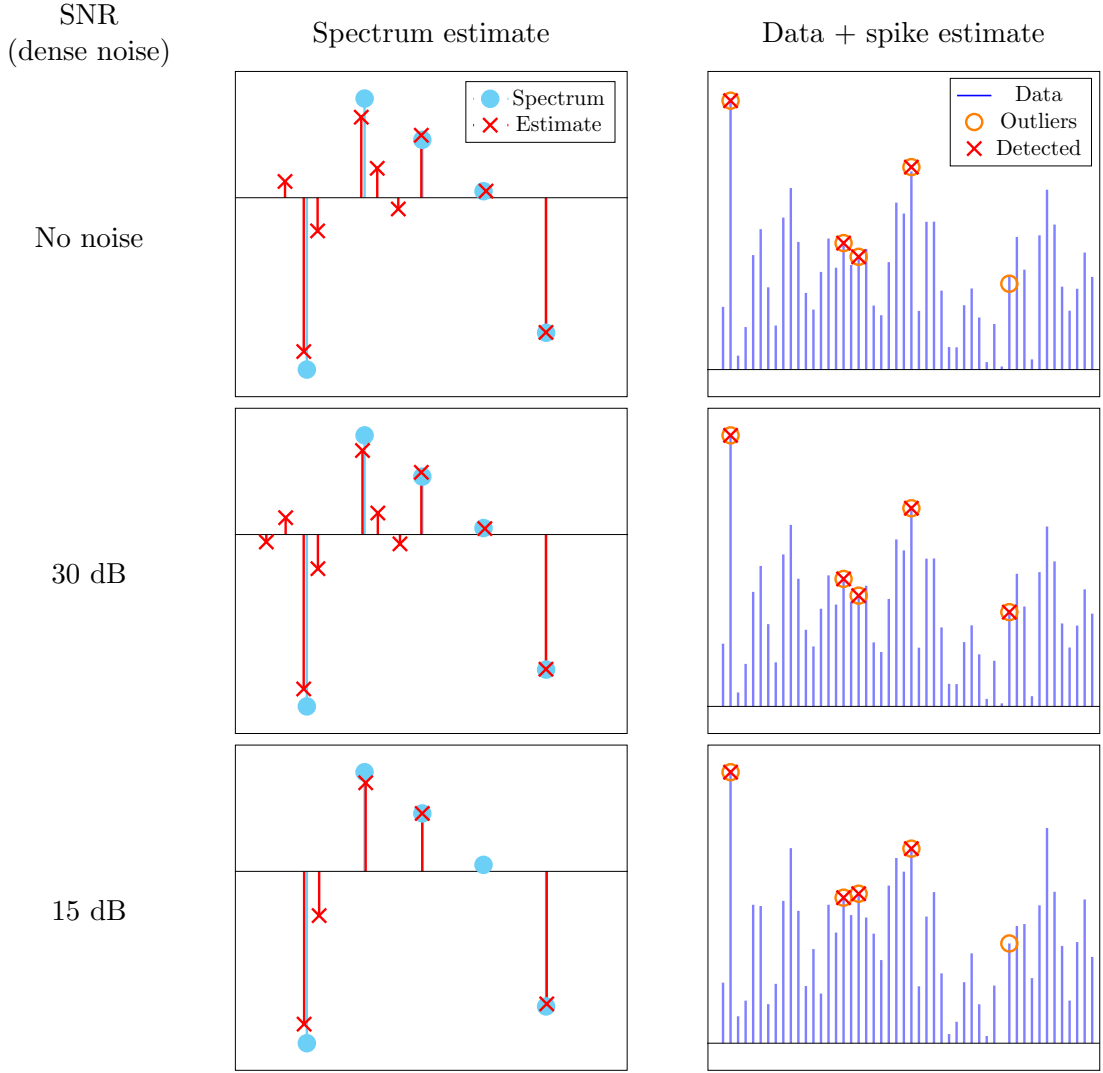
\begin{figure}[tp]
\begin{tabular}{
>{\centering\arraybackslash}m{0.15\linewidth}>{\centering\arraybackslash}m{0.35 \linewidth} >{\centering\arraybackslash}m{0.36\linewidth} }
SNR \hspace{0.5cm} (dense noise) & Spectrum estimate & Data + spike estimate\\
No noise   
  &
 \begin{tikzpicture}[scale=0.76]
\begin{axis}[ticks=none,legend pos=north east,xmin=0.01,xmax=0.2]
\addlegendentry{Spectrum}
 \addlegendentry{Estimate}
 \addplot+[ycomb,mark=*, cyan!50!white,very thick,mark options={solid,scale=2}]  file {noise_sines_spectrum_1.dat};
\addplot+[ycomb,mark=x, red,mark options={fill=red,scale=2.5},very thick]  file {greedy_no_ld.dat};
\addplot[black,forget plot] coordinates {(-0.2,0) (0.45,0)};
\end{axis}
\end{tikzpicture} 
  &
  \begin{tikzpicture}[scale=0.76]
\begin{axis}[xmin=-1,xmax=51, ticks=none]
\addlegendentry{Data}
 \addlegendentry{Outliers}
 \addlegendentry{Detected}
 \addlegendimage{blue,thick}
\addplot+[ycomb,mark=none,very thick,white!50!blue,forget plot] file {noise_y_abs_1.dat};
\addplot[only marks,mark=o, orange,mark options={fill=red,scale=2},very thick] file {sparse_noise_true.dat};
\addplot[only marks,mark=x, red,mark options={fill=red,scale=2.5},very thick] file {greedy_no_ld_spikes.dat};
\addplot[black] coordinates {(-2,0) (51,0)};
\end{axis}
\end{tikzpicture} 
\\
30 dB   &
 \begin{tikzpicture}[scale=0.76]
\begin{axis}[ticks=none,legend pos=north east,xmin=0.01,xmax=0.2]
 \addplot+[ycomb,mark=*, cyan!50!white,very thick,mark options={solid,scale=2}]  file {noise_sines_spectrum_1.dat};
\addplot+[ycomb,mark=x, red,mark options={fill=red,scale=2.5},very thick]  file {noise_greedy_no_ld_1.dat};
\addplot[black,forget plot] coordinates {(-0.2,0) (0.45,0)};
\end{axis}
\end{tikzpicture} 
&
\begin{tikzpicture}[scale=0.76]
\begin{axis}[xmin=-1,xmax=51, ticks=none]
 \addlegendimage{blue,thick}
\addplot+[ycomb,mark=none,very thick,white!50!blue,forget plot] file {noise_y_abs_1.dat};
\addplot[only marks,mark=o, orange,mark options={fill=red,scale=2},very thick] file {noise_sparse_noise_true_1.dat};
\addplot[only marks,mark=x, red,mark options={fill=red,scale=2.5},very thick] file {noise_greedy_no_ld_spikes_1.dat};
\addplot[black] coordinates {(-2,0) (51,0)};
\end{axis}
\end{tikzpicture} 
\\
15 dB
&
\begin{tikzpicture}[scale=0.76]
\begin{axis}[ticks=none,legend style={font=\small,at={(0,0)},anchor=south west,legend columns=-1,/tikz/every even column/.append style={column sep=0.25cm},draw=none},xmin=0.01,xmax=0.2]
\addplot+[ycomb,mark=*, cyan!50!white,very thick,mark options={solid,scale=2}]  file {noise_sines_spectrum_2.dat};
\addplot+[ycomb,mark=x, red,mark options={fill=red,scale=2.5},very thick]  file {noise_greedy_no_ld_2.dat};
\addplot[black,forget plot] coordinates {(-0.2,0) (0.45,0)};
\end{axis}
\end{tikzpicture} 
& 
\begin{tikzpicture}[scale=0.76]
\begin{axis}[xmin=-1,xmax=51, ticks=none]
\addplot+[ycomb,mark=none,very thick,white!50!blue] file {noise_y_abs_2.dat};
\addplot[only marks,mark=o, orange,mark options={fill=red,scale=2},very thick] file {noise_sparse_noise_true_2.dat};
\addplot[only marks,mark=x, red,mark options={fill=red,scale=2.5},very thick] file {noise_greedy_no_ld_spikes_2.dat};
\addplot[black] coordinates {(-2,0) (51,0)};
\end{axis}
\end{tikzpicture} 
\end{tabular}
\caption{Demixing via greedy demixing without a local optimization step. The signal is the same as in Figure~\ref{fig:sines_spikes} and the noisy data are the same as in Figures~\ref{fig:dual_poly_dense_noise} and~\ref{fig:sdp_noisy}. The thresholding parameter $\tau$ is set depending on the noise level: at 30 dB and in the absence of dense noise it is set small enough not to eliminate the spectral line with the smallest coefficient in the pruning step, whereas at 15 dB it is set so as not to discard the spectral line with the second smallest coefficient.
}
\label{fig:greedy}
\end{figure}

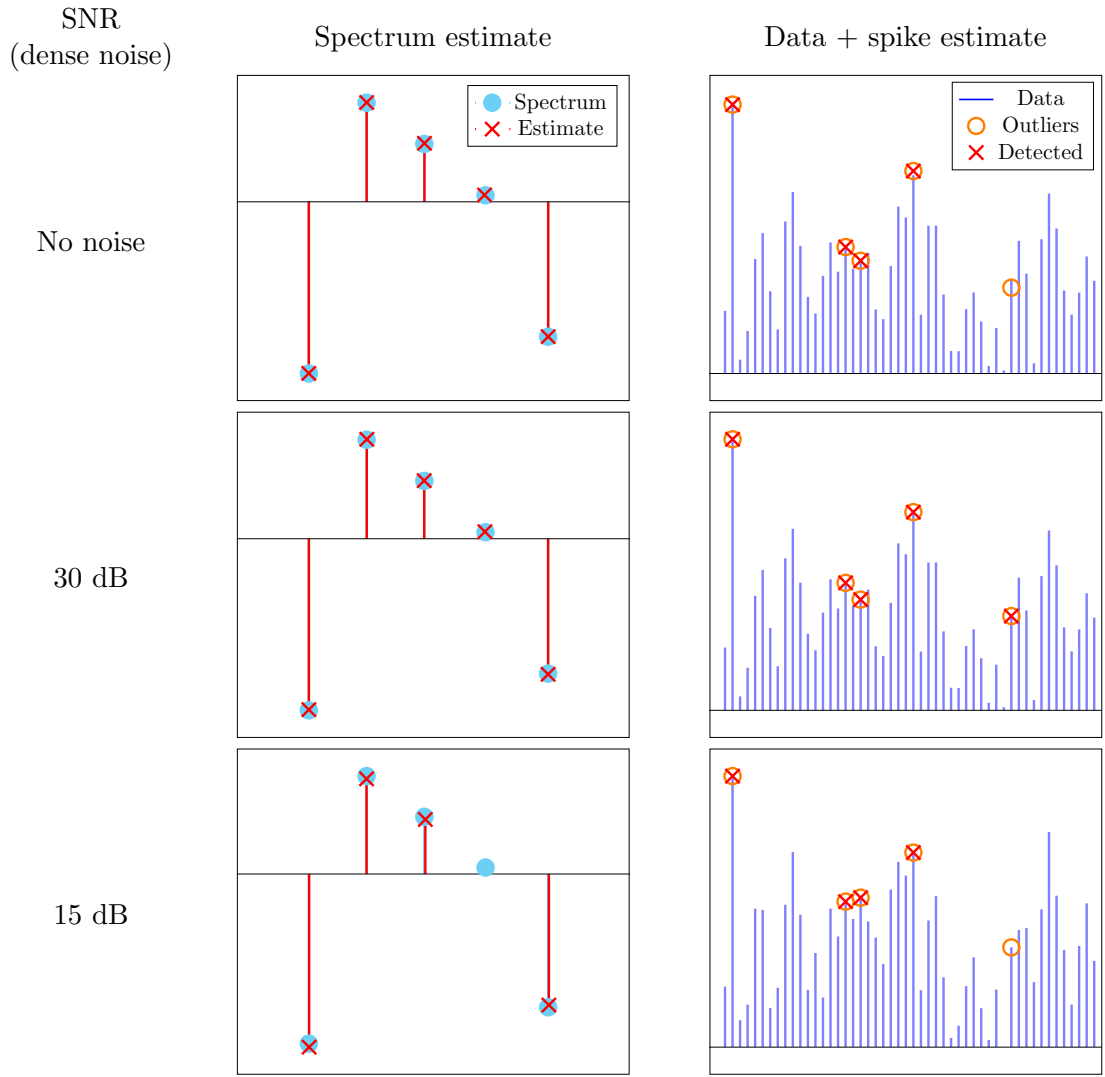
\begin{figure}[tp]
\begin{tabular}{
>{\centering\arraybackslash}m{0.15\linewidth}>{\centering\arraybackslash}m{0.35 \linewidth} >{\centering\arraybackslash}m{0.36\linewidth} }
SNR \hspace{0.5cm} (dense noise) & Spectrum estimate & Data + spike estimate\\
No noise   
  &
 \begin{tikzpicture}[scale=0.76]
\begin{axis}[ticks=none,legend pos=north east,xmin=0.01,xmax=0.2]
\addlegendentry{Spectrum}
 \addlegendentry{Estimate}
 \addplot+[ycomb,mark=*, cyan!50!white,very thick,mark options={solid,scale=2}]  file {noise_sines_spectrum_1.dat};
\addplot+[ycomb,mark=x, red,mark options={fill=red,scale=2.5},very thick]  file {greedy.dat};
\addplot[black,forget plot] coordinates {(-0.2,0) (0.45,0)};
\end{axis}
\end{tikzpicture} 
  &
  \begin{tikzpicture}[scale=0.76]
\begin{axis}[xmin=-1,xmax=51, ticks=none]
\addlegendentry{Data}
 \addlegendentry{Outliers}
 \addlegendentry{Detected}
 \addlegendimage{blue,thick}
\addplot+[ycomb,mark=none,very thick,white!50!blue,forget plot] file {noise_y_abs_1.dat};
\addplot[only marks,mark=o, orange,mark options={fill=red,scale=2},very thick] file {sparse_noise_true.dat};
\addplot[only marks,mark=x, red,mark options={fill=red,scale=2.5},very thick] file {greedy_spikes.dat};
\addplot[black] coordinates {(-2,0) (51,0)};
\end{axis}
\end{tikzpicture} 
\\
30 dB   &
 \begin{tikzpicture}[scale=0.76]
\begin{axis}[ticks=none,legend pos=north east,xmin=0.01,xmax=0.2]
 \addplot+[ycomb,mark=*, cyan!50!white,very thick,mark options={solid,scale=2}]  file {noise_sines_spectrum_1.dat};
\addplot+[ycomb,mark=x, red,mark options={fill=red,scale=2.5},very thick]  file {noise_greedy_1.dat};
\addplot[black,forget plot] coordinates {(-0.2,0) (0.45,0)};
\end{axis}
\end{tikzpicture} 
&
\begin{tikzpicture}[scale=0.76]
\begin{axis}[xmin=-1,xmax=51, ticks=none]
 \addlegendimage{blue,thick}
\addplot+[ycomb,mark=none,very thick,white!50!blue,forget plot] file {noise_y_abs_1.dat};
\addplot[only marks,mark=o, orange,mark options={fill=red,scale=2},very thick] file {noise_sparse_noise_true_1.dat};
\addplot[only marks,mark=x, red,mark options={fill=red,scale=2.5},very thick] file {noise_greedy_spikes_1.dat};
\addplot[black] coordinates {(-2,0) (51,0)};
\end{axis}
\end{tikzpicture} 
\\
15 dB
&
\begin{tikzpicture}[scale=0.76]
\begin{axis}[ticks=none,legend style={font=\small,at={(0,0)},anchor=south west,legend columns=-1,/tikz/every even column/.append style={column sep=0.25cm},draw=none},xmin=0.01,xmax=0.2]
\addplot+[ycomb,mark=*, cyan!50!white,very thick,mark options={solid,scale=2}]  file {noise_sines_spectrum_2.dat};
\addplot+[ycomb,mark=x, red,mark options={fill=red,scale=2.5},very thick]  file {noise_greedy_2.dat};
\addplot[black,forget plot] coordinates {(-0.2,0) (0.45,0)};
\end{axis}
\end{tikzpicture} 
& 
\begin{tikzpicture}[scale=0.76]
\begin{axis}[xmin=-1,xmax=51, ticks=none]
\addplot+[ycomb,mark=none,very thick,white!50!blue] file {noise_y_abs_2.dat};
\addplot[only marks,mark=o, orange,mark options={fill=red,scale=2},very thick] file {noise_sparse_noise_true_2.dat};
\addplot[only marks,mark=x, red,mark options={fill=red,scale=2.5},very thick] file {noise_greedy_spikes_2.dat};
\addplot[black] coordinates {(-2,0) (51,0)};
\end{axis}
\end{tikzpicture} 
\end{tabular}
\caption{Demixing via greedy demixing with a local optimization step. The signal is the same as in Figure~\ref{fig:sines_spikes} and the noisy data are the same as in Figures~\ref{fig:dual_poly_dense_noise}, \ref{fig:sdp_noisy} and~\ref{fig:greedy}. The thresholding parameter $\tau$ is set as described in the caption of Figure~\ref{fig:greedy}.}
\label{fig:greedy_local_opt}
\end{figure}

Inspired by recent work on atomic-norm minimization based on the conditional-gradient method~\cite{rao2015forward,rao2014forward,sparse_inverse_ben}, our greedy-demixing procedure includes selection, pruning and local-optimization steps (see also~\cite{stoicaNLSmultimodal,eftekhari2015greed,fannjiang2012coherence} for spectral super-resolution algorithms that leverage a local-optimization step similar to ours).
\begin{enumerate}
\item \textbf{Initialization:} The residual $\vct{r} \in \C^{n}$ is initialized to equal the data vector $\vct{y}$. The sets of estimated spectral lines $\widehat{T}$ and spikes $\widehat{\Omega}$ are initialized to equal the empty set.  
\item \textbf{Selection:} At each iteration we compute the atom in $\ml{D}$ that has the highest correlation with the current residual $\vct{r}$ and update either $\widehat{T}$ or $\widehat{\Omega}$. For the \emph{spiky} atoms the correlation is just equal to $\normInf{\vct{r}}$. For the sinusoidal atoms, we compute the highest correlation by first determining the location $f_{\op{grid}}$ of the maximum of the function $\op{\emph{corr}}\brac{f}:= \abs{\PROD{\vct{a}\brac{f,0}}{\vct{r}}}$ on a fine grid, which can be done efficiently by computing an oversampled fast Fourier transform, and then finding a local minimum of the function $\op{\emph{corr}}\brac{f}$ using a local search method initialized at $f_{\op{grid}}$.
\item \textbf{Pruning:} After adding a new atom to $\widehat{T}$ or $\widehat{\Omega}$, we compute the coefficients corresponding to the selected atoms using a least-squares fit. We then remove any atoms whose corresponding coefficients are smaller than a threshold $\tau > 0$.
\item \textbf{Local optimization:} We fix the number of selected sinusoidal atoms $\hat{k}:=|\widehat{T}|$ and optimize their locations to update $\widehat{T}$ by finding a local minimum of the least-squares cost function
\begin{align}
\op{\emph{ls}}\brac{f_1, \ldots, f_{\hat{k}}} := \min_{\vct{\hat{x}} \in \C^{\hat{k}}, \vct{\hat{z}} \in \C^{\abs{\widehat{\Omega}}}} \normTwo{\vct{y} -\sqrt{n}\sum_{j=1}^{\hat{k}} \vct{\hat{x}}_j \, \vct{a}\brac{f_j,0} -\sum_{l \in \widehat{\Omega}} \vct{\hat{z}}_l \, \vct{e}\brac{l} },
\end{align}
using a local search method\footnote{We use the Matlab function \emph{fminsearch} based on the simplex search method~\cite{lagarias1998convergence}.} initialized at the current estimate $\widehat{T}$. Alternatively, one can use other methods such as gradient descent to find a local minimum of the nonconvex function.
\item The residual is updated by computing the coefficients corresponding to the currently selected atoms using least-squares and subtracting the resulting approximation from $\vct{y}$.
\end{enumerate}
This algorithm can be applied without any modification to data that are perturbed by dense noise. In Figures~\ref{fig:greedy} and~\ref{fig:greedy_local_opt} we illustrate the performance of the method on the same data used in Figures~\ref{fig:sines_spikes_sdp} and~\ref{fig:sdp_noisy}. Figure~\ref{fig:greedy} shows what happens if we omit the local-optimization step: the algorithm does not yield exact demixing even in the absence of dense noise. In contrast, in Figure~\ref{fig:greedy_local_opt} we see that greedy demixing combined with local optimization recovers the two mixed components exactly when no additional noise perturbs the data. In addition, the procedure is robust to the presence of dense noise, as shown in the last two rows of Figure~\ref{fig:greedy_local_opt}. 

Intuitively, the greedy method is not able to achieve exact recovery, because it optimizes the position of each spectral line one by one, eventually not being able to make further progress. The local-optimization step refines the fit by optimizing over the positions of the spectral lines simultaneously. This succeeds when the initialization is close enough to a \emph{good} local minimum of the cost function. Our experiments seem to indicate that the greedy scheme provides such an initialization.

\begin{figure}[tp]
\begin{center}
\includegraphics[scale=.65]{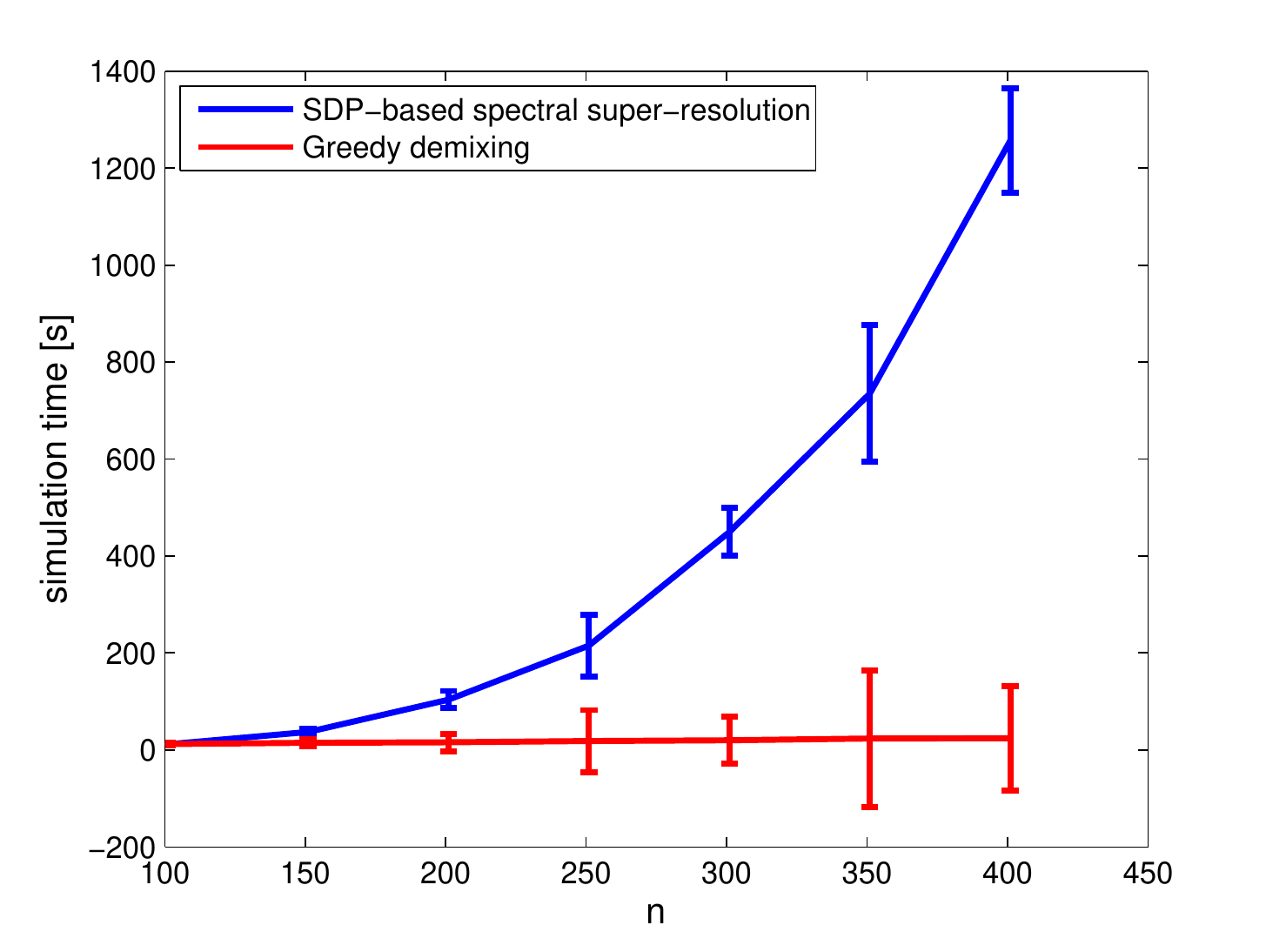}
\end{center}
\caption{Comparison of average running times for the SDP-based demixing approach described in Section~\ref{sec:sdp} and greedy demixing with a local optimization step over 10 tries (the error bars show 95\% confidence intervals). The number of spectral lines and of outliers equal $10$. The amplitudes of both components are iid Gaussian. The minimum separation of the spectral lines is $2.8/(n+1)$. Both algorithms achieve exact recovery in all instances. The experiments were carried out on a laptop with an Intel Core i5-5300 CPU 2.3GHz and 12G RAM.}
\label{fig:greedy_sim_time}
\end{figure}

As illustrated in Figure~\ref{fig:greedy_sim_time}, the greedy scheme is significantly faster than the SDP-based approach described earlier. These preliminary empirical results show the potential of coupling greedy approaches with local nonconvex optimization. Establishing guarantees for such demixing procedures is an interesting research direction. 


\subsection{Atomic-norm denoising}
\label{sec:atomic_norm}
In this section, we discuss how to implement the atomic-norm based denoising procedure described in Section~\ref{sec:denoising}. Our method relies on the fact that the atomic norm has a semidefinite characterization when the dictionary contains sinusoidal atoms of the form~\eqref{eq:atoms} . This is established in the following proposition, which we borrow from~\cite{Bhaskar:2012tq, tang2012offgrid}.
\begin{proposition}[Proposition 2.1 \cite{tang2012offgrid}, \cite{Bhaskar:2012tq}]\label{pro:sdp-char}
For $\vct{g} \in \C^{n}$
\begin{align} 
\atomicnorm{ \vct{g} }  = \inf_{ t \in \R, \, \vct{u} \in \C^n} \keys{ \frac{n \, \vct{u}_1 + t}{2}  ~:~  \MAT{ \ml{T}\brac{ \vct{u} } & \vct{g} \\ \vct{g}^{\ast} & t}  \succeq 0 
},
\end{align}
where the operator $\ml{T}$ is defined in Section~\ref{sec:sdp}.
\end{proposition}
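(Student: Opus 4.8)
The plan is to prove the two inequalities separately, writing $\mathrm{SDP}\brac{\vct g}$ for the value of the right-hand side and exploiting the fact that the atoms are $\vct a\brac{f,\phi} = \tfrac{e^{i\phi}}{\sqrt n}\,\vct\psi\brac f$, where $\vct\psi\brac f\in\C^n$ is the Vandermonde vector with $l$-th entry $e^{i2\pi l f}$, so that $\normTwo{\vct\psi\brac f}^2 = n$. Throughout I would use two standard facts about Hermitian Toeplitz matrices: (i) any positive semidefinite $\ml T\brac{\vct u}$ admits a \emph{Vandermonde} (Carath\'eodory--Fej\'er) decomposition $\ml T\brac{\vct u} = \sum_{k} p_k\,\vct\psi\brac{f_k}\vct\psi\brac{f_k}^{*}$ with $p_k>0$, $f_k\in[0,1)$ and at most $\rank\brac{\ml T\brac{\vct u}}$ terms; and (ii) the block matrix in the statement is positive semidefinite if and only if $\ml T\brac{\vct u}\succeq 0$, $\vct g$ lies in the range of $\ml T\brac{\vct u}$, and $t \geq \vct g^{*}\ml T\brac{\vct u}^{\dagger}\vct g$ (Schur complement). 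In such a decomposition $\vct u_1 = \sum_k p_k$ and $\trace\brac{\ml T\brac{\vct u}} = n\,\vct u_1$, which explains the factor $n$ in the objective.

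For the bound $\mathrm{SDP}\brac{\vct g}\leq\atomicnorm{\vct g}$, I would start from any atomic decomposition $\vct g = \sum_k c_k\,\vct a\brac{f_k,\phi_k}$ with $c_k\geq 0$ and build an explicit feasible point. Writing $\vct g = \sum_k \tfrac{c_k}{\sqrt n}e^{i\phi_k}\vct\psi\brac{f_k}$, I would set $\ml T\brac{\vct u} = \sum_k \tfrac{c_k}{n}\vct\psi\brac{f_k}\vct\psi\brac{f_k}^{*}$ and $t = \sum_k c_k$, so that the block matrix equals $\sum_k \tfrac{c_k}{n}\MAT{\vct\psi\brac{f_k}\\ \sqrt n\,e^{-i\phi_k}}\MAT{\vct\psi\brac{f_k}\\ \sqrt n\,e^{-i\phi_k}}^{*}\succeq 0$. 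A short computation gives $n\vct u_1 = t = \sum_k c_k$, hence objective value $\sum_k c_k$. The particular choice $\sqrt n$ for the bottom entry is exactly the minimizer of the AM--GM bound $\tfrac{n}{\abs{b}}+\abs{b}\geq 2\sqrt n$, which is what makes the objective collapse to $\sum_k c_k$. Taking the infimum over atomic decompositions yields the claim.

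For the reverse bound $\atomicnorm{\vct g}\leq\mathrm{SDP}\brac{\vct g}$, I would take an arbitrary feasible $\brac{\vct u, t}$, apply the Vandermonde decomposition to $\ml T\brac{\vct u}$, and set $\vct c := \ml T\brac{\vct u}^{\dagger}\vct g$, so that $\vct g = \ml T\brac{\vct u}\vct c = \sum_k \beta_k\,\vct\psi\brac{f_k}$ with $\beta_k := p_k\,\vct\psi\brac{f_k}^{*}\vct c$. This is an atomic decomposition up to the $\sqrt n$ normalization of $\vct\psi$, so $\atomicnorm{\vct g}\leq \sqrt n\sum_k\abs{\beta_k} = \sqrt n \sum_k p_k\,\abs{\vct\psi\brac{f_k}^{*}\vct c}$. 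Applying Cauchy--Schwarz with weights $\sqrt{p_k}$ bounds this by $\sqrt n\,\sqrt{\sum_k p_k}\,\sqrt{\sum_k p_k\abs{\vct\psi\brac{f_k}^{*}\vct c}^2} = \sqrt{n\vct u_1}\,\sqrt{\vct c^{*}\ml T\brac{\vct u}\vct c}$, and the Schur-complement inequality gives $\vct c^{*}\ml T\brac{\vct u}\vct c = \vct g^{*}\ml T\brac{\vct u}^{\dagger}\vct g\leq t$. One final AM--GM step, $\sqrt{n\vct u_1}\,\sqrt t\leq\tfrac12\brac{n\vct u_1 + t}$, closes the argument, and combining the two inequalities gives the stated identity.

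I would expect the main obstacle to be the careful bookkeeping at the two places where the positive-semidefinite Toeplitz structure is invoked: establishing the Vandermonde (Carath\'eodory--Fej\'er) decomposition rigorously, and handling the rank-deficient case, where one must verify that $\vct g$ lies in the range of $\ml T\brac{\vct u}$ so that the pseudoinverse appearing in the Schur complement is meaningful and $\vct g = \ml T\brac{\vct u}\vct c$ holds. The central symmetry of the dictionary $\A$ in~\eqref{eq:atoms} guarantees that $\atomicnorm{\cdot}$ is genuinely a norm, so both sides are well defined; everything else reduces to the two elementary AM--GM estimates above.
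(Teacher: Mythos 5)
The paper never proves this proposition---it is imported verbatim, with citation, from Proposition~2.1 of~\cite{tang2012offgrid} (see also~\cite{Bhaskar:2012tq})---and your argument is a correct reconstruction of exactly the proof given in those references: the feasible point built from an atomic decomposition (with the $\sqrt{n}$ bottom entry making the objective collapse to $\sum_k c_k$) for one inequality, and the Carath\'eodory--Fej\'er decomposition combined with the generalized Schur complement, Cauchy--Schwarz, and AM--GM for the other. Your two technical caveats (existence of the Vandermonde decomposition and range/pseudoinverse bookkeeping in the rank-deficient case) are precisely the points the cited proof also has to handle, so there is nothing to fix.
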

This result allows us to rewrite \eqref{eq:atomic_norm_min} as the semidefinite program
\begin{align} 
\label{eq:denoising_sdp}
 \min_{\substack{ t \in \R, \, \vct{u} \in \C^n, \\ \vct{\tilde{g}} \in \C^n , \, \vct{\tilde{z}} \in \C^n}} \;     \frac{n \, \vct{u}_1 + t}{2 \sqrt{n}}  + \lambda \normOne{ \vct{\tilde{z}} }    \quad
 \text{subject to} \quad & \MAT{ \ml{T}\brac{ \vct{u} } & \vct{\tilde{g}} \\ \vct{\tilde{g}}^{\ast} & t}  \succeq 0, \\
& \; \;   \vct{\tilde{g}} + \vct{\tilde{z}} = \vct{y},
\end{align}
which is precisely the dual program of \eqref{eq:TVnormMin_sdp_sinesspikes}.

Similarly, Problem~\eqref{eq:atomic_norm_noise} can be reformulated as the semidefinite program,
\begin{align} 
\label{eq:denoising_sdp_noise}
 \min_{\substack{ t \in \R, \, \vct{u} \in \C^n, \\ \vct{\tilde{g}} \in \C^n , \, \vct{\tilde{z}} \in \C^n}} \;     \frac{n \, \vct{u}_1 + t}{2 \sqrt{n}} + \lambda\normOne{ \vct{\tilde{z}} }  + \frac{\gamma}{2}  \normTwo{ \vct{y} - \vct{\tilde{g}} - \vct{\tilde{z}} }^2  \quad
 \text{subject to} \quad & \MAT{ \ml{T}\brac{ \vct{u} } & \vct{\tilde{g}} \\ \vct{\tilde{g}}^{\ast} & t}  \succeq 0
 \end{align}
This problem can be solved efficiently using the alternating direction method of multipliers~\cite{boyd2011distributed} (see also~\cite{Bhaskar:2012tq} for a similar implementation of SDP-based atomic-norm denoising for the case without outliers), as described in detail in Section~\ref{sec:admm} of the appendix. Figure~\ref{fig:sdp_denoising} shows the results of applying this method to denoise the data used in Figures~\ref{fig:sdp_noisy}, ~\ref{fig:greedy} and~\ref{fig:greedy_local_opt}. In the absence of dense noise, the approach yields perfect denoising (not shown in the figure). When dense noise perturbs the data, the method is still able to perform effective denoising, correcting for the presence of the outliers. 
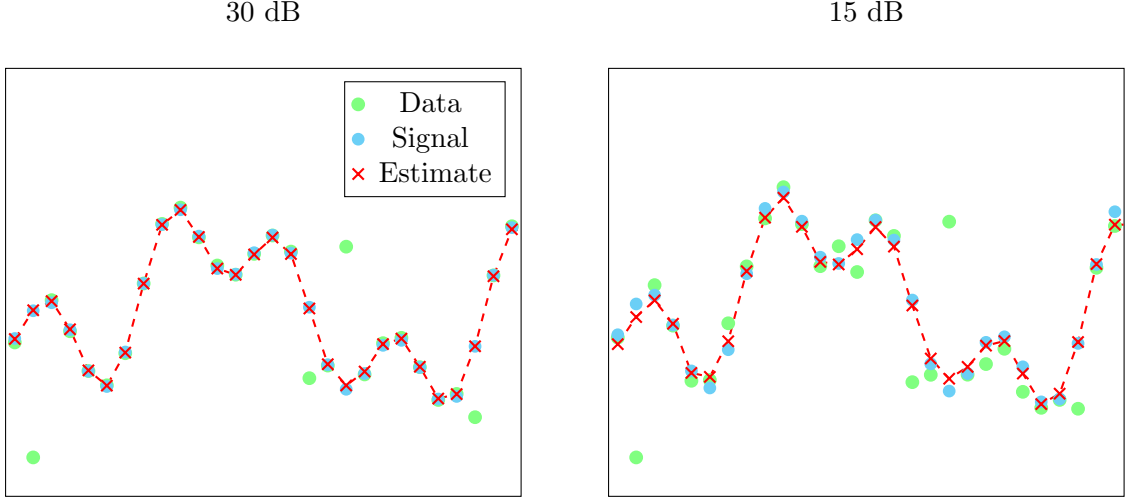
\begin{figure}[tp]
\centering
\begin{tabular}{ >{\centering\arraybackslash}m{0.5\linewidth} >{\centering\arraybackslash}m{0.42\linewidth}  }
  30 dB  & 15 dB \\ \\
\begin{tikzpicture}[scale=1]
\begin{axis}[ticks=none,xmin=0.5,xmax=28.5, legend pos=north east,ymax=8]
\addlegendentry{Data}
 \addlegendentry{Signal}
 \addlegendentry{$ $ Estimate}
\addplot[mark=*, green!50!white,mark options={solid,fill=green!50!white,scale=1},very thick, only marks]  file {admm_y_1.dat};
\addplot[only marks,mark=*, cyan!50!white, thick,mark options={solid,scale=1}]  file {admm_true_1.dat};
\addplot[mark=x, dashed,red,mark options={solid,fill=red,scale=1.5},thick,forget plot]  file {admm_est_1.dat};
\addlegendimage{mark=x, red,mark options={ fill=red,scale=1.5},thick, only marks}
\end{axis}
\end{tikzpicture} 
&
\begin{tikzpicture}[scale=1]
\begin{axis}[ticks=none,legend style={font=\small,at={(0,0)},anchor=south west,legend columns=-1,/tikz/every even column/.append style={column sep=0.25cm},draw=none},xmin=0.5,xmax=28.5,ymax=7]
\addplot[mark=*, green!50!white,mark options={solid,fill=green!50!white,scale=1},very thick, only marks]  file {admm_y_2.dat};
\addplot[only marks,mark=*, cyan!50!white, thick,mark options={solid,scale=1}]  file {admm_true_2.dat};
\addplot[mark=x, dashed,red,mark options={solid,fill=red,scale=1.5},thick]  file {admm_est_2.dat};
\end{axis}
\end{tikzpicture} 
\end{tabular}
\caption{Denoising via atomic-norm minimization in the presence of both outliers and dense noise. The signal is the same as in Figure~\ref{fig:sines_spikes} and the data is the same as in Figures~\ref{fig:dual_poly_dense_noise} and~\ref{fig:sdp_noisy}. The parameter $\lambda$ is set to $1/\sqrt{n}$, whereas $\gamma$ is set to $1/\normTwo{w}$ (in practice, we would have to estimate the noise level or set the parameter via cross validation).
}
\label{fig:sdp_denoising}
\end{figure}
\section{Numerical Experiments}
\label{sec:numerical}

\subsection{Demixing via semidefinite programming}
\label{sec:phase_transitions}

\begin{figure}[tp]
\centering
\begin{tabular}{   >{\centering\arraybackslash}m{0.09\linewidth} >{\centering\arraybackslash}m{0.3\linewidth} >{\centering\arraybackslash}m{0.34\linewidth} >{\centering\arraybackslash}m{0.1\linewidth} }
& $s = 10$ &  $k = 15$\\
 $\Delta \brac{ n- 1}$ & 
 \includegraphics{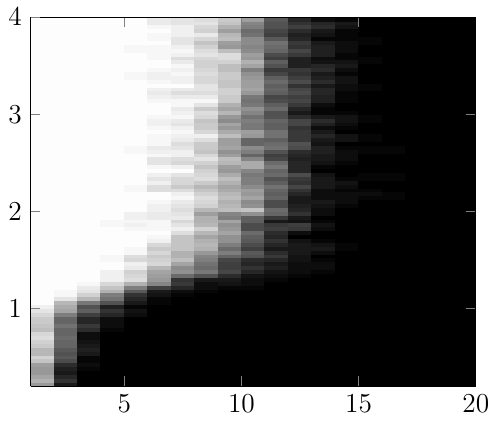}
 &  
 \includegraphics{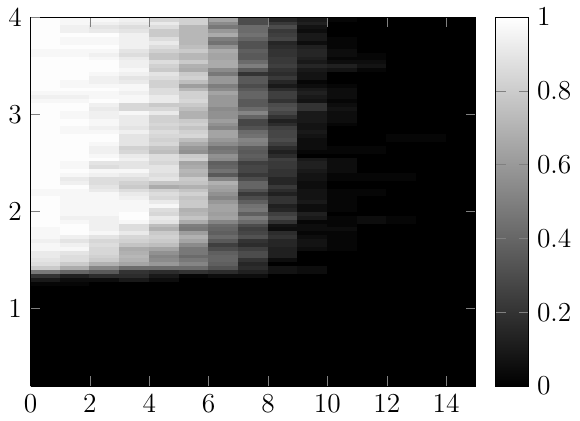}
& $n=61$ 
 \\
 $\Delta \brac{ n- 1}$ & 
 \includegraphics{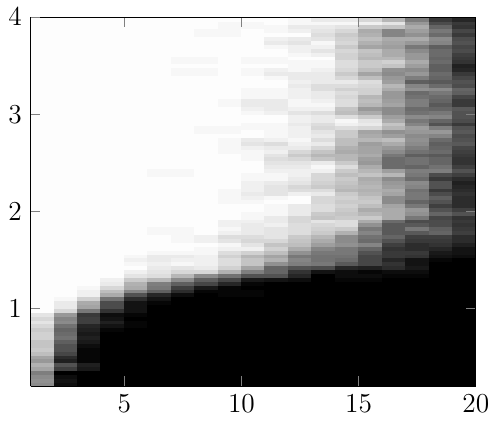}
& 
 \includegraphics{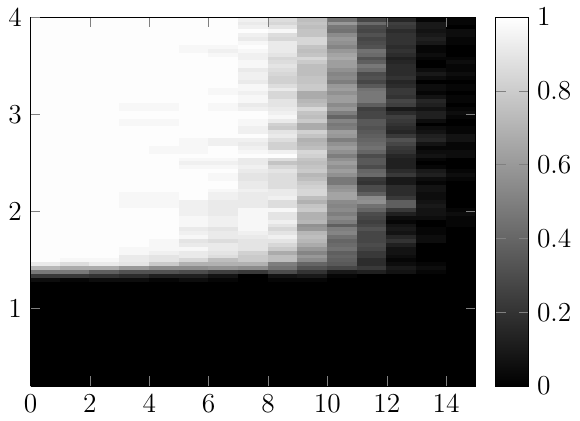}
 & $n=81$ 
\\
 $\Delta \brac{ n- 1}$ & 
 \includegraphics{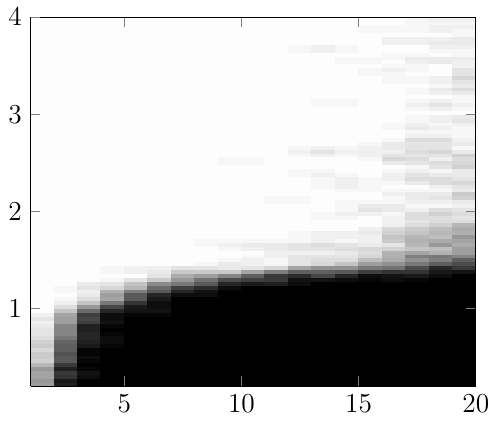}
& 
 \includegraphics{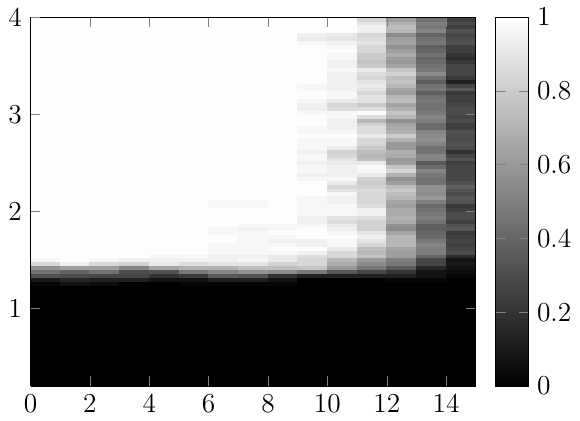}
& $n = 101$  
\\
& $k$ &  $s$ 
\end{tabular}
\caption{Graphs showing the fraction of times Problem~\eqref{eq:opt_problem} achieves exact demixing over 10 trials with random signs and supports for different numbers of spectral lines $k$ (left column) and outliers $s$ (right column), as well as different values of the minimum separation of the spectral lines. Each row shows results for a different number of measurements. The value of the regularization parameter $\lambda$ is 0.1 for the left column and 0.15 for the second column. The simulations are carried out using CVX~\cite{cvx}.}
\label{fig:phase_transitions_mindist}
\end{figure}

\begin{figure}[tp]
\hspace{-0.4cm}
\begin{tabular}{  >{\centering\arraybackslash}m{0.005\linewidth} >{\centering\arraybackslash}m{0.26\linewidth} >{\centering\arraybackslash}m{0.26\linewidth}  >{\centering\arraybackslash}m{0.32\linewidth} >{\centering\arraybackslash}m{0.08\linewidth} }
 &$\lambda=0.1$ &  $\lambda=0.15$ & $\lambda = 0.2$ \\ 
 $s$ & 
 \includegraphics{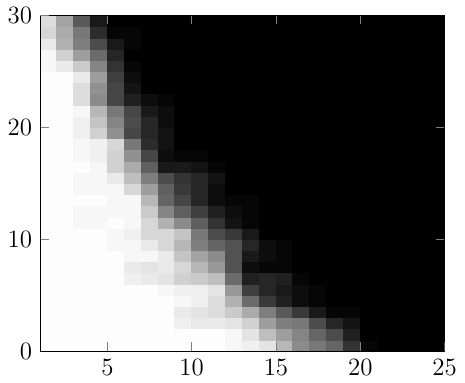}
 &  
 \includegraphics{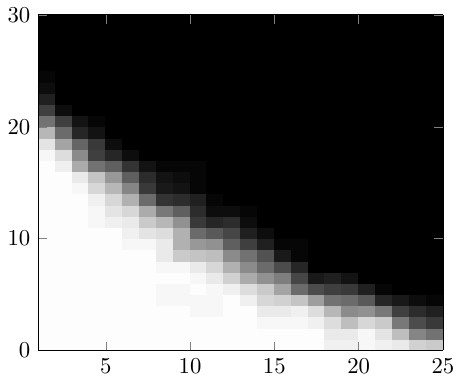}
& 
 \includegraphics{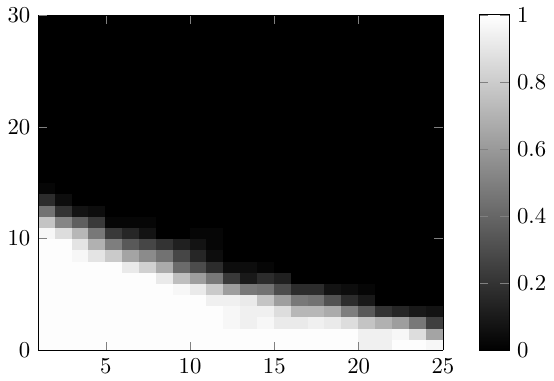}
  & $n=61$ \\
\\
 $s$ &  
  \includegraphics{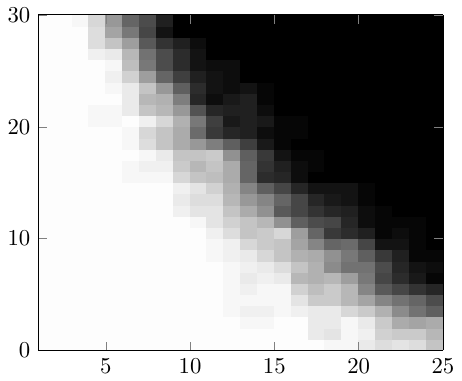}
 &  
 \includegraphics{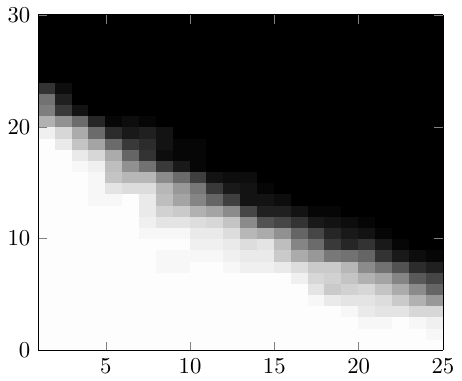}
& 
 \includegraphics{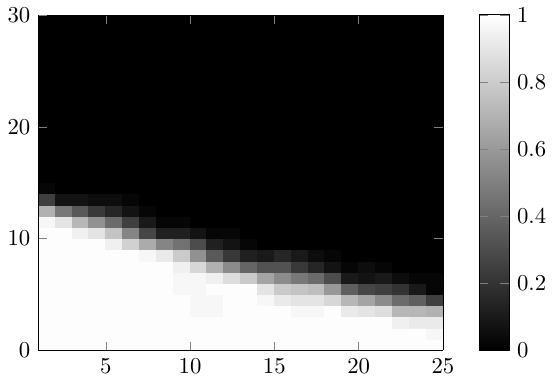}
& $n=81$\\
  &    $k$ &  $k$ &  $k$ 
\end{tabular}
\caption{Graphs showing the fraction of times Problem~\eqref{eq:opt_problem} achieves exact demixing over 10 trials with random signs and supports for different numbers of spectral lines $k$ and outliers $s$. The minimum separation of the spectral lines is $ 2 / (n-1)$. Each column shows results for a different value of the regularization parameter $\lambda$. Each row shows results for a different number of measurements $n$. The simulations are carried out using CVX~\cite{cvx}.}
\label{fig:phase_transitions_lambda}
\end{figure}

In this section we investigate the performance of the method described in Section~\ref{sec:main_results}. To do this, we apply the SDP-based approach described in Section~\ref{sec:sdp} to data of the form~\eqref{eq:model_data} varying the different parameters of interest. Fixing either the number of spectral lines $k$ or the number of outliers $s$ allows us to visualize the performance of the method for a range of values of the line spectrum's minimum separation $\Delta$ (defined by~\eqref{eq:min_distance}). The results are shown in Figure~\ref{fig:phase_transitions_mindist}. We observe that in every instance there is a rapid phase transition between the values at which the method always achieves exact demixing and the values at which it fails. The minimum separation at which this phase transition takes place is between $1/\brac{n-1}$ and $2/\brac{n-1}$, which is smaller than the minimum-separation required by Theorem~\ref{theorem:main}. We conjecture that if we allow for arbitrary sign patterns, the phase transition would occur near $2/\brac{n-1}$. In fact, if we constrain the amplitudes of the spectral lines to be real instead of complex, the phase transition occurs at a higher minimum separation, as shown in~\cite[Figure 7]{superres_new}. 

In order to investigate the effect of the regularization parameter on the performance of the algorithm, we fix $\Delta$ and perform demixing for different values of $k$ and $s$. The results are shown in Figure~\ref{fig:phase_transitions_lambda}. As suggested by Lemma~\ref{lemma:trimmed_sol}, for fixed $s$ the method succeeds for all values of $k$ below a certain limit, and vice versa when we vary $s$. Since $\lambda$ weights the effect of the terms that promote sparsity of the two different components in our mixture model, it is no surprise that varying it affects the tradeoff between the number of spectral lines and of spikes that we can demix. For smaller $\lambda$ the sparsity-inducing term affecting the multisinusoidal component is stronger, so the method succeeds for mixtures with smaller $k$ and larger $s$. Analogously, for larger $\lambda$ the sparsity-inducing term affecting the outlier component is stronger, so the method succeeds for mixtures with larger $k$ and smaller $s$.   

\subsection{Comparison with matrix-completion based denoising}
\label{sec:experiments_denoising}
In this section, we compare the SDP-based atomic-norm denoising method described in Section~\ref{sec:atomic_norm} to the matrix-completion based denoising method from~\cite{Chi2014spectral}. Both algorithms are implemented using CVX~\cite{cvx} and applied to data following model~\eqref{eq:data_denoising}. In general we observe that both methods either \emph{succeed}, achieving extremely small errors (the relative MSE\footnote{The relative MSE is defined as the ratio between the $\ell_2$-norm of the difference between the \emph{clean} samples $\vct{g}$ and the estimate divided by $\normTwo{\vct{g}}$.} is smaller than $10^{-8}$), or \emph{fail}, producing very large errors. We compare the performance by recording whether the methods succeed or fail in denoising randomly generated signals for different number of spectral lines $k$ and outliers $s$. To provide a more complete picture, we repeat the simulations for different values of the regularization parameters ($\lambda$ for atomic-norm denoising and $\theta$ for matrix-completion denoising) that govern the sparsity-inducing terms of the corresponding optimization problems. The values of $\lambda$ and $\theta$ are chosen separately to yield the best possible performance. 

\begin{figure}[tp]
\hspace{-0.4cm}
\begin{tabular}{>{\centering\arraybackslash}m{0.02\linewidth} >{\centering\arraybackslash}m{0.28\linewidth} >{\centering\arraybackslash}m{0.28\linewidth}  >{\centering\arraybackslash}m{0.3\linewidth} }
 &&Atomic norm \\ \\
  & $\lambda=0.1$ &  $\lambda=0.15$ & $\lambda = 0.2$  \\ \\
  $s$ & 
 \includegraphics{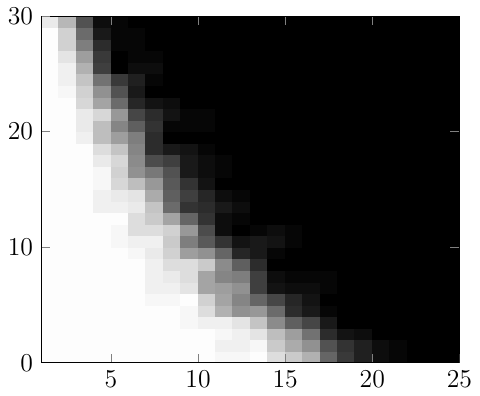}
& 
 \includegraphics{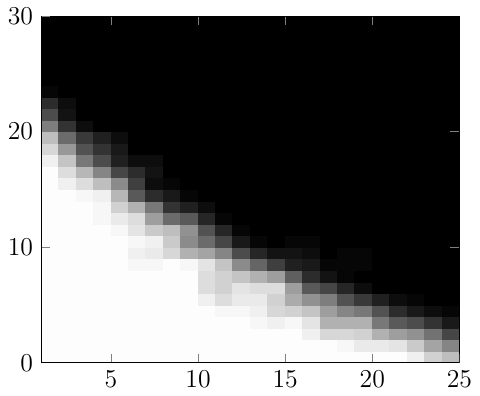}
& 
 \includegraphics{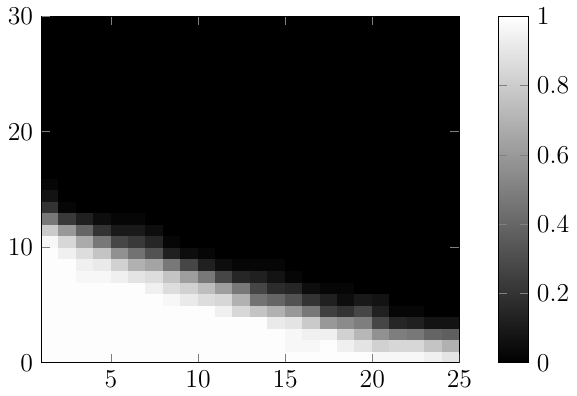}
  \\
\\
$s$ &  
 \includegraphics{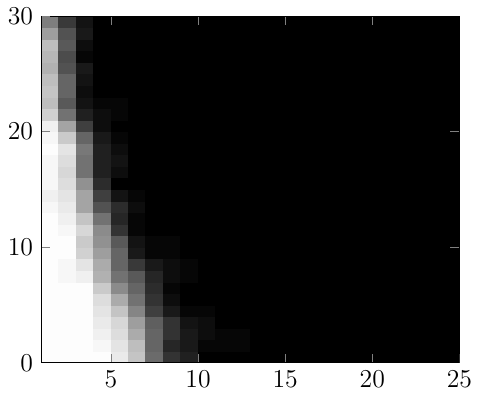}
& 
 \includegraphics{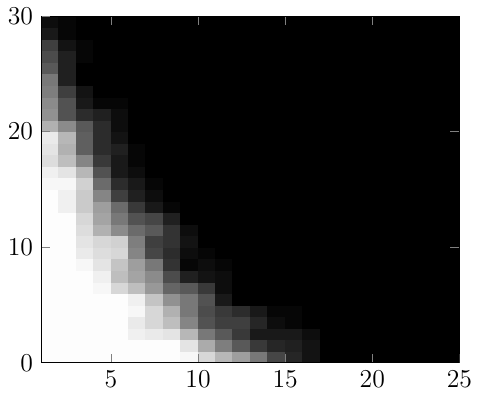}
& 
 \includegraphics{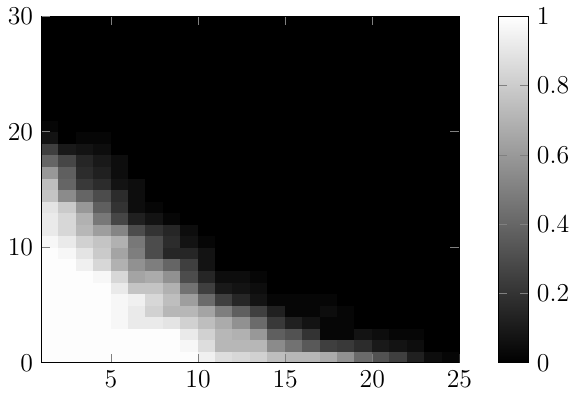}
\\
    &  $k$ &  $k$ &  $k$ \\ \\
   & $\theta=0.15$ &  $\theta=0.2$ & $\theta = 0.25$ \\ \\
& & Matrix completion \\ $ $
\end{tabular}
\caption{Graphs showing the fraction of times Problem~\eqref{eq:denoising_sdp} (top row) and the matrix-completion approach from~\cite{Chi2014spectral} (bottom row) achieve exact denoising for different values of their respective regularization parameters over 10 trials with random signs and supports. The minimum separation of the spectral lines is $ 2 / (n-1)$ and the number of data is $n=61$. The simulations are carried out using CVX~\cite{cvx}.}
\label{fig:comp_at_mc}
\end{figure}

Figure~\ref{fig:comp_at_mc} shows the results. We observe that atomic-norm denoising consistently outperforms matrix-completion denoising across regimes in which the methods achieve different tradeoffs between the values of $k$ and $s$. In addition, atomic-norm denoising is faster: the average running time for each trial is 3.25 seconds with a standard deviation of 0.30 s, whereas the average running time for the matrix-completion approach is of 11.1 s with a standard deviation of 1.32 s. The experiments were carried out on an Intel Xeon desktop computer with a 3.5 GHz CPU and 24 GB of RAM.
\section{Conclusion and future research directions}
\label{sec:conclusion}

In this work we propose an optimization-based method for spectral super-resolution in the presence of outliers and characterize its performance theoretically. In addition, we describe how to implement the approach using semidefinite programming, discuss its connection to atomic-norm denoising and present a greedy demixing algorithm with a promising empirical performance. Our results suggest the following directions for future research.

\begin{itemize}
\item Proving a result similar to Theorem~\ref{theorem:main} without the assumption that the phases of the different components are random. This would require showing that the dual-polynomial construction in Section~\ref{sec:random_interp} is valid, without leveraging the concentration bounds that we use for our proof. It is unclear whether this is possible because the interpolation kernel $K$ does not display a good asymptotic decay, as shown in Figure~\ref{fig:kernel}. Note that if the amplitudes of the sparse noise $\vct{z}$ are constrained to be real, then a derandomization argument similar to the one in~\cite[Theorem 2.1]{candes2011robust} allows to establish the same guarantees as Theorem~\ref{theorem:main} for a sparse perturbation that has an arbitrary deterministic sign pattern.
\item Deriving guarantees for spectral super-resolution via the approach described in Section~\ref{sec:dense_noise} in the presence of dense and sparse noise. To achieve this, one could combine our dual polynomial construction with the techniques developed in~\cite{support_detection,superres_noisy, tang2014minimax}. In addition, it would be interesting to investigate the application of the method when the level of dense noise is unknown, as in~\cite{superres_unknown_noise}.
\item Developing fast algorithms to solve the semidefinite programs in Sections~\ref{sec:sdp} and \ref{sec:sdp_noise}. We have found that ADMM is effective for denoising, but the dual variable converges too slowly for it to be effective in super-resolving the line spectrum.
\item Investigating whether greedy demixing techniques, like the one in Section~\ref{sec:greedy}, can achieve the same performance as our convex-programming approach both empirically and theoretically. 
\item Considering other structured noise models, beyond sparse perturbations, which could be learnt from data by leveraging techniques such as dictionary learning~\cite{olshausen_field,mairal_bach_tutorial}. For instance, this could allow to deal with recurring interferences in radar applications.
\end{itemize}

\subsection*{Acknowledgements}
C.F. is generously supported by NSF award DMS-1616340. G.T. is generously supported by NSF award CCF-1464205.

\begin{small}
\bibliographystyle{abbrv}
\bibliography{rlse}

\begin{thebibliography}{10}

\bibitem{azais2015spike}
J.-M. Azais, Y.~De~Castro, and F.~Gamboa.
\newblock Spike detection from inaccurate samplings.
\newblock {\em Applied and Computational Harmonic Analysis}, 38(2):177--195,
  2015.

\bibitem{beatty1978use}
L.~G. Beatty, J.~D. George, and A.~Z. Robinson.
\newblock Use of the complex exponential expansion as a signal representation
  for underwater acoustic calibration.
\newblock {\em The Journal of the Acoustical Society of America},
  63(6):1782--1794, 1978.

\bibitem{berni1975target}
A.~J. Berni.
\newblock Target identification by natural resonance estimation.
\newblock {\em IEEE Transactions on Aerospace and Electronic systems},
  (2):147--154, 1975.

\bibitem{Bhaskar:2012tq}
B.~Bhaskar, G.~Tang, and B.~Recht.
\newblock Atomic norm denoising with applications to line spectral estimation.
\newblock {\em Signal Processing, IEEE Transactions on}, 61(23):5987--5999, Dec
  2013.

\bibitem{music_bienvenu}
G.~Bienvenu.
\newblock Influence of the spatial coherence of the background noise on high
  resolution passive methods.
\newblock In {\em Proceedings of the International Conference on Acoustics,
  Speech and Signal Processing}, volume~4, pages 306 -- 309, 1979.

\bibitem{borcea2002imaging}
L.~Borcea, G.~Papanicolaou, C.~Tsogka, and J.~Berryman.
\newblock Imaging and time reversal in random media.
\newblock {\em Inverse Problems}, 18(5):1247, 2002.

\bibitem{sparse_inverse_ben}
N.~Boyd, G.~Schiebinger, and B.~Recht.
\newblock The alternating descent conditional gradient method for sparse
  inverse problems.
\newblock Preprint.

\bibitem{boyd2011distributed}
S.~Boyd, N.~Parikh, E.~Chu, B.~Peleato, and J.~Eckstein.
\newblock Distributed optimization and statistical learning via the alternating
  direction method of multipliers.
\newblock {\em Foundations and Trends{\textregistered} in Machine Learning},
  3(1):1--122, 2011.

\bibitem{Boyd:2004uz}
S.~P. Boyd and L.~Vandenberghe.
\newblock {\em {Convex Optimization}}.
\newblock Cambridge Univ Pr, Mar. 2004.

\bibitem{superres_unknown_noise}
C.~Boyer, Y.~{De Castro}, and J.~Salmon.
\newblock Adapting to unknown noise level in sparse deconvolution.
\newblock Preprint.

\bibitem{bredies2013inverse}
K.~Bredies and H.~K. Pikkarainen.
\newblock Inverse problems in spaces of measures.
\newblock {\em ESAIM: Control, Optimisation and Calculus of Variations},
  19(1):190--218, 2013.

\bibitem{Candes:2012uf}
E.~J. Cand\`{e}s and C.~Fernandez-Granda.
\newblock Towards a mathematical theory of super-resolution.
\newblock {\em Communications on Pure and Applied Mathematics}, 67(6):906--956,
  Mar.

\bibitem{superres_noisy}
E.~J. {Cand\`{e}s} and C.~Fernandez-Granda.
\newblock Super-resolution from noisy data.
\newblock {\em Journal of Fourier Analysis and Applications}, 19(6):1229--1254,
  2013.

\bibitem{candes2011robust}
E.~J. Cand{\`e}s, X.~Li, Y.~Ma, and J.~Wright.
\newblock Robust principal component analysis?
\newblock {\em Journal of the ACM}, 58(3):11, 2011.

\bibitem{candes2011probabilistic}
E.~J. Candes and Y.~Plan.
\newblock A probabilistic and ripless theory of compressed sensing.
\newblock {\em Information Theory, IEEE Transactions on}, 57(11):7235--7254,
  2011.

\bibitem{Candes:2007es}
E.~J. Cand\`{e}s and J.~Romberg.
\newblock {Sparsity and incoherence in compressive sampling}.
\newblock {\em Inverse Problems}, 23(3):969--985, Apr. 2007.

\bibitem{Candes:2006eq}
E.~J. Cand\`{e}s, J.~Romberg, and T.~Tao.
\newblock {Robust uncertainty principles: exact signal reconstruction from
  highly incomplete frequency information}.
\newblock {\em IEEE Trans. Inf. Thy.}, 52(2):489--509, Feb. 2006.

\bibitem{Candes:2005cs}
E.~J. Cand\`{e}s and T.~Tao.
\newblock {Decoding by linear programming}.
\newblock {\em IEEE Trans. Inf. Thy.}, 51(12):4203--4215, 2005.

\bibitem{candes2006near}
E.~J. Candes and T.~Tao.
\newblock Near-optimal signal recovery from random projections: Universal
  encoding strategies?
\newblock {\em IEEE transactions on information theory}, 52(12):5406--5425,
  2006.

\bibitem{Candes:2010jb}
E.~J. Cand\`{e}s and T.~Tao.
\newblock {The Power of Convex Relaxation: Near-Optimal Matrix Completion}.
\newblock {\em IEEE Trans. Inf. Thy.}, 56(5):2053--2080, 2010.

\bibitem{carriere1992high}
R.~Carriere and R.~L. Moses.
\newblock High resolution radar target modeling using a modified {P}rony
  estimator.
\newblock {\em IEEE Transactions on Antennas and Propagation}, 40(1):13--18,
  1992.

\bibitem{chandrasekaran2012convex}
V.~Chandrasekaran, B.~Recht, P.~A. Parrilo, and A.~S. Willsky.
\newblock The convex geometry of linear inverse problems.
\newblock {\em Foundations of Computational Mathematics}, 12(6):805--849, 2012.

\bibitem{chandrasekaran2011rank}
V.~Chandrasekaran, S.~Sanghavi, P.~A. Parrilo, and A.~S. Willsky.
\newblock Rank-sparsity incoherence for matrix decomposition.
\newblock {\em SIAM Journal on Optimization}, 21(2):572--596, 2011.

\bibitem{chen2001atomic}
S.~S. Chen, D.~L. Donoho, and M.~A. Saunders.
\newblock Atomic decomposition by basis pursuit.
\newblock {\em SIAM review}, 43(1):129--159, 2001.

\bibitem{Chi2014spectral}
Y.~Chen and Y.~Chi.
\newblock Robust spectral compressed sensing via structured matrix completion.
\newblock {\em Information Theory, IEEE Transactions on}, 60(10):6576--6601,
  Oct 2014.

\bibitem{supportPursuit}
Y.~De~Castro and F.~Gamboa.
\newblock Exact reconstruction using {B}eurling minimal extrapolation.
\newblock {\em Journal of Mathematical Analysis and Applications},
  395(1):336--354.

\bibitem{deProny:tg}
B.~G.~R. de~Prony.
\newblock {Essai {\'e}xperimental et analytique: sur les lois de la
  dilatabilit{\'e} de fluides {\'e}lastique et sur celles de la force expansive
  de la vapeur de l'alkool, {\`a} diff{\'e}rentes temp{\'e}ratures}.
\newblock {\em Journal de l'{\'e}cole Polytechnique}, 1(22):24--76, 1795.

\bibitem{donoho2006compressed}
D.~L. Donoho.
\newblock Compressed sensing.
\newblock {\em IEEE Transactions on information theory}, 52(4):1289--1306,
  2006.

\bibitem{donoho2001uncertainty}
D.~L. Donoho and X.~Huo.
\newblock Uncertainty principles and ideal atomic decomposition.
\newblock {\em Information Theory, IEEE Transactions on}, 47(7):2845--2862,
  2001.

\bibitem{donoho1989uncertainty}
D.~L. Donoho and P.~B. Stark.
\newblock Uncertainty principles and signal recovery.
\newblock {\em SIAM Journal on Applied Mathematics}, 49(3):906--931, 1989.

\bibitem{dragotti2014sparse}
P.~L. Dragotti and Y.~M. Lu.
\newblock On sparse representation in {F}ourier and local bases.
\newblock {\em IEEE Transactions on Information Theory}, 60(12):7888--7899,
  2014.

\bibitem{Dumitrescu:2007vw}
B.~Dumitrescu.
\newblock {\em {Positive Trigonometric Polynomials and Signal Processing
  Applications}}.
\newblock Springer Verlag, Feb. 2007.

\bibitem{peyreduval}
V.~Duval and G.~Peyr{\'e}.
\newblock Exact support recovery for sparse spikes deconvolution.
\newblock {\em Foundations of Computational Mathematics}, pages 1--41, 2015.

\bibitem{eftekhari2015greed}
A.~Eftekhari and M.~B. Wakin.
\newblock Greed is super: A fast algorithm for super-resolution.
\newblock Preprint.

\bibitem{fannjiang2012coherence}
A.~Fannjiang and W.~Liao.
\newblock Coherence pattern-guided compressive sensing with unresolved grids.
\newblock {\em SIAM Journal on Imaging Sciences}, 5(1):179--202, 2012.

\bibitem{faxin2001effective}
Y.~Faxin, S.~Yiying, and L.~Yongtan.
\newblock An effective method of anti-impulsive-disturbance for ship-target
  detection in hf radar.
\newblock In {\em Radar, 2001 CIE International Conference on, Proceedings},
  pages 372--375. IEEE, 2001.

\bibitem{support_detection}
C.~Fernandez-Granda.
\newblock Support detection in super-resolution.
\newblock In {\em Proceedings of the 10th International Conference on Sampling
  Theory and Applications}, pages 145--148, 2013.

\bibitem{superres_new}
C.~Fernandez-Granda.
\newblock Super-resolution of point sources via convex programming.
\newblock {\em Information and Inference}, 2016.

\bibitem{cvx}
M.~Grant and S.~Boyd.
\newblock {CVX}: Matlab software for disciplined convex programming, version
  1.21.
\newblock \url{../../cvx}, Apr. 2011.

\bibitem{Gross:2009id}
D.~Gross.
\newblock Recovering low-rank matrices from few coefficients in any basis.
\newblock {\em IEEE Trans. Inf. Thy.}, 57(3):1548--1566, Mar. 2009.

\bibitem{windows_harris}
F.~Harris.
\newblock On the use of windows for harmonic analysis with the discrete
  {F}ourier transform.
\newblock {\em Proceedings of the IEEE}, 66(1):51 -- 83, 1978.

\bibitem{lagarias1998convergence}
J.~C. Lagarias, J.~A. Reeds, M.~H. Wright, and P.~E. Wright.
\newblock Convergence properties of the nelder--mead simplex method in low
  dimensions.
\newblock {\em SIAM Journal on optimization}, 9(1):112--147, 1998.

\bibitem{leonowicz2003advanced}
Z.~Leonowicz, T.~Lobos, and J.~Rezmer.
\newblock Advanced spectrum estimation methods for signal analysis in power
  electronics.
\newblock {\em IEEE Transactions on Industrial Electronics}, 50(3):514--519,
  2003.

\bibitem{li2013compressed}
X.~Li.
\newblock Compressed sensing and matrix completion with constant proportion of
  corruptions.
\newblock {\em Constructive Approximation}, 37(1):73--99, 2013.

\bibitem{lu2010impulsive}
X.~Lu, J.~Wang, A.~Ponsford, and R.~Kirlin.
\newblock Impulsive noise excision and performance analysis.
\newblock In {\em 2010 IEEE Radar Conference}, pages 1295--1300. IEEE, 2010.

\bibitem{mairal_bach_tutorial}
J.~Mairal, F.~Bach, and J.~Ponce.
\newblock Sparse modeling for image and vision processing.
\newblock {\em arXiv preprint arXiv:1411.3230}, 2014.

\bibitem{mp}
S.~G. Mallat and Z.~Zhang.
\newblock Matching pursuits with time-frequency dictionaries.
\newblock {\em IEEE Transactions on Signal Processing}, 41(12):3397--3415,
  1993.

\bibitem{mccoy2014sharp}
M.~B. McCoy and J.~A. Tropp.
\newblock Sharp recovery bounds for convex demixing, with applications.
\newblock {\em Foundations of Computational Mathematics}, 14(3):503--567, 2014.

\bibitem{moitra_superres}
A.~Moitra.
\newblock Super-resolution, extremal functions and the condition number of
  {V}andermonde matrices.
\newblock In {\em Proceedings of the 47th Annual ACM Symposium on Theory of
  Computing (STOC)}, 2015.

\bibitem{olshausen_field}
B.~A. Olshausen and D.~Field.
\newblock Emergence of simple-cell receptive field properties by learning a
  sparse code for natural images.
\newblock {\em Nature}, 381(6583):607--609, 1996.

\bibitem{omp}
Y.~C. Pati, R.~Rezaiifar, and P.~Krishnaprasad.
\newblock Orthogonal matching pursuit: Recursive function approximation with
  applications to wavelet decomposition.
\newblock In {\em 27th Asilomar Conference on Signals, Systems and Computers},
  pages 40--44. IEEE, 1993.

\bibitem{rao2014forward}
N.~Rao, P.~Shah, and S.~Wright.
\newblock Forward?backward greedy algorithms for signal demixing.
\newblock In {\em 2014 48th Asilomar Conference on Signals, Systems and
  Computers}, pages 437--441. IEEE, 2014.

\bibitem{rao2015forward}
N.~Rao, P.~Shah, and S.~Wright.
\newblock Forward--backward greedy algorithms for atomic norm regularization.
\newblock {\em IEEE Transactions on Signal Processing}, 63(21):5798--5811,
  2015.

\bibitem{rockafellar1974conjugate}
R.~Rockafellar.
\newblock {\em Conjugate Duality and Optimization}.
\newblock Regional conference series in applied mathematics. Society for
  Industrial and Applied Mathematics, 1974.

\bibitem{tv}
L.~I. Rudin, S.~Osher, and E.~Fatemi.
\newblock Nonlinear total variation based noise removal algorithms.
\newblock {\em Physica D: Nonlinear Phenomena}, 60(1):259--268, 1992.

\bibitem{Schaeffer:1941wm}
A.~Schaeffer.
\newblock {Inequalities of A. Markoff and S. Bernstein for polynomials and
  related functions}.
\newblock {\em Bull. Amer. Math. Soc.}, 47, Nov. 1941.

\bibitem{music_schmidt}
R.~Schmidt.
\newblock {Multiple emitter location and signal parameter estimation}.
\newblock {\em Antennas and Propagation, IEEE Transactions on}, 34(3):276--280,
  1986.

\bibitem{slepian}
D.~{Slepian}.
\newblock {Prolate spheroidal wave functions, {F}ourier analysis, and
  uncertainty. V - The discrete case}.
\newblock {\em Bell System Technical Journal}, 57:1371--1430, 1978.

\bibitem{smith2008introduction}
J.~O. Smith.
\newblock {\em Introduction to digital filters: with audio applications},
  volume~2.
\newblock Julius Smith, 2008.

\bibitem{stoica2011new}
P.~Stoica, P.~Babu, and J.~Li.
\newblock New method of sparse parameter estimation in separable models and its
  use for spectral analysis of irregularly sampled data.
\newblock {\em IEEE Transactions on Signal Processing}, 59(1):35--47, 2011.

\bibitem{stoicaNLSmultimodal}
P.~Stoica, R.~Moses, B.~Friedlander, and T.~Soderstrom.
\newblock Maximum likelihood estimation of the parameters of multiple sinusoids
  from noisy measurements.
\newblock {\em IEEE Transactions on Acoustics, Speech and Signal Processing},
  37(3):378--392, 1989.

\bibitem{Stoica:2005wf}
P.~Stoica and R.~L. Moses.
\newblock {\em {Spectral analysis of signals}}.
\newblock Prentice Hall, Upper Saddle River, New Jersey, 1 edition, 2005.

\bibitem{su_corrupted_fourier}
D.~Su.
\newblock Compressed sensing with partially corrupted {F}ourier measurements.
\newblock Preprint.

\bibitem{tang_resolution}
G.~Tang.
\newblock Resolution limits for atomic decompositions via {M}arkov-{B}ernstein
  type inequalities.
\newblock In {\em Proceedings of the 10th International Conference on Sampling
  Theory and Applications}, pages 548--552, 2015.

\bibitem{tang2014minimax}
G.~Tang, B.~Bhaskar, and B.~Recht.
\newblock Near minimax line spectral estimation.
\newblock {\em Information Theory, IEEE Transactions on}, 61(1):499--512, Jan
  2015.

\bibitem{tang2012offgrid}
G.~Tang, B.~Bhaskar, P.~Shah, and B.~Recht.
\newblock Compressed sensing off the grid.
\newblock {\em Information Theory, IEEE Transactions on}, 59(11):7465--7490,
  Nov 2013.

\bibitem{tang2013discretize}
G.~Tang, B.~N. Bhaskar, and B.~Recht.
\newblock Sparse recovery over continuous dictionaries-just discretize.
\newblock In {\em 2013 Asilomar Conference on Signals, Systems and Computers},
  pages 1043--1047, Nov 2013.

\bibitem{tang2014robust}
G.~Tang, P.~Shah, B.~N. Bhaskar, and B.~Recht.
\newblock Robust line spectral estimation.
\newblock In {\em Signals, Systems and Computers, 2014 48th Asilomar Conference
  on}, pages 301--305. IEEE, 2014.

\bibitem{tibshirani1996regression}
R.~Tibshirani.
\newblock Regression shrinkage and selection via the lasso.
\newblock {\em Journal of the Royal Statistical Society. Series B
  (Methodological)}, pages 267--288, 1996.

\bibitem{tropp2008linear}
J.~A. Tropp.
\newblock On the linear independence of spikes and sines.
\newblock {\em Journal of Fourier Analysis and Applications}, 14(5-6):838--858,
  2008.

\bibitem{Tropp:2011vm}
J.~A. Tropp.
\newblock User-friendly tail bounds for sums of random matrices.
\newblock {\em Found. Comput. Math.}, 12(4):389--434, Aug. 2011.

\bibitem{viti1997prony}
V.~Viti, C.~Petrucci, and P.~Barone.
\newblock Prony methods in {NMR} spectroscopy.
\newblock {\em International Journal of Imaging Systems and Technology},
  8(6):565--571, 1997.

\bibitem{yang2015gridless}
Z.~Yang and L.~Xie.
\newblock On gridless sparse methods for line spectral estimation from complete
  and incomplete data.
\newblock {\em IEEE Transactions on Signal Processing}, 63(12):3139--3153.

\bibitem{zeng2013_}
W.-J. Zeng, H.~So, and L.~Huang.
\newblock $\ell_p$-music: Robust direction-of-arrival estimator for impulsive
  noise environments.
\newblock {\em IEEE Transactions on Signal Processing}, 61:4296--4308, 2013.

\bibitem{le_paper}
L.~Zheng and X.~Wang.
\newblock Improved {NN-JPDAF} for joint multiple target tracking and feature
  extraction.
\newblock Preprint.

\end{thebibliography}
\end{small}

\appendix

\section{Proof of Lemma~\ref{lemma:trimmed_sol}}
\label{proof:trimmed_sol}
For any vector $\vct{u}$ and any atomic measure $\nu$, we denote by $\vct{u}_{\ml{S}}$ and $\nu_{\ml{S}}$ the restriction of $\vct{u}$ and $\nu$ to the subset of their support indexed by a set $\ml{S}$. Let $\keys{\hat{\mu},\vct{ \hat{z} }}$ be any solution to Problem~\eqref{eq:opt_problem} applied to $\vct{y'}$. The pair $\keys{\hat{\mu}+\mu_{T/T'},\vct{ \hat{z} }+\vct{ z }_{\Omega/\Omega'}}$ is feasible for Problem~\eqref{eq:opt_problem} applied to $\vct{y}$ since
\begin{align}
\mathcal{F}_{n} \, \hat{\mu} +\mathcal{F}_{n} \, \mu_{T/T'}+ \vct{ \hat{z} }+\vct{ z }_{\Omega/\Omega'} & = \vct{y'} +\mathcal{F}_{n} \, \mu_{T/T'} + \vct{ z }_{\Omega/\Omega'}\\
& = \mathcal{F}_{n} \, \mu' +\mathcal{F}_{n} \, \mu_{T/T'} + \vct{z'} +\vct{ z }_{\Omega/\Omega'} \\
& = \mathcal{F}_{n} \, \mu + \vct{z}\\
& = \vct{y}. 
\end{align}
By the triangle inequality and the assumption that $\keys{\mu,\vct{z}}$ is the unique solution to Problem~\eqref{eq:opt_problem} applied to $\vct{y'}$, this implies
\begin{align}
\normTV{\mu} + \lambda \normOne{\vct{z}} & <  \normTV{\hat{\mu}+\mu_{T/T'}} + \lambda \normOne{\vct{ \hat{z} }+\vct{ z }_{\Omega/\Omega'}} \\
& \leq \normTV{\hat{\mu}} +\normTV{\hat{\mu}_{T/T'}} + \lambda \normOne{\vct{ \hat{z} }} + \lambda \normOne{\vct{ z }_{\Omega/\Omega'}}
\end{align}
unless $ \hat{\mu}+\mu_{T/T'} = \mu$ and $\vct{ \hat{z} }+\vct{z}_{\Omega/\Omega'} = \vct{z}$, so that
\begin{align}
\normTV{\mu'} + \lambda \normOne{\vct{z'}} & = \normTV{\mu} - \normTV{\mu_{T/T'}} + \lambda \normOne{\vct{z}} -   \lambda \normOne{\vct{ z }_{\Omega/\Omega'}} \\
& <  \normTV{\hat{\mu}} + \lambda \normOne{\vct{ \hat{z} }},
\end{align}
unless $ \hat{\mu} = \mu$ and $\vct{ \hat{z} } = \vct{z'}$. We conclude that $\keys{\mu',\vct{z'}}$ must be the unique solution to Problem~\eqref{eq:opt_problem} applied to $\vct{y'}$.

\section{Atomic-norm denoising}
\subsection{Proof of Lemma~\ref{lemma:atomic_norm_dual}}
\label{proof:atomic_norm_dual}
We define a scaled dual norm $ \|\cdot\|_{\A'} := \|\cdot\|_\A / \sqrt{n}$. The dual norm of $\|\cdot\|_{\A'}$ is
\begin{align}\label{eqn:dualatomicnorm}
  \|\vct{ \eta }  \|_{\A'} ^{\ast} & = \sup_{ \atomicnorm{ \vct{ \tilde{g}}} \leq \sqrt{n}} \PROD{ \vct{\eta}}{ \vct{\tilde{g}}} \\
  & = \sup_{\phi \in [0, 2\pi), f \in [0, 1]} \PROD{ \vct{\eta}}{  \sqrt{n} e^{i\phi} \vct{a}\brac{f,0} } \\
 & =  \sup_{f \in \sqbr{0, 1} } \abs{ \PROD{ \vct{\eta} }{ \sqrt{n} \vct{a}\brac{f,0} } } \\
 & = \normInf{ \mathcal{F}_{n}^{\ast} \, \vct{\eta}}.
 \end{align}
The result now follows from the fact that the dual of~\ref{eq:atomic_norm_min} is 
\begin{align}
\label{eq:dual_sinesspikes}
\max_{ \vct{\eta} \in \C^{n}} \;   \PROD{\vct{y}}{\vct{\eta}}  \quad \text{subject to}
\quad &   \|\vct{ \eta }  \|_{\A'} ^{\ast} \leq 1, \\
&  \normInf{\vct{\eta}}  \leq \lambda,
\end{align}
by a standard argument~\cite[Section 2.1]{chandrasekaran2012convex}.

\subsection{Proof of Corollary \ref{cor:atomic_denoising}}
\label{proof:atomic_denoising}
The corollary is a direct consequence of the following lemma, which establishes that the dual polynomial whose existence we establish in Proposition~\ref{prop:dual_pol} also guarantees that solving Problem~\eqref{eq:atomic_norm_min} achieves exact demixing. 
\begin{lemma}
If there exists a trigonometric polynomial $Q$ satisfying the conditions listed in Proposition~\ref{proposition:dual_polynomial}, then $\vct{g}$ and $\vct{z}$ are the unique solutions to Problem~\eqref{eq:atomic_norm_min}.
\end{lemma}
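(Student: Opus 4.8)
The plan is to show that the coefficient vector $\vct{q}$ of the certificate $Q$ simultaneously certifies optimality of both terms in the cost of Problem~\eqref{eq:atomic_norm_min}, mirroring the subgradient argument displayed after Proposition~\ref{proposition:dual_polynomial}. First I would verify that $\vct{q}$ is dual feasible: the bounds $\abs{Q(f)} \leq 1$ for every $f$ (conditions~\eqref{eqn:condition:Q1} and~\eqref{eqn:condition:Q2}) give $\normInf{\mathcal{F}_n^{\ast}\vct{q}} = \normInf{Q} \leq 1$, while~\eqref{eqn:condition:q1} and~\eqref{eqn:condition:q2} give $\normInf{\vct{q}} \leq \lambda$. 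Recalling from the proof of Lemma~\ref{lemma:atomic_norm_dual} that the dual norm of $\frac{1}{\sqrt{n}}\atomicnorm{\cdot}$ is $\vct{\eta}\mapsto\normInf{\mathcal{F}_n^{\ast}\vct{\eta}}$, the first bound says $\vct{q}$ lies in the unit ball of this dual norm. A short computation then shows $\vct{q}$ is in fact a subgradient of $\frac{1}{\sqrt{n}}\atomicnorm{\cdot}$ at $\vct{g}$: writing $\vct{g} = \mathcal{F}_n\mu$ and passing to the adjoint, the interpolation property~\eqref{eqn:condition:Q1} yields $\PROD{\vct{q}}{\vct{g}} = \PROD{\mathcal{F}_n^{\ast}\vct{q}}{\mu} = \PROD{Q}{\mu} = \sum_j \abs{\vct{x}_j} = \normOne{\vct{x}}$, and combining the representation~\eqref{eq:g} (which gives $\frac{1}{\sqrt{n}}\atomicnorm{\vct{g}} \leq \normOne{\vct{x}}$) with the dual-norm inequality (which gives the reverse) shows $\frac{1}{\sqrt{n}}\atomicnorm{\vct{g}} = \normOne{\vct{x}} = \PROD{\vct{q}}{\vct{g}}$. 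Analogously, conditions~\eqref{eqn:condition:q1} and~\eqref{eqn:condition:q2} make $\frac{1}{\lambda}\vct{q}$ a subgradient of $\normOne{\cdot}$ at $\vct{z}$, since $\PROD{\vct{q}}{\vct{z}} = \lambda\normOne{\vct{z}}$ and $\normInf{\lambda^{-1}\vct{q}} \leq 1$.

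Optimality of $(\vct{g}, \vct{z})$ then follows exactly as in the display after Proposition~\ref{proposition:dual_polynomial}: for any feasible $(\vct{\tilde g}, \vct{\tilde z})$ with $\vct{\tilde g} + \vct{\tilde z} = \vct{y} = \vct{g} + \vct{z}$, the two subgradient inequalities add to
\begin{align}
\frac{1}{\sqrt{n}}\atomicnorm{\vct{\tilde g}} + \lambda\normOne{\vct{\tilde z}} \geq \frac{1}{\sqrt{n}}\atomicnorm{\vct{g}} + \lambda\normOne{\vct{z}} + \PROD{\vct{q}}{(\vct{\tilde g} - \vct{g}) + (\vct{\tilde z} - \vct{z})},
\end{align}
and the inner-product term vanishes because feasibility forces $(\vct{\tilde g} - \vct{g}) + (\vct{\tilde z} - \vct{z}) = \vct{0}$.

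For uniqueness I would extract structural information from the equality case. If $(\vct{\tilde g}, \vct{\tilde z})$ is also optimal, both subgradient inequalities must hold with equality. Equality in the $\ell_1$ term forces $\op{Re}(\overline{\vct{q}_l}\vct{\tilde z}_l) = \lambda\abs{\vct{\tilde z}_l}$ for every $l$; since $\abs{\vct{q}_l} < \lambda$ on $\Omega^c$ by~\eqref{eqn:condition:q2}, this forces $\vct{\tilde z}_l = 0$ off $\Omega$, so $\vct{\tilde z}$ is supported on $\Omega$. Equality in the atomic term, $\PROD{\vct{q}}{\vct{\tilde g}} = \frac{1}{\sqrt{n}}\atomicnorm{\vct{\tilde g}}$, I would analyse by taking any optimal atomic decomposition $\vct{\tilde g} = \sum_i c_i \vct{a}\brac{f_i,\phi_i}$ with $c_i > 0$ and $\sum_i c_i = \atomicnorm{\vct{\tilde g}}$, and using $\PROD{\vct{q}}{\vct{a}\brac{f,\phi}} = \frac{1}{\sqrt{n}}\op{Re}(e^{i\phi}\overline{Q(f)}) \leq \frac{1}{\sqrt{n}}\abs{Q(f)}$; the strict bound $\abs{Q(f)}<1$ on $T^c$ then forces every active frequency $f_i$ to lie in $T$ and pins its phase to $e^{i\phi_i} = Q(f_i)$. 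Collecting the active atoms frequency by frequency produces a measure $\tilde\mu$ supported on $T$ with $\vct{\tilde g} = \mathcal{F}_n\tilde\mu$ and $\normTV{\tilde\mu} = \frac{1}{\sqrt{n}}\atomicnorm{\vct{\tilde g}}$. Consequently $(\tilde\mu, \vct{\tilde z})$ is feasible for the TV-norm Problem~\eqref{eq:opt_problem} and attains objective value $\normTV{\tilde\mu} + \lambda\normOne{\vct{\tilde z}} = \frac{1}{\sqrt{n}}\atomicnorm{\vct{\tilde g}} + \lambda\normOne{\vct{\tilde z}} = \normTV{\mu} + \lambda\normOne{\vct{z}}$, so it is an optimal solution of Problem~\eqref{eq:opt_problem}. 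Since the hypotheses guarantee $(\mu,\vct{z})$ is the \emph{unique} solution of Problem~\eqref{eq:opt_problem} by Proposition~\ref{proposition:dual_polynomial}, we conclude $\tilde\mu = \mu$ and $\vct{\tilde z} = \vct{z}$, whence $\vct{\tilde g} = \mathcal{F}_n\tilde\mu = \mathcal{F}_n\mu = \vct{g}$.

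I expect the main obstacle to be the equality analysis of the atomic term in the uniqueness step: one must argue at the level of an optimal atomic decomposition that equality in $\PROD{\vct{q}}{\vct{\tilde g}} \leq \frac{1}{\sqrt{n}}\atomicnorm{\vct{\tilde g}}$ confines the active frequencies to $T$ with phases aligned to $Q$, and then recombine these atoms into a genuine measure $\tilde\mu$ satisfying $\normTV{\tilde\mu} = \frac{1}{\sqrt{n}}\atomicnorm{\vct{\tilde g}}$, so that uniqueness reduces cleanly to the already-established uniqueness for Problem~\eqref{eq:opt_problem}. By contrast, the dual-feasibility check and the subgradient optimality argument are routine once $\PROD{\vct{q}}{\vct{g}} = \frac{1}{\sqrt{n}}\atomicnorm{\vct{g}}$ is verified.
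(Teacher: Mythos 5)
Your proof is correct, and while the optimality half matches the paper's argument in content, your uniqueness step takes a genuinely different route. The paper stays entirely inside Problem~\eqref{eq:atomic_norm_min}: for any feasible pair $(\vct{g'},\vct{z'})$ distinct from $(\vct{g},\vct{z})$ it uses the full-rank property of $\MAT{F_T & \Id_{\Omega}}$ (Section~\ref{proof:F_T_I}, valid when $k+s\leq n$) to guarantee that such a pair must activate a frequency outside $T$ or a spike outside $\Omega$, and then the strict bounds~\eqref{eqn:condition:Q2} and~\eqref{eqn:condition:q2} make the chain $\frac{1}{\sqrt{n}}\atomicnorm{\vct{g}}+\lambda\normOne{\vct{z}}\leq\PROD{\vct{q}}{\vct{y}}=\PROD{\vct{q}}{\vct{g'}}+\PROD{\vct{q}}{\vct{z'}}<\frac{1}{\sqrt{n}}\atomicnorm{\vct{g'}}+\lambda\normOne{\vct{z'}}$ strict, so optimality and uniqueness come in one stroke. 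You instead phrase optimality as two subgradient inequalities and extract uniqueness from the equality case: any other optimizer must have spikes supported on $\Omega$ and atoms confined to $T$ with phases pinned to $Q$, hence it lifts to a measure $\tilde\mu$ supported on $T$ for which $(\tilde\mu,\vct{\tilde z})$ is feasible for Problem~\eqref{eq:opt_problem} with the same cost as $(\mu,\vct{z})$, and you then invoke the uniqueness conclusion of Proposition~\ref{proposition:dual_polynomial} instead of re-deriving it. Your reduction buys re-use of an already-established result and makes explicit the correspondence between solutions of the two problems, which is the conceptual point of Corollary~\ref{cor:atomic_denoising}; the paper's direct argument is shorter and self-contained, but it repeats, inside the atomic formulation, essentially the same linear-independence reasoning that underlies Proposition~\ref{proposition:dual_polynomial}. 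Two tacit assumptions are shared by both arguments and worth flagging: the condition $k+s\leq n$ (you need it because the conclusion of Proposition~\ref{proposition:dual_polynomial} is stated under it; the paper needs it for the full-rank step), and attainment of the infimum defining the atomic norm by a finite decomposition (standard, via compactness of the atom set and Carath\'eodory's theorem), which you use when taking an ``optimal atomic decomposition'' and the paper uses implicitly when writing $\atomicnorm{\vct{g'}}$ as $\sum_{f_j\in T'}\abs{\vct{x'}_j}$.
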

\begin{proof}
In the case of the atoms defined by~\eqref{eq:atoms}, the atomic norm is given by
\begin{align}
\atomicnorm{ \vct{u} } & = \inf_{\substack{\keys{\vct{\tilde{x}}_j \geq 0}, \; \keys{\phi_j \in [0, 2\pi)} \\ \keys{f_j \in [0, 1]}}} \keys{ \sum_j \vct{\tilde{x}}_j: \vct{u} = \sum_j \vct{\tilde{x}}_j \vct{a}\brac{f_j, \phi_j}  },
\end{align}
so that 
\begin{align}
\atomicnorm{ \vct{g} } & \leq \normOne{\vct{x}} \quad \text{due to~\eqref{eq:g}} \\
& = \normTV{\mu} . \label{eq:ineq_g_mu}
\end{align}
By construction,
\begin{align}
\PROD{\vct{q}}{\vct{y}} & = \PROD{\vct{q}}{ \vct{g} + \vct{z}} \\
& = \PROD{\ml{F}_n^{\ast}\vct{q}}{ \mu } + \PROD{\vct{q}}{ \vct{z}} \\
& = \int_{\sqbr{0,1}} \overline{Q \brac{f}} \, \diff{\mu \brac{f}} + \lambda \sum_{l=1}^{s} \abs{\vct{z}_l} \\
& = \normTV{\mu} + \lambda \normOne{ \vct{ z }}. \label{eq:eq_q_y_mu_z}
\end{align}
Consider an arbitrary feasible pair $\keys{ \vct{g'}, \vct{z'}}$ different from $\keys{ \vct{g}, \vct{z}}$, such that $\vct{z'}$ has nonzero support $\Omega'$ and
\begin{align}
\vct{g'}  = \sqrt{n}  \sum_{f_j \in T'} \vct{x'}_j \vct{a}\brac{f_j,0}, \quad \atomicnorm{ \vct{g'} } := \sum_{f_j \in T'} \abs{\vct{x'}_j }
\end{align} 
for a sequence of complex coefficients $\vct{x'}$ and a set of frequency locations $T' \subseteq \sqbr{0,1}$. 

Note that as long as $k + s \leq n$ (recall that $k := \abs{T}$ and $s:=\abs{\Omega}$) then either $T \neq T'$ or $\Omega \neq \Omega'$. The reason is that under that condition any set formed by $k$ atoms of the form $\vct{a}\brac{f_j,0}$ and $s$ vectors with cardinality one is linearly independent (this is equivalent to the matrix $\MAT{F_T & \Id_{\Omega}}$ in Section~\ref{proof:F_T_I} being full rank), so that if both $T = T'$ and $\Omega = \Omega'$ then $\vct{g} + \vct{z}= \vct{g'} + \vct{z}$ would imply that $\vct{g}= \vct{g'}$ and $ \vct{z}= \vct{z}$ (and we are assuming this is not the case).

By conditions~\eqref{eqn:condition:Q1} and \eqref{eqn:condition:Q2}
\begin{alignat}{2}
\sqrt{n} \PROD{\vct{q}}{ \vct{a}\brac{f_j,0}} & = Q\brac{f_j} \\
& = \frac{\vct{x}_j}{\abs{\vct{x}_j}}, \qquad && \forall f_j \in T, \label{eq:Q_a_1}\\
\sqrt{n} \PROD{\vct{q}}{ \vct{a}\brac{f_j,0}} & = \abs{ Q \brac{ f } } \\ 
&  <  1, && \forall f \in T^c. \label{eq:Q_a_2}
\end{alignat}
We have
\begin{align}
\atomicnorm{ \vct{g} } + \lambda \normOne{ \vct{ z }} & \leq \PROD{\vct{q}}{\vct{y}} \quad \text{by~\eqref{eq:ineq_g_mu} and~\eqref{eq:eq_q_y_mu_z}}\\
& =  \PROD{\vct{q}}{\vct{g'}} + \PROD{\vct{q}}{\vct{z'}} \\
& = \sqrt{n} \sum_{f_j \in T'} \vct{x'}_j \PROD{\vct{q}}{\vct{a}\brac{f,0}} + \PROD{\vct{q}_{\Omega'}}{\vct{z'}} \\
& < \sqrt{n} \sum_{f_j \in T'} \abs{\vct{x'}_j} + \lambda \sum_{l \in \Omega'} \abs{\vct{z'}_j} \label{eq:ineq_xprime_zprime}\\
& = \atomicnorm{ \vct{g'} } + \lambda \normOne{ \vct{ z' }} 
\end{align}
where \eqref{eq:ineq_xprime_zprime} follows from conditions~\eqref{eqn:condition:q1} and \eqref{eqn:condition:q2}, \eqref{eq:Q_a_1}, \eqref{eq:Q_a_2} and the fact that either $T \neq T'$ or $\Omega \neq \Omega'$. We conclude that $\keys{ \vct{g}, \vct{z}}$ must be the unique solution to Problem~\eqref{eq:atomic_norm_min}.
 \end{proof}


\section{Proof of Proposition~\ref{proposition:dual_polynomial}}
\label{proof:dual_polynomial}
For any vector $\vct{u}$ and any atomic measure $\nu$, we denote by $\vct{u}_{\ml{S}}$ and $\nu_{\ml{S}}$ the restriction of $\vct{u}$ and $\nu$ to the subset of their support indexed by a set $\ml{S}$ ($\vct{u}_{\ml{S}}$ has the same dimension as $\vct{u}$ and $\nu_{\ml{S}}$ is still a measure in the unit interval). Let us consider an arbitrary feasible pair $\mu'$ and $\vct{z'}$, such that $\mu'\neq \mu$ or $\vct{z'}\neq \vct{z}$. Due to the constraints of the optimization problem, $\mu'$ and $\vct{z'}$ satisfy 
\begin{align}
\vct{y}=\mathcal{F}_n \, \mu+\vct{z}=\mathcal{F}_n \, \mu'+\vct{z'}. \label{eq:constraint}
\end{align}
The following lemma establishes that $\mu_{T^c}'$ and $\vct{z}_{\Omega^c}'$ cannot both equal zero.
\begin{lemma}[Proof in Section~\ref{proof:F_T_I}]
\label{lemma:F_T_I}
If $\keys{\mu',\vct{z'}}$ is feasible and $\mu_{T^c}'$ and $\vct{z}_{\Omega^c}'$ both equal zero, then $\mu=\mu'$ and $\vct{z}=\vct{z'}$.
\end{lemma}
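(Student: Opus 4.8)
The plan is to reduce the lemma to the \emph{injectivity} of a single finite-dimensional operator, and then to establish that injectivity from the hypothesis $k+s\le n$ together with the minimum-separation condition on $T$. Since $\mu'_{T^c}=0$, the difference $\mu-\mu'$ is supported on $T$, so I can write it as $\sum_{f_j\in T}\vct{a}_j\,\delta(f-f_j)$ for a coefficient vector $\vct{a}\in\C^k$; letting $F_T$ denote the $n\times k$ matrix whose $j$th column is $\mathcal{F}_n\,\delta(f-f_j)$, this gives $\mathcal{F}_n(\mu-\mu')=F_T\,\vct{a}$. Likewise, since $\vct{z'}_{\Omega^c}=0$, the difference $\vct{z}-\vct{z'}$ is supported on $\Omega$ and equals $\Id_\Omega\,\vct{c}$ for some $\vct{c}\in\C^s$, where $\Id_\Omega$ collects the standard-basis vectors indexed by $\Omega$. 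Feasibility of both pairs, $\mathcal{F}_n\mu+\vct{z}=\mathcal{F}_n\mu'+\vct{z'}$, then reads
\begin{align}
\MAT{F_T & \Id_\Omega}\MAT{\vct{a}\\ \vct{c}} = F_T\,\vct{a}+\Id_\Omega\,\vct{c}=\vct{0}.
\end{align}
The desired conclusion $\mu=\mu'$ and $\vct{z}=\vct{z'}$ is \emph{exactly} the assertion $\vct{a}=\vct{0}$ and $\vct{c}=\vct{0}$, so it suffices to prove that the $n\times(k+s)$ matrix $\MAT{F_T & \Id_\Omega}$ has trivial kernel, i.e. that its $k$ frequency atoms and $s$ spike atoms are linearly independent.

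The next step is to reformulate this full-rank statement analytically. The kernel relation $F_T\,\vct{a}=-\Id_\Omega\,\vct{c}$ says precisely that the vector of integer samples of the trigonometric polynomial $h(t):=\sum_{j=1}^{k}\vct{a}_j\,e^{i2\pi f_j t}$ is supported on $\Omega$; equivalently $h$ vanishes at every index of $\Omega^c$, a set whose cardinality $n-s$ is at least $k$ by the assumption $k+s\le n$. The natural route is then to show that $\vct{a}\neq\vct{0}$ is impossible: the submatrix of $F_T$ obtained by retaining the rows indexed by $\Omega^c$ is a generalized Vandermonde matrix in the distinct nodes $z_j:=e^{i2\pi f_j}$, and one would like to argue that it keeps full column rank $k$, which would force $\vct{a}=\vct{0}$ and hence $\Id_\Omega\,\vct{c}=\vct{0}$, i.e. $\vct{c}=\vct{0}$.

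I expect this last claim — the injectivity of $\MAT{F_T & \Id_\Omega}$ — to be the main obstacle, and it is genuinely subtle rather than a mere dimension count. Mixing the continuous frequency atoms with the discrete spike atoms allows \emph{resonances} in which a bona fide multisinusoidal sample vector $F_T\vct{a}$ is itself sparse and supported on $\Omega$: the picket-fence configuration discussed after Theorem~\ref{theorem:main} in Section~\ref{sec:main_result} is exactly such an alignment, and it shows that a retained Vandermonde minor on $\Omega^c$ can collapse in rank even when $\Omega^c$ is large. Consequently the argument must use the minimum-separation bound $\frac{\mindist}{n-1}$ on $T$ together with $k+s\le n$ to preclude the range of $F_T$ from meeting the $s$-dimensional coordinate subspace indexed by $\Omega$ nontrivially, for instance by controlling how many of the $n$ integer samples a min-separated trigonometric polynomial can annihilate, or by exhibiting $k$ rows within $\Omega^c$ on which the corresponding Vandermonde-type minor is provably nonsingular. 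Pinning down this interaction between the separation of the frequencies and the location of the corrupted samples is where the real work of the proof lies.
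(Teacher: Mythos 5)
Your first step is identical to the paper's own proof: the paper also reduces Lemma~\ref{lemma:F_T_I} to the claim that the $n\times\brac{k+s}$ matrix $\MAT{F_T & \Id_{\Omega}}$ has trivial kernel, by stacking the coefficients of $\mu-\mu'$ (supported on $T$) and $\vct{z}-\vct{z'}$ (supported on $\Omega$). From there, however, your proposal is not a proof: the injectivity claim, which you yourself call ``the main obstacle'' and ``where the real work lies,'' is never established. That is a genuine gap, and worse, the route you sketch for closing it cannot succeed. You propose to deduce injectivity from the minimum-separation condition~\eqref{condition:minimum_separation} together with $k+s\le n$, but the picket-fence configuration you cite satisfies \emph{both} hypotheses: the paper itself points out that with $k'=\sqrt{n}$ equispaced frequencies the separation $1/\sqrt{n}$ amply satisfies~\eqref{condition:minimum_separation}, and $k+s=2\sqrt{n}\le n$, while $F_T\vct{1}$ is supported on $\Omega$, so $\MAT{F_T & \Id_{\Omega}}$ is singular. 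Concretely, for $n=9$, $T=\keys{0,1/3,2/3}$, $\Omega=\keys{3,6,9}$, every hypothesis you allow yourself holds and the conclusion fails ($F_T\vct{1}=3\brac{\vct{e}\brac{3}+\vct{e}\brac{6}+\vct{e}\brac{9}}$, and the minimum separation $1/3$ exceeds $\mindist/8$). No argument from those hypotheses alone can exist.

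What your analysis gets right---and this is worth stating plainly---is that the dimension count is not to be trusted, and this cuts against the paper as well. The paper disposes of the key step in one line, asserting that full rank ``follows from the fact that $F_T$ is a submatrix of a Vandermonde matrix.'' That observation only gives $\rank\brac{F_T}=k$ (its rows are consecutive powers of distinct nodes); injectivity of the concatenation is equivalent to full column rank of the row-subsampled matrix $\brac{F_T}_{\Omega^c}$, a \emph{generalized} Vandermonde matrix, and on the unit circle such matrices can be singular---the picket fence is exactly such an instance. So the lemma, under the hypotheses its stated proof actually uses, is false; it can only be salvaged by invoking something neither proof touches, e.g.\ the standing hypothesis of Proposition~\ref{proposition:dual_polynomial} that a dual polynomial $Q$ exists (one can check no such $Q$ exists for the picket fence: averaging $Q$ over the shifts $f\mapsto f+j/k'$ and evaluating at $0$ forces $1=\sum_{l\in\Omega}\vct{q}_l=-\lambda s<0$, a contradiction), or the probabilistic model of Theorem~\ref{theorem:main}. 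In short, you correctly located the soft spot that the paper steps over, but your proposal neither fills the gap nor could fill it along the lines you indicate; any correct repair must bring the dual certificate or the randomness into the injectivity argument rather than relying on separation and a count of rows.
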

This lemma and the existence of $Q$ imply that the cost function evaluated at $\keys{\mu',\vct{z'}}$ is larger than at $\keys{\mu,\vct{z}}$:
\begin{align}
  \normTV{\mu'} +  \lambda \normOne{\vct{z'}} & = \normTV{\mu_{T}'} +\normTV{\mu_{T^c}'} +  \lambda \normOne{\vct{z}_{\Omega}'} +  \lambda \normOne{\vct{z}_{\Omega^c}'}\\
& > \normTV{\mu_{T}'} +\PROD{Q}{\mu_{T^c}'} +  \lambda \normOne{\vct{z}_{\Omega}'} +  \PROD{\vct{q}}{\vct{z}_{\Omega^c}'} \qquad \text{by Lemma~\ref{proof:F_T_I}, \eqref{eqn:condition:Q2} and \eqref{eqn:condition:q2}} \notag \\
& \geq \PROD{Q}{\mu'} +  \PROD{\vct{q}}{\vct{z'}} \qquad \text{by \eqref{eqn:condition:Q1} and \eqref{eqn:condition:q1}}\\
&=\PROD{\mathcal{F}_n^{\ast}\vct{q}}{\mu'} +  \PROD{\vct{q}}{\vct{z'}}\\
&=\PROD{\vct{q}}{\mathcal{F}_n \, \mu'+\vct{z'}}\\
&=\PROD{\vct{q}}{\mathcal{F}_n \, \mu+\vct{z}} \qquad \text{by \eqref{eq:constraint}} \\
&=\PROD{\mathcal{F}_n^{\ast}\vct{q}}{\mu} +  \PROD{\vct{q}}{\vct{z}}\\
&=\PROD{Q}{\mu} +  \PROD{\vct{q}}{\vct{z}}\\
& =   \normTV{\mu} +  \lambda \normOne{\vct{z}} \qquad \text{by \eqref{eqn:condition:Q1} and \eqref{eqn:condition:q1}}.
\end{align}
We conclude that $\keys{\mu,\vct{z}}$ must be the unique solution.

\subsection{Proof of Lemma~\ref{lemma:F_T_I}}
\label{proof:F_T_I}
If $\mu_{T^c}'$ and $\vct{z}_{\Omega^c}'$ both equal zero, then
\begin{align}
\mathcal{F}_n \, \mu+\vct{z} - \mathcal{F}_n \, \mu_{T}'-\vct{z}_{\Omega}'  & = \mathcal{F}_n \, \mu'+\vct{z'} - \mathcal{F}_n \, \mu_{T}'-\vct{z}_{\Omega}' \qquad \text{by \eqref{eq:constraint}} \\
& = \mathcal{F}_n \, \mu_{T^c}'+\vct{z}_{\Omega^c}'   \\
& = 0.  \label{eq:zero} 
\end{align}
We index the entries of $\Omega := \keys{i_1,i_2, \ldots,i_s}$ and define the matrix $\MAT{F_T & \Id_{\Omega}} \in \C^{n \times \brac{k+s}}$, where
\begin{align}
\brac{F_T}_{lj} & = e^{i2\pi l f_j} \quad \text{for } 1\leq l \leq n, \; 1\leq j \leq  k, \\
\brac{ \Id_{\Omega}}_{lj} & = 
\begin{cases} 1 \quad \text{if } l =i_j \\ 
0 \quad \text{otherwise}\end{cases}\quad \text{for } 1\leq l \leq n, \; 1\leq j \leq  s.
\end{align}
If $k + s \leq n$ then $\MAT{F_T & \Id_{\Omega}}$ is full rank (this follows from the fact that $F_T$ is a submatrix of a Vandermonde matrix). Equation~\eqref{eq:zero} implies
\begin{align}
\MAT{F_T & \Id_{\Omega}} \MAT{\vct{x}-\vct{x}' \\ \ml{P}_{\Omega}\vct{z} - \ml{P}_{\Omega} \vct{z'}} = 0,
\end{align}
where $\ml{P}_{\Omega} \vct{u}' \in \C^s$ is the subvector of $\vct{u}'$ containing the entries indexed by $\Omega$ and $\vct{x}' \in \C^T$ is the vector containing the amplitudes of $\mu'$ (recall that by assumption $\mu_{T^c}'=0$). We conclude that $\mu=\mu'$ and $\vct{z}=\vct{z'}$. 

\section{Proof of Lemma~\ref{lemma:c_amp}}
\label{proof:c_amp}
The vector of coefficients $\vct{c}$ equals the convolution of three rectangles of widths $2 \cdot \gammaOne \, m + 1$, $2 \cdot \gammaTwo \, m + 1$ and $2 \cdot \gammaThree \, m + 1$ and amplitudes $\brac{2 \cdot \gammaOne \, m + 1}^{-1}$, $\brac{2 \cdot \gammaTwo \, m + 1}^{-1}$ and $\brac{2 \cdot \gammaThree \, m + 1}^{-1}$. Some simple computations show that the amplitude of the convolution of three rectangles with unit amplitudes and widths $a_1 < a_2<a_3$ is bounded by $a_1 a_2$. An immediate consequence is that the amplitude of $\vct{c}$ is bounded by
\begin{align}
\normInf{ \vct{c} } & \leq \frac{ \brac{2 \cdot \gammaOne \, m + 1} \brac{2 \cdot \gammaTwo \, m + 1} }{ \brac{2 \cdot \gammaOne \, m + 1} \brac{2 \cdot \gammaTwo \, m + 1} \brac{2 \cdot \gammaThree \, m + 1}  } \\
& \leq \frac{ 1 }{ \brac{2 \cdot \gammaThree \, m + 1} } \\
& \leq \frac{1.3}{m}.
\end{align}

\section{Proof of Lemma~\ref{lemma:boundB}}
\label{proof:boundB}
To bound the operator norm of $B_{\Omega}$, we control the behavior of 
\begin{align}
H & \defeq B_{\Omega}B_{\Omega}^{\ast}\\
& = \sum_{l \in \Omega} \vct{b}\brac{l}\vct{b}\brac{l}^{\ast},
\end{align}
which concentrates around a scaled version of
\begin{align}
\bar{H} & := \sum_{l =-m}^{m} \vct{b}\brac{l}\vct{b}\brac{l}^{\ast}.
\end{align}
The following lemma bounds the operator norm of $\bar{H}$.
\begin{lemma}[Proof in Section~\ref{lemma:boundHbar}]
\label{lemma:boundHbar}
Under the assumptions of Theorem~\ref{theorem:main} 
\begin{align}
\norm{ \bar{H}} & \leq 260 \, \pi^2 n \, \log k.
\end{align}
\end{lemma}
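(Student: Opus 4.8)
The plan is to exploit the block moment structure of $\bar H$ and reduce the operator-norm bound to a bound on the maximum absolute row sum, which is legitimate because $\bar H=\sum_{l=-m}^{m}\vct b(l)\vct b(l)^{\ast}$ is Hermitian and positive semidefinite. First I would split the index set $\{1,\dots,2k\}$ into the two blocks induced by the definition of $\vct b(l)$ in~\eqref{eq:b} and write
\[
\bar H=\MAT{ M_0 & -i2\pi\kappa\, M_1 \\ i2\pi\kappa\, M_1 & (2\pi\kappa)^2 M_2 },\qquad (M_p)_{jk}=\sum_{l=-m}^{m} l^{\,p}\, e^{-i2\pi l (f_j - f_k)},
\]
for $p\in\{0,1,2\}$. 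Each $M_p$ is a moment version of the Dirichlet kernel: writing $S_0(f):=\sum_{l=-m}^{m} e^{-i2\pi l f}=(2m+1)\,\ml D_m(f)$, one has $(M_0)_{jk}=S_0(f_j-f_k)$, $(M_1)_{jk}=\tfrac{i}{2\pi}S_0'(f_j-f_k)$ and $(M_2)_{jk}=-\tfrac1{4\pi^2}S_0''(f_j-f_k)$, so the off-diagonal entries of the three blocks of $\bar H$ have magnitudes $|S_0|$, $\kappa|S_0'|$ and $\kappa^2|S_0''|$ respectively. Since $\bar H$ is Hermitian, $\norm{\bar H}$ is at most its largest absolute row sum, and it suffices to bound that quantity for a row in each of the two blocks.

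For the diagonal I would use the closed forms $(M_0)_{jj}=2m+1$, $(M_1)_{jj}=\sum_l l=0$ and $(M_2)_{jj}=\sum_l l^2=\tfrac{m(m+1)(2m+1)}{3}$; together with $\kappa\le 0.468/m$ from Lemma~\ref{lemma:bound_kappa}, every diagonal entry of $\bar H$ is $\ml O(n)$. For the off-diagonal entries I would use $|\sin(\pi f)|\ge 2|f|$ on the wrap-around interval $|f|\le\tfrac12$ to obtain the decay estimates
\[
|S_0(f)|\le \frac{1}{2|f|},\qquad |S_0'(f)|\le \frac{(2m+1)\pi}{2|f|}+\frac{\pi}{4f^2},
\]
together with an analogous closed-form bound for $|S_0''(f)|$ whose leading term is $\tfrac{(2m+1)^2\pi^2}{2|f|}$. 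After multiplying by $\kappa$ and $\kappa^2$ and using $\kappa(2m+1)\le 1$ (again via Lemma~\ref{lemma:bound_kappa}, since $m\ge 10^3$), each off-diagonal entry of every block is bounded by $\tfrac{c\,\pi^2}{|f_j-f_k|}$ plus lower-order terms of the form $\tfrac{c'}{m\,|f_j-f_k|^2}$.

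The $\log k$ factor then arises from summing these bounds over the separated frequencies. By the minimum-separation hypothesis~\eqref{condition:minimum_separation} the points obey $\Delta(T)\ge \tfrac{2.52}{n-1}$, so for fixed $j$ the wrap-around distances $|f_j-f_k|$ are at least $\Delta_{\min},2\Delta_{\min},\dots$ on either side; hence
\[
\sum_{k\ne j}\frac{1}{|f_j-f_k|}\le \frac{2}{\Delta_{\min}}\sum_{i=1}^{k-1}\frac1i\le \frac{2(n-1)}{2.52}\,(1+\log k),
\]
while $\sum_{k\ne j}\tfrac1{|f_j-f_k|^2}\le \tfrac{2}{\Delta_{\min}^2}\cdot\tfrac{\pi^2}{6}$ converges and contributes only $\ml O(n)$ after the $1/m$ weight. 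The dominant contribution is the off-diagonal part of the $(2,2)$ block, of order $\pi^2 n\log k$; collecting the $\ml O(n)$ diagonal terms, the cross-block terms and the convergent lower-order sums, and bounding all numerical constants generously, yields $\norm{\bar H}\le 260\,\pi^2 n\log k$.

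The main obstacle is bookkeeping rather than conceptual: obtaining the explicit decay bounds for $S_0'$ and especially $S_0''$ (which carry several terms scaling with different powers of $2m+1$ and $1/|f|$) and then tracking all numerical constants through the harmonic-sum estimate carefully enough that the total fits inside $260\pi^2$. One must also use the periodic (wrap-around) distance consistently when invoking the minimum-separation condition, and note that the harmonic sum produces the $\log k$ scaling precisely because there are $k-1$ off-diagonal terms in each row.
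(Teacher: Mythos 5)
Your proposal is correct, and its skeleton coincides with the paper's proof: the same block decomposition of $\bar{H}$ into the Dirichlet kernel and its first two derivatives, the same reduction of $\norm{\bar{H}}$ to a maximum absolute row sum (Gershgorin, using that $\bar{H}$ is Hermitian), and the same conversion of each row sum into a harmonic series via the minimum-separation condition~\eqref{condition:minimum_separation}, which is exactly where the $n\log k$ scaling comes from in both arguments. The genuine difference is in how the entrywise kernel bounds are obtained. The paper splits each row into \emph{near} frequencies (within $80/m$ of $f_i$, at most $126$ of them by the separation condition), controlled by the uniform bound $\abs{\ml{D}_m^{\brac{\ell}}\brac{f}} \le \brac{2\pi m}^{\ell}$ from Bernstein's polynomial inequality (Theorem~\ref{theorem:bernstein_pol_ineq}), and \emph{far} frequencies, controlled by the single-term $1/f$ decay estimate imported from~\cite{superres_new} (Lemma~\ref{lemma:dirichlet}); this yields one clean harmonic sum per derivative order. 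You instead derive elementary closed-form quotient bounds from $\abs{\sin\brac{\pi f}} \ge 2\abs{f}$, valid at every nonzero wrap-around distance, so you need no near/far split, no Bernstein inequality, and no external citation; the price is multi-term bounds whose $1/f^2$ and $1/f^3$ tails you must (and correctly do) dispatch as convergent $\zeta\brac{2}$- and $\zeta\brac{3}$-type sums carrying $1/m$ and $1/m^2$ weights, hence $\ml{O}\brac{n}$ contributions. Both routes rely on $\kappa \le 0.468/m$ (Lemma~\ref{lemma:bound_kappa}) so that $\kappa\brac{2m+1} < 1$ kills the powers of $m$, and both land comfortably inside the generous constant $260\pi^2$ (your dominant term, the off-diagonal part of the second diagonal block, contributes on the order of $\pi^2 n \log k$ with a small constant). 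In short, your version is more self-contained, the paper's requires less bookkeeping. One shared caveat worth stating explicitly: absorbing the additive constants into $\log k$ at the end requires $k$ larger than $2$, a restriction the paper acknowledges and handles with a remark that the argument must be modified otherwise.
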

By~\eqref{eq:cond_s} $s \leq C_s \, n \brac{ \log k \log \frac{n}{\epsilon}}^{-1}$ which together with the lemma implies 
\begin{align}
\label{eq:barH_bound_Cb}
\norm{ \frac{s}{n}\bar{H}} & \leq \frac{C_{B}^2 \, n}{2}  \brac{\log \frac{n}{\epsilon}}^{-1}
\end{align}
if we set $C_s$ small enough. The following lemma uses the matrix Bernstein inequality to control the deviation of $H$ from a scaled version of $\bar{H}$. 
\begin{lemma}[Proof in Section~\ref{proof:H_Hbar}]
\label{lemma:H_Hbar}
Under the assumptions of Theorem~\ref{theorem:main}
\begin{align}
\norm{ H - \frac{s}{n}\bar{H}} & \leq \frac{C_{B}^2 \, n}{2} \brac{\log \frac{n}{\epsilon}}^{-1}
\end{align}
with probability at least $1- \epsilon /5 $.
\end{lemma}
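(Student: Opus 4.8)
The plan is to bound $\norm{H - \frac{s}{n}\bar{H}}$ with the matrix Bernstein inequality, exactly as announced before the statement. Writing $\delta_{\Omega}\brac{l}$ for the indicator that the relabelled sample index $l \in \keys{-m, \ldots, m}$ belongs to $\Omega$, the Bernoulli model of Theorem~\ref{theorem:main} makes these independent with mean $s/n$, so that
\begin{align}
H - \frac{s}{n}\bar{H} = \sum_{l=-m}^{m} X_l, \qquad X_l := \brac{\delta_{\Omega}\brac{l} - \frac{s}{n}} \vct{b}\brac{l}\vct{b}\brac{l}^{\ast},
\end{align}
is a sum of independent, Hermitian, mean-zero $2k \times 2k$ matrices (recall $\vct{b}\brac{l} \in \C^{2k}$), with $\E\brac{H} = \frac{s}{n}\bar{H}$.

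Next I would control the two ingredients of the Bernstein tail. Since $\abs{\delta_{\Omega}\brac{l} - s/n} \leq 1$ and $\norm{\vct{b}\brac{l}\vct{b}\brac{l}^{\ast}} = \normTwo{\vct{b}\brac{l}}^2 \leq 10 k$ by Lemma~\ref{lemma:bound_b}, each summand satisfies $\norm{X_l} \leq 10 k =: L$. For the matrix variance I would use the rank-one identity $\brac{\vct{b}\brac{l}\vct{b}\brac{l}^{\ast}}^2 = \normTwo{\vct{b}\brac{l}}^2 \, \vct{b}\brac{l}\vct{b}\brac{l}^{\ast}$ together with $\E\sqbr{\brac{\delta_{\Omega}\brac{l} - s/n}^2} = \frac{s}{n}\brac{1 - \frac{s}{n}} \leq \frac{s}{n}$ to obtain
\begin{align}
\sum_{l=-m}^{m} \E\brac{X_l^2} \preceq \frac{s}{n}\sum_{l=-m}^{m} \normTwo{\vct{b}\brac{l}}^2 \, \vct{b}\brac{l}\vct{b}\brac{l}^{\ast} \preceq \frac{10 k s}{n}\, \bar{H}.
\end{align}
Taking operator norms and invoking Lemma~\ref{lemma:boundHbar} then yields the variance proxy $v := \norm{\sum_l \E\brac{X_l^2}} \leq \frac{10 k s}{n}\norm{\bar{H}} \leq 2600\,\pi^2\, k s \log k$.

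Finally I would insert $L$ and $v$ into the Bernstein bound $\Pr\sqbr{\norm{\sum_l X_l} \geq t} \leq 4k \, \exp\brac{-\frac{t^2/2}{v + Lt/3}}$ with the target $t := \frac{C_{B}^2\, n}{2}\brac{\log\frac{n}{\epsilon}}^{-1}$. Using $k, s \leq C\brac{\log\frac{n}{\epsilon}}^{-2} n$ from~\eqref{eq:cond_k}--\eqref{eq:cond_s} and $\log k \leq \log\frac{n}{\epsilon}$, both $v$ and $Lt$ are of order $\brac{\log\frac{n}{\epsilon}}^{-3} n^2$, whereas $t^2$ is of order $\brac{\log\frac{n}{\epsilon}}^{-2} n^2$; hence the exponent is of order $\log\frac{n}{\epsilon}$, with a proportionality constant that grows as $C_s$ and $C_k$ shrink. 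Since the dimensional prefactor is $4k \leq 4n$, choosing $C_s$ and $C_k$ small enough (compatibly with the fixed constant $C_{B}$) drives the failure probability below $\epsilon/5$.

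Mechanically the argument is routine once the decomposition is in place; the one point that needs care is the logarithmic bookkeeping. I must check that the single factor $\log k$ produced by $\norm{\bar{H}}$ in Lemma~\ref{lemma:boundHbar}, set against the two factors of $\log\frac{n}{\epsilon}$ saved in each of the bounds on $k$ and $s$, leaves exactly one net power of $\log\frac{n}{\epsilon}$ in the exponent, so that a single $\epsilon$-independent choice of the numerical constants absorbs both the prefactor $4n$ and the target level $\epsilon/5$. I would also confirm consistency with~\eqref{eq:barH_bound_Cb}, so that the triangle inequality $\norm{H} \leq \norm{\frac{s}{n}\bar{H}} + \norm{H - \frac{s}{n}\bar{H}} \leq C_B^2 n \brac{\log\frac{n}{\epsilon}}^{-1}$ delivers the operator-norm bound $\norm{B_{\Omega}} = \sqrt{\norm{H}} \leq C_B\brac{\log\frac{n}{\epsilon}}^{-1/2}\sqrt{n}$ of Lemma~\ref{lemma:boundB}.
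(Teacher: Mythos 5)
Your proposal is correct and follows essentially the same route as the paper: the identical centered rank-one decomposition $X_l = \brac{\delta_{\Omega}\brac{l}-\tfrac{s}{n}}\vct{b}\brac{l}\vct{b}\brac{l}^{\ast}$, the bound $\norm{X_l}\leq 10k$ from Lemma~\ref{lemma:bound_b}, the variance proxy controlled through $\norm{\bar{H}}$ via Lemma~\ref{lemma:boundHbar}, and the matrix Bernstein inequality with $C_k$ and $C_s$ taken small enough. The only cosmetic differences are bookkeeping: the paper routes the variance bound through~\eqref{eq:barH_bound_Cb} so that the final smallness condition falls on $k$ alone (with $\sigma^2 = 20\,k\,t$), whereas you keep $v \lesssim k\,s\log k$ and invoke both~\eqref{eq:cond_k} and~\eqref{eq:cond_s}, and you use a dimensional prefactor $4k$ in place of the paper's $2k$, which is harmlessly looser.
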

We conclude that 
\begin{align}
\norm{ B_{\Omega}} & \leq \sqrt{ \norm{H}} \\
& \leq \sqrt{ \frac{s}{n}\norm{ \bar{H}} + \norm{ H - \frac{s}{n}\bar{H}} } \\
& \leq C_{B}\, \sqrt{n} \brac{\log \frac{n}{\epsilon}}^{-\frac{1}{2}}
\end{align}
with probability at least $1- \epsilon /5 $ by the triangle inequality.

\subsection{Proof of Lemma~\ref{lemma:boundHbar}}
\label{proof:boundHbar}
We express the matrix $\bar{H}$ in terms of the Dirichlet kernel $\ml{D}_m$ of order $m$ defined in~\eqref{eq:Dirichlet_kernel} and its derivatives,
\begin{align}
\bar{H} = n\begin{bmatrix}
\bar{H}_0 & \bar{H}_1\\
- \bar{H}_1 & \bar{H_2}
\end{bmatrix}
\end{align}
where 
\begin{align}
 \brac{ \bar{H}_0 }_{j l} & = \ml{D}_m \brac{ f_j - f_l }, \quad 
\brac{ \bar{H}_1 }_{j l}  = \kappa \,\ml{D}_m ^{\brac{1}} \brac{ f_j - f_l }, \quad \brac{ \bar{H}_2
}_{j l} = -\kappa^2 \, \ml{D}_m^{\brac{2}} \brac{ f_j - f_l }.
\end{align}
In order to bound the operator norm of $\bar{H}$ we first establish some bounds on $\ml{D}_m ^{\brac{\ell}}$ for $\ell=0,1,2$. Due to how the kernel is normalized in~\eqref{eq:Dirichlet_kernel}, the magnitude of $\ml{D}_m$ is bounded by one. This yields a uniform bound on the magnitude of its derivatives by Bernstein's polynomial inequality.
\begin{theorem}[Bernstein's polynomial inequality~\cite{Schaeffer:1941wm}]
\label{theorem:bernstein_pol_ineq}
For any complex-valued polynomial $P$ of degree $N$
\begin{align}
\sup_{\abs{z} \leq 1} \abs{P^{\brac{1}} \brac{z}} \leq N \, \sup_{\abs{z} \leq 1} \abs{P\brac{z}}.
\end{align}
\end{theorem}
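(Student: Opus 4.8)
The plan is to reduce the algebraic inequality on the disk to the classical Bernstein inequality for trigonometric polynomials on the circle, and then prove the latter by comparison against a pure sinusoid. First I would invoke the maximum modulus principle: since $P$ and $P'$ are holomorphic, $\abs{P}$ and $\abs{P'}$ are subharmonic, so their suprema over the closed disk $\{\abs{z}\le 1\}$ are attained on the boundary $\abs{z}=1$. It therefore suffices to prove $\sup_{\abs{z}=1}\abs{P'(z)}\le N\sup_{\abs{z}=1}\abs{P(z)}$. Writing $z=e^{i\theta}$ and $f(\theta):=P(e^{i\theta})=\sum_{k=0}^{N}a_k e^{ik\theta}$, a direct computation gives $f'(\theta)=i\,e^{i\theta}P'(e^{i\theta})$, so $\abs{f'(\theta)}=\abs{P'(e^{i\theta})}$ while $\sup_\theta\abs{f(\theta)}=\sup_{\abs{z}=1}\abs{P(z)}$. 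Since $f$ is a trigonometric polynomial of degree at most $N$, the whole claim reduces to the trigonometric Bernstein inequality $\|f'\|_\infty\le N\|f\|_\infty$.

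Next I would remove the complex-valuedness. Fix a point $\theta_0$ and write $f'(\theta_0)=\abs{f'(\theta_0)}e^{i\alpha}$. The real trigonometric polynomial $h(\theta):=\operatorname{Re}\brac{e^{-i\alpha}f(\theta)}$ has degree at most $N$, satisfies $\|h\|_\infty\le\|f\|_\infty$, and obeys $h'(\theta_0)=\operatorname{Re}\brac{e^{-i\alpha}f'(\theta_0)}=\abs{f'(\theta_0)}$. Hence it is enough to prove, for an arbitrary real trigonometric polynomial $h$ of degree $N$ with $\|h\|_\infty\le 1$, the one-point bound $h'(\theta_0)\le N$.

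The core step is this real inequality, which I would establish by Bernstein's own comparison argument. Assume toward a contradiction that $h'(\theta_0)>N$. Then necessarily $\abs{h(\theta_0)}<1$ (otherwise $\theta_0$ is an interior extremum of $\abs{h}$, forcing $h'(\theta_0)=0$), so I can choose a phase $\beta$ so that the sinusoid $g(\theta):=\sin\brac{N\theta+\beta}$ matches $h$ at $\theta_0$, $g(\theta_0)=h(\theta_0)$, with $g'(\theta_0)=N\sqrt{1-h(\theta_0)^2}\in[0,N]$. The difference $D:=g-h$ then vanishes at $\theta_0$ with $D'(\theta_0)=g'(\theta_0)-h'(\theta_0)<0$. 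Because $g$ oscillates between its consecutive extrema $\pm 1$ at $2N$ points over one period while $\abs{h}\le 1$, we have $D\ge 0$ wherever $g=+1$ and $D\le 0$ wherever $g=-1$, which forces a zero of $D$ in each of the $2N$ interlacing intervals; the strict inequality $D'(\theta_0)<0$ at $\theta_0$ produces an extra zero, so $D$ has strictly more than $2N$ zeros per period. This contradicts the fact that a nonzero trigonometric polynomial of degree $N$ has at most $2N$ zeros per period, and establishes $h'(\theta_0)\le N$.

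I expect the zero-counting to be the main obstacle: one must treat with care the configurations where $h$ touches $\pm 1$ at an extremum of $g$ (producing double zeros of $D$) and confirm that, with multiplicities and a parity argument, the total strictly exceeds $2N$. An alternative that sidesteps this bookkeeping is to prove the trigonometric inequality directly through M. Riesz's interpolation formula $h'(\theta)=\sum_{k=1}^{2N}\lambda_k\,h(\theta+\theta_k)$ with nodes $\theta_k=\tfrac{(2k-1)\pi}{2N}$ and weights $\lambda_k=\tfrac{(-1)^{k-1}}{4N\sin^2(\theta_k/2)}$; there the crux becomes the identity $\sum_k\abs{\lambda_k}=N$, which is verified by evaluating the formula on $g(\theta)=\sin\brac{N\theta}$, after which $\abs{h'(\theta)}\le\|h\|_\infty\sum_k\abs{\lambda_k}=N\|h\|_\infty$ follows at once. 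Either route, combined with the two reductions above, yields the stated bound for complex-valued $P$.
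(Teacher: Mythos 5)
The paper offers no proof of this statement: it is imported as a classical result with a citation to Schaeffer's 1941 article and used as a black box (to bound derivatives of the Dirichlet kernel and of $\bar{Q}$), so there is nothing internal to compare your argument against. Your proposal is the standard classical proof and its structure is sound: the maximum-modulus reduction to the circle, the identity $\abs{f'(\theta)} = \abs{P'(e^{i\theta})}$ for $f(\theta) := P(e^{i\theta})$, the reduction to real trigonometric polynomials via $h := \operatorname{Re}\brac{e^{-i\alpha} f}$, and the comparison with $g(\theta) := \sin\brac{N\theta + \beta}$ are all correct, as is the alternative via the Riesz interpolation formula, where $\sum_k \abs{\lambda_k} = N$ does follow by evaluating the formula on $\sin\brac{N\theta}$ at $\theta = 0$, since there $\lambda_k \sin\brac{N\theta_k} = \abs{\lambda_k}$ for every $k$.

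The only genuine gap is the one you flagged, and it is worth being precise about where the count as written can fail: the zero "forced" in the interval of extrema of $g$ that contains $\theta_0$ may simply be $\theta_0$ itself, and zeros landing on shared interval endpoints (extrema of $g$) get claimed by two intervals at once, so the naive total need not exceed $2N$. Two standard repairs close this. (i) Count multiplicities: if $D := g - h$ vanishes at an extremum $\tau$ of $g$, then $h(\tau) = \pm 1$ is a global extremum of $h$, hence $h'(\tau) = 0 = g'(\tau)$ and $\tau$ is a double zero of $D$, which restores the count. (ii) Cleaner: run the entire argument on $h_\epsilon := (1-\epsilon)h$ for small $\epsilon > 0$, which preserves $h_\epsilon'(\theta_0) > N$ while making the sign of $g - h_\epsilon$ strictly positive at every maximum of $g$ and strictly negative at every minimum; then there is a zero strictly inside each of the $2N$ open intervals between consecutive extrema, and the interval containing $\theta_0$ (on which $g$ is strictly increasing, since $g'(\theta_0) > 0$) contains three distinct zeros -- one on each side of $\theta_0$ plus $\theta_0$ itself -- giving at least $2N + 2$ zeros of a nonzero trigonometric polynomial of degree at most $N$, a contradiction. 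With either repair, or by carrying out the Riesz-formula route instead, your proof is complete.
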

Applying the theorem, we have
\begin{align}
\label{eq:boundDirichlet}
\abs{ \ml{D}^{\brac{\ell}} _m \brac{f} } \leq \brac{2  \pi  m}^{\ell}. 
\end{align}
The following lemma allows us to control the tail of the Dirichlet kernel and its derivatives. 
\begin{lemma}[{\hspace{1sp}\cite[Section C.4]{superres_new}}]
\label{lemma:dirichlet}
If $m \geq 10^3$, for $f \geq 80 /m$
\begin{align}
\abs{ \ml{D}^{\brac{\ell}} _m \brac{f} } & \leq \frac{1.1 \, 2^{\ell - 2} \pi^{\ell} m^{\ell -1}}{f}.
\end{align}
\end{lemma}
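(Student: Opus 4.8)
The plan is to reduce to the far regime $f \in \sqbr{80/m,\,1/2}$ using periodicity and the evenness of $\ml{D}_m$ (for larger $f$ one writes $\ml{D}_m^{\brac{\ell}}\brac{f} = \brac{-1}^\ell \ml{D}_m^{\brac{\ell}}\brac{1-f}$ and replaces $f$ by the distance to the nearest integer), and then to exploit the closed form $\ml{D}_m\brac{f} = \frac{\sin\brac{M\pi f}}{M\sin\brac{\pi f}}$ with $M := 2m+1$. Writing $\ml{D}_m = \sin\brac{M\pi f}\, g$ with $g\brac{f} := \frac{1}{M\sin\brac{\pi f}}$, I would expand the $\ell$-th derivative by Leibniz's rule,
\begin{align}
\ml{D}_m^{\brac{\ell}}\brac{f} = \sum_{j=0}^{\ell} \binom{\ell}{j}\frac{d^j}{df^j}\sin\brac{M\pi f}\; g^{\brac{\ell-j}}\brac{f},
\end{align}
and bound each factor separately. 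The numerator factor is immediate, $\abs{\frac{d^j}{df^j}\sin\brac{M\pi f}} \leq \brac{M\pi}^j$.

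The heart of the argument is the leading term $j=\ell$, which I expect to supply essentially the entire bound. Using $\sin\brac{\pi f}\geq 2f$ on $\sqbr{0,1/2}$ (concavity), one gets $\abs{g\brac{f}} \leq \frac{1}{2Mf}$, so this term is at most $\brac{M\pi}^\ell \cdot \frac{1}{2Mf} = \frac{\pi^\ell M^{\ell-1}}{2f}$. Since $M^{\ell-1} = 2^{\ell-1}m^{\ell-1}\brac{1+\tfrac{1}{2m}}^{\ell-1}$, and for $m\geq 10^3$ and the small values of $\ell$ at which the lemma is invoked the correction factor is at most $1.001$, the leading term already matches $\frac{2^{\ell-2}\pi^\ell m^{\ell-1}}{f}$ up to a negligible amount.

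It remains to show that the terms $j<\ell$ contribute at most a further $10\%$, which accounts for the constant $1.1$. For this I need bounds on $g^{\brac{k}}$ with $k=\ell-j\geq 1$, which I would obtain by a Cauchy estimate: $g$ is analytic on the disc $\abs{z-f}\leq f/2$ (its only poles sit at the integers, and this disc has real part in $\sqbr{f/2,3f/2}\subset\brac{0,3/4}$, staying distance $\geq f/2$ from $0$ and $\geq 1/4$ from $1$). From the identity $\abs{\sin\brac{\pi z}}^2 = \sin^2\brac{\pi u}+\sinh^2\brac{\pi v}$ for $z=u+iv$ one checks $\abs{\sin\brac{\pi z}}\geq c_0 f$ on the bounding circle for a fixed constant $c_0$, whence $\max_{\abs{z-f}=f/2}\abs{g\brac{z}}\leq \frac{1}{c_0 Mf}$ and
\begin{align}
\abs{g^{\brac{k}}\brac{f}} \leq \frac{k!}{\brac{f/2}^k}\,\frac{1}{c_0 Mf} = \frac{k!\,2^k}{c_0\,Mf^{k+1}}.
\end{align}
Comparing the $j$-th term to the leading one, the ratio is bounded by a fixed constant times $\brac{\tfrac{2\ell}{\pi Mf}}^{\ell-j}$; since $f\geq 80/m$ and $M\geq 2m$ force $Mf\geq 160$, this is at most a constant times $\brac{\ell/\brac{80\pi}}^{\ell-j}$, so for bounded $\ell$ the geometric sum over $j<\ell$ stays below $0.1$ of the leading term. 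Combining the leading estimate with this correction gives the claimed inequality.

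The main obstacle I anticipate is the uniform lower bound $\abs{\sin\brac{\pi z}}\geq c_0 f$ on the circle $\abs{z-f}=f/2$: one must verify it across the whole circle rather than only on the real axis and confirm the constant is usable, though, crucially, $c_0$ enters only the lower-order correction where there is ample slack, so a crude value suffices. A secondary point is keeping the dependence on $\ell$ explicit, since both the factor $\brac{1+1/\brac{2m}}^{\ell-1}$ and the geometric tail degrade if $\ell$ grows with $m$; the lemma is only used for $\ell\in\keys{0,1,2,3}$, for which $m\geq 10^3$ renders every correction harmless.
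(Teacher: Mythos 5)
Your proof is correct, but note first that the paper you are working from never proves this lemma at all: it is imported verbatim (with citation) from Section C.4 of~\cite{superres_new}, so the relevant comparison is with that source's argument. There the bound is established for $\ell \in \keys{0,1,2,3}$ by explicitly differentiating the closed form $\ml{D}_m\brac{f} = \sin\brac{M\pi f}/\brac{M\sin\brac{\pi f}}$, $M:=2m+1$, case by case, and bounding each term of the resulting quotient-rule expressions by $1$ in the numerators and via $\sin\brac{\pi f}\geq 2f$ in the denominators. Your argument shares that skeleton — same closed form, same chord bound, same dominant term $\pi^{\ell}M^{\ell-1}/\brac{2f}$ — but you replace the case-by-case differentiation of $g = 1/\brac{M\sin\brac{\pi\,\cdot}}$ by a single Cauchy estimate on the disc $\abs{z-f}\leq f/2$, which disposes of all lower-order Leibniz terms uniformly; that is a genuinely different and cleaner way to organize the error analysis, at the (mild) price of invoking complex analysis. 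The step you flag as the main obstacle is in fact immediate: on the circle, $\abs{\sin\brac{\pi z}}^2 = \sin^2\brac{\pi u}+\sinh^2\brac{\pi v} \geq \sin^2\brac{\pi u}$ with $u:=\op{Re} z \in \sqbr{f/2,\,3f/2}$, and $\sin\brac{\pi u}\geq \min\keys{2u,\,2\brac{1-u}} \geq f$ there (using $f\leq 1/2$), so $c_0=1$ works. Two scoping points that you got right deserve to be stated as part of the proof rather than as caveats. First, the statement must be read with $f\in\sqbr{80/m,\,1/2}$ (equivalently, $f$ the wrap-around distance), exactly as your symmetry reduction assumes; otherwise it is false, since $\ml{D}_m\brac{f}\to 1$ as $f\to 1$ while the right-hand side is $O\brac{1/m}$ for $\ell=0$. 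Second, your restriction to bounded $\ell$ is not a limitation of your method but a necessity: for fixed $m$ and large even $\ell$ one has $\abs{\ml{D}_m^{\brac{\ell}}\brac{1/2}} \to \frac{2}{M}\brac{2\pi m}^{\ell}$, which exceeds the claimed bound $0.55\,\brac{2\pi m}^{\ell}/m$, so no proof can cover all $\ell$; the source proves the estimate only for $\ell\leq 3$, and this paper invokes it only with $\ell\in\keys{0,1,2}$ (in the proof of Lemma~\ref{lemma:boundHbar}), which your argument covers with ample margin.
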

We now combine these two bounds to control the sum of the magnitudes of $\ml{D}_m^{\brac{ \ell }}$ when evaluated at $T$ for $\ell = 0,1,2$. By the minimum-separation condition~\eqref{condition:minimum_separation}, if we fix $f_i \in T$ then there are at most 126 other frequencies in $T$ that are at a distance of $80/m$ or less from $f_i$. We bound those terms using~\eqref{eq:boundDirichlet} and deal with the rest by applying Lemma~\ref{lemma:dirichlet},
 \begin{align}
\sup_{f_i} \sum_{j = 1}^k \kappa^\ell \abs{ \ml{D}^{\brac{ \ell }} \brac{ f_i - f_j } } 
 & \leq 126 \pi^{\ell}  \kappa^\ell \sup_{f} \abs{ \ml{D}^{\brac{ \ell }} \brac{ f } } + 2\kappa^\ell  \sum_{j=1}^{ k } \sup_{\abs{f} \geq j \Delta_{\min}} \abs{ \ml{D}^{\brac{ \ell }} \brac{ f }} \\
 & \leq 126 \pi^{\ell} + \frac{1}{m^{\brac{l}}} \sum_{j=1}^{k} \frac{1.1 \, \pi^{\ell} m^{\ell -1}}{ 4j \Delta_{\min} } \quad \text{by Lemma~\ref{lemma:bound_kappa} and~\eqref{eq:boundDirichlet}}\\
 & \leq 130  \pi^{\ell} \log k \quad \text{since $\Delta_{\min} := \frac{\mindisthalf}{m}$ and $ \sum_{j=1}^{k} \frac{1}{j} \leq 1 + \log k \leq 2 \log k$}
\end{align}
as long as $k$ is larger than 2 (the argument can be easily modified if this is not the case).

By Gershgorin's circle theorem, the eigenvalues of $\bar{H}$, and consequently its operator norm, are bounded by
\begin{align}
 n \max_i \Bigg\{  & \sum_{j = 1}^k \abs{ \ml{D} \brac{ f_i - f_j } } + \sum_{j = 1}^k \kappa \abs{ \ml{D}^{\brac{ 1 }} \brac{ f_i - f_j } },  \\
&  \sum_{j = 1}^k \kappa \abs{ \ml{D}^{\brac{ 1 }} \brac{ f_i - f_j } } + \sum_{j = 1}^k \kappa^2 \abs{ \ml{D}^{\brac{ 2 }} \brac{ f_i - f_j } } \Bigg\}
    \leq 260 \, \pi^{2} n \log k.
\end{align}

\subsection{Proof of Lemma~\ref{lemma:H_Hbar}}
\label{proof:H_Hbar}
Under the assumptions of Theorem~\ref{theorem:main}
\begin{align}
H & = \sum_{l =-m}^{m} \delta_{\Omega}\brac{l} \vct{b}\brac{l}\vct{b}\brac{l}^{\ast},
\end{align}
where $\delta_{\Omega}\brac{-m}$, $\delta_{\Omega}\brac{-m+1}$, \ldots, $\delta_{\Omega}\brac{m}$ are iid Bernouilli random variables with parameter $\frac{s}{n}$. We control this sum of independent random matrices using the matrix Bernstein inequality.

\begin{theorem}[Matrix Bernstein inequality{~\cite[Theorem 1.4]{Tropp:2011vm}}]
\label{theorem:matrix_bernstein}
Let $\keys{X_l}$ be a finite sequence of independent zero-mean self-adjoint random matrices of dimension $d$ such that $\norm{X_l} \leq B$ almost surely for a certain constant $B$. For all $t \geq 0 $ and a positive constant $\sigma^2$
\begin{align}
\Pr \keys{ \norm{ \textstyle \sum_{l=-m}^{m} X_l } \geq t } \leq d \, \exp \brac{\frac{- t^2/2}{\sigma^2 + Bt / 3}} \quad \text{as long as} \quad  \norm{ \textstyle \sum_{l=-m}^{m} \E \brac{X_l^2} } \leq \sigma^2.
\end{align}
\end{theorem}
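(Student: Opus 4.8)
The plan is to prove this via the \emph{matrix Laplace-transform method}, the matrix analogue of the scalar Chernoff--Bernstein argument, with the essential noncommutativity obstacle handled by Lieb's concavity theorem. Write $Y := \sum_{l} X_l$. Since $Y$ is self-adjoint, $\norm{Y} = \max\keys{\lambda_{\max}(Y),\, \lambda_{\max}(-Y)}$, so it suffices to bound the one-sided tail $\Pr\keys{\lambda_{\max}(Y) \geq t}$ and then run the identical argument on $\keys{-X_l}$, which has the same uniform bound $B$ and the same second moments, combining the two tails by a union bound. The first step is the matrix Markov inequality: for every $\theta > 0$,
\begin{align}
\Pr\keys{\lambda_{\max}(Y) \geq t} \leq e^{-\theta t}\, \E\, \trace \exp\brac{\theta Y},
\end{align}
which holds because $e^{\theta \lambda_{\max}(Y)} = \lambda_{\max}\brac{\exp(\theta Y)} \leq \trace \exp\brac{\theta Y}$, the last inequality using that $\exp(\theta Y)$ is positive definite.

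Next I would control the matrix moment generating function $\E\, \trace \exp\brac{\theta \sum_l X_l}$. The obstruction is that the $X_l$ do not commute, so $\exp\brac{\theta \sum_l X_l}$ does not factor into a product over $l$. This is precisely where Lieb's theorem enters: the map $A \mapsto \trace \exp\brac{H + \log A}$ is concave on positive-definite matrices for any fixed self-adjoint $H$. Applying it together with Jensen's inequality to peel off the summands one at a time yields the subadditivity of the matrix cumulant generating function,
\begin{align}
\E\, \trace \exp\brac{\theta \sum_l X_l} \leq \trace \exp\brac{\sum_l \log \E\, \exp\brac{\theta X_l}}.
\end{align}
I would then bound a single factor: since $\E X_l = 0$, expanding the exponential and using the operator inequality $X_l^k \preceq B^{k-2} X_l^2$ for $k \geq 2$ (valid eigenvalue-by-eigenvalue because $\norm{X_l} \leq B$) gives
\begin{align}
\E\, \exp\brac{\theta X_l} \preceq I + g(\theta)\, \E X_l^2 \preceq \exp\brac{g(\theta)\, \E X_l^2}, \qquad g(\theta) := \frac{\theta^2/2}{1 - \theta B/3},
\end{align}
for $0 < \theta < 3/B$, where summing the tail of the series uses $k! \geq 2 \cdot 3^{k-2}$ and the final step uses $I + M \preceq \exp(M)$. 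Operator monotonicity of the logarithm then gives $\log \E\, \exp(\theta X_l) \preceq g(\theta)\, \E X_l^2$, and monotonicity of $\trace\exp$ converts the subadditivity bound into
\begin{align}
\E\, \trace \exp\brac{\theta Y} \leq \trace \exp\brac{g(\theta) \sum_l \E X_l^2} \leq d\, \exp\brac{g(\theta)\, \sigma^2},
\end{align}
the final inequality using $\trace \exp(M) \leq d\, \exp\brac{\lambda_{\max}(M)}$ together with the hypothesis $\norm{\sum_l \E X_l^2} \leq \sigma^2$.

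Combining the Markov step with this bound gives $\Pr\keys{\lambda_{\max}(Y) \geq t} \leq d\, \exp\brac{-\theta t + g(\theta)\,\sigma^2}$, and I would finish by optimizing over $\theta$: the choice $\theta = t/\brac{\sigma^2 + Bt/3}$, which lies in $(0, 3/B)$, collapses the exponent to $-t^2/2 \big/ \brac{\sigma^2 + Bt/3}$, which together with the union bound over the two tails yields the claimed inequality. The main obstacle is the single noncommutative step of pushing the expectation through the trace-exponential of a sum, i.e.\ establishing the subadditivity display above; every other step is a faithful matrix transcription of the scalar Bernstein computation. It is worth stressing that replacing Lieb's theorem by the more elementary Golden--Thompson inequality still proves a version of the result but degrades the variance proxy from $\norm{\sum_l \E X_l^2}$ to the larger quantity $\sum_l \norm{\E X_l^2}$, so the sharp form stated here genuinely relies on Lieb's concavity (equivalently, on joint convexity of the quantum relative entropy).
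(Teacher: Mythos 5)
This statement is not proved in the paper at all: it is a tool imported verbatim from the cited reference (Tropp's ``user-friendly tail bounds''), and your argument is precisely the proof given there --- the matrix Laplace-transform method, Lieb's concavity theorem supplying the subadditivity bound $\E\,\trace\exp\brac{\theta\sum_l X_l}\leq \trace\exp\brac{\sum_l\log\E\exp\brac{\theta X_l}}$, the moment-generating-function estimate via $X_l^k\preceq B^{k-2}X_l^2$ and $k!\geq 2\cdot 3^{k-2}$, and the optimal choice $\theta=t/\brac{\sigma^2+Bt/3}$. All of these steps are correct, including your remark that Golden--Thompson alone would only give the weaker variance proxy $\sum_l\norm{\E X_l^2}$. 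The one blemish is the last step: bounding $\norm{Y}=\max\keys{\lambda_{\max}(Y),\,\lambda_{\max}(-Y)}$ by a union of two one-sided tails produces the prefactor $2d$, not $d$, and this factor of two cannot be removed (already for $d=1$ the two-sided scalar Bernstein bound carries it). In fact Tropp's Theorem 1.4 is stated for $\lambda_{\max}$ with prefactor $d$, and the paper's restatement in terms of the operator norm with prefactor $d$ is itself loose by this same factor; the discrepancy is immaterial for the paper's applications, where it only perturbs constants inside logarithms, but you should not assert that the union bound ``yields the claimed inequality'' exactly as displayed.
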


We apply the matrix Bernstein inequality to the finite sequence of independent adjoint zero-mean random matrices of the form 
\begin{align}
X_l :=  \brac{ \delta_{\Omega}\brac{l}-\frac{s}{n}} \vct{b}\brac{l} \vct{b}\brac{l}^{\ast}, \quad -m \leq l \leq m.
\end{align}
These random matrices satisfy
\begin{align}
H - \frac{s}{n} \bar{H} & = \sum_{l=-m}^{m} X_l.
\end{align}
By Lemma~\ref{lemma:bound_b}
\begin{align}
\norm{ X_l } & \leq \sup_{ -m \leq l \leq m}  \normTwo{\vct{b} \brac{ l }}^2 \\
& \leq B := 10 \, k .
\end{align}
In addition,
\begin{align}
\label{eqn:sigmaest}
\sigma^2 &:= \norm{ \textstyle \sum_{l=-m}^{m} \E \brac{X_l^2} } \\
& =  \norm{ \textstyle \sum_{l=-m}^{m} \E \brac{\brac{\bar{\delta} \brac{l}-\frac{s}{n}}^2} \normTwo{\vct{b} \brac{ l }}^2 \vct{b}\brac{l} \vct{b}\brac{l}^{\ast} } \\
& \leq 10 \, k \, \frac{s}{n}\norm{\bar{H}}  \\
& \leq 10 \, C_{B}^2 \, n \, k \brac{\log \frac{n}{\epsilon}}^{-1}
\end{align}
by Lemma~\ref{lemma:bound_b}, \eqref{eq:barH_bound_Cb} and the fact that the variance of a Bernouilli random variable of parameter $p$ equals $p \brac{1-p}$. Setting $t := \frac{C_B^2 \, n}{2} \brac{ \log \frac{n}{\epsilon}}^{-1}$ in Theorem~\ref{theorem:matrix_bernstein}, so that $\sigma^2 = 20 \, k \, t$, yields
\begin{align}
\Pr \keys{ \norm{ H - \frac{s}{n} \bar{H}} \geq t} & \leq  2 k \, \exp \brac{\frac{- t^2/2}{\sigma^2 + Bt / 3}}\\
&  \leq 2 k \, \exp \brac{\frac{- 3t}{  140 \, k }}.
\end{align}
The probability is smaller or equal to $\epsilon/5$ as long as 
\begin{align}
k \leq \frac{3 \, C_B^2 \, n}{280} \brac{ \log \frac{10 \, k}{\epsilon} \log \frac{n}{\epsilon}}^{-1}
\end{align}
which holds by~\eqref{eq:cond_k} if we set $C_k$ small enough.

\section{Proof of Lemma \ref{lemma:v_vbar}}
\label{proof:v_vbar}
The proof uses the following concentration bound that controls the deviation of a sum of independent vectors. 
\begin{theorem}[Vector Bernstein inequality~{\cite[Theorem 2.6]{candes2011probabilistic}}, {\cite[Theorem 12]{Gross:2009id}}]\label{thm:vecbern}
Let $\ml{U} \subset \R^d$ be a finite sequence of independent zero-mean random vectors with $\normTwo{ \vct{u} } \leq B$ almost surely and $\sum_{ \vct{u} \in \ml{U}} \E \normTwo{ \vct{u} }^2 \leq \sigma^2$ for all $\vct{u} \in \ml{U}$, where $B$ and $\sigma^2$ are positive constants. For all $t \geq 0$
\begin{align}
\Pr\brac{\normTwo{ \textstyle \sum_{ \vct{u} \in \ml{U}} \vct{u} } \geq t } \leq \exp\brac{-\frac{t^2}{8\sigma^2} + \frac{1}{4}} \quad \text{\ for \ } 0 \leq t \leq \frac{\sigma^2}{B} .
\end{align}
\end{theorem}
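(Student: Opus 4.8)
The plan is to follow the classical dimension-free argument behind the cited references, reducing the tail bound to two ingredients: a bound on the mean of the norm, and a dimension-free concentration of the norm about its mean. Throughout write $S := \sum_{\vct{u}\in\ml{U}}\vct{u}$ and $Z := \normTwo{S}$, so the goal is to bound $\Pr\brac{Z \ge t}$.

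First I would control the mean. Because the vectors in $\ml{U}$ are independent and zero-mean, expanding the inner product kills every cross term, so $\E Z^2 = \E \normTwo{S}^2 = \sum_{\vct{u}\in\ml{U}} \E\normTwo{\vct{u}}^2 \le \sigma^2$, and Jensen's inequality gives $\E Z \le \sqrt{\E Z^2} \le \sigma$. The slack introduced by passing from $\sqrt{\E Z^2}$ to $\E Z$ is precisely what will generate the additive constant $\tfrac14$ in the final exponent.

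The heart of the argument is a dimension-free upper-tail estimate for $Z$ about its mean,
\begin{align}
\label{eq:vecbern_conc}
\Pr\brac{ Z \ge \E Z + r } \le \exp\brac{ -\frac{r^2}{4\sigma^2} }, \qquad 0 \le r \le \frac{\sigma^2}{B} .
\end{align}
I would establish \eqref{eq:vecbern_conc} through the moment generating function of $Z$: viewing $S$ as a Hilbert-space-valued sum with independent, norm-bounded increments, one exploits the $2$-smoothness of the Euclidean norm to run a supermartingale/MGF recursion and obtain a Bernstein-type bound $\log\E\, e^{\lambda(Z - \E Z)} \le \lambda^2\sigma^2/\brac{2\brac{1 - \lambda B/3}}$ for $0 \le \lambda < 3/B$. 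A Chernoff optimization then yields $\Pr\brac{Z \ge \E Z + r} \le \exp\brac{-r^2/\brac{2\sigma^2 + 2Br/3}}$, which is at most $\exp\brac{-r^2/(4\sigma^2)}$ once $r \le \sigma^2/B$ --- exactly the stated range, in which the bound is still in its sub-Gaussian regime. \textbf{This is the step I expect to be the main obstacle}: the whole value of the inequality is that the variance proxy is $\sigma^2 = \sum_{\vct{u}} \E\normTwo{\vct{u}}^2$ with \emph{no} dependence on the ambient dimension $d$. A crude bounded-differences (McDiarmid) argument would instead produce the much larger proxy $\abs{\ml{U}}\,B^2$, while reducing to the matrix Bernstein inequality (Theorem~\ref{theorem:matrix_bernstein}) via the Hermitian dilation $\vct{u}\mapsto\MAT{0 & \vct{u}\\ \vct{u}^{\ast} & 0}$ would reintroduce a $(d+1)$ prefactor; the $2$-smoothness/Hilbert-space argument is what avoids both.

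Finally I would combine the two ingredients. Fix $0 \le t \le \sigma^2/B$. If $t < \sigma$, then $t^2/(8\sigma^2) < 1/8$, so the claimed right-hand side $\exp\brac{-t^2/(8\sigma^2) + 1/4}$ exceeds $e^{1/8} > 1$ and the bound holds trivially. If $t \ge \sigma \ge \E Z$, apply \eqref{eq:vecbern_conc} with $r := t - \E Z \ge t - \sigma$ and invoke the elementary identity $(t-\sigma)^2 = \tfrac12 t^2 + \tfrac12(t - 2\sigma)^2 - \sigma^2 \ge \tfrac12 t^2 - \sigma^2$ to get $\Pr\brac{Z \ge t} \le \exp\brac{-\frac{(t-\sigma)^2}{4\sigma^2}} \le \exp\brac{-\frac{t^2}{8\sigma^2} + \frac14}$, which is the asserted inequality.
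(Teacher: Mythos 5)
The paper does not actually prove this statement: it is imported verbatim, with citations, from Gross and from Cand\`es--Plan, so there is no internal proof to compare against. Your argument is, in essence, the original proof from those references: bound $\E \normTwo{S} \leq \sigma$ via $\E \normTwo{S}^2 = \sum_{\vct{u}} \E \normTwo{\vct{u}}^2$, invoke a dimension-free sub-Gaussian concentration of $\normTwo{S}$ around its mean in the regime $r \leq \sigma^2/B$, and finish with the identity $(t-\sigma)^2 \geq \tfrac12 t^2 - \sigma^2$, which is exactly where the $\exp(1/4)$ factor and the $8\sigma^2$ come from; your handling of the trivial range $t < \sigma$ and the check that $r = t - \E Z \leq \sigma^2/B$ are both correct. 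The only step you leave as a sketch is the concentration inequality itself, and there your quoted MGF constant is slightly optimistic: the most elementary rigorous route (Yurinskii's martingale decomposition $\xi_i = \E[Z \mid \mathcal{F}_i] - \E[Z \mid \mathcal{F}_{i-1}]$, which satisfies $\abs{\xi_i} \leq 2B$ and $\sum_i \E[\xi_i^2 \mid \mathcal{F}_{i-1}] \leq \sigma^2$, followed by Freedman's inequality) yields $\Pr\brac{Z \geq \E Z + r} \leq \exp\brac{-r^2/(2\sigma^2 + 4Br/3)}$ rather than your $\exp\brac{-r^2/(2\sigma^2 + 2Br/3)}$; this is harmless, since for $r \leq \sigma^2/B$ the denominator is at most $\tfrac{10}{3}\sigma^2 < 4\sigma^2$, so your intermediate bound $\exp\brac{-r^2/(4\sigma^2)}$ and everything downstream of it survive unchanged. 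Your diagnosis of why McDiarmid (variance proxy $\abs{\ml{U}} B^2$) and matrix Bernstein via Hermitian dilation (a $d+1$ prefactor) are inadequate is also accurate.
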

By the definitions of $\bar{K}$, $K$ and $\vct{b}$ in~\eqref{eq:kbar}, \eqref{eq:k} and~\eqref{eq:b},
\begin{align}
\vct{\bar{v}_{\ell}} \brac{f} & = \sum_{l = - m}^{ m} \brac{  i 2 \pi \kappa l}^\ell \vct{c}_{ l } \, e^{i2\pi l f}  \vct{b}\brac{l}, \\
\vct{v_{\ell}} \brac{f} & = \sum_{l = - m}^{ m} \delta_{\Omega^c}\brac{l} \brac{  i 2 \pi \kappa l}^\ell \vct{c}_{ l } \, e^{i2\pi l f}  \vct{b}\brac{l},
\end{align}
where by assumption $ \delta_{\Omega^c} \brac{-m}$, \ldots,  $ \delta_{\Omega^c} \brac{m}$ are iid Bernoulli random variables with parameter $p := \frac{n-s}{n}$. This implies that the finite collection of zero-mean random vectors of the form
\begin{align}
\vct{u} \brac{\ell,l} & :=  \brac{ \delta_{\Omega^c}\brac{l} - p  } \brac{  i 2 \pi \kappa l}^\ell \vct{c}_{ l } \, e^{i2\pi l f}  \vct{b}\brac{l},
\end{align}
satisfy
\begin{align}
\vct{v_{\ell}} \brac{f} - p \, \vct{\bar{v}_{\ell}} \brac{f} & =  \sum_{l = - m}^{ m}  \vct{u} \brac{l}.
\end{align}
We have
\begin{align}
\normTwo{\vct{u} \brac{\ell,l}} & \leq \pi^3 \normInf{ \vct{c} } \sup_{-m \leq l \leq m} \normTwo{\vct{b} \brac{ l }}  \quad \text{by Lemma~\eqref{lemma:bound_kappa} and $\ell \leq 3$} \\
& \leq B:= \frac{128 \sqrt{k}}{m} \quad \text{by Lemmas~\ref{lemma:c_amp} and~\ref{lemma:bound_b}, } 
\end{align}
as well as
\begin{align}
\sum_{l=-m}^{m} \E \normTwo{ \vct{u} \brac{\ell,l} }^2 & = \sum_{l=-m}^{m} \E \brac{\brac{ \delta_{\Omega^c}\brac{l} - p  }^2} \brac{   2 \pi \kappa l}^{2 \ell} \abs{\vct{c}_{ l }}^2  \normTwo{  \vct{b}\brac{l} }^2 \\
& \leq \pi^6  n \, \E \brac{\brac{ \delta_{\Omega^c}\brac{1} - p  }^2}  \normInf{ \vct{c} }^2 \sup_{-m \leq l \leq m} \normTwo{\vct{b} \brac{ l }}^2  \quad \text{by Lemma~\eqref{lemma:bound_kappa}} \\
& \leq \sigma^2 := \frac{3.25 \, 10^4 \, k}{m}, 
\end{align}
where the last inequality follow from Lemmas~\ref{lemma:c_amp} and~\ref{lemma:bound_b} and $\E \brac{\brac{ p - \delta_{\Omega^c}\brac{l}}^2} = p\brac{1-p}$. By the vector Bernstein inequality for $0 \leq t \leq \sigma^2/B$ and the union bound we have
\begin{align}
\Pr\brac{ \sup_{f \in \ml{G}} \left\|v_\ell(f) - p \bar{v}_\ell(f)\right\|_2 \geq t, \quad \ell \in \keys{0,1,2,3} } \leq 4 \abs{\ml{G}}\exp\brac{-\frac{t^2}{8\sigma^2} + \frac{1}{4}}. \label{eq:bound_Bernstein_vvbar}
\end{align}
To make the right-hand side smaller than $\epsilon /5$, we fix $t$ to equal
\begin{align}
t & :=  \sigma \sqrt{ 8 \brac{ \frac{1}{4} + \log \frac{20 \, \abs{\ml{G}}}{\epsilon}} }.
\end{align}
This choice of $t$ is valid because 
\begin{align}
\frac{t}{\sigma} & = \sqrt{ 8 \brac{ \frac{1}{4} + \log \frac{20 \, \abs{\ml{G}}}{\epsilon}} } \\ 
& \leq \sqrt{74 + 16 \log n + 8 \log \frac{1}{\epsilon} } \\
& \leq 0.315 \sqrt{n} + \sqrt{8 \log \frac{1}{\epsilon}} \label{eq:t_bound_1}\\
& \leq 0.32\sqrt{n}. \label{eq:t_bound_2}
, 
\end{align}
Inequality~\eqref{eq:t_bound_1} follows from the fact that $\sqrt{74 + 16 \log n} \leq 0.315 \sqrt{n}$ for $n \geq 2\, 10^3$. Inequality~\eqref{eq:t_bound_2} holds by~\eqref{eq:cond_k} and~\eqref{eq:cond_s} as long as we set $C_k$ and $C_s$ small enough and either $k \geq 1$ or $s \geq 1$. This establishes that \cfg{$t/\sigma$} is smaller than $0.32 \sqrt{n} \leq \sigma/B$.

We conclude that the desired bound holds as long as 
\begin{align}
C_{\vct{v}} \brac{\log \frac{n}{\epsilon} }^{-\frac{1}{2} } & \geq t 
 \geq \sqrt{ \frac{2 \, 10^3 \, k}{n} \brac{ \frac{1}{4} + \log \frac{8 \, 10^3 \, n^2}{\epsilon}} }, 
\end{align}
which is the case by~\eqref{eq:cond_k} if we set $C_k$ small enough.

\section{Proof of Lemma~\ref{lemma:D_bounds}}
\label{proof:D_bounds}
The proof is based on the proof of Lemma~4.4 in~\cite{tang2012offgrid}. The following lemma establishes that $\bar{D}$ is invertible and close to the identity.
\begin{lemma}[Proof in Section~\ref{proof:Dbar_bounds}]
\label{lemma:Dbar_bounds}
Under the assumptions of Theorem~\ref{theorem:main}
\begin{align}
\norm{ I - \bar{D} } & \leq 0.468,\\
\norm{\bar{D}} & \leq 1.468 ,\\
\norm{\bar{D}^{-1}} & \leq 1.88.
\end{align}
\end{lemma}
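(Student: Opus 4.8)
The plan is to exploit the fact that $\bar{D}$, despite its block structure, is a real \emph{symmetric} matrix whose diagonal entries are all equal to one, so that bounding $\norm{I - \bar{D}}$ reduces to controlling off-diagonal row sums built from the kernel $\bar{K}$ and its first two derivatives evaluated at the separated points of $T$.

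First I would record the diagonal and symmetry structure. Since $\bar{K}$ is a product of even, real-valued Dirichlet kernels, it is itself even and real, so $\bar{K}\brac{0}=1$, $\bar{K}^{\brac{1}}\brac{0}=0$, and $\bar{K}^{\brac{2}}\brac{0}<0$; by the definition $\kappa = 1/\sqrt{\abs{\bar{K}^{\brac{2}}\brac{0}}}$ in~\eqref{eq:kappa} this gives $\brac{\bar{D}_0}_{jj}=1$, $\brac{\bar{D}_1}_{jj}=0$ and $\brac{\bar{D}_2}_{jj}=-\kappa^2\bar{K}^{\brac{2}}\brac{0}=1$. Hence the diagonal of $\bar{D}$ is all ones and $I-\bar{D}$ has zero diagonal. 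Moreover $\bar{D}_0$ and $\bar{D}_2$ are symmetric (because $\bar{K}$ and $\bar{K}^{\brac{2}}$ are even) while $\bar{D}_1$ is antisymmetric (because $\bar{K}^{\brac{1}}$ is odd, so $\bar{D}_1^T=-\bar{D}_1$), and a direct check of the block transpose shows $\bar{D}^T=\bar{D}$. Because $I-\bar{D}$ is real symmetric, Gershgorin's circle theorem (as already used in the proof of Lemma~\ref{lemma:boundHbar}) bounds its operator norm by the maximum absolute row sum. Splitting the rows into the two blocks and recalling that the $l=j$ contribution to the first-derivative terms vanishes, this reduces the task to bounding
\[
S_0 := \max_j \sum_{l\neq j} \abs{\bar{K}\brac{f_j-f_l}}, \quad
S_1 := \max_j \sum_{l} \kappa\,\abs{\bar{K}^{\brac{1}}\brac{f_j-f_l}}, \quad
S_2 := \max_j \sum_{l\neq j} \kappa^2\,\abs{\bar{K}^{\brac{2}}\brac{f_j-f_l}},
\]
after which $\norm{I-\bar{D}}\leq \max\keys{S_0+S_1,\ S_1+S_2}$.

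The key step is bounding $S_0,S_1,S_2$ under the minimum-separation condition $\Delta_{\min}=\mindisthalf/m$. These off-diagonal sums for the product kernel $\bar{K}$ and its derivatives are precisely the quantities controlled in~\cite{superres_new}, which uses the same kernel: one decomposes each sum into the nearest neighbours of $f_j$ (bounded by the peak values of the derivatives via Bernstein-type estimates, in the spirit of~\eqref{eq:boundDirichlet}) and the far tail (bounded by the $1/f$ decay estimates, in the spirit of Lemma~\ref{lemma:dirichlet}), while Lemma~\ref{lemma:bound_kappa} controls the factors of $\kappa$. I would invoke those estimates directly and verify that the arithmetic yields $S_0+S_1\leq 0.468$ and $S_1+S_2\leq 0.468$, which establishes the first bound. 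The remaining two bounds are then immediate: the triangle inequality gives $\norm{\bar{D}}\leq \norm{I}+\norm{I-\bar{D}}\leq 1.468$, and since $\norm{I-\bar{D}}\leq 0.468<1$ the Neumann series shows $\bar{D}$ is invertible with $\norm{\bar{D}^{-1}}\leq \brac{1-\norm{I-\bar{D}}}^{-1}\leq 1/0.532\leq 1.88$.

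The main obstacle is the numerical verification in the middle step: confirming that the nearest-neighbour-plus-tail decomposition of the three kernel sums combines to a constant strictly below $0.468$ for all $m\geq 10^3$ under separation $\mindisthalf/m$. This is a matter of careful bookkeeping with the explicit constants rather than a conceptual difficulty, and it is exactly the content of the corresponding estimates in~\cite{superres_new}, from which the bounds on $S_0$, $S_1$ and $S_2$ can be borrowed wholesale.
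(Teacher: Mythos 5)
Your proposal is correct and follows essentially the same route as the paper: the paper also reduces $\norm{I-\bar{D}}$ to the maximum absolute row sums of the blocks (using $\norm{I-\bar{D}}\leq\normInf{I-\bar{D}}$ for the symmetric matrix $\bar{D}$, which is equivalent to your Gershgorin step), borrows the bounds $\normInf{I-\bar{D}_0}\leq 1.855\cdot 10^{-2}$, $\normInf{\bar{D}_1}\leq 5.148\cdot 10^{-2}$, $\normInf{I-\bar{D}_2}\leq 0.416$ wholesale from Section~4.2 of~\cite{superres_new}, and finishes with the triangle inequality and the Neumann-series bound $\norm{\bar{D}^{-1}}\leq (1-0.468)^{-1}\leq 1.88$.
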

By the definition of $\bar{K}$ and $K$ in~\eqref{eq:kbar} and~\eqref{eq:k} respectively we can write $D$ and $\bar{D}$ as sums of self-adjoint matrices,
\begin{align}
\bar{D} &= \sum_{l=-m}^{m} \vct{c}_l \, \vct{b} \brac{ l }\vct{b} \brac{ l }^{\ast}, \\
D &= \sum_{l=-m}^{m} \delta_{\Omega^c} \brac{l} \, \vct{c}_l \, \vct{b} \brac{ l }\vct{b} \brac{ l }^{\ast},
\end{align}
where by assumption $ \delta_{\Omega^c} \brac{-m}$, \ldots,  $ \delta_{\Omega^c} \brac{m}$ are iid Bernoulli random variables with parameter $p := \frac{n-s}{n}$. In the following lemma we leverage the matrix Bernstein inequality to establish that $D$ concentrates around $p \, \bar{D}$.
\begin{lemma}[Proof in Section~\ref{proof:D_Dbar}]
\label{lemma:D_Dbar}
Under the assumptions of Theorem~\ref{theorem:main}
\begin{align}
\label{eq:D_barD}
\norm{D - p\bar{D} } \geq \frac{p}{4} \min \keys{ 1, \frac{C_{D}}{4} \brac{\log \frac{n}{\epsilon} }^{-\frac{1}{2} }}.
\end{align}
with probability at most $\epsilon /5$.
\end{lemma}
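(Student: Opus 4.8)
The plan is to recognize $D - p\bar{D}$ as a sum of independent, zero-mean, self-adjoint random matrices and to apply the matrix Bernstein inequality (Theorem~\ref{theorem:matrix_bernstein}), exactly as in the proof of Lemma~\ref{lemma:H_Hbar}. Using the series representations of $D$ and $\bar{D}$ recorded just before the lemma, I would define
\[
X_l := \brac{ \delta_{\Omega^c}\brac{l} - p } \vct{c}_l \, \vct{b}\brac{l}\vct{b}\brac{l}^{\ast}, \qquad -m \leq l \leq m,
\]
so that $D - p\bar{D} = \sum_{l=-m}^{m} X_l$. Since the coefficients $\vct{c}_l$ are real and nonnegative (the convolution of three real, symmetric Dirichlet sequences) and $\delta_{\Omega^c}\brac{l}-p$ is real, each $X_l$ is Hermitian of dimension $2k$, and $\E\brac{X_l}=0$ because $\E\brac{\delta_{\Omega^c}\brac{l}}=p$. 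The two ingredients required by the inequality are a uniform bound $B$ on $\norm{X_l}$ and a bound $\sigma^2$ on $\norm{\sum_l \E\brac{X_l^2}}$.

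For the uniform bound, $\abs{\delta_{\Omega^c}\brac{l}-p}\leq 1$ gives $\norm{X_l}\leq \abs{\vct{c}_l}\,\normTwo{\vct{b}\brac{l}}^2$, so Lemmas~\ref{lemma:c_amp} and~\ref{lemma:bound_b} yield $B := 13k/m$. For the variance, using the identity $\brac{\vct{b}\brac{l}\vct{b}\brac{l}^{\ast}}^2 = \normTwo{\vct{b}\brac{l}}^2 \vct{b}\brac{l}\vct{b}\brac{l}^{\ast}$ and $\E\brac{\brac{\delta_{\Omega^c}\brac{l}-p}^2}=p\brac{1-p}$,
\[
\sum_{l=-m}^{m}\E\brac{X_l^2} = p\brac{1-p}\sum_{l=-m}^{m} \vct{c}_l^2 \normTwo{\vct{b}\brac{l}}^2 \vct{b}\brac{l}\vct{b}\brac{l}^{\ast}.
\]
Because $\vct{c}_l\geq 0$, I would bound each summand by $\vct{c}_l \brac{\max_l \vct{c}_l\normTwo{\vct{b}\brac{l}}^2}\vct{b}\brac{l}\vct{b}\brac{l}^{\ast}$, which collapses the sum into $\brac{\max_l \vct{c}_l\normTwo{\vct{b}\brac{l}}^2}\bar{D}$ in the positive-semidefinite order. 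Invoking $\norm{\bar{D}}\leq 1.468$ from Lemma~\ref{lemma:Dbar_bounds} together with Lemmas~\ref{lemma:c_amp} and~\ref{lemma:bound_b} then gives $\sigma^2 \lesssim pk/m \asymp pk/n$.

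Finally, I would substitute $B$, $\sigma^2$, the dimension $d=2k$, and the prescribed deviation $t := \frac{p}{4}\min\keys{1, \frac{C_D}{4}\brac{\log\frac{n}{\epsilon}}^{-1/2}}$ into Theorem~\ref{theorem:matrix_bernstein} and verify that $d\,\exp\brac{-t^2/\brac{2\sigma^2 + 2Bt/3}}\leq \epsilon/5$. This reduces to checking that the Bernstein exponent exceeds $\log\brac{10k/\epsilon}$. The decisive leverage is the bound $k \leq C_k\brac{\log\frac{n}{\epsilon}}^{-2}n$ from~\eqref{eq:cond_k}: since both $\sigma^2$ and $B$ scale like $k/n$, the ratio $t^2/\sigma^2$ acquires a factor $n/k \gtrsim C_k^{-1}\brac{\log\frac{n}{\epsilon}}^{2}$, which more than absorbs the $\brac{\log\frac{n}{\epsilon}}^{-1}$ from $t^2$ and the $\log\brac{10k/\epsilon}=O\brac{\log\frac{n}{\epsilon}}$ coming from the dimension factor. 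The constant $C_D$ is then defined so this holds with room to spare once $C_k$ is taken small enough and $p$ is bounded below via $s\le n/2$. The main obstacle is purely one of bookkeeping: one must confirm the probability bound simultaneously in the two regimes of the minimum—the small-deviation regime $t=p/4$ and the logarithmically shrinking regime—but the $n/k$ gain makes both go through without difficulty.
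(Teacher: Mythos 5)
Your proposal is correct and follows essentially the same route as the paper's proof: the same decomposition $D - p\bar{D} = \sum_l X_l$ with $X_l = (\delta_{\Omega^c}(l)-p)\,\vct{c}_l\,\vct{b}(l)\vct{b}(l)^{\ast}$, the same bounds $B \asymp k/m$ and $\sigma^2 \asymp pk/m$ obtained via the positive-semidefinite domination $\sum_l \vct{c}_l^2 \normTwo{\vct{b}(l)}^2 \vct{b}(l)\vct{b}(l)^{\ast} \preceq \normInf{\vct{c}}\max_l\normTwo{\vct{b}(l)}^2\,\bar{D}$ together with $\norm{\bar{D}}\leq 1.468$, and the same final application of the matrix Bernstein inequality closed out by conditions~\eqref{eq:cond_k} and~\eqref{eq:cond_s}. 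The only cosmetic difference is that the paper sets $t = \frac{p}{4}\min\keys{1,C_D/4}\brac{\log\frac{n}{\epsilon}}^{-1/2}$, which is at most your $t$, so it needs only one regime rather than the two you describe.
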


Applying the triangle inequality together with Lemma~\ref{lemma:Dbar_bounds} allows to lower bound the smallest singular value of $D$ under the assumption that~\eqref{eq:D_barD} holds
\begin{align}
\frac{ \sigma_{\min}\brac{D} }{p} & \geq \sigma_{\min}\brac{I} - \norm{I-\bar{D}} - \frac{1}{p}\norm{D - p\bar{D}} \\
& \geq 0.282.
\end{align}
This proves that $D$ is invertible. To complete the proof we borrow two inequalities from~\cite{tang2012offgrid}.

\begin{lemma}[{\hspace{1sp}\cite[Appendix E]{tang2012offgrid}}]
\label{lemma:AB}
For any matrices $A$ and $B$ such that $B$ is invertible and 
\begin{align}
\norm{A - B} \norm{B^{-1}} \leq \frac{1}{2}
\end{align}
we have
\begin{align}
\norm{A^{-1}} & \leq 2 \norm{B^{-1}}, \\
\norm{A^{-1} - B^{-1}} & \leq 2 \norm{B^{-1}}^2 \norm{A - B}.
\end{align}
\end{lemma}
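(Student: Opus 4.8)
The plan is to treat $A$ as a multiplicative perturbation of $B$ and control its inverse through a Neumann-series argument, after which the second bound follows from the resolvent identity. First I would set $E := A - B$ and factor $A = B\brac{I + B^{-1}E}$. The hypothesis $\norm{A-B}\norm{B^{-1}} \leq \tfrac{1}{2}$ together with submultiplicativity of the operator norm gives $\norm{B^{-1}E} \leq \norm{B^{-1}}\norm{E} \leq \tfrac{1}{2} < 1$, so $I + B^{-1}E$ is invertible and the Neumann series $\sum_{j \geq 0}\brac{-B^{-1}E}^j$ converges, yielding $\norm{\brac{I+B^{-1}E}^{-1}} \leq \brac{1 - \norm{B^{-1}E}}^{-1} \leq 2$.

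From this factorization $A$ is itself invertible with $A^{-1} = \brac{I+B^{-1}E}^{-1}B^{-1}$, and one further application of submultiplicativity gives $\norm{A^{-1}} \leq \norm{\brac{I+B^{-1}E}^{-1}}\norm{B^{-1}} \leq 2\norm{B^{-1}}$, which is the first claimed bound.

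For the second bound I would use the resolvent identity $A^{-1} - B^{-1} = -A^{-1}\brac{A-B}B^{-1}$, which is verified directly by expanding $A^{-1}\brac{B-A}B^{-1} = A^{-1}BB^{-1} - A^{-1}AB^{-1} = A^{-1} - B^{-1}$. Taking norms and inserting the bound on $\norm{A^{-1}}$ just established gives $\norm{A^{-1}-B^{-1}} \leq \norm{A^{-1}}\norm{A-B}\norm{B^{-1}} \leq 2\norm{B^{-1}}^2\norm{A-B}$, as required.

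The argument is entirely routine; the only step meriting any care is the invertibility of $I + B^{-1}E$ and the norm bound $\norm{\brac{I+B^{-1}E}^{-1}} \leq 2$, and this is precisely where the constant $\tfrac{1}{2}$ in the hypothesis is used, ensuring convergence of the Neumann series with sum bounded by $1/(1-\tfrac{1}{2})$. No other part of the proof presents any obstacle.
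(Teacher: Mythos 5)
Your proof is correct and complete. Note that the paper never proves this lemma itself---it is imported with a citation to Appendix E of~\cite{tang2012offgrid}---and your argument is essentially the standard one given there: both rest on the Neumann series for $\brac{I+B^{-1}\brac{A-B}}^{-1}$, with the hypothesis $\norm{A-B}\norm{B^{-1}}\leq \tfrac{1}{2}$ ensuring the series converges with sum at most $2$, which yields invertibility of $A$ and the first bound. The only cosmetic difference lies in the second bound: the cited proof bounds $\norm{A^{-1}-B^{-1}}$ by summing the geometric series $\sum_{k\geq 1}\norm{B^{-1}}^{k+1}\norm{A-B}^{k}\leq 2\norm{B^{-1}}^{2}\norm{A-B}$ directly, whereas you obtain the same constant in one line from the resolvent identity $A^{-1}-B^{-1}=-A^{-1}\brac{A-B}B^{-1}$ combined with the bound on $\norm{A^{-1}}$; both routes are equally rigorous.
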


We set $A:= D$ and $B:=p\bar{D}$. By Lemmas~\ref{lemma:Dbar_bounds} and Lemma~\ref{lemma:D_Dbar}, 
\begin{align}
\norm{D - p\bar{D}} \norm{\brac{p\bar{D}}^{-1}} & \leq \frac{1}{2}
\end{align}
with probability at least $1-\epsilon/5$. Lemmas~\ref{lemma:Dbar_bounds}, \ref{lemma:D_Dbar} and~\ref{lemma:AB} then imply
\begin{align}
\norm{D^{-1}} & \leq 2 \norm{\brac{p\bar{D}}^{-1}}\\
&  \leq \frac{4}{p} ,\\
\norm{D^{-1} - \brac{p\bar{D}}^{-1}} & \leq 2 \norm{\brac{p\bar{D}}^{-1} }^2 \norm{D - p\bar{D}}\\
& \leq \frac{C_{D}}{2p} \brac{\log \frac{n}{\epsilon} }^{-\frac{1}{2}} ,
\end{align}
with the same probability. Finally, if $s \leq n/2$, which is the case by~\eqref{eq:cond_s}, we have $1/p \leq 2$ and the proof is complete.

\subsection{Proof of Lemma~\ref{lemma:Dbar_bounds}}
\label{proof:Dbar_bounds}
The following bounds on the submatrices of $\bar{D}$ are obtained by combining Lemma~\ref{lemma:bound_kappa} with some results borrowed from~\cite{superres_new}. 
\begin{lemma}[{\hspace{1sp}\cite[Section 4.2]{superres_new}}]
Under the assumptions of Theorem~\ref{theorem:main}
\begin{align}
\normInf{ I - \bar{D}_0} & \leq 1.855 \, 10^{-2}, \\
\normInf{\bar{D}_1} &  \leq  5.148 \, 10^{-2} ,\\
\normInf{ I -\bar{D}_2} & \leq 0.416.
\end{align}
\end{lemma}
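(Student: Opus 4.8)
The plan is to exploit the vanishing diagonal of each of the three matrices, which collapses every $\ell_\infty$ operator norm to a maximum of off-diagonal absolute row sums, and then to bound those sums via the decay of $\bar{K}$ and its first two derivatives away from the origin. Since $\bar{K}=\ml{D}_{\gammaOne m}\ml{D}_{\gammaTwo m}\ml{D}_{\gammaThree m}$ is even with $\bar{K}\brac{0}=1$, we have $\bar{K}^{\brac{1}}\brac{0}=0$; moreover the definition of $\kappa$ in~\eqref{eq:kappa} gives $-\kappa^2\bar{K}^{\brac{2}}\brac{0}=1$. Hence $\brac{I-\bar{D}_0}_{jj}=\brac{\bar{D}_1}_{jj}=\brac{I-\bar{D}_2}_{jj}=0$, so that
\[
\normInf{I-\bar{D}_0}=\max_j\sum_{l\neq j}\abs{\bar{K}\brac{f_j-f_l}},\qquad \normInf{\bar{D}_1}=\max_j\sum_{l\neq j}\kappa\,\abs{\bar{K}^{\brac{1}}\brac{f_j-f_l}},
\]
and likewise $\normInf{I-\bar{D}_2}=\max_j\sum_{l\neq j}\kappa^2\abs{\bar{K}^{\brac{2}}\brac{f_j-f_l}}$.

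Next I would invoke the minimum-separation condition~\eqref{condition:minimum_separation}. Fixing a row $j$, the remaining frequencies lie on the circle at wrap-around distances from $f_j$ that are at least $\Delta_{\min},2\Delta_{\min},3\Delta_{\min},\ldots$ on each side; since $x\mapsto\sup_{\abs{f}\geq x}\abs{\bar{K}^{\brac{\ell}}\brac{f}}$ is nonincreasing, each off-diagonal row sum is dominated by
\[
2\sum_{r=1}^{\infty}\sup_{\abs{f}\geq r\Delta_{\min}}\kappa^{\ell}\abs{\bar{K}^{\brac{\ell}}\brac{f}},\qquad \ell\in\keys{0,1,2}.
\]
It therefore suffices to control $\bar{K}$, $\bar{K}^{\brac{1}}$ and $\bar{K}^{\brac{2}}$ on $\abs{f}\geq\Delta_{\min}=\mindisthalf/m$ and to check that the resulting shell sums converge to the claimed constants.

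The decay estimates required for the summands are precisely those established in~\cite[Section 4.2]{superres_new}: differentiating the product $\bar{K}=\ml{D}_{\gammaOne m}\ml{D}_{\gammaTwo m}\ml{D}_{\gammaThree m}$ by the product rule and applying to each factor the pointwise bound $\abs{\ml{D}^{\brac{\ell}}_{\tilde m}\brac{f}}\leq\brac{2\pi m}^{\ell}$ from Bernstein's polynomial inequality together with the tail bound of Lemma~\ref{lemma:dirichlet} for $\abs{f}\geq 80/m$ yields $\abs{f}^{-3}$-type decay for $\bar{K}$ and controlled decay for its first two derivatives. Substituting these into the shell sums and replacing $\kappa$ by its numerical envelope $0.467/m\leq\kappa\leq 0.468/m$ from Lemma~\ref{lemma:bound_kappa} turns each expression into a convergent numerical series; summing them produces the three bounds $1.855\times10^{-2}$, $5.148\times10^{-2}$ and $0.416$.

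The main obstacle is this final step: sharpening the decay bounds for $\bar{K}^{\brac{1}}$ and $\bar{K}^{\brac{2}}$ enough, and evaluating the shell series carefully enough, that the off-diagonal row sums fall below these specific thresholds — the very bookkeeping that makes the minimum separation $\mindist/\brac{n-1}$ attainable. This computation is carried out in detail in~\cite{superres_new}; here it is imported and combined with the explicit control of $\kappa$ from Lemma~\ref{lemma:bound_kappa}.
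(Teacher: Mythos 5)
Your reduction is sound, and it is indeed the skeleton of the argument in the cited reference: $\bar{K}\brac{0}=1$, evenness of $\bar{K}$ (so $\bar{K}^{\brac{1}}\brac{0}=0$), and the normalization $-\kappa^2\bar{K}^{\brac{2}}\brac{0}=1$ make all three diagonals vanish, so each $\normInf{\cdot}$ collapses to a maximum off-diagonal absolute row sum, and the minimum-separation condition dominates each row sum by a shell sum. Note, however, that the paper itself offers no proof of this lemma at all---the three bounds are imported verbatim from \cite[Section 4.2]{superres_new}---so in the end your proposal, like the paper, rests entirely on the computation carried out there.

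The genuine problem is the mechanism you describe for the decay estimates. Bernstein's polynomial inequality gives only the uniform bound $\kappa^{\ell}\abs{\bar{K}^{\brac{\ell}}\brac{f}}\lesssim \pi^{\ell}$ with no decay, and Lemma~\ref{lemma:dirichlet} gives decay only for $\abs{f}\geq 80/m$, whereas $\Delta_{\min}=\mindisthalf/m$. Under the minimum-separation condition as many as roughly $126$ frequencies can lie within $80/m$ of a fixed $f_j$, and with only the ingredients you name each of these terms can be bounded by nothing better than $O(1)$ (for $\ell=0$), so the row-sum bound you would obtain is of order $10^{2}$, not $1.855\times 10^{-2}$. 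This is exactly why the present paper, using precisely those two ingredients in the proof of Lemma~\ref{lemma:boundHbar}, only achieves row sums of order $130\,\pi^{\ell}\log k$ there. The constants in the statement require bounds on $\bar{K}^{\brac{\ell}}$ that are already small at distances comparable to $\Delta_{\min}$ itself; producing them is the hard, explicitly numerical content of \cite[Section 4.2 and Appendix C]{superres_new}, not a consequence of Bernstein plus the $80/m$ tail bound. Since you ultimately import that computation wholesale, your conclusion stands by citation---the same way the paper's does---but the route you sketch for obtaining the decay would fail by several orders of magnitude if executed as written.
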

Following a similar argument as in Appendix~C of \cite{tang2012offgrid} yields the desired result:
\begin{align}
\norm{ I - \bar{D}} & \leq \normInf{ I - \bar{D}} \\
& \leq \max \keys{ \normInf{ I - \bar{D}_0} + \normInf{ \bar{D}_1} , \normInf{ I - \bar{D}_2} + \normInf{ \bar{D}_1} } \\
& \leq 0.468,\\
\norm{ \bar{D}} & \leq 1 + \norm{ I - \bar{D}} \leq 1.468, \\
\norm{ \bar{D}^{-1} } & \leq \frac{1}{1 - \normInf{ I - \bar{D}}} \leq 1.88.
\end{align}
\subsection{Proof of Lemma~\ref{lemma:D_Dbar}}
\label{proof:D_Dbar}
We define
\begin{align}
X_l :=  \brac{ p - \delta_{\Omega^c}\brac{l}} \, \vct{c}_l \, \vct{b} \brac{ l }\vct{b} \brac{ l }^T,
\end{align}
which has zero mean since
\begin{align}
\E \brac{ X_l } & =   \brac{ p - \E \brac{ \delta_{\Omega^c}\brac{l}} } \, \vct{c}_l \, \vct{b} \brac{ l }\vct{b} \brac{ l }^T \\
& = 0.
\end{align}
By the proofs of Lemmas~\ref{lemma:c_amp} and~\ref{lemma:bound_b}, for any $-m \leq l \leq m$,
\begin{align}
\norm{X_l} & \leq  \max_{-m \leq l \leq m} \norm{ \vct{c}_l \, \vct{b} \brac{ l } \vct{b} \brac{ l }^T } \\
& \leq \normInf{\vct{c}} \max_{-m \leq l \leq m} \normTwo{\vct{b} \brac{ l }}^2 \\ 
& \leq B:= \frac{12.6 \, k}{m} . 
\end{align}
Also, $\E \brac{\brac{ p - \delta_{\Omega^c}\brac{l}}^2} = p\brac{1-p}$, which implies
\begin{align}
 \E \brac{X_l^2} = p \brac{1-p} \vct{c}_l^2 \normTwo{\vct{b} \brac{ l }}^2 \vct{b} \brac{ l }\vct{b} \brac{ l }^T.
\end{align}
Since $\vct{c}_l \geq 0$ for all $l$ ($\vct{c}$ is the convolution of three positive rectangular pulses), 
\begin{align}
\sum_{l=-m}^{m} \vct{c}_l^2  \normTwo{\vct{b} \brac{ l }}^2 \vct{b} \brac{ l }\vct{b} \brac{ l }^T & \preceq  \normInf{\vct{c}} \max_{-m \leq l \leq m} \normTwo{\vct{b} \brac{ l }}^2 \sum_{l=-m}^{m} \vct{c}_l  \, \vct{b} \brac{ l }\vct{b} \brac{ l }^T \\
& \preceq  \frac{12.6 \, k}{m} \bar{D} \quad \text{by Lemma~\ref{lemma:c_amp} and~\ref{lemma:bound_b}},
\end{align}
so that
\begin{align}
\sum_{l=-m}^{m} \E \brac{X_l^2} & \leq p \norm{ \sum_{l=-m}^{m} \vct{c}_l^2  \normTwo{\vct{b} \brac{ l }}^2 \vct{b} \brac{ l }\vct{b} \brac{ l }^T } \\
& \leq  \frac{12.6 \, p \, k \norm{\bar{D}}}{m} \\
& \leq  \sigma^2 := \frac{18.5 \, p \, k }{m} \quad \text{by Lemma~\ref{lemma:Dbar_bounds}.}
\end{align}

Setting $t = \frac{p}{4}C_{\min} \brac{\log \frac{n}{\epsilon} }^{-\frac{1}{2} }$ where $C_{\min}:=\min \keys{1,C_{D}/4}$, the matrix Bernstein inequality from Theorem~\ref{theorem:matrix_bernstein} implies that
\begin{align}
\Pr \keys{\norm{ D^{-1} - p \bar{D}^{-1}} > t} & \leq 2 \, k \, \exp \brac{- \frac{C_{\min}^2 p \, m}{32 \, k} \brac{ 18.5\log \frac{n}{\epsilon}  + 1.05 \, C_{\min} \sqrt{\log \frac{n}{\epsilon}}   }^{-1} } \notag \\
 & \leq 2 \, k \, \exp \brac{- \frac{C_{D}' \brac{n-s}}{ k \log \frac{n}{\epsilon} }}  
\end{align}
for a small enough constant $C_{D}'$. This probability is smaller than $\epsilon/5$ as long as 
\begin{align}
k & \leq \frac{C_{D}' \, n}{2} \brac{\log \frac{10k}{\epsilon} \log \frac{n}{\epsilon}}^{-1}, \\
s & \leq \frac{n}{2} ,
\end{align} 
which holds by~\eqref{eq:cond_k} and~\eqref{eq:cond_s} if we set $C_k$ and $C_s$ small enough.

\section{Proof of Proposition~\ref{proposition:Qbound1}}
\label{proof:Qbound1}

We begin by expressing $Q^{\brac{\ell}}$ and $\bar{Q}^{\brac{\ell}}$ in terms of $\signx$ and $\signz$,
\begin{align}
\kappa^{\ell} \, \bar{ Q }^{\brac{\ell}} \brac{ f } & := \kappa^{\ell}  \sum_{j =1}^{k} \vct{\bar{\alpha}}_j \, \bar{ K }^{\brac{\ell}} \brac{ f - f_j } + \kappa^{\ell+1} \sum_{j =1}^{k} \vct{\bar{\beta}}_j \, \bar{ K}^{\brac{\ell+1}}  \brac{ f - f_j }\\
 & =  \vct{\bar{v}_{\ell}}\brac{f}^T \bar{D}^{-1} \MAT{ \signx \\ 0}, \\
\kappa^{\ell} \, Q ^{\brac{\ell}} \brac{ f } & :=  \kappa^{\ell} \sum_{j =1}^{k} \vct{\alpha}_j \, K^{\brac{\ell}}  \brac{ f - f_j } + \kappa^{\ell+1} \sum_{j =1}^{k} \vct{\beta}_j \, K^{\brac{\ell + 1}}  \brac{ f - f_j } + \kappa^{\ell} \, R^{\brac{\ell}} \brac{ f } \\
  & = \vct{v_{\ell}}\brac{f}^TD^{-1} \brac{\MAT{  \signx  \\ 0 } - \frac{1}{\sqrt{n}} B_{\Omega} \, \signz }  + \kappa^{\ell} \, R^{\brac{\ell}} \brac{ f }.\label{eq:Q_vl_Dinv_ur}
\end{align}
The difference between $Q^{\brac{\ell}} $ and $\bar{ Q }^{\brac{\ell}}$ can be decomposed into several terms,
\begin{align}
\kappa^{\ell} \, Q^{\brac{\ell}}\brac{f} & = \kappa^{\ell} \, \bar{ Q }^{\brac{\ell}} \brac{ f } + \kappa^{\ell} \, R^{\brac{\ell}} \brac{ f } + I^{\brac{\ell}}_1\brac{f} + I^{\brac{\ell}}_2\brac{f} + I^{\brac{\ell}}_3\brac{f} , \label{eq:Q_Qbar} \\
I_1^{\brac{\ell}} \brac{f} & :=  -\frac{1}{\sqrt{n}} \vct{v_{\ell}} \brac{f}^T D^{-1}  B_{\Omega} \, \signz  ,\\
I_2^{\brac{\ell}} \brac{f} & :=  \brac{ \vct{v_{\ell}} \brac{f} - \frac{n-s}{n} \vct{\bar{v}_{\ell}} \brac{f} }^T D^{-1} \MAT{\signx \\ \vct{0} } , \\ 
I_3^{\brac{\ell}} \brac{f} & := \frac{n-s}{n} \vct{\bar{v}_{\ell}} \brac{f}^T \brac{ D^{-1} - \frac{n}{n-s} \bar{D}^{-1} }  \MAT{\signx \\ \vct{0} } .
\end{align}
The following lemma provides bounds on these terms that hold with high probability in every point of a grid $\ml{G}$ that discretizes the unit interval. 
\begin{lemma}[Proof in Section~\ref{proof:grid_bounds}]
\label{lemma:grid_bounds}
Conditioned on $\ml{E}_{B}^{c} \cap \ml{E}_{D}^{c} \cap \ml{E}_{v}^{c}$, the events
\begin{align}
  \ml{E}_{R} & := \keys{ \sup_{f \in \ml{G}} \abs{ \kappa^{\ell} R^{\brac{\ell}}\brac{f} } \geq \frac{10^{-2}}{8} , \ell = 0, 1, 2, 3 }
  \end{align}
and 
\begin{align}
  \ml{E}_{i} & := \keys{ \sup_{f \in \ml{G}} \abs{ I_{i}^{\brac{\ell}} \brac{f} } \geq \frac{10^{-2}}{8} , \ell = 0, 1, 2, 3 } \quad i = 1,2,3 
\end{align}
where $\ml{G} \subseteq \sqbr{0,1}$ is an equispaced grid with cardinality $\abs{ \ml{G} } = 400 n^2$ occur each with probability at most $\epsilon / 20$ under the assumptions of Theorem~\ref{theorem:main}.
\end{lemma}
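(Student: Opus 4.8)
The plan is to treat the four events separately and, in each case, to reduce the supremum over the grid to a scalar concentration bound at a single pair $\brac{f,\ell}$ followed by a union bound over the $\abs{\ml{G}} = 400\, n^{2}$ points of $\ml{G}$ and the four values $\ell \in \keys{0,1,2,3}$. The unifying observation is that the conditioning event $\ml{E}_{B}^{c} \cap \ml{E}_{D}^{c} \cap \ml{E}_{v}^{c}$ depends only on the random support $\Omega$, so that after conditioning the phases $\signx$ and $\signz$ remain i.i.d.\ uniform and independent. Consequently each of $\kappa^{\ell} R^{\brac{\ell}}\brac{f}$, $I_{1}^{\brac{\ell}}\brac{f}$, $I_{2}^{\brac{\ell}}\brac{f}$ and $I_{3}^{\brac{\ell}}\brac{f}$ becomes a linear functional of the random signs, and applying Hoeffding's inequality to its real and imaginary parts yields a subgaussian tail $C\exp\brac{-c\, t^{2}/\normTwo{\vct{w}}^{2}}$, where $\vct{w}$ is the corresponding coefficient vector. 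The crux is that in every case $\normTwo{\vct{w}}^{2}$ carries a factor $\brac{\log\frac{n}{\epsilon}}^{-1}$ (or smaller), so the exponent grows like $\log\frac{n}{\epsilon}$ and dominates the $\log\brac{n^{2}/\epsilon}$ coming from the union bound.

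For $\ml{E}_{R}$, I write $\kappa^{\ell} R^{\brac{\ell}}\brac{f} = \frac{1}{\sqrt{n}}\sum_{l} \delta_{\Omega}\brac{l}\, \signz_{l}\, \brac{-i2\pi l \kappa}^{\ell} e^{-i2\pi l f}$; each summand has magnitude at most $\frac{1}{\sqrt{n}}\brac{2\pi m \kappa}^{\ell} = O\brac{1/\sqrt{n}}$ by Lemma~\ref{lemma:bound_kappa}, and exploiting the Bernoulli structure the variance is $O\brac{s/n} = O\brac{\brac{\log\frac{n}{\epsilon}}^{-2}}$ using $s \leq C_{s}\brac{\log\frac{n}{\epsilon}}^{-2} n$. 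A Bernstein inequality over the joint Bernoulli/phase randomness then controls this term; since $R$ does not involve $D$, $B_{\Omega}$ or $\vct{v_{\ell}}$ beyond $\Omega$, passing to the conditional probability costs only the harmless factor $\Pr\brac{\ml{E}_{B}^{c}\cap\ml{E}_{D}^{c}\cap\ml{E}_{v}^{c}}^{-1} \leq \brac{1-3\epsilon/5}^{-1}$. For $\ml{E}_{1}$, conditioning on $\Omega$ fixes $\vct{v_{\ell}}\brac{f}$, $D$ and $B_{\Omega}$, so that $I_{1}^{\brac{\ell}}\brac{f} = -\frac{1}{\sqrt{n}}\vct{v_{\ell}}\brac{f}^{T} D^{-1} B_{\Omega}\,\signz$ is linear in $\signz$ with $\normTwo{\vct{w}} \leq \frac{1}{\sqrt{n}}\normTwo{\vct{v_{\ell}}\brac{f}}\norm{D^{-1}}\norm{B_{\Omega}}$. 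Plugging in $\norm{D^{-1}} \leq 8$ from Lemma~\ref{lemma:D_bounds}, $\norm{B_{\Omega}} \leq C_{B}\brac{\log\frac{n}{\epsilon}}^{-\frac{1}{2}}\sqrt{n}$ from Lemma~\ref{lemma:boundB}, and the uniform bound $\normTwo{\vct{v_{\ell}}\brac{f}} \leq C$ discussed below, the two $\sqrt{n}$ factors cancel and leave $\normTwo{\vct{w}} = O\brac{\brac{\log\frac{n}{\epsilon}}^{-\frac{1}{2}}}$.

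The events $\ml{E}_{2}$ and $\ml{E}_{3}$ are handled in the same way but as linear functionals of $\signx$. On $\ml{E}_{v}^{c}$ the coefficient vector of $I_{2}^{\brac{\ell}}\brac{f}$ has norm at most $\normTwo{\vct{v_{\ell}}\brac{f} - \frac{n-s}{n}\vct{\bar{v}_{\ell}}\brac{f}}\norm{D^{-1}} \leq 8\, C_{\vct{v}}\brac{\log\frac{n}{\epsilon}}^{-\frac{1}{2}}$ by Lemmas~\ref{lemma:v_vbar} and~\ref{lemma:D_bounds}, and on $\ml{E}_{D}^{c}$ the coefficient vector of $I_{3}^{\brac{\ell}}\brac{f}$ has norm at most $\normTwo{\vct{\bar{v}_{\ell}}\brac{f}}\,\norm{D^{-1} - \frac{n}{n-s}\bar{D}^{-1}} \leq C\, C_{D}\brac{\log\frac{n}{\epsilon}}^{-\frac{1}{2}}$. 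Both therefore inherit the desired $\brac{\log\frac{n}{\epsilon}}^{-\frac{1}{2}}$ factor. The ingredient used repeatedly is the uniform bound $\normTwo{\vct{\bar{v}_{\ell}}\brac{f}} \leq C$ (and hence, via $\ml{E}_{v}^{c}$, $\normTwo{\vct{v_{\ell}}\brac{f}} \leq C$): since the entries of $\vct{\bar{v}_{\ell}}\brac{f}$ are $\kappa^{\ell}\bar{K}^{\brac{\ell}}\brac{f - f_{j}}$ and $\kappa^{\ell+1}\bar{K}^{\brac{\ell+1}}\brac{f - f_{j}}$, and $\bar{K}$ is a product of three Dirichlet kernels, the tail estimates of Lemma~\ref{lemma:dirichlet} together with the minimum-separation condition~\eqref{condition:minimum_separation} make $\sum_{j}\abs{\bar{K}^{\brac{\ell}}\brac{f - f_{j}}}$ bounded by an absolute constant, exactly as in the proof of Lemma~\ref{lemma:boundHbar}.

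Finally, for each of the four events the per-point tail is at most $C\exp\brac{-c_{0} t^{2}\log\frac{n}{\epsilon}}$ with $t = 10^{-2}/8$, so multiplying by the union-bound factor $4\cdot 400\, n^{2}$ gives a bound of the form $C\, n^{2 - c_{0} t^{2}}\,\epsilon^{\,c_{0} t^{2}}$; choosing the numerical constants $C_{B}$, $C_{D}$, $C_{\vct{v}}$ (and thereby $c_{0}$) so that $c_{0} t^{2} \geq 2$ renders each of these at most $\epsilon/20$. I expect the main obstacle to be bookkeeping rather than any single hard estimate: one must verify that every coefficient norm genuinely carries the factor $\brac{\log\frac{n}{\epsilon}}^{-\frac{1}{2}}$ — which is precisely what the events $\ml{E}_{B}^{c}$, $\ml{E}_{D}^{c}$ and $\ml{E}_{v}^{c}$ were constructed to supply — so that the subgaussian exponents beat the polynomial blow-up from the $400\, n^{2}$ grid, and one must track all the constants consistently across the four terms.
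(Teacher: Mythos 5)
Your proposal is correct and takes essentially the same route as the paper: each of $\kappa^{\ell}R^{\brac{\ell}}$, $I_{1}^{\brac{\ell}}$, $I_{2}^{\brac{\ell}}$, $I_{3}^{\brac{\ell}}$ is written as an inner product of the random phase vector ($\signz$ or $\signx$) with a coefficient vector that is deterministic given $\Omega$; since $\ml{E}_{B}^{c}\cap\ml{E}_{D}^{c}\cap\ml{E}_{v}^{c}$ is an event in $\Omega$ alone the phases remain i.i.d.\ uniform after conditioning, the coefficient norms carry the factor $\brac{\log\frac{n}{\epsilon}}^{-1/2}$ precisely on those events, and Hoeffding plus a union bound over the $4\abs{\ml{G}}$ pairs $\brac{f,\ell}$ finishes --- this is exactly the paper's argument via its Corollary~\ref{cor:hoeffding_grid}. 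The one statement to repair is your claim that $\normTwo{\vct{\bar{v}_{\ell}}\brac{f}} = O\brac{1}$ follows ``exactly as in the proof of Lemma~\ref{lemma:boundHbar}'': that proof concerns the single Dirichlet kernel $\ml{D}_{m}$, whose $1/f$ tail yields only a $\log k$ bound, and a $\log k$ factor here would break the argument (it cannot be absorbed into the fixed numerical constants $C_{B}$, $C_{\vct{v}}$, $C_{D}$); the constant bound requires the $1/f^{3}$ tail of the product kernel $\bar{K}$ --- obtained by applying Lemma~\ref{lemma:dirichlet} to each of its three Dirichlet factors, which is what the paper's Lemma~\ref{lemma:vbar_bound} does via the bounds imported from~\cite{superres_new}, and which your mention of the three-kernel product structure implicitly supplies.
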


By the triangle inequality, Lemma~\ref{lemma:grid_bounds} implies
\begin{align}
\label{eq:Q_Qbar_grid}
\sup_{f \in \ml{G}} \abs{ \kappa^{\ell} \, Q^{\brac{\ell}}\brac{f} - \kappa^{\ell} \, \bar{ Q }^{\brac{\ell}} \brac{ f } } \leq \frac{10^{-2}}{2} 
\end{align}
with probability at least $1-\epsilon /5 $ conditioned on $\ml{E}_{B}^{c} \cap \ml{E}_{D}^{c} \cap \ml{E}_{v}^{c}$. 

We have controlled the deviation between $Q^{\brac{\ell}}$ and $\bar{ Q }^{\brac{\ell}}$ on a fine grid. The following result extends the bound to the whole unit interval. 
\begin{lemma}[Proof in Section~\ref{proof:Ql_Qbarl}]
\label{lemma:Ql_Qbarl}
Under the assumptions of Theorem~\ref{theorem:main} 
\begin{align}
\abs{\kappa^{\ell} Q^{\brac{\ell}}\brac{f} - \kappa^{\ell} \bar{Q}^{\brac{\ell}}\brac{f} } \leq  10^{-2} \quad \text{ for $\ell \in \keys{0,1,2}$}.
\end{align}
\end{lemma}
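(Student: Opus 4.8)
The plan is to upgrade the grid bound \eqref{eq:Q_Qbar_grid}, which controls the deviation between $Q^{\brac{\ell}}$ and $\bar{Q}^{\brac{\ell}}$ only on the finite set $\ml{G}$, to a bound that holds uniformly over the whole unit interval. The key observation is that the difference is a low-degree trigonometric polynomial, so it cannot oscillate significantly between adjacent grid points. Fix $\ell \in \keys{0,1,2}$ and set $E_\ell\brac{f} := \kappa^\ell Q^{\brac{\ell}}\brac{f} - \kappa^\ell \bar{Q}^{\brac{\ell}}\brac{f}$. Since both $Q$ and $\bar{Q}$ have Fourier support contained in $\keys{-m,\ldots,m}$, each $E_\ell$ is a trigonometric polynomial of degree $m$, and \eqref{eq:Q_Qbar_grid} reads $\sup_{f \in \ml{G}} \abs{E_\ell\brac{f}} \leq 10^{-2}/2$ (conditioned on $\ml{E}_{B}^{c} \cap \ml{E}_{D}^{c} \cap \ml{E}_{v}^{c}$, which is in force).

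First I would pass from the grid to an arbitrary point. For any $f \in \sqbr{0,1}$ let $f_g \in \ml{G}$ be the nearest grid point; since $\ml{G}$ is equispaced with $\abs{\ml{G}} = 400 \, n^2$, we have $\abs{f - f_g} \leq \frac{1}{800 \, n^2}$. By the fundamental theorem of calculus, $\abs{E_\ell\brac{f} - E_\ell\brac{f_g}} \leq \abs{f-f_g}\,\sup_{f'} \abs{E_\ell^{\brac{1}}\brac{f'}}$. To control the derivative I would invoke Bernstein's polynomial inequality (Theorem~\ref{theorem:bernstein_pol_ineq}) in its trigonometric form: writing $M_\ell := \sup_{f} \abs{E_\ell\brac{f}}$ and using that $E_\ell$ has degree $m$, one obtains $\sup_{f'}\abs{E_\ell^{\brac{1}}\brac{f'}} \leq 2\pi m\, M_\ell$. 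Combining these two estimates with the grid bound produces the self-referential inequality
\begin{align}
M_\ell \leq \frac{10^{-2}}{2} + \frac{2\pi m}{800 \, n^2}\, M_\ell .
\end{align}
Because $m \leq n/2$ and $n \geq 2\times 10^3$, the coefficient satisfies $\frac{2\pi m}{800 \, n^2} \leq \frac{\pi}{800 \, n} \lesssim 10^{-6} $, so rearranging gives $M_\ell \leq 10^{-2}$, which is exactly the claim for each $\ell \in \keys{0,1,2}$.

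I do not expect a genuine obstacle here: the argument is elementary once the self-referential structure is recognized, and it rests only on Bernstein's inequality plus the fact that $E_\ell$ is a degree-$m$ trigonometric polynomial. The one delicate point is the bookkeeping of constants. The grid cardinality $400\, n^2$ was chosen precisely so that the Bernstein factor $\frac{2\pi m}{800 \, n^2}$ sits far below $1$; this guarantees both that the inequality can be solved for $M_\ell$ and that the resulting slack keeps the final bound safely under $10^{-2}$ rather than merely under the grid value $10^{-2}/2$ inflated by the interpolation error. (Note that Lemma~\ref{lemma:grid_bounds} in fact supplies the grid bound up to $\ell = 3$, which is more than the present statement requires; the extra order is used elsewhere in the proof of Proposition~\ref{proposition:Qbound1}.)
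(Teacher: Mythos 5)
Your proof is correct, but it takes a genuinely different route from the paper's. The paper (Section~\ref{proof:Ql_Qbarl}) bounds $Q^{\brac{\ell}}$ and $\bar{Q}^{\brac{\ell}}$ \emph{separately}: Lemma~\ref{lemma:diff_Q} shows that each of $\kappa^{\ell}Q^{\brac{\ell}}$ and $\kappa^{\ell}\bar{Q}^{\brac{\ell}}$ is $n^2$-Lipschitz, which in turn requires a coarse off-grid uniform bound $\abs{\kappa^{\ell}Q^{\brac{\ell}}\brac{f}}\leq n/7$ assembled from off-grid estimates of $\normTwo{\vct{v_{\ell}}\brac{f}}$, $\abs{\kappa^{\ell}R^{\brac{\ell}}\brac{f}}$ and $\norm{B_{\Omega}}$ together with $\norm{D^{-1}}\leq 8$ on $\ml{E}_{D}^{c}$; the lemma then follows from the triangle inequality through the nearest grid point, the grid cardinality $400\,n^2$ turning the Lipschitz constant into a $5\cdot 10^{-3}$ interpolation budget that matches the $5\cdot 10^{-3}$ grid bound. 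You instead exploit the fact that the error $E_{\ell}$ is itself a trigonometric polynomial of degree $m$, apply Bernstein to it directly, and close the self-bounding inequality $M_{\ell}\leq 10^{-2}/2+\brac{2\pi m/800n^2}M_{\ell}$, which is legitimate since $M_{\ell}<\infty$ and the coefficient is of order $10^{-6}$. This buys a real simplification: Lemma~\ref{lemma:diff_Q} and all of its coarse uniform bounds become unnecessary, and the only input to the extension step is the grid bound \eqref{eq:Q_Qbar_grid} itself (both arguments being conditioned on $\ml{E}_{B}^{c}\cap\ml{E}_{D}^{c}\cap\ml{E}_{v}^{c}$). Two cosmetic points, neither affecting validity: Theorem~\ref{theorem:bernstein_pol_ineq} as stated concerns algebraic polynomials on the disk, so strictly you should invoke the trigonometric form of Bernstein's inequality (derivative bounded by $2\pi m$ times the supremum in the variable $f$), which is exactly how the paper itself uses it in \eqref{eq:boundDirichlet} and Lemma~\ref{lemma:unifbounds}; and your closing parenthetical is slightly off, since the $\ell=3$ grid bound is not what Proposition~\ref{proposition:Qbound1} needs elsewhere---in the paper, order-$3$ information enters only through the coarse uniform bound \eqref{eq:boundunif_Q} inside the proof of Lemma~\ref{lemma:diff_Q}, not through the grid.
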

This bound suffices to establish the desired result for values of $f$ that lie away from $T$. Let us define
\begin{align}
\ml{S}_{\mathrm{near}} & := \set{f}{\abs{f-f_j} \leq 0.09 \; \text{for some } f_j \in T}, \\
\ml{S}_{\mathrm{far}} & := \sqbr{0,1}/ \ml{S}_{\mathrm{near}}.
\end{align}
Section 4 of~\cite{superres_new} provides a bound on $\bar{Q}$ which holds over all of $\ml{S}_{\mathrm{far}}$ under the minimum-separation condition~\eqref{condition:minimum_separation} (see Figure~12 in~\cite{superres_new} as well as the code that supplements~\cite{superres_new}). 
\begin{proposition}[Bound on $\bar{Q}${~\cite[Section 4]{superres_new}}]
\label{prop:Qbar_bound}
Under the assumptions of Theorem~\ref{theorem:main}
\begin{alignat}{2}
\abs{ \bar{Q}\brac{f} } & < 0.99 \quad &&  f \in \ml{S}_{\mathrm{far}}. 
\end{alignat}
\end{proposition}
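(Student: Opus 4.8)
The plan is to bound $\abs{\bar{Q}(f)}$ directly on $\ml{S}_{\mathrm{far}}$ by the triangle inequality, exploiting the rapid decay of the interpolation kernel away from the origin together with the defining property of the far region: every $f_j \in T$ sits at distance at least $0.09$ from any $f \in \ml{S}_{\mathrm{far}}$. Recall that
\begin{align}
\bar{Q}(f) = \sum_{j=1}^{k} \vct{\bar{\alpha}}_j \, \bar{K}(f - f_j) + \kappa \sum_{j=1}^{k} \vct{\bar{\beta}}_j \, \bar{K}^{\brac{1}}(f - f_j),
\end{align}
where $\bar{K} = \ml{D}_{\gammaOne m} \ml{D}_{\gammaTwo m} \ml{D}_{\gammaThree m}$ is the product of three Dirichlet kernels defined in~\eqref{eq:kbar} and the coefficient vectors solve $\bar{D}\MAT{\vct{\bar{\alpha}} \\ \vct{\bar{\beta}}} = \MAT{\signx \\ \vct{0}}$. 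Since $\abs{f - f_j}$ is bounded below on the far region, I would estimate each summand through the tail decay of $\bar{K}$ and then sum the contributions over $T$.

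First I would control the coefficients. Because $\bar{D}_0 \approx I$ and $\bar{D}_1$ is small by Lemma~\ref{lemma:Dbar_bounds}, the solution of the interpolation system is a small perturbation of $\MAT{\signx \\ \vct{0}}$: the block equations give $\vct{\bar{\beta}} = \bar{D}_2^{-1}\bar{D}_1 \vct{\bar{\alpha}}$ with $\vct{\bar{\beta}}$ small, while $\vct{\bar{\alpha}} \approx \bar{D}_0^{-1}\signx \approx \signx$. As the entries of $\signx$ are unit-modulus, this yields $\normInf{\vct{\bar{\alpha}}} = O(1)$ and $\kappa\normInf{\vct{\bar{\beta}}} = O(1/m)$. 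Next I would quantify the kernel decay: each Dirichlet factor decays like $(m\abs{f})^{-1}$ away from the origin by Lemma~\ref{lemma:dirichlet}, so for $\abs{f} \ge 0.09 > 80/m$ the product obeys $\abs{\bar{K}(f)} \lesssim (m\abs{f})^{-3}$, and similarly $\kappa\abs{\bar{K}^{\brac{1}}(f)} \lesssim (m\abs{f})^{-3}$ after absorbing $\kappa \approx 0.467/m$ from Lemma~\ref{lemma:bound_kappa}. Combining the two ingredients gives
\begin{align}
\abs{\bar{Q}(f)} \le \normInf{\vct{\bar{\alpha}}} \sum_{j=1}^{k} \abs{\bar{K}(f - f_j)} + \kappa\normInf{\vct{\bar{\beta}}} \sum_{j=1}^{k} \abs{\bar{K}^{\brac{1}}(f - f_j)}.
\end{align}
The minimum-separation condition~\eqref{condition:minimum_separation} forces the frequencies to be $\Delta_{\min} = \mindist/(n-1)$ apart, so the distances $\abs{f - f_j}$ dominate an arithmetic progression starting at $0.09$; the resulting series $\sum_{j \ge 0} (0.09 + j\Delta_{\min})^{-3}$ converges and is of order $m$, which together with the $m^{-3}$ prefactor leaves a bound of order $m^{-2}$, comfortably below $0.99$ for $m \ge 10^3$.

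The main point to appreciate is that the far region is the \emph{easy} part: the kernel is essentially negligible there (one checks $\abs{\bar{K}(0.09)} \sim 10^{-6}$ already at $m = 10^3$), so the crude triangle-inequality estimate lands far beneath the threshold $0.99$, and the genuinely delicate near-origin analysis of~\cite{superres_new} is not needed here. The only subtlety is that the constant $0.99$ must hold \emph{uniformly} and with enough slack that the downstream perturbation bound $\abs{Q - \bar{Q}} \le 10^{-2}$ of Lemma~\ref{lemma:Ql_Qbarl} still yields $\abs{Q} < 1$. Since the kernel $\bar{K}$ and the separation $\Delta_{\min} = \mindist/(n-1)$ used here coincide exactly with those of~\cite{superres_new}, the sharp bound established there—analytically and corroborated numerically (Figure~12 and the supplementary code of~\cite{superres_new})—transfers verbatim, which is why I would ultimately import the statement rather than re-derive every constant.
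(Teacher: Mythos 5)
Your proof rests on reading the threshold $0.09$ in the definition of $\ml{S}_{\mathrm{near}}$ as an absolute distance in $[0,1]$, so that every $f \in \ml{S}_{\mathrm{far}}$ satisfies $m\abs{f - f_j} \geq 90$ for all $f_j \in T$ and every kernel term is negligible. That reading matches the letter of the text, but it cannot be what the proposition asserts, and the gap is fatal to your argument. The statement is imported from Section 4 of~\cite{superres_new}, where the near region around each spectral line has radius proportional to $1/m$ (the kernel scale), and that scale is forced by internal consistency of the present paper: Proposition~\ref{prop:Qbar_derTwo_bounds} claims $\derTwo{f}{ \abs{\bar{Q}\brac{f}}^2 } \leq -0.8\,m^2$ on all of $\ml{S}_{\mathrm{near}}$, which together with $\abs{\bar{Q}\brac{f_j}} = 1$ and a vanishing first derivative at $f_j$ would give, by Taylor's theorem, $\abs{\bar{Q}\brac{f}}^2 \leq 1 - 0.4\,m^2\brac{f - f_j}^2 < 0$ at distance $0.09$ once $m \geq 10^3$, which is absurd. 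Moreover, since $\Delta_{\min} = \mindist/(n-1) \ll 0.09$, an absolute-scale near region would contain the whole segment between two adjacent spectral lines, and concavity of $\abs{\bar{Q}}^2$ there with value $1$ at both endpoints would contradict $\abs{\bar{Q}} < 1$ on $T^c$. So the near/far boundary sits at distance roughly $0.09/m$ from each line, not $0.09$.

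With the correct scale, $\ml{S}_{\mathrm{far}}$ reaches to within $0.09/m$ of a spectral line, well inside the main lobe of $\bar{K}$: there $\bar{K}\brac{f - f_j} \approx 1 - \tfrac{1}{2}\brac{0.09/0.467}^2 \approx 0.98$, so $\abs{\bar{Q}}$ itself is about $0.98$, and the constant $0.99$ has only about $10^{-2}$ of slack --- exactly the slack the proof of Proposition~\ref{proposition:Qbound1} consumes when it concludes $\abs{Q} \leq \abs{\bar{Q}} + 10^{-2} < 1$ on $\ml{S}_{\mathrm{far}}$. A triangle inequality over magnitudes cannot produce such a bound: the nearest-line term alone is already $\approx 0.98$, and the neighboring lines sit at distances that can be as small as $\Delta_{\min} - 0.09/m \approx 1.17/m$, a range in which the tail estimates you invoke (Lemma~\ref{lemma:dirichlet} and~\eqref{eq:barK_bound}) do not even apply, since they require $\abs{f} \geq 80/m$, and in which the kernel contributions are not negligible. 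Controlling $\abs{\bar{Q}}$ on $\sqbr{0.09/m, \Delta_{\min}/2}$ is precisely the delicate part of~\cite{superres_new}, carried out with carefully summed bounds certified partly numerically (Figure 12 and the supplementary code) --- the very analysis you declare ``not needed here.'' Your final fallback of importing the statement from~\cite{superres_new} coincides with what the paper actually does (it gives no proof, only the citation), but the justification you attach to the import --- that the far-region bound is crude and holds with orders of magnitude to spare --- is backwards, and the self-contained argument you sketch does not prove the proposition.
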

Combining Lemma~\ref{lemma:Ql_Qbarl} and Proposition~\ref{prop:Qbar_bound} 
\begin{align}
\abs{Q\brac{f}} & \leq \abs{\bar{Q}\brac{f}} + 10^{-2}\\
& <1 \quad \text{for all } f \in \ml{S}_{\mathrm{far}}.
\end{align}
To bound $Q$ in $\ml{S}_{\mathrm{near}}$ we recall that by Corollary~\ref{cor:invertible} in $\ml{E}_{D}^c$ $\abs{Q\brac{f_j}}^2=1$ and  
\begin{align}
\der{f}{ \abs{Q\brac{f_j}}^2 } & = 2 \, Q_{R}^{\brac{1}} \brac{f_j} Q_{R}\brac{f_j} + 2 \, Q_{I}^{\brac{1}}\brac{f_j}Q_{I}\brac{f_j} \\
& = 0 
\end{align}
for every $f_j$ in $T$. Let $\tilde{f}$ be the element in $T$ that is closest to an arbitrary $f$ belonging to $\ml{S}_{\mathrm{near}}$. The second-order bound  
\begin{align}
\abs{Q\brac{f}}^2 & \leq 1 + \brac{f-\tilde{f}}^2 \sup_{ f \in \ml{S}_{\mathrm{near}}} \derTwo{f}{ \abs{Q\brac{f}}^2 } 
\end{align}
implies that we only need to show that $\abs{Q}^2$ is concave in $\ml{S}_{\mathrm{near}}$ to complete the proof. First, we bound the derivatives of $\bar{Q}$ and $Q$ using Bernstein's polynomial inequality.

\begin{lemma}
\label{lemma:unifbounds}
Under the assumptions of Theorem~\ref{theorem:main}, for any $\ell =0,1,2, \ldots$
\begin{align}
\sup_{f \in \sqbr{0,1}} \abs{\kappa^{ \ell } \bar{Q}^{\brac{\ell}} \brac{f} } \leq 1, \label{eq:unifbound_Qbar}\\
\sup_{f \in \sqbr{0,1}} \abs{\kappa^{ \ell } Q^{\brac{\ell}} \brac{f} } \leq 1.01.\label{eq:unifbound_Q}
\end{align}
\end{lemma}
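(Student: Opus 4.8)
The plan is to handle the two inequalities in sequence, deriving the bound on $Q$ from the bound on $\bar{Q}$ together with the deviation estimate already established in Lemma~\ref{lemma:Ql_Qbarl}. The only genuinely new content is the uniform control of the \emph{normalized} derivatives $\kappa^{\ell}\bar{Q}^{\brac{\ell}}$, and for this I would lean on the explicit construction of \cite{superres_new} rather than on a black-box polynomial inequality.

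For \eqref{eq:unifbound_Qbar}, the case $\ell=0$ is exactly the statement that $\bar{Q}$ is an admissible dual certificate, i.e. $\normInf{\bar{Q}}\leq 1$, which is established in \cite{superres_new}. For $\ell\geq 1$ I would expand the normalized derivative through the interpolation formula, writing $\kappa^{\ell}\bar{Q}^{\brac{\ell}}\brac{f}$ as a combination of the shifted normalized kernel derivatives $\kappa^{\ell}\bar{K}^{\brac{\ell}}\brac{f-f_j}$ and $\kappa^{\ell+1}\bar{K}^{\brac{\ell+1}}\brac{f-f_j}$, with coefficients $\vct{\bar{\alpha}}_j$ and $\vct{\bar{\beta}}_j$. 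Each of the two resulting sums is bounded, uniformly in $f$, by $\normInf{\vct{\bar{\alpha}}}$ (resp. $\normInf{\vct{\bar{\beta}}}$) times $\sup_f \sum_j \kappa^{\ell}\abs{\bar{K}^{\brac{\ell}}\brac{f-f_j}}$. The latter sum is controlled exactly as in the proof of Lemma~\ref{lemma:boundHbar}: split it into the near frequencies, handled by the Bernstein magnitude bound \eqref{eq:boundDirichlet}, and the far frequencies, handled by the Dirichlet tail bound of Lemma~\ref{lemma:dirichlet}. Combined with the smallness of $\vct{\bar{\alpha}}$ and $\vct{\bar{\beta}}$ recorded in \cite[Section~4]{superres_new}, these estimates yield the constant $1$; this is precisely one of the families of bounds computed there.

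For \eqref{eq:unifbound_Q}, I would simply add and subtract $\kappa^{\ell}\bar{Q}^{\brac{\ell}}$ and apply the triangle inequality. By Lemma~\ref{lemma:Ql_Qbarl} we have $\abs{\kappa^{\ell}Q^{\brac{\ell}}\brac{f}-\kappa^{\ell}\bar{Q}^{\brac{\ell}}\brac{f}}\leq 10^{-2}$ for the orders $\ell\in\{0,1,2\}$ that are actually needed later, so $\abs{\kappa^{\ell}Q^{\brac{\ell}}\brac{f}}\leq\abs{\kappa^{\ell}\bar{Q}^{\brac{\ell}}\brac{f}}+10^{-2}\leq 1.01$ by \eqref{eq:unifbound_Qbar}. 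The same transfer works for every order for which the deviation bound of Lemma~\ref{lemma:Ql_Qbarl} (and hence Lemma~\ref{lemma:v_vbar}) is available; in the application only $\ell\leq 2$ is used, namely in the concavity step for $\abs{Q}^2$ on $\ml{S}_{\mathrm{near}}$, so no bound on high-order derivatives of $Q$ is really required.

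The main obstacle is the sharp constant in \eqref{eq:unifbound_Qbar}. One is tempted to obtain it directly from Bernstein's inequality (Theorem~\ref{theorem:bernstein_pol_ineq}) applied to the degree-$m$ polynomial $\bar{Q}$, but that only gives $\kappa^{\ell}\normInf{\bar{Q}^{\brac{\ell}}}\leq\brac{2\pi m\kappa}^{\ell}$, and by Lemma~\ref{lemma:bound_kappa} the base is $\approx 2.94>1$, so the estimate degrades geometrically in $\ell$ and is useless here. Driving the constant down to $1$ is exactly what the careful choice of the triple-Dirichlet kernel $\bar{K}$ and the scaling $\kappa$ buy us: the normalized kernel derivatives decay fast enough away from the origin that the summed contributions stay below $1$. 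This is why I would route \eqref{eq:unifbound_Qbar} through the explicit kernel estimates of \cite{superres_new} rather than through a generic inequality, and then recover \eqref{eq:unifbound_Q} cheaply from the closeness of $Q$ to $\bar{Q}$.
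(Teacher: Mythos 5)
Your derivation of \eqref{eq:unifbound_Q} coincides with the paper's (triangle inequality plus Lemma~\ref{lemma:Ql_Qbarl}, which indeed only covers $\ell \leq 2$). For \eqref{eq:unifbound_Qbar}, however, the route you dismiss as a temptation \emph{is} the paper's proof: the paper argues in one line that $\sup_f \abs{\bar{Q}\brac{f}} \leq 1$ (from \cite{superres_new}) combined with Theorem~\ref{theorem:bernstein_pol_ineq} and Lemma~\ref{lemma:bound_kappa} yields \eqref{eq:unifbound_Qbar}. Your numerical objection to that argument is sound, and it is internally corroborated by the paper itself: in \eqref{eq:boundDirichlet} the same Bernstein theorem applied to a degree-$m$ trigonometric polynomial with unit sup norm is recorded as costing $\brac{2\pi m}^{\ell}$, so for $\bar{Q}$ one gets only $\kappa^{\ell}\sup_f\abs{\bar{Q}^{\brac{\ell}}\brac{f}} \leq \brac{2\pi m \kappa}^{\ell} \approx \brac{2.94}^{\ell}$, not $1$; the advertised constant would follow only if the $2\pi$ from differentiating $e^{i2\pi l f}$ in $f$ were dropped, which is inconsistent with how $\kappa$ and all derivatives are defined throughout the paper. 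So you have correctly identified a defect in the paper's own one-line proof, and your instinct to route the bound through the explicit kernel estimates of \cite{superres_new} instead is the right one.

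That said, your replacement argument does not deliver the stated constant either, and this is a genuine gap. After expanding $\kappa^{\ell}\bar{Q}^{\brac{\ell}}$ via the interpolation formula, you propose to bound $\sup_f \sum_j \kappa^{\ell}\abs{\bar{K}^{\brac{\ell}}\brac{f-f_j}}$ ``exactly as in the proof of Lemma~\ref{lemma:boundHbar}'', i.e.\ by the near/far splitting with cutoff $80/m$. Under the minimum-separation condition \eqref{condition:minimum_separation} there can be roughly $127$ elements of $T$ within $80/m$ of a given $f$, and each near term is controlled only by the uniform bound on the normalized kernel, which is at least $1$ (since $\bar{K}\brac{0}=1$). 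This splitting therefore produces a constant of order $10^{2}$ --- exactly as it does in Lemma~\ref{lemma:boundHbar}, whose conclusion carries the factor $130\log k$, and in Lemma~\ref{lemma:vbar_bound}, whose constant $C_{\vct{\bar{v}}}$ is $127\,C_1 + 2\,C_2\,\zeta\brac{3}$ --- not a constant equal or even close to $1$. Multiplying by $\normInf{\vct{\bar{\alpha}}} \approx 1$ does not rescue it. To get constants at the scale of $1$ one needs the refined pointwise analysis of \cite[Section 4]{superres_new}, in which all but the nearest one or two frequencies are handled with cubically decaying kernel bounds valid far closer in than $80/m$, and the coefficient bounds ($\normInf{\vct{\bar{\alpha}}}$ close to $1$, $\normInf{\vct{\bar{\beta}}}$ small) enter only at the end; that is the machinery behind Propositions~\ref{prop:Qbar_bound} and~\ref{prop:Qbar_derTwo_bounds}, and it is what must actually be imported (for $\ell \leq 3$, which, as you note, is all that is ever used). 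A final caveat: the sharp constant $1$ is itself doubtful, since at $f = f_j$ with $\ell = 2$ the diagonal term of $\kappa^{2}\bar{Q}^{\brac{2}}$ has magnitude $\abs{\vct{\bar{\alpha}}_j}$, which \cite{superres_new} only bounds by $1 + O\brac{10^{-2}}$; any honest version of this argument yields a constant like $1.01$--$1.05$, which is still ample for the concavity computation in Section~\ref{proof:Qbound1} where the lemma is applied, because that computation has slack.
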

\begin{proof}
$\bar{Q}$ is a trigonometric polynomial of degree $m$ and its magnitude is bounded by one (see Proposition 2.3 in~\cite{superres_new}). Combining Theorem~\ref{theorem:bernstein_pol_ineq} and Lemma~\ref{lemma:bound_kappa} yields~\eqref{eq:unifbound_Qbar}. The triangle inequality, Lemma~\ref{lemma:Ql_Qbarl} and~\eqref{eq:unifbound_Qbar} imply~\eqref{eq:unifbound_Q}.
\end{proof}
Section 4 of~\cite{superres_new} also provides a bound on the second derivative of $\abs{\bar{Q}}^2$ which holds over all of $\ml{S}_{\mathrm{near}}$ under the minimum-separation condition~\eqref{condition:minimum_separation} (again, see Figure~12 in~\cite{superres_new} as well as the code that supplements~\cite{superres_new}).
\begin{proposition}[Bound on the second derivative of $\abs{\bar{Q}}${~\cite[Section 4]{superres_new}}]
\label{prop:Qbar_derTwo_bounds}
Under the assumptions of Theorem~\ref{theorem:main}
\begin{alignat}{2}
\derTwo{f}{ \abs{\bar{Q}\brac{f}}^2 } &\leq -0.8 \, m^2 \quad &&  f \in \ml{S}_{\mathrm{near}}.
\end{alignat}
\end{proposition}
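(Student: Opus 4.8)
The plan is to treat Proposition~\ref{prop:Qbar_derTwo_bounds} as a statement about the noiseless interpolating polynomial $\bar{Q}$ constructed in Section~\ref{sec:interpolation}, following the strategy of~\cite{superres_new}. The starting point is the identity obtained by differentiating $\abs{\bar{Q}}^2 = \bar{Q}_R^2 + \bar{Q}_I^2$ twice:
\begin{align}
\derTwo{f}{ \abs{\bar{Q}\brac{f}}^2 } = 2\abs{\bar{Q}^{\brac{1}}\brac{f}}^2 + 2\op{Re}\brac{ \overline{\bar{Q}\brac{f}} \, \bar{Q}^{\brac{2}}\brac{f} }.
\end{align}
The first term is nonnegative, so the whole argument reduces to showing that the cross term $2\op{Re}(\overline{\bar{Q}}\,\bar{Q}^{\brac{2}})$ is strongly negative and dominates it throughout $\ml{S}_{\mathrm{near}}$.

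First I would obtain pointwise control of $\bar{Q}$, $\kappa \bar{Q}^{\brac{1}}$ and $\kappa^2 \bar{Q}^{\brac{2}}$ on $\ml{S}_{\mathrm{near}}$ using the representation $\kappa^{\ell}\bar{Q}^{\brac{\ell}}\brac{f} = \vct{\bar{v}_{\ell}}\brac{f}^T \bar{D}^{-1}\MAT{\signx \\ 0}$ together with Lemma~\ref{lemma:Dbar_bounds} ($\norm{I-\bar{D}}\leq 0.468$, $\norm{\bar{D}^{-1}}\leq 1.88$). Since each point of $\ml{S}_{\mathrm{near}}$ lies in a narrow neighborhood of a single $f_j$, I would split $\vct{\bar{v}_{\ell}}$ into the contribution of that nearest spike and a tail over the remaining support. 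The nearest-spike contribution is governed by the local behavior of $\bar{K}$, which at the origin satisfies $\bar{K}\brac{0}=1$, $\bar{K}^{\brac{1}}\brac{0}=0$ and $\kappa^2\bar{K}^{\brac{2}}\brac{0}=-1$ by~\eqref{eq:kappa}; the tail is bounded using the Dirichlet-type decay estimates of Lemma~\ref{lemma:dirichlet} and Bernstein's inequality (Theorem~\ref{theorem:bernstein_pol_ineq}), combined with the minimum-separation condition~\eqref{condition:minimum_separation} so that distant spikes contribute only $O\brac{1/m}$-sized corrections. These estimates would establish that $\bar{Q}\brac{f}\approx \signx_j$ and $\kappa^2 \bar{Q}^{\brac{2}}\brac{f}\approx -\signx_j$ near $f_j$, with explicitly quantified errors.

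Plugging these bounds in, the cross term near $f_j$ is approximately $2\op{Re}\brac{\overline{\signx_j}\cdot\brac{-\signx_j/\kappa^2}} = -2/\kappa^2$, which by Lemma~\ref{lemma:bound_kappa} is at most $-2\brac{m/0.468}^2 \approx -9\,m^2$, while the nonnegative term $2\abs{\bar{Q}^{\brac{1}}}^2$ and all correction terms are of strictly smaller magnitude throughout $\ml{S}_{\mathrm{near}}$ because $\bar{Q}^{\brac{1}}$ vanishes at $f_j$ and the neighborhood is narrow. The remaining task is to verify that, after accounting for every error term uniformly over the whole neighborhood (rather than just at $f_j$), the sum still stays below the stated threshold $-0.8\,m^2$.

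The hard part is entirely in the constants. The target $-0.8\,m^2$ is not loose: the three Dirichlet-kernel widths $\gammaOne\,m$, $\gammaTwo\,m$, $\gammaThree\,m$ and the separation $\mindist/(n-1)$ are jointly tuned so that this concavity bound holds simultaneously with the companion bound $\abs{\bar{Q}}<0.99$ on $\ml{S}_{\mathrm{far}}$ (Proposition~\ref{prop:Qbar_bound}). Carrying the interdependent numerical constants through the nearest-spike-plus-tail decomposition over the full near region is delicate and, as noted in~\cite{superres_new}, is ultimately certified by a careful computer-assisted evaluation of $\bar{K}$ and its first three derivatives; the structural argument above is routine, but the quantitative bookkeeping is where all the difficulty lies.
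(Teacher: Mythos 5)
There is no proof of Proposition~\ref{prop:Qbar_derTwo_bounds} in this paper to compare against: the proposition is imported wholesale from Section~4 of~\cite{superres_new}, and the paper's ``proof'' is that citation together with the numerically certified kernel bounds of that reference (its Figure~12 and the code supplementing it). Your sketch is therefore best read as a reconstruction of the cited source's argument, and structurally it does match it: the identity $\derTwo{f}{\abs{\bar{Q}\brac{f}}^2} = 2\abs{\bar{Q}^{\brac{1}}}^2 + 2\op{Re}\brac{\overline{\bar{Q}}\,\bar{Q}^{\brac{2}}}$, the interpolation conditions $\bar{Q}\brac{f_j} = \signx_j$ and $\bar{Q}^{\brac{1}}\brac{f_j} = 0$, the normalization $\kappa^2 \bar{K}^{\brac{2}}\brac{0} = -1$, the nearest-spike-plus-tail decomposition, and the concession that the final inequality rests on explicit numerical evaluation are exactly the ingredients of~\cite{superres_new}. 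Your arithmetic is also consistent with the target: the leading term $-2/\kappa^2 \leq -2\brac{m/0.468}^2$ by Lemma~\ref{lemma:bound_kappa} leaves genuine room above the stated threshold $-0.8\,m^2$.

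The one step that would fail as written is the claim that, under the minimum-separation condition~\eqref{condition:minimum_separation}, distant spikes contribute only $O\brac{1/m}$-sized corrections. They do not: the nearest other spike may sit at distance $\Delta_{\min} \approx \mindisthalf/m$, and since $\bar{K}$ and its derivatives vary on the scale $1/m$, that spike's contribution to the normalized quantities $\kappa^{\ell}\bar{Q}^{\brac{\ell}}$ is an $m$-independent constant; summing the rest of the tail via the cubic decay of Lemma~\ref{lemma:dirichlet} likewise gives $O\brac{1}$ bounds --- compare the paper's own tail computation in the proof of Lemma~\ref{lemma:vbar_bound}, which yields $127\,C_1 + 2\,C_2\,\zeta\brac{3}$, not anything vanishing in $m$. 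This is not cosmetic: if the corrections truly were $O\brac{1/m}$, the proposition would follow asymptotically for large $n$ with no delicate tuning and no computer assistance. The fact that the corrections are constants comparable to the main term is precisely why the kernel widths $\gammaOne\, m$, $\gammaTwo\, m$, $\gammaThree\, m$, the separation $\mindist/\brac{n-1}$, and the constant $0.8$ must be certified jointly by the numerics of~\cite{superres_new}. Your closing paragraph effectively concedes this, but it contradicts the $O\brac{1/m}$ claim; a faithful reconstruction should replace that claim with uniform constant bounds on the tail sums (in the style of Lemma~\ref{lemma:vbar_bound}) and let the certified numerical evaluation carry the remaining quantitative burden.
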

Combining Proposition~\ref{prop:Qbar_derTwo_bounds}, Lemma~\ref{lemma:unifbounds} and the triangle inequality, as well as the lower bound on $\kappa$ from Lemma~\ref{lemma:bound_kappa}, allows us to conclude that the second derivative of $\abs{\bar{Q}}^2$ is negative in $\ml{S}_{\mathrm{near}}$. Indeed, for any $f \in \ml{S}_{\mathrm{near}}$
\begin{align}
 \frac{\kappa^2}{2} \derTwo{f}{ \abs{Q\brac{f}}^2 } & = \kappa^2Q_{R}^{\brac{2}} \brac{f} Q_{R}\brac{f} + \kappa^2 Q_{I}^{\brac{2}}\brac{f}Q_{I}\brac{f} + \abs{\kappa \, Q^{\brac{1}} \brac{f} }^2 \\
& \leq  \frac{\kappa^2}{2} \derTwo{f}{ \abs{\bar{Q}\brac{f}}^2 } + 2 \abs{  \kappa^2 Q^{\brac{2}} \brac{f} -  \kappa^2\bar{Q}^{\brac{2}}\brac{f}  } \sup_{f'} \abs{ Q \brac{f'} }\notag \\
& \quad +2 \abs{Q\brac{f} - \bar{Q}\brac{f} }  \sup_{f'} \abs{  \kappa^2 \bar{Q}^{\brac{2}}  \brac{f'} } \notag \\
& \quad   + 2 \abs{\kappa \, Q^{\brac{1}}\brac{f} - \kappa \, \bar{Q}^{\brac{1}}\brac{f} }  \brac{ \sup_{f'}\abs{ \kappa \, Q^{\brac{1}}  \brac{f'}} +  \sup_{f'}\abs{ \kappa \, \bar{Q}^{\brac{1}}  \brac{f'}} } \\
& \leq -0.087 + 2 \cdot 10^{-2} (4+2 \cdot 10^{-2}) \\
& < 0.
\end{align}

\subsection{Proof of Lemma~\ref{lemma:grid_bounds}}
\label{proof:grid_bounds}
Following an argument used in~\cite{tang2012offgrid} (see also~\cite{Candes:2007es}), we use Hoeffding's inequality to bound the different terms.
\begin{theorem}[Hoeffding's inequality]
  \label{theorem:hoeffding} Let the components of $ \vct{\tilde{u}} $ be sampled i.i.d. from a symmetric distribution on the complex unit circle. For any $t >0$ and any vector $\vct{u} $
\begin{align}
   \Pr \brac{ \abs{ \left<\vct{\tilde{u}}, \vct{u}\right> } \geq \tilde{\epsilon} } & \leq 4 \exp \brac{- \frac{\tilde{\epsilon}^2}{4 \normTwo{ \vct{u} }^2}}.
\end{align}
\end{theorem}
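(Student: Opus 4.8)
The plan is to prove this complex-valued Hoeffding bound by reducing it to the standard real-valued sub-Gaussian tail estimate applied separately to the real and imaginary parts of the inner product, followed by a union bound. Writing $\left<\vct{\tilde{u}}, \vct{u}\right> = \sum_j \overline{\tilde{u}_j}\, \vct{u}_j$, I would first record that this is a sum of independent complex random variables and that, because each $\tilde{u}_j$ is drawn from a symmetric distribution on the unit circle (so $\tilde{u}_j$ and $-\tilde{u}_j$ are equal in distribution), every summand $\overline{\tilde{u}_j}\,\vct{u}_j$ is symmetric and hence has zero mean. In particular its real part $X_j := \op{Re}(\overline{\tilde{u}_j}\,\vct{u}_j)$ and imaginary part $Y_j := \op{Im}(\overline{\tilde{u}_j}\,\vct{u}_j)$ are zero-mean, symmetric real random variables obeying $\abs{X_j}, \abs{Y_j} \le \abs{\overline{\tilde{u}_j}\,\vct{u}_j} = \abs{\vct{u}_j}$, since $\abs{\tilde{u}_j} = 1$.

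The core estimate is a moment-generating-function bound. For a symmetric random variable $X$ with $\abs{X} \le c$ one has $\E e^{\lambda X} = \E \cosh(\lambda X) \le \cosh(\lambda c) \le e^{\lambda^2 c^2/2}$, because symmetry annihilates the odd part of the exponential and $\cosh$ is monotone in the magnitude of its argument. Applying this to $S_R := \sum_j X_j$ and using independence gives $\E e^{\lambda S_R} \le \prod_j e^{\lambda^2 \abs{\vct{u}_j}^2/2} = e^{\lambda^2 \normTwo{\vct{u}}^2/2}$, and the Chernoff--Markov bound optimized over $\lambda$ yields $\Pr(S_R \ge a) \le \exp(-a^2/(2\normTwo{\vct{u}}^2))$. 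Symmetry of $S_R$ then upgrades this to the two-sided bound $\Pr(\abs{S_R} \ge a) \le 2\exp(-a^2/(2\normTwo{\vct{u}}^2))$, and the identical argument handles the imaginary sum $S_I := \sum_j Y_j$.

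Finally I would combine the two tails. Since $\abs{\left<\vct{\tilde{u}}, \vct{u}\right>}^2 = S_R^2 + S_I^2$, the event $\keys{\abs{\left<\vct{\tilde{u}}, \vct{u}\right>} \ge \tilde{\epsilon}}$ forces $\abs{S_R} \ge \tilde{\epsilon}/\sqrt{2}$ or $\abs{S_I} \ge \tilde{\epsilon}/\sqrt{2}$; a union bound at threshold $a = \tilde{\epsilon}/\sqrt{2}$ then gives
\[
\Pr\brac{ \abs{\left<\vct{\tilde{u}}, \vct{u}\right>} \ge \tilde{\epsilon} } \le 2 \cdot 2 \exp\brac{ -\frac{(\tilde{\epsilon}/\sqrt{2})^2}{2\normTwo{\vct{u}}^2} } = 4 \exp\brac{ -\frac{\tilde{\epsilon}^2}{4\normTwo{\vct{u}}^2} },
\]
which is exactly the claimed inequality. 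The steps requiring care are purely ones of bookkeeping: verifying that ``symmetric'' simultaneously delivers the zero-mean property and the $\cosh$ bound, tracking that each summand is bounded by $\abs{\vct{u}_j}$ rather than $2\abs{\vct{u}_j}$ so that the aggregate variance proxy is exactly $\normTwo{\vct{u}}^2$, and selecting the splitting threshold $\tilde{\epsilon}/\sqrt{2}$ that reproduces both the factor $4$ and the constant $1/4$ in the exponent. There is no genuine analytic obstacle here; this is a standard concentration estimate, and essentially all the work lies in matching the constants to the stated form.
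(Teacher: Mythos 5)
Your proof is correct: the MGF bound $\E e^{\lambda X}=\E\cosh(\lambda X)\le e^{\lambda^2 c^2/2}$ for bounded symmetric $X$, the Chernoff step, and the split of the modulus at threshold $\tilde{\epsilon}/\sqrt{2}$ all check out, and they reproduce exactly the factor $4$ and the $1/4$ in the exponent. Note, however, that the paper itself offers no proof to compare against --- it states this Hoeffding inequality as a known result imported from the literature (the references \cite{tang2012offgrid} and \cite{Candes:2007es}); your argument is essentially the standard one behind that cited result, namely real/imaginary decomposition plus the real-valued sub-Gaussian tail for bounded symmetric summands, so you have in effect filled in the omitted proof rather than deviated from the paper's.
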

\begin{corollary}
\label{cor:hoeffding_grid}
Let the components of $ \vct{\tilde{u}} $ be sampled i.i.d. from a symmetric distribution on the complex unit circle. For any finite collection of vectors $\ml{U}$ with cardinality $4 \abs{G} = 1600 n^2$ the event
\begin{align}
\ml{E} := \keys{ \abs{ \PROD{\vct{\tilde{u}}}{ \vct{u}} } >   \frac{10^{-2}}{8} \quad \text{for all } \; \vct{u} \in \ml{U} }  
\end{align}
has probability at most $ \epsilon / 20$ as long as
\begin{align}
\normTwo{\vct{u}}^2 \leq C_{\ml{U}}^2 \brac{\log \frac{n}{\epsilon}}^{-1} \quad \text{for all } \; \vct{u} \in \ml{U},
\end{align}
where $ C_{\ml{U}} : =1/5000$.
\end{corollary}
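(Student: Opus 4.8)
The plan is to prove this by applying Hoeffding's inequality (Theorem~\ref{theorem:hoeffding}) to each fixed vector $\vct{u} \in \ml{U}$ separately and then taking a union bound over the finite collection; here $\ml{E}$ is understood as the union over $\vct{u} \in \ml{U}$ of the bad events $\abs{\PROD{\vct{\tilde{u}}}{\vct{u}}} > 10^{-2}/8$, which is exactly the form needed to control a supremum over the grid in Lemma~\ref{lemma:grid_bounds}. First I would set the threshold $\tilde{\epsilon} := 10^{-2}/8$ in Theorem~\ref{theorem:hoeffding} and use the hypothesis $\normTwo{\vct{u}}^2 \leq C_{\ml{U}}^2 \brac{\log \frac{n}{\epsilon}}^{-1}$ to bound the denominator, obtaining for a single $\vct{u}$
\begin{align}
\Pr\brac{ \abs{ \PROD{\vct{\tilde{u}}}{\vct{u}} } \geq \frac{10^{-2}}{8} } & \leq 4 \exp\brac{ - \frac{\tilde{\epsilon}^2}{4 \normTwo{\vct{u}}^2} } \\
& \leq 4 \exp\brac{ - \frac{\tilde{\epsilon}^2}{4 C_{\ml{U}}^2} \log \frac{n}{\epsilon} } \\
& = 4 \brac{\frac{n}{\epsilon}}^{- \tilde{\epsilon}^2 / \brac{4 C_{\ml{U}}^2} }.
\end{align}
With $\tilde{\epsilon} = 10^{-2}/8$ and $C_{\ml{U}} = 1/5000$ the exponent evaluates to $\tilde{\epsilon}^2/\brac{4 C_{\ml{U}}^2} \approx 9.77$, so each individual tail probability is at most $4 \brac{n/\epsilon}^{-9.77}$.

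Next I would apply the union bound over the $\abs{\ml{U}} = 1600 \, n^2$ vectors, which gives
\begin{align}
\Pr\brac{\ml{E}} \leq 1600 \, n^2 \cdot 4 \brac{\frac{n}{\epsilon}}^{- \tilde{\epsilon}^2/\brac{4 C_{\ml{U}}^2} } = 6400 \, \epsilon^{9.77} \, n^{2 - 9.77}.
\end{align}
The final step is to verify this is at most $\epsilon/20$. Since the exponent of $n$ is $2 - 9.77 < -7$, the factor $n^{-7.77}$ decays extremely fast; using $n \geq 2 \times 10^{3}$ and $\epsilon \leq 1$ so that $\epsilon^{9.77} \leq \epsilon$, the numerical prefactor $6400$ together with the target denominator $20$ is absorbed with an enormous margin, and the desired inequality $\Pr\brac{\ml{E}} \leq \epsilon/20$ follows.

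The computation is entirely routine and there is no genuine mathematical obstacle. The only point requiring care is the bookkeeping of the numerical constant: one must check that the choice $C_{\ml{U}} = 1/5000$ forces the Hoeffding exponent $\tilde{\epsilon}^2/\brac{4 C_{\ml{U}}^2}$ to exceed $2$ (the critical value needed to defeat the $n^2$ growth of the union bound), while leaving a polynomial surplus in both $n$ and $1/\epsilon$ that dominates the constant factor $6400 \cdot 20$ and the target $\epsilon/20$. Because the exponent comes out near $9.77$, far above the critical value of $2$, the margin is large and the constant $C_{\ml{U}}$ is far from tight.
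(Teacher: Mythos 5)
Your proposal is correct and follows exactly the paper's (one-line) proof: apply Hoeffding's inequality (Theorem~\ref{theorem:hoeffding}) with $\tilde{\epsilon} = 10^{-2}/8$ to each fixed $\vct{u}$, then take the union bound over the $1600\,n^2$ vectors; your exponent $\tilde{\epsilon}^2/(4C_{\ml{U}}^2) \approx 9.77$ and the final check against $\epsilon/20$ using $n \geq 2\times 10^3$ are accurate. You also correctly read the event $\ml{E}$ as the union of the individual bad events (the paper's ``for all'' is a slip for ``for some''), which is the interpretation needed in Lemma~\ref{lemma:grid_bounds}.
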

\begin{proof}
The result follows directly from the proposition and the union bound.
\end{proof}

\subsubsection*{Bound on $\Pr \brac{\ml{E}_{R} | \ml{E}_{B}^{c} \cap \ml{E}_{D}^{c} \cap \ml{E}_{v}^{c}}$}
We consider the family of vectors
\begin{align}
\vct{u}\brac{\ell, f} :=  \frac{ \kappa^{\ell} }{\sqrt{n}} \MAT{ \brac{i2\pi l_1}^\ell e^{i 2 \pi l_1 f} &  \brac{i2\pi l_2}^\ell e^{i 2 \pi l_2 f} & \cdots &  \brac{i2\pi l_s}^\ell e^{i 2 \pi l_s f} }^T
\end{align}
where $\ell \in \keys{0,1,2,3}$ and $f$ belongs to $\ml{G}$, so that $\abs{\ml{U}} = 4 \abs{\ml{G}}$. We have
\begin{align}
\normTwo{ \vct{u} \brac{\ell, f}}^2 & \leq \frac{ \kappa^{2\ell} \brac{2\pi m}^{2\ell} s}{n} \\
& \leq  \frac{ \pi^6 s}{n} \quad \text{by Lemma~\ref{lemma:bound_kappa}} \\
& \leq C_{\ml{U}}^2 \brac{\log \frac{n}{\epsilon}}^{-1} \quad \text{by~\eqref{eq:cond_s} if we set $C_s$ small enough}.
\end{align}
The desired result follows by Corollary~\ref{cor:hoeffding_grid} because
\begin{align}
\kappa^{\ell} R^{\brac{\ell}}\brac{f} = \PROD{ \signz }{ \vct{u}\brac{\ell, f} } .
\end{align}

\subsubsection*{Bound on $\Pr \brac{\ml{E}_{1} | \ml{E}_{B}^{c} \cap \ml{E}_{D}^{c} \cap \ml{E}_{v}^{c}}$}
We have 
\begin{align}
I_1^{\brac{\ell}} \brac{f} & = \PROD{ \vct{u}\brac{ \ell , f } }{ \signz}, \qquad 
\vct{u}\brac{\ell, f} := -\frac{1}{\sqrt{n}} \; B_{\Omega}^{\ast}\, D^{-1} \vct{v_{\ell}} \brac{f},
\end{align}
where $\ell \in \keys{0,1,2,3}$ and $f$ belongs to $\ml{G}$, so that $\abs{\ml{U}} = 4 \abs{\ml{G}}$. 

To bound $\normTwo{ \vct{u}\brac{\ell, f} }$ we leverage a bound on the $\ell_2$ norm of $\vct{ v_{\ell}}$ which follows from Lemma~\ref{lemma:v_vbar} and the following bound on the $\ell_2$ norm of $\vct{\bar{v}_{\ell}}$ .
\begin{lemma}[Proof in Section~\ref{proof:vbar_bound}]
\label{lemma:vbar_bound}
Under the assumptions of Theorem~\ref{theorem:main}, there is a fixed numerical constant $C_{\vct{\bar{v}}}$ such that for any $f$
\begin{align}
\normTwo{\vct{\bar{v}_{\ell}} \brac{f}} & \leq C_{\vct{\bar{v}}}.
\end{align}
\end{lemma}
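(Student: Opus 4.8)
The plan is to bound the squared Euclidean norm directly from the definition of $\vct{\bar{v}_{\ell}}$,
\begin{align}
\normTwo{\vct{\bar{v}_{\ell}}\brac{f}}^2 = \sum_{j=1}^{k}\brac{ \kappa^{2\ell}\abs{\bar{K}^{\brac{\ell}}\brac{f-f_j}}^2 + \kappa^{2\ell+2}\abs{\bar{K}^{\brac{\ell+1}}\brac{f-f_j}}^2 },
\end{align}
by an absolute constant, uniformly in $f$, $k$ and $n$. Since the two blocks are handled identically (up to the harmless extra factor of $\kappa^2$), it suffices to control $\sum_{j}\kappa^{2\ell}\abs{\bar{K}^{\brac{\ell}}\brac{f-f_j}}^2$ for each $\ell \in \keys{0,1,2,3,4}$.

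First I would establish two pointwise estimates on $\kappa^{\ell}\abs{\bar{K}^{\brac{\ell}}\brac{\tau}}$: a \emph{uniform} bound and a \emph{decay} bound. For the uniform bound, recall that $\bar{K}$ defined in~\eqref{eq:kbar} is a trigonometric polynomial of degree $m$ (the three orders $\gammaOne m$, $\gammaTwo m$, $\gammaThree m$ sum to $m$) whose magnitude is bounded by one; Bernstein's polynomial inequality (Theorem~\ref{theorem:bernstein_pol_ineq}) then gives $\abs{\bar{K}^{\brac{\ell}}\brac{\tau}} \leq \brac{2\pi m}^{\ell}$, and combining with the bound $\kappa \leq 0.468/m$ from Lemma~\ref{lemma:bound_kappa} yields $\kappa^{\ell}\abs{\bar{K}^{\brac{\ell}}\brac{\tau}} \leq A_{\ell}$ for a constant $A_{\ell}$ depending only on $\ell$. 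For the decay bound, I would expand $\bar{K}^{\brac{\ell}}$ via the product rule as a sum of terms $\ml{D}_{\gammaOne m}^{\brac{\ell_1}}\ml{D}_{\gammaTwo m}^{\brac{\ell_2}}\ml{D}_{\gammaThree m}^{\brac{\ell_3}}$ with $\ell_1+\ell_2+\ell_3=\ell$, and apply the tail estimate of Lemma~\ref{lemma:dirichlet} to each factor. Since every factor contributes a tail of order $1/\abs{\tau}$, each product decays like $\brac{m\abs{\tau}}^{-3}$, and after multiplying by $\kappa^{\ell}$ this gives $\kappa^{\ell}\abs{\bar{K}^{\brac{\ell}}\brac{\tau}} \leq B_{\ell}\brac{m\abs{\tau}}^{-3}$ whenever $\abs{\tau}$ exceeds a fixed multiple of $1/m$.

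The final step is a near/far split driven by the minimum-separation condition~\eqref{condition:minimum_separation}. Fix $f$ and order the frequencies by distance to $f$. Because consecutive elements of $T$ are separated by at least $\Delta_{\min}=\mindisthalf/m$, only a fixed number $N_0=O\brac{1}$ of them can lie inside the radius where the tail bound of Lemma~\ref{lemma:dirichlet} takes effect; I would bound those crudely using the uniform estimate, contributing at most $N_0 A_{\ell}^2$. The remaining frequencies lie on the two sides of $f$ at distances at least $j\Delta_{\min}$, $j=1,2,\ldots$, so the decay bound gives a tail contribution of at most $2\sum_{j\geq 1} B_{\ell}^2\brac{\mindisthalf\, j}^{-6}$, a convergent numerical series. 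Summing the two pieces and maximizing over $\ell \in \keys{0,1,2,3,4}$ produces the claimed absolute constant $C_{\vct{\bar{v}}}$, independent of $f$, $k$ and $n$.

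The only delicate point is the decay estimate $B_{\ell}\brac{m\abs{\tau}}^{-3}$ for derivatives of the triple-Dirichlet product: the product rule produces terms in which the $\ell$ derivatives are distributed unevenly among the three kernels, and one must verify that even the worst case (all derivatives falling on a single factor) still decays like $\abs{\tau}^{-3}$, since each of the two undifferentiated kernels retains a factor of $1/\abs{\tau}$. This follows directly from Lemma~\ref{lemma:dirichlet} applied factor by factor, as that estimate already exhibits the $1/\abs{\tau}$ tail for every derivative order; the remainder is the same convergent-series bookkeeping already used in the proof of Lemma~\ref{lemma:boundHbar}, and indeed the fast $\brac{m\abs{\tau}}^{-3}$ decay makes the $\ell_2$ sum converge even more comfortably than the $\ell_1$ sums appearing there.
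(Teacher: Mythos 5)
Your proposal is correct and follows essentially the same route as the paper's proof: a pointwise uniform bound plus a cubic decay bound on $\kappa^{\ell}\bar{K}^{(\ell)}$, then a near/far split of the frequencies driven by the minimum-separation condition, and a convergent numerical series. The only real differences are that you bound $\normTwo{\vct{\bar{v}_{\ell}}\brac{f}}^2$ directly (giving a $\sum_{j\geq 1} j^{-6}$ series) where the paper first passes to the $\ell_1$ norm (giving $\sum_{j \geq 1} j^{-3} = \zeta\brac{3}$), and that you re-derive the two kernel estimates, via Bernstein's inequality and the product rule combined with Lemma~\ref{lemma:dirichlet}, where the paper cites them wholesale from~\cite{superres_new}; the one caveat in your derivation is that Lemma~\ref{lemma:dirichlet} is stated for Dirichlet kernels of order at least $10^3$, so applying it factor by factor to $\ml{D}_{\gammaOne m}$, $\ml{D}_{\gammaTwo m}$, $\ml{D}_{\gammaThree m}$ strictly requires $n$ somewhat larger than $2\times 10^3$ or a rescaled version of that tail bound, a harmless adjustment since only the value of the final numerical constant $C_{\vct{\bar{v}}}$ is affected.
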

\begin{corollary}
\label{cor:v_bound}
In $\ml{E}_{v}^c$ for any $f \in \ml{G}$
\begin{align}
\normTwo{\vct{ v_{\ell}} \brac{f}} & \leq C_{\vct{\bar{v}}} + C_{\vct{v}}.
\end{align}
\end{corollary}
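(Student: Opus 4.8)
The plan is to derive this bound directly from the triangle inequality, combining the probabilistic control on the deviation $\vct{v_{\ell}} - \frac{n-s}{n}\vct{\bar{v}_{\ell}}$ that is built into the event $\ml{E}_v^c$ with the deterministic bound on $\vct{\bar{v}_{\ell}}$ supplied by Lemma~\ref{lemma:vbar_bound}. No new estimates are needed; the statement is a bookkeeping consequence of the two bounds already in hand.

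First, for any $f \in \ml{G}$ and any $\ell \in \keys{0,1,2,3}$ I would split
\begin{align}
\normTwo{\vct{v_{\ell}}\brac{f}} \leq \normTwo{\vct{v_{\ell}}\brac{f} - \frac{n-s}{n}\vct{\bar{v}_{\ell}}\brac{f}} + \frac{n-s}{n}\normTwo{\vct{\bar{v}_{\ell}}\brac{f}}.
\end{align}
On the event $\ml{E}_v^c$, the first term is at most $C_{\vct{v}}\brac{\log\frac{n}{\epsilon}}^{-\frac{1}{2}}$ by the very definition of $\ml{E}_v$. Since $n \geq 2\times 10^3$ and $\epsilon \leq 1$, we have $\log\frac{n}{\epsilon} \geq \log\brac{2\times 10^3} > 1$, so $\brac{\log\frac{n}{\epsilon}}^{-\frac{1}{2}} \leq 1$ and the first term is bounded by $C_{\vct{v}}$.

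For the second term I would use $\frac{n-s}{n} \leq 1$ (which holds because $0 \leq s \leq n$) together with Lemma~\ref{lemma:vbar_bound}, which gives $\normTwo{\vct{\bar{v}_{\ell}}\brac{f}} \leq C_{\vct{\bar{v}}}$ for every $f$. Adding the two estimates yields $\normTwo{\vct{v_{\ell}}\brac{f}} \leq C_{\vct{v}} + C_{\vct{\bar{v}}}$, as claimed. The only mildly delicate point is the harmless inequality $\brac{\log\frac{n}{\epsilon}}^{-\frac{1}{2}} \leq 1$, which is immediate from the standing assumption $n \geq 2\times 10^3$, so I do not anticipate any real obstacle: the corollary follows at once from the triangle inequality and the two previously established bounds.
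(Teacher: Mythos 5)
Your proof is correct and follows exactly the paper's route: the paper's one-line proof also invokes the triangle inequality together with Lemma~\ref{lemma:vbar_bound} and the deviation bound defining $\ml{E}_{v}^c$ from Lemma~\ref{lemma:v_vbar}. The extra details you supply (that $\brac{\log\frac{n}{\epsilon}}^{-1/2}\leq 1$ for $n\geq 2\times 10^3$ and that $\frac{n-s}{n}\leq 1$) are precisely the bookkeeping the paper leaves implicit.
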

\begin{proof}
The result follows from the lemma, the triangle inequality and Lemma~\ref{lemma:v_vbar}.
\end{proof}

Combining Lemma~\ref{lemma:D_bounds} and Corollary~\ref{cor:v_bound} yields
\begin{align}
\normTwo{ \vct{u}\brac{\ell, f} } & \leq \frac{1}{\sqrt{n}} \norm{B_{\Omega}} \norm{D^{-1}} \normTwo{\vct{v_{\ell}}\brac{f}} \\
& \leq \frac{8 \brac{C_{\vct{\bar{v}}} + C_{\vct{v}} } \norm{B_{\Omega}}}{\sqrt{n}}
\end{align}
in $\ml{E}_{D}^{c} \cap \ml{E}_{v}^{c}$. Corollary~\ref{cor:hoeffding_grid} implies the desired result if 
\begin{align}
\label{eq:C_B_def}
\norm{B_{\Omega}} & \leq  C_{B} \brac{\log \frac{n}{\epsilon} }^{-\frac{1}{2} } \sqrt{ n }, \quad C_{B} := \frac{C_{\ml{U}}}{8\brac{C_{\vct{\bar{v}}} + C_{\vct{v}} }},
\end{align}
which is the case in $\ml{E}_{B}^{c}$ by Lemma~\ref{lemma:boundB}.

\subsubsection*{Bound on $\Pr \brac{\ml{E}_{2} | \ml{E}_{B}^{c} \cap \ml{E}_{D}^{c} \cap \ml{E}_{v}^{c}}$}

We have 
\begin{align}
I_2^{\brac{\ell}} \brac{f} &  = \PROD{ \vct{u}\brac{ \ell , f } }{ \signx}, \qquad 
\vct{u}\brac{\ell, f} := P D^{-1}\brac{ \vct{v_{\ell}} \brac{f} - \frac{n-s}{n} \vct{\bar{v}_{\ell}} \brac{f} }
\end{align}
where $P \in \R^{k \times 2k}$ is the projection matrix that selects the first $k$ entries in a vector, $\ell \in \keys{0,1,2,3}$ and $f$ belongs to $\ml{G}$, so that $\abs{\ml{U}} = 4 \abs{\ml{G}}$. 

Since $\norm{P}=1$, by Lemma~\ref{lemma:D_bounds} in $\ml{E}_{D}^{c}$
\begin{align}
\normTwo{ \vct{u}\brac{\ell, f} } & \leq \norm{P} \norm{D^{-1}} \normTwo{ \vct{v_{\ell}} \brac{f} - \frac{n-s}{n} \vct{\bar{v}_{\ell}} \brac{f} } \\
& \leq 8 \normTwo{ \vct{v_{\ell}} \brac{f} - \frac{n-s}{n} \vct{\bar{v}_{\ell}} \brac{f} }.
\end{align}
The desired result holds if 
\begin{align}
\label{eq:C_v_def}
\normTwo{ \vct{v_{\ell}} \brac{f} - \frac{n-s}{n} \vct{\bar{v}_{\ell}} \brac{f} } & \leq C_{\vct{v}}\brac{\log \frac{n}{\epsilon} }^{-\frac{1}{2} }, \quad C_{\vct{v}} := \frac{C_{\ml{U}}}{8},
\end{align}
which is the case in $\ml{E}_{v}^{c}$ by Lemma~\ref{lemma:v_vbar}.

\subsubsection*{Bound on $\Pr \brac{\ml{E}_{3} | \ml{E}_{B}^{c} \cap \ml{E}_{D}^{c} \cap \ml{E}_{v}^{c}}$}

We have 
\begin{align}
I_3^{\brac{\ell}} \brac{f} &  = \PROD{ \vct{u}\brac{ \ell , f } }{ \signx}, \qquad 
\vct{u}\brac{\ell, f} := \frac{n-s}{n} P \brac{ D^{-1} - \frac{n}{n-s} \bar{D}^{-1} } \vct{\bar{v}_{\ell}} \brac{f}
\end{align}
where $\ell \in \keys{0,1,2,3}$ and $f$ belongs to $\ml{G}$, so that $\abs{\ml{U}} = 4 \abs{\ml{G}}$. 

Since $\norm{P}=1$, by Lemma~\ref{lemma:v_vbar}
\begin{align}
\normTwo{ \vct{u}\brac{\ell, f} } & \leq \norm{P} \norm{ D^{-1} - \frac{n}{n-s} \bar{D}^{-1}} \normTwo{\vct{\bar{v}_{\ell}} \brac{f} } \\
& \leq C_{\bar{v}} \norm{ D^{-1} - \frac{n}{n-s} \bar{D}^{-1}}.
\end{align}
The desired result holds if 
\begin{align}
\label{eq:C_D_def}
\norm{ D^{-1} - \frac{n}{n-s} \bar{D}^{-1}} & \leq C_{D} \brac{\log \frac{n}{\epsilon} }^{-\frac{1}{2} }, \quad C_{D} := \frac{ C_{\ml{U}} }{ C_{\bar{v}} },
\end{align}
for a fixed numerical constant $C_{D}$, which is the case in $\ml{E}_{D}^{c}$ by Lemma~\ref{lemma:D_bounds}.

\subsection{Proof of Lemma~\ref{lemma:vbar_bound}}
\label{proof:vbar_bound}
We use the $\ell_1$ norm to bound the $\ell_2$ norm of $\vct{\bar{v}_{\ell}}\brac{f}$:
\begin{align}
    \normTwo{ \vct{\bar{v}_{\ell}}\brac{f} } & \leq \normOne{ \vct{\bar{v}_{\ell}}\brac{f} }\\
    &  = \sum_{j = 1}^k \kappa^\ell \left| \bar{K}^{\brac{ \ell }} \brac{ f
    - f_j } \right| + \sum_{j = 1}^k \kappa^{\ell+1} \left| \bar{K}^{\brac{ \ell + 1}} \brac{ f
    - f_j } \right|  .
\end{align}
To bound the sum on the right we leverage some results from~\cite{superres_new}. 
\begin{lemma}
  \begin{align}
     \label{eq:barK_bound}
     \kappa^\ell \left|
    \bar{K}^{\brac{ \ell }} \brac{ f } \right|   \leq \begin{cases}
    C_1 \quad & \forall f \in [-\frac{1}{2}, \frac{1}{2}], \\
    C_2 \, m^{-3} \abs{ f}^{-3}  \quad &  \text{ if } \quad \frac{80}{m} \leq \abs{f} \leq \frac{1}{2},
    \end{cases}
  \end{align}
  for suitably chosen numerical constant $C_1$ and $C_2$. 
\end{lemma}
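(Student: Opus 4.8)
The plan is to establish the two regimes separately, exploiting throughout the explicit product structure $\bar{K} = \ml{D}_{\gammaOne m}\,\ml{D}_{\gammaTwo m}\,\ml{D}_{\gammaThree m}$ together with the bound $\kappa \leq 0.468/m$ from Lemma~\ref{lemma:bound_kappa}. The uniform bound will come from Bernstein's polynomial inequality applied to $\bar{K}$ as a whole, while the tail decay will come from the Leibniz rule combined with the decay of the individual Dirichlet factors.

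For the uniform bound on $[-\tfrac12,\tfrac12]$, I would first note that, since $\gammaOne + \gammaTwo + \gammaThree = 1$, the kernel $\bar{K}$ is a trigonometric polynomial of degree $m$, and since each Dirichlet factor has magnitude at most one, $\sup_f \abs{\bar{K}\brac{f}} \leq 1$. Applying Bernstein's polynomial inequality (Theorem~\ref{theorem:bernstein_pol_ineq}) exactly as in the derivation of~\eqref{eq:boundDirichlet}—$\bar{K}$ has the same degree $m$ as $\ml{D}_m$—yields $\sup_f \abs{\bar{K}^{\brac{\ell}}\brac{f}} \leq \brac{2\pi m}^{\ell}$. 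Multiplying by $\kappa^\ell$ and invoking Lemma~\ref{lemma:bound_kappa} gives $\kappa^\ell \abs{\bar{K}^{\brac{\ell}}\brac{f}} \leq \brac{2\pi \cdot 0.468}^{\ell}$, a constant for each fixed $\ell$. Since only finitely many orders $\ell$ are needed in the applications (up to $\ell=4$ in Lemma~\ref{lemma:vbar_bound}), taking $C_1$ to be the maximum of $\brac{2\pi \cdot 0.468}^{\ell}$ over these values establishes the first bound.

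For the decay bound I would expand the $\ell$-th derivative by the Leibniz rule, $\bar{K}^{\brac{\ell}} = \sum_{\ell_1+\ell_2+\ell_3=\ell} \binom{\ell}{\ell_1,\ell_2,\ell_3} \prod_{i} \ml{D}_{\gamma_i m}^{\brac{\ell_i}}$, and bound each factor individually in the tail. A factor with $\ell_i = 0$ is controlled directly from the closed form $\ml{D}_{\tilde{m}}\brac{f} = \sin\brac{\brac{2\tilde{m}+1}\pi f}/\brac{\brac{2\tilde{m}+1}\sin\brac{\pi f}}$ using $\abs{\sin\brac{\pi f}} \geq 2\abs{f}$ on $[-\tfrac12,\tfrac12]$, which gives $\abs{\ml{D}_{\tilde{m}}\brac{f}} \leq \brac{2\brac{2\tilde{m}+1}\abs{f}}^{-1}$ with no lower restriction on $\abs{f}$; a factor with $\ell_i \geq 1$ is bounded by the tail estimate of Lemma~\ref{lemma:dirichlet} applied with order $\gamma_i m$, giving $\abs{\ml{D}_{\gamma_i m}^{\brac{\ell_i}}\brac{f}} \lesssim \brac{\gamma_i m}^{\ell_i - 1}/\abs{f}$. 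Multiplying the three factors produces the $\abs{f}^{-3}$ decay and an overall power $m^{\ell-3}$, because the three exponents sum to $\brac{\ell_1-1}+\brac{\ell_2-1}+\brac{\ell_3-1} = \ell-3$; the remaining $\kappa^\ell \leq \brac{0.468/m}^{\ell}$ then cancels the factor $m^\ell$, leaving the claimed $C_2\, m^{-3}\abs{f}^{-3}$.

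The main obstacle is a threshold mismatch. Lemma~\ref{lemma:dirichlet} guarantees the per-factor tail estimate for $\ml{D}_{\gamma_i m}$ only when $\abs{f} \geq 80/\brac{\gamma_i m}$, whereas the statement asks for decay already at $\abs{f} \geq 80/m$, and since $\gamma_i < 1$ the former threshold is strictly larger, in the worst case $80/\brac{\gammaOne m} \approx 324/m$. To cover the intermediate band $80/m \leq \abs{f} < 80/\brac{\gammaOne m}$ I would revisit the quotient-rule estimates underlying Lemma~\ref{lemma:dirichlet}: the genuine smallness parameter is the product $\brac{\gamma_i m}\abs{f}$, which on this band still satisfies $\brac{\gamma_i m}\abs{f} \geq 80\,\gamma_i \geq 80\,\gammaOne \approx 20$, large enough for the same asymptotic bounds to hold after only a modest enlargement of the numerical constants. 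Absorbing those constants into $C_2$ then delivers the decay uniformly on $80/m \leq \abs{f} \leq \tfrac12$, which completes the proof.
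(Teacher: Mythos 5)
Your proposal is correct in substance, but it takes a genuinely different route from the paper: the paper does not derive these bounds at all. Its entire proof is a citation of Corollary 4.5, Lemma 4.6 and Lemma C.2 of~\cite{superres_new}, which contain precisely the near-origin and tail estimates for $\bar{K}$ and its derivatives. You instead reconstruct both bounds from primitives available inside this paper: for the uniform bound, Bernstein's polynomial inequality (Theorem~\ref{theorem:bernstein_pol_ineq}) applied to $\bar{K}$, which is indeed a degree-$m$ trigonometric polynomial with $\abs{\bar{K}} \leq 1$ since $\gammaOne + \gammaTwo + \gammaThree = 1$ and each Dirichlet factor has magnitude at most one, together with $\kappa \leq 0.468/m$ from Lemma~\ref{lemma:bound_kappa}; for the tail, the Leibniz rule on the three-fold product~\eqref{eq:kbar} with per-factor bounds. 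Your exponent bookkeeping is right: each factor contributes $\abs{f}^{-1}$ and $\brac{\gamma_i m}^{\ell_i - 1}$, so the product is of order $m^{\ell-3}\abs{f}^{-3}$, and $\kappa^\ell \lesssim m^{-\ell}$ cancels the $m^{\ell}$. What your route buys is independence from the external reference; what the paper's route buys is brevity and sharper numerical constants (which matter elsewhere in~\cite{superres_new}, though for this lemma only ``suitably chosen'' constants are needed, so your coarser constants suffice).

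On the one genuine wrinkle you flag --- Lemma~\ref{lemma:dirichlet} applies to $\ml{D}_{\gamma_i m}$ only for $\abs{f} \geq 80/\brac{\gamma_i m}$, roughly $324/m$ in the worst case --- your proposed fix (re-deriving the Dirichlet tail estimates under $\brac{\gamma_i m}\abs{f} \geq 80\,\gammaOne$) would work, but it is more labor than needed. There is a one-line patch: on the intermediate band $80/m \leq \abs{f} \leq 80/\brac{\gammaOne m}$ one has $m \abs{f} \leq 80/\gammaOne$, hence $m^{-3}\abs{f}^{-3} \geq \brac{\gammaOne/80}^3$, so the uniform bound already gives $\kappa^\ell \abs{ \bar{K}^{\brac{\ell}}\brac{f} } \leq C_1 \leq C_1 \brac{80/\gammaOne}^3 m^{-3} \abs{f}^{-3}$ there. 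Taking $C_2 := \max\keys{ C_2', \, C_1 \brac{80/\gammaOne}^3 }$, where $C_2'$ is the constant your Leibniz argument yields for $\abs{f} \geq 80/\brac{\gammaOne m}$, covers the whole range $80/m \leq \abs{f} \leq 1/2$ with no further analysis of the Dirichlet kernel. With that substitution your argument is complete and fully self-contained.
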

\begin{proof}
The constant bound on the kernel follows from Corollary 4.5, Lemma 4.6 and Lemma C.2 (see also Figures 14 and 15) in~\cite{superres_new}. The bound for large $f$ follows from Lemma~C.2 in~\cite{superres_new}.
\end{proof}
By the minimum-separation condition~\eqref{condition:minimum_separation}, there are at most 127 elements of $T$ that are at a distance of $80/m$ or less from $f$. We use the first bound in~\eqref{eq:barK_bound} to control the contribution of those elements and the second bound to deal with the remaining terms,
 \begin{align}
 \sum_{j = 1}^k \kappa^\ell \abs{ \bar{K}^{\brac{ \ell }} \brac{ f - f_j } } 
 & \leq \sum_{j: \abs{ f-f_j } < \frac{80}{m}} C_1 + \sum_{j: \frac{80}{m} \leq \abs{ f-f_j } \leq \frac{1}{2}}\frac{C_2}{m^3 \abs{ f-f_j }^3} \\
 & \leq 127\, C_1 + 2\, C_2 \sum_{j = 1}^\infty\frac{1}{m^3 (j \Delta_{\min})^3} \\
 &\leq 127\,C_1 + 2\,C_2 \sum_{j=1}^\infty \frac{1}{j^3} \\
 & = 127\,C_1 + 2\,C_2 \, \zeta \brac{3} ,
  \end{align}
where $\zeta \brac{3}$ is Ap\'ery's constant, which is bounded by 1.21. This completes the proof.

\subsection{Proof of Lemma~\ref{lemma:Ql_Qbarl}}
\label{proof:Ql_Qbarl}
The proof follows a similar argument to the proof of Proposition 4.12 in~\cite{tang2012offgrid}. We begin by bounding the deviations of $Q^{\brac{\ell}}$ and $\bar{Q}^{\brac{\ell}}$ on neighboring points.
\begin{lemma}[Proof in Section~\ref{proof:diff_Q}]
\label{lemma:diff_Q}
Under the assumptions of Theorem~\ref{theorem:main}, for any $f_1$, $f_2$ in the unit interval
\begin{align}
\abs{ \kappa^{\ell} Q^{\brac{\ell}}\brac{f_2} -  \kappa^{\ell} Q^{\brac{\ell}}\brac{f_1} } & \leq  n^2 \abs{f_2 - f_1}, \\ 
\abs{ \kappa^{\ell} \bar{Q}^{\brac{\ell}}\brac{f_2} -  \kappa^{\ell} \bar{Q}^{\brac{\ell}}\brac{f_1} } & \leq  n^2 \abs{f_2 - f_1}.
\end{align} 
\end{lemma}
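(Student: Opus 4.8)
The plan is to reduce the statement to a uniform bound on one additional derivative. Since $\kappa^{\ell} Q^{\brac{\ell}}$ and $\kappa^{\ell} \bar{Q}^{\brac{\ell}}$ are continuously differentiable, the fundamental theorem of calculus gives
\[
\abs{ \kappa^{\ell} Q^{\brac{\ell}}\brac{f_2} -  \kappa^{\ell} Q^{\brac{\ell}}\brac{f_1} } \leq \abs{f_2 - f_1} \, \sup_{f} \abs{ \kappa^{\ell} Q^{\brac{\ell+1}}\brac{f} },
\]
and likewise for $\bar{Q}$ (this increment bound is valid for complex-valued functions because it equals $\abs{\int_{f_1}^{f_2} \kappa^{\ell} Q^{\brac{\ell+1}}}$), so it suffices to prove $\sup_f \abs{ \kappa^{\ell} Q^{\brac{\ell+1}}\brac{f} } \leq n^2$. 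I work throughout on $\ml{E}_{B}^{c} \cap \ml{E}_{D}^{c}$, where $Q$ is well defined and the operator-norm estimates of Lemmas~\ref{lemma:boundB} and~\ref{lemma:D_bounds} hold. Differentiating the closed form~\eqref{eq:Q_vl_Dinv_ur} and using $\tfrac{d}{df}\vct{v_{\ell}} = \tfrac{1}{\kappa}\vct{v_{\ell+1}}$ (immediate from the definition of $\vct{v_{\ell}}$), I get
\[
\kappa^{\ell} Q^{\brac{\ell+1}}\brac{f} = \frac{1}{\kappa}\vct{v_{\ell+1}}\brac{f}^T D^{-1}\brac{\MAT{\signx \\ 0} - \frac{1}{\sqrt{n}} B_{\Omega}\, \signz} + \kappa^{\ell} R^{\brac{\ell+1}}\brac{f}.
\]

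The crux is a bound on $\normTwo{\vct{v_{\ell+1}}\brac{f}}$ valid for \emph{every} $f$, not just on the grid $\ml{G}$: the concentration estimates of Lemma~\ref{lemma:v_vbar} and Corollary~\ref{cor:v_bound} are grid estimates only, so I fall back on the deterministic series representation $\vct{v_{\ell+1}}\brac{f} = \sum_{l=-m}^{m} \delta_{\Omega^c}\brac{l}\brac{i2\pi\kappa l}^{\ell+1}\vct{c}_l \, e^{i2\pi l f}\vct{b}\brac{l}$ from the proof of Lemma~\ref{lemma:v_vbar}. Combining $2\pi\kappa m \leq 3$ (Lemma~\ref{lemma:bound_kappa}), $\sum_l \abs{\vct{c}_l} = \bar{K}\brac{0} = 1$ (the $\vct{c}_l$ are nonnegative), and $\normTwo{\vct{b}\brac{l}} \leq \sqrt{10k}$ (Lemma~\ref{lemma:bound_b}) yields $\normTwo{\vct{v_{\ell+1}}\brac{f}} \leq 3^{\ell+1}\sqrt{10k}$ uniformly in $f$.

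It then remains to assemble the factors using $\norm{D^{-1}} \leq 8$ (Lemma~\ref{lemma:D_bounds}), $\tfrac{1}{\kappa} \leq 1.07\,n$ (Lemma~\ref{lemma:bound_kappa} with $m \leq n/2$), and $\normTwo{\MAT{\signx \\ 0} - \tfrac{1}{\sqrt n}B_{\Omega}\signz} \leq \sqrt{k} + C_{B}\sqrt{s}$ (since $\normTwo{\signx}=\sqrt k$, $\normTwo{\signz}=\sqrt s$ and $\norm{B_\Omega} \leq C_B\sqrt n$ on $\ml{E}_B^c$). The key observation is that the product $\normTwo{\vct{v_{\ell+1}}\brac{f}}\cdot\normTwo{\MAT{\signx \\ 0} - \tfrac{1}{\sqrt n}B_{\Omega}\signz}$ scales like $k + \sqrt{ks}$, which by~\eqref{eq:cond_k} and~\eqref{eq:cond_s} is $O\brac{n \brac{\log \tfrac{n}{\epsilon}}^{-2}}$ rather than $O\brac{n}$; multiplied by $\tfrac{1}{\kappa}\norm{D^{-1}} = O\brac{n}$ the first term is $O\brac{n^2 \brac{\log \tfrac{n}{\epsilon}}^{-2}}$, while the remainder is bounded directly from its coefficients by $\abs{\kappa^{\ell} R^{\brac{\ell+1}}\brac{f}} \leq \kappa^{\ell}\brac{2\pi m}^{\ell+1}s/\sqrt n = O\brac{n^{3/2}\brac{\log \tfrac{n}{\epsilon}}^{-2}}$. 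For $n \geq 2\times 10^3$ the logarithmic factors dominate the absolute constants once $C_k$ and $C_s$ are chosen small, giving the clean bound $n^2$. The $\bar{Q}$ estimate is structurally identical but easier, since $\kappa^{\ell}\bar{Q}^{\brac{\ell+1}}\brac{f} = \tfrac{1}{\kappa}\vct{\bar{v}_{\ell+1}}\brac{f}^T \bar{D}^{-1}\MAT{\signx \\ 0}$ with $\normTwo{\vct{\bar{v}_{\ell+1}}\brac{f}} \leq C_{\vct{\bar{v}}}$ for all $f$ by Lemma~\ref{lemma:vbar_bound} (exploiting the fast decay of $\bar{K}$ that $K$ lacks) and $\norm{\bar{D}^{-1}} \leq 1.88$ (Lemma~\ref{lemma:Dbar_bounds}), giving $O\brac{n\sqrt k} = O\brac{n^{3/2}\brac{\log \tfrac{n}{\epsilon}}^{-1}} \leq n^2$. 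The only genuine obstacle is securing the uniform-in-$f$ control of $\normTwo{\vct{v_{\ell+1}}}$ off the grid and checking that its $\sqrt{k}$ growth is absorbed by the smallness of $k$ and $s$, which is precisely what makes the deliberately crude constant $n^2$ correct.
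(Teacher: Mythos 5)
Your proof is correct, and its architecture is the same as the paper's: reduce the increment bound to a uniform, off-grid bound on the next derivative, then control that derivative through the closed form \eqref{eq:Q_vl_Dinv_ur}, a deterministic bound on $\normTwo{\vct{v_{\ell+1}}\brac{f}}$ obtained directly from the coefficient series, the bound $\norm{D^{-1}}\leq 8$, and the conditions \eqref{eq:cond_k}--\eqref{eq:cond_s}. Two deviations are worth recording. First, where you invoke Lemma~\ref{lemma:boundB} and hence work on $\ml{E}_{B}^{c}\cap\ml{E}_{D}^{c}$, the paper uses the deterministic estimate $\norm{B_{\Omega}}\leq\sqrt{\norm{\bar{H}}}\leq\sqrt{260\,\pi^2 n\log k}$ (valid because $B_{\Omega}$ is a submatrix of a matrix $\bar{B}$ with $\bar{H}=\bar{B}\bar{B}^{\ast}$), so its version of the lemma holds on $\ml{E}_{D}^{c}$ alone; your extra conditioning is harmless, since the lemma is only applied inside the proof of Proposition~\ref{proposition:Qbound1}, which is already conditioned on $\ml{E}_{B}^{c}\cap\ml{E}_{D}^{c}\cap\ml{E}_{v}^{c}$. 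Second, for $\bar{Q}$ the paper does not return to the closed form: it uses Bernstein's polynomial inequality (Lemma~\ref{lemma:unifbounds}), giving $\sup_f\abs{\kappa^{\ell+1}\bar{Q}^{\brac{\ell+1}}\brac{f}}\leq 1$ and hence a bound of order $n$, whereas your route through Lemma~\ref{lemma:vbar_bound} and $\norm{\bar{D}^{-1}}\leq 1.88$ gives order $n\sqrt{k}$; both sit comfortably below $n^2$. One caveat you share with the paper: the clean constant $n^2$ only emerges for bounded $\ell$ (your $3^{\ell+1}$ must be absorbed, just as the paper restricts to $\ell\in\keys{0,1,2}$), which is all that the application in Lemma~\ref{lemma:Ql_Qbarl} requires.
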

For any $f$ in the unit interval there exists a grid point $f_{\ml{G}}$ such that the distance between the two points is smaller than the step size $\brac{400 \, n^2}^{-1}$. This allows to establish the desired result by combining~\eqref{eq:Q_Qbar_grid} with Lemma~\ref{lemma:diff_Q} and the triangle inequality,
\begin{align}
\abs{ \kappa^{\ell} Q^{\brac{\ell}}\brac{f} -  \kappa^{\ell} \bar{Q}^{\brac{\ell}}\brac{f} } & \leq \abs{ \kappa^{\ell} Q^{\brac{\ell}}\brac{f} -  \kappa^{\ell} Q^{\brac{\ell}}\brac{f_{\ml{G}} } } + \abs{ \kappa^{\ell} Q^{\brac{\ell}}\brac{f_{\ml{G}}} -  \kappa^{\ell} \bar{Q}^{\brac{\ell}}\brac{f_{\ml{G}}} }\\
& \quad + \abs{ \kappa^{\ell} \bar{Q}^{\brac{\ell}}\brac{f_{\ml{G}}} -  \kappa^{\ell} \bar{Q}^{\brac{\ell}}\brac{f} } \\
& \leq 2 \, n^2 \abs{f - f_{\ml{G}} } + 5 \, 10^{-3}\\
& \leq 10^{-2}.
\end{align}

\subsubsection{Proof of Lemma~\ref{lemma:diff_Q}}
\label{proof:diff_Q}
We first derive a coarse uniform bound on $Q^{\brac{\ell}}$ for $\ell \in \keys{0,1,2,3}$. For this we need bounds on the $\ell_2$ norm of $\vct{v_{\ell}} \brac{f}$ and the magnitude of $R^{\brac{\ell}} \brac{ f }$ that hold over the whole unit interval, not only on a discrete grid. By the definitions of $K$ and $\vct{b}\brac{j}$ in~\eqref{eq:k} and~\eqref{eq:b}, for any $f$
\begin{align}
\normTwo{ \vct{v_{\ell}} \brac{f}} & = \normTwo{ \sum_{l \in \Omega^c} \brac{  i 2 \pi \kappa l}^\ell \vct{c}_{ l } e^{i2\pi l f}  \vct{b}\brac{l} } \\
& \leq \pi^{\ell} n \normInf{c} \sup_{-m \leq l \leq m} \normTwo{\vct{b} \brac{ l }} \quad \text{by Lemma~\ref{lemma:bound_kappa}}\\
& \leq \frac{1.3 \, \pi^{3} n \sqrt{10 k} }{m}   \quad \text{by Lemmas~\ref{lemma:c_amp} and~\ref{lemma:bound_b}} \\
& \leq 256 \sqrt{k}.
\end{align}
Similarly, for any $f$
\begin{align}
\abs{ \kappa^{\ell} R^{\brac{\ell}} \brac{ f } } & = \abs{\lambda \, \kappa^{\ell} \sum_{l \in \Omega} \brac{- i 2 \pi l}^{\ell} \signz_l \, e^{- i 2 \pi l f} } \\
& \leq \frac{ \kappa^{\ell} \brac{2 \pi}^{\ell}}{\sqrt{n}} \sum_{l \in \Omega}  l^{\ell} \\
& \leq \frac{ \kappa^{\ell} \brac{2 \pi}^{\ell} s \, m^{\ell} }{\sqrt{n}}\\
& \leq  \frac{ 4 \pi^3 s}{\sqrt{n}} \quad \text{by Lemma~\ref{lemma:bound_kappa}}. 
\end{align}
We also derive a coarse bound on the operator norm $B_{\Omega}$
\begin{align}
\norm{B_{\Omega}} & \leq \sqrt{ \norm{\bar{H}} } \\
& \leq  \sqrt{260 \, \pi^2 n \, \log k} \quad \text{by Lemma~\ref{lemma:boundHbar} }
\end{align}
which holds because $B_{\Omega}$ is a submatrix of a matrix $\bar{B}$ such that $\bar{H}=\bar{B}\bar{B}^{\ast}$. These bounds together with~\eqref{eq:Q_vl_Dinv_ur}, the Cauchy-Schwarz inequality and the triangle inequality imply that in $\ml{E}_{D}^c$
\begin{align}
\abs{ \kappa^{\ell} \, Q^{\brac{\ell}} \brac{f} } & \leq \normTwo{\vct{v_{\ell}}\brac{f}} \norm{ D^{-1}} \brac{ \normTwo{ \signx  } +  \frac{1}{\sqrt{n}} \norm{ B_{\Omega}} \normTwo{ \signz } } + \abs{ \kappa^{\ell}  R^{\brac{\ell}} \brac{ f } } \\ 
& \leq 5 \, 10^5 \, \brac{k + \sqrt{ks \, \log k}}\\
& \leq \frac{ n }{7} \quad \text{by~\eqref{eq:cond_k} and~\eqref{eq:cond_s} if we set $C_k$ and $C_s$ small enough }. \label{eq:boundunif_Q}
\end{align}
Finally, if we interpret $Q^{\brac{\ell}}\brac{z}$ as a function of $z \in \C$, a generalization of the mean-value theorem yields 
\begin{align}
\abs{ \kappa^{\ell} Q^{\brac{\ell}}\brac{f_2} -  \kappa^{\ell} Q^{\brac{\ell}}\brac{f_1} } & \leq \kappa^{\ell}  \abs{e^{i 2 \pi f_2} - e^{i 2 \pi f_1}} \sup_{z'} \abs{ \der{z}{Q^{\brac{\ell}}\brac{z'}}} \\
& \leq \frac{2 \pi \abs{f_2 - f_1}}{\kappa} \sup_{f} \abs{\kappa^{\ell + 1} Q^{\brac{\ell +1}}\brac{f} } \\ 
& \leq n^2 \abs{f_2 - f_1} \quad \text{by \eqref{eq:boundunif_Q} for $\ell \in \keys{0,1,2}$}.
\end{align}
The bound on the deviation of $\bar{Q}^{\ell}$ is obtained using exactly the same argument together with the bound~\eqref{eq:unifbound_Qbar}. In the case of $\bar{Q}$ the bound is extremely coarse, but it suffices for our purpose.

\section{Proof of Proposition~\ref{proposition:qboundgamma}}
\label{proof:qboundgamma}
Let $l$ be an arbitrary element of $\Omega^c$. We express the corresponding coefficient $\vct{q}_{l}$ in terms of the sign patterns $\signx$ and $\signz$,
\begin{align}
\vct{q}_{l} & =  \vct{c}_l \brac{\sum_{j =1}^{k} \vct{\alpha}_j \, e^{i2\pi l f_j} + i2\pi l   \kappa \sum_{j =1}^{k} \vct{\beta}_j \, e^{i2\pi l f_j}} \\
& =  \vct{c}_l \, \vct{b}\brac{l}^{\ast} \MAT{\vct{\alpha} \\ \vct{\beta} } \\
& =  \vct{c}_l \, \vct{b}\brac{l}^{\ast} D^{-1} \brac{\MAT{  \signx  \\ 0 } - \frac{1}{\sqrt{n}} B_{\Omega} \, \signz } \\
& =  \vct{c}_l \, \brac{ \PROD{ P D^{-1} \vct{b}\brac{l} }{\signx} +  \frac{1}{\sqrt{n}} \PROD{B_{\Omega}^{\ast} D^{-1} \vct{b}\brac{l} }{ \signz }},
\end{align}
where $P \in \R^{k \times 2k}$ is the projection matrix that selects the first $k$ entries in a vector. 

The bounds
\begin{align}
\normTwo{P D^{-1} \vct{b}\brac{l}}^2 & \leq \norm{P}^2 \norm{D^{-1}}^2   \normTwo{\vct{b}\brac{l}}^2 \\
& \leq 640 k \quad \text{in $\ml{E}_{D}^{c}$ by Lemmas~\ref{lemma:bound_b} and~\ref{lemma:D_bounds}} \\
& \leq \frac{ 0.18^2 n}{ \log \frac{40}{\epsilon}} \text{by~\eqref{eq:cond_k} if we set $C_k$ small enough,}
\end{align}
and 
\begin{align}
\normTwo{B_{\Omega}^{\ast} D^{-1} \vct{b}\brac{l}}^2 & \leq \norm{B_{\Omega} }^2  \norm{D^{-1}}^2   \normTwo{\vct{b}\brac{l}}^2 \\
& \leq 640 \, C_{B}^2 \, k n \quad \text{in $\ml{E}_{B}^{c} \cap \ml{E}_{D}^{c}$ by Lemmas~\ref{lemma:boundB} and~\ref{lemma:D_bounds}} \\
& \leq \frac{ 0.18^2 n^2}{ \log \frac{40}{\epsilon}} \quad \text{by~\eqref{eq:cond_k} if we set $C_k$ small enough,}
\end{align}
imply by Hoeffding's inequality (Theorem~\ref{theorem:hoeffding}) that the probability of each of the events 
\begin{align}
\abs{  \PROD{ P D^{-1} \vct{b}\brac{l} }{\signx}} & > 0.18 \sqrt{n} , \\
\abs{ \PROD{B_{\Omega}^{\ast} D^{-1} \vct{b}\brac{l} }{ \signz }} & > 0.18 n
\end{align}
is bounded by $\epsilon / 10$. 
By Lemma~\ref{lemma:c_amp} and the union bound, this implies 
\begin{align}
\abs{\vct{q}_l} & \leq \normInf{ \vct{c} } \brac{ \abs{ \PROD{D^{-1} \vct{b}\brac{l} }{ \MAT{  \signx  \\ 0 } }} + \frac{\abs{ \PROD{B_{\Omega}^{\ast} D^{-1} \vct{b}\brac{l} }{ \signz }}}{\sqrt{n}} } \\
& \leq \frac{2.6}{n} \brac{0.18 \sqrt{n} + 0.18 \sqrt{n}}\\
& < \frac{1}{\sqrt{n}}
\end{align}
with probability at least $1-\epsilon/5$.

%
%
%

\section{Algorithms}

\subsection{Proof of Lemma~\ref{lemma:dual_sdp_noise}}
\label{proof:dual_sdp_noise}
The problem is equivalent to
\begin{align}
\min_{ \tilde{\mu}, \vct{ \tilde{z} } ,\vct{u} }    \normTV{ \tilde{\mu}} + \lambda \normOne{ \vct{\tilde{z}} } \qquad
  \text{\ subject\ to\ }  & \normTwo{ \vct{y} - \vct{u} }^2 \leq \sigma^2 \\
   & \ml{F}_n \, \tilde{\mu} + \vct{\tilde{z}} = \vct{u} ,
\end{align}
where we have introduced an auxiliary primal variable $\vct{u}\in \C^{n}$. Let us define the dual variables $\vct{\eta} \in \C^{n}$ and $\nu  \geq 0$. The Lagrangian is equal to
\begin{align}
\mathcal{L}\brac{\tilde{ \mu },\vct{\tilde{ z }},\vct{\eta}} & = \normTV{\tilde{ \mu }} + \lambda \normOne{\vct{\tilde{ z }}} + \PROD{ \vct{u} -\mathcal{F}_{n} \, \tilde{ \mu } - \vct{\tilde{ z }} }{\vct{\eta}} + \nu \brac{ \normTwo{ \vct{y} - \vct{u} }^2 - \sigma^2 }
\\
& =\normTV{\tilde{ \mu }}-  \PROD{\tilde{ \mu }}{\mathcal{F}_{n}^{\ast} \,\vct{\eta}} + \lambda \normOne{ \vct{\tilde{ z }}} - \PROD{ \vct{\tilde{ z }}}{\vct{\eta}} +  \PROD{\vct{u}}{\vct{\eta}} + \nu \brac{ \normTwo{ \vct{y} - \vct{u} }^2 - \sigma^2 }
\end{align}
where $\eta \in \C^{n}$ is the dual variable. 

To compute the Lagrange dual function we minimize the value of the Lagrangian over the primal variables~\cite{Boyd:2004uz}. The minimum of
\begin{align}
 \normTV{\tilde{ \mu }}-  \PROD{\tilde{ \mu }}{\mathcal{F}_{n}^{\ast} \,\vct{\eta}} 
\end{align}
over $\tilde{ \mu }$ is $-\infty$ unless~\eqref{eq:cond_one_primaldual} holds.
Moreover, if~\eqref{eq:cond_one_primaldual} holds then the minimum is at $\tilde{\mu}=0$ by H\"older's inequality. Similarly, minimizing 
\begin{align}
\lambda \normOne{ \vct{\tilde{ z }}} - \PROD{ \vct{\tilde{ z }}}{\vct{\eta}}
\end{align}
over $\vct{z}$ yields $-\infty$ unless~\eqref{eq:cond_lambda_primaldual} holds,
whereas if~\eqref{eq:cond_lambda_primaldual} holds the minimum is attained at $\vct{\tilde{z}}=0$. All that remains is to minimize
\begin{align}
 \PROD{\vct{u}}{\vct{\eta}} + \nu \brac{ \normTwo{ \vct{y} - \vct{u} }^2 - \sigma^2 }
\end{align}
with respect to $\vct{u}$ (note that~\eqref{eq:cond_one_primaldual} and~\eqref{eq:cond_lambda_primaldual} do not involve $\vct{u}$). The function is convex with respect to $\vct{u}$ so we set the gradient to zero to deduce that the minimum is at $\vct{u} = \vct{y} - \frac{1}{2\nu} \eta $. Plugging in this value yields the Lagrange dual function
 \begin{align}
 \label{eq:dual_long}
\PROD{\vct{y}}{\vct{\eta}} -\frac{1}{4 \nu} \normTwo{ \eta }^2 - \nu \sigma^2 .
\end{align}
The dual problem consists of maximizing the Lagrange dual function subject to $\nu \geq 0 $, \eqref{eq:cond_one_primaldual} and~\eqref{eq:cond_lambda_primaldual}. For any fixed value of $\tilde{\eta}$, maximizing over $\nu$ is easy, the expression is convex in the half plane $\nu \geq 0$ and the derivative is zero at $\normTwo{ \eta } / 2\sigma$. Plugging this into~\eqref{eq:dual_long} yields the dual problem~\eqref{eq:dual_noise}.

The reformulation of~\eqref{eq:dual_noise} as a semidefinite program is an immediate consequence of the following proposition.

\begin{proposition}[Semidefinite characterization~{\cite[Theorem 4.24]{Dumitrescu:2007vw}}, {\cite[Proposition 2.4]{superres_new}}]
\label{prop:sdp-charact}
Let $\vct{\eta} \in \C^{n }$,
\begin{align*}
\abs{ \brac{ \mathcal{F}_{n}^{\ast} \, \vct{\eta}} (f) } & \leq 1 \quad \text{for all } f \in \sqbr{0,1}
\end{align*}
if and only if there exists a Hermitian matrix $\Lambda \in \C^{n \times n}$ obeying
 \begin{equation}
\label{eq:sdp-charact}
   \MAT{\Lambda & \vct{\eta} \\ \vct{\eta}^{\ast} & \Id } \succeq 0, \qquad \mathcal{T}^{\ast}\brac{\Lambda}= \MAT{1 \\ \vct{0}},
\end{equation}
where $\vct{0} \in \C^{n-1}$ is a vector of zeros.
\end{proposition}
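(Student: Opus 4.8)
The plan is to establish the two implications in~\eqref{eq:sdp-charact} by translating between Hermitian matrices and trigonometric polynomials through the vector $\vct{a}\brac{f} := \brac{1, e^{i2\pi f}, \ldots, e^{i2\pi\brac{n-1}f}}^T \in \C^n$. First I would record the elementary identity that for any Hermitian $M \in \C^{n\times n}$,
\begin{align}
\vct{a}\brac{f}^{\ast} M \, \vct{a}\brac{f} = \sum_{d=-\brac{n-1}}^{n-1} \brac{\sum_{i} M_{i,i+d}} e^{i2\pi d f},
\end{align}
obtained by grouping the entries of $M$ along diagonals. Since $\brac{\mathcal{T}^{\ast}\brac{M}}_j$ is precisely the sum of the $\brac{j-1}$-th superdiagonal of $M$, the constraint $\mathcal{T}^{\ast}\brac{\Lambda} = \MAT{1 \\ \vct{0}}$ forces the main diagonal sum to equal one and every off-diagonal sum to vanish, which by the identity above is equivalent to $\vct{a}\brac{f}^{\ast}\Lambda\,\vct{a}\brac{f} = 1$ for all $f \in \sqbr{0,1}$. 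I would also note that $\vct{a}\brac{f}^{\ast}\vct{\eta}$ equals $\brac{\mathcal{F}_n^{\ast}\,\vct{\eta}}\brac{f}$ up to a unimodular factor, so the two have the same magnitude.

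The reverse implication is then immediate. By the Schur complement, the block matrix in~\eqref{eq:sdp-charact} is positive semidefinite if and only if $\Lambda \succeq \vct{\eta}\vct{\eta}^{\ast}$. Sandwiching this inequality between $\vct{a}\brac{f}^{\ast}$ and $\vct{a}\brac{f}$ and invoking the constraint identity gives
\begin{align}
1 = \vct{a}\brac{f}^{\ast}\Lambda\,\vct{a}\brac{f} \geq \abs{\vct{a}\brac{f}^{\ast}\vct{\eta}}^2 = \abs{\brac{\mathcal{F}_n^{\ast}\,\vct{\eta}}\brac{f}}^2,
\end{align}
which is the desired bound at every $f$.

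For the forward implication I would proceed constructively. Assuming $\abs{\brac{\mathcal{F}_n^{\ast}\,\vct{\eta}}\brac{f}} \leq 1$, the function $g\brac{f} := 1 - \abs{\brac{\mathcal{F}_n^{\ast}\,\vct{\eta}}\brac{f}}^2$ is a nonnegative trigonometric polynomial of degree at most $n-1$. The central tool is the Gram-matrix (sum-of-squares) representation of such polynomials, a consequence of the Fej\'er--Riesz spectral factorization theorem: there exists $G \succeq 0$ with $g\brac{f} = \vct{a}\brac{f}^{\ast} G \,\vct{a}\brac{f}$ for all $f$. Setting $\Lambda := G + \vct{\eta}\vct{\eta}^{\ast}$ then finishes the argument, since $\Lambda$ is Hermitian and satisfies $\Lambda \succeq \vct{\eta}\vct{\eta}^{\ast}$, so the block matrix is positive semidefinite by the Schur complement, while $\vct{a}\brac{f}^{\ast}\Lambda\,\vct{a}\brac{f} = g\brac{f} + \abs{\brac{\mathcal{F}_n^{\ast}\,\vct{\eta}}\brac{f}}^2 = 1$ recovers the affine constraint $\mathcal{T}^{\ast}\brac{\Lambda} = \MAT{1 \\ \vct{0}}$ through the dictionary of the first step.

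The main obstacle is the existence of the positive semidefinite Gram factor $G$, i.e. the Fej\'er--Riesz theorem that every nonnegative trigonometric polynomial admits a spectral (sum-of-squares) factorization; this is exactly the nontrivial content borrowed from~\cite{Dumitrescu:2007vw}. Once that factorization is available, the remaining ingredients --- the diagonal-sum dictionary, the Schur-complement reduction, and the verification of the two affine constraints --- are routine linear algebra.
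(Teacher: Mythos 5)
Your proof is correct. Note that the paper never proves this proposition itself---it imports it by citation from Dumitrescu's book (Theorem 4.24) and from Proposition 2.4 of the super-resolution paper---and your argument is precisely the standard one underlying those references: the diagonal-sum identity $\vct{a}\brac{f}^{\ast} M \vct{a}\brac{f} = \sum_d \brac{\sum_i M_{i,i+d}} e^{i2\pi d f}$ translating the affine constraint $\mathcal{T}^{\ast}\brac{\Lambda}=\MAT{1 \\ \vct{0}}$ into $\vct{a}\brac{f}^{\ast}\Lambda\,\vct{a}\brac{f}\equiv 1$, the Schur complement reducing positive semidefiniteness of the block matrix to $\Lambda \succeq \vct{\eta}\vct{\eta}^{\ast}$, and the Fej\'er--Riesz factorization supplying the Gram matrix $G\succeq 0$ for the nonnegative polynomial $1-\abs{\brac{\mathcal{F}_n^{\ast}\vct{\eta}}\brac{f}}^2$ in the converse direction. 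You correctly isolate Fej\'er--Riesz as the only nontrivial ingredient; the one step worth making explicit is that the vanishing of the subdiagonal sums follows from the vanishing of the superdiagonal sums via Hermitian symmetry of $\Lambda$, which you use implicitly when passing from $\mathcal{T}^{\ast}\brac{\Lambda}$ (which only reads the upper triangle) to the full two-sided Fourier expansion.
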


\subsection{Proof of Lemma~\ref{lemma:primaldual_noise}}
\label{proof:primaldual_noise}
The interior of the feasible set of Problem~\eqref{eq:dual_noise} contains the origin and is
therefore non empty, so strong duality holds by a generalized Slater condition \cite{rockafellar1974conjugate} and we have
\begin{align}
\sum_{f_j \in \widehat{T}} \abs{\vct{\hat{x}}_{j}} +\lambda \sum_{l \in \widehat{\Omega}} \abs{\vct{\hat{z}}_{l}}   = \normTV{\hat{\mu}} + \lambda \normOne{\vct{\hat{z}}} 
& = \PROD{\vct{\hat{\eta}}}{ \vct{y} } - \sigma \normTwo{\vct{\eta}} \\
& \leq \PROD{\vct{\hat{\eta}}}{ \vct{y} } -  \PROD{\vct{\hat{\eta}}}{\vct{y}-\mathcal{F}_{n} \, \hat{\mu} - \vct{\hat{z}}} \label{eq:CauchySchwarzineq}\\
& = \PROD{\vct{\hat{\eta}}}{\mathcal{F}_{n} \, \hat{\mu}+ \vct{\hat{z}} }\\
& = \operatorname{Re}\sqbr{\sum_{f_j \in \widehat{T}} \abs{\vct{\hat{x}}_{j}}  \overline{\brac{\mathcal{F}_{n}^{\ast} \, \vct{\hat{\eta}}}\brac{f_j}} \frac{\vct{\hat{x}}_j}{\abs{\vct{\hat{x}}_j}} +\sum_{l \in \widehat{\Omega}} \abs{\vct{\hat{z}}_{l}} \overline{\vct{\hat{\eta}}_l}\frac{\vct{\hat{z}}_l}{\abs{\vct{\hat{z}}_l}} }.
\end{align}
The inequality~\eqref{eq:CauchySchwarzineq} follows from the Cauchy-Schwarz inequality because $\keys{\hat{\mu}, \vct{\hat{z}}}$ is primal feasible and hence $\normTwo{\vct{y}-\mathcal{F}_{n} \, \hat{\mu} - \vct{\hat{z}}} \leq \sigma$. Due to the constraints~\eqref{eq:cond_one_primaldual} and~\eqref{eq:cond_lambda_primaldual} and H\"older's inequality, the inequality that we have established is only possible if~\eqref{eq:eta_mu_noise} and \eqref{eq:eta_z_noise} hold. The proof is complete.

\subsection{Atomic-noise denoising via the alternating direction method of multipliers}
\label{sec:admm}
We rewrite Problem~\eqref{eq:denoising_sdp_noise} as
\begin{align}
	\label{eq:sdp2}
	 \min_{ \substack{ t \in \R, \, \vct{u} \in \C^n, \\ \vct{\tilde{g}} \in \C^n , \, \vct{\tilde{z}} \in \C^n \\ \Psi \in \C^{n+1 \times n+1} } } \;   
	\frac{ \xi }{2}   \brac{n \, \vct{u}_1 + t} + \lambda'  \normOne{ \vct{\tilde{z}} }  + \frac{1}{2}  \normTwo{ \vct{y} - \vct{\tilde{g}} - \vct{\tilde{z}} }^2  \quad
 \text{subject to} \quad &  \Psi = \MAT{ \ml{T}\brac{ \vct{u} } & \vct{\tilde{g}} \\ \vct{\tilde{g}}^{\ast} & t} , \\
  &  \Psi \succeq 0,
\end{align}
where $ \xi := \frac{1}{\gamma\sqrt{n}}$ and $\lambda' := \frac{\lambda}{\gamma}$. The augmented Lagrangian for this problem is of the form
\begin{align}
	\label{eq:AL}
	\ml{ L}_\rho \brac{ t,\vct{u},\vct{\tilde{g}},\vct{\tilde{z}},\Upsilon,\Psi }  := &  \frac{ \xi }{2}   \brac{n \, \vct{u}_1 + t} + \lambda'  \normOne{ \vct{\tilde{z}} }  + \frac{1}{2}  \normTwo{ \vct{y} - \vct{\tilde{g}} - \vct{\tilde{z}} }^2 + \PROD{\Upsilon}{ \Psi - \MAT{ \ml{T}\brac{ \vct{u} } & \vct{\tilde{g}} \\ \vct{\tilde{g}}^{\ast} & t} }\\
	 & + \frac{\rho}{2} \normF{ \Psi - \MAT{ \ml{T}\brac{ \vct{u} } & \vct{\tilde{g}} \\ \vct{\tilde{g}}^{\ast} & t} }^2,
\end{align}
where $\rho > 0$ is a parameter. The alternating direction method of multipliers (ADMM) minimizes the augmented Lagrangian by iteratively applying the updates:
\begin{align}
\label{eq:UPDA1}
t^{\brac{l+1}} & :=  \arg\min_{t}  \ml{ L}_\rho \brac{ t,\vct{u}^{\brac{l}},\vct{\tilde{g}}^{\brac{l}},\vct{\tilde{z}}^{\brac{l}},\Upsilon^{\brac{l}},\Psi^{\brac{l}}} , \\
\label{eq:UPDA2}
\vct{u}^{\brac{l+1}} & :=  \arg\min_{\vct{u}} \ml{ L}_\rho \brac{ t^{\brac{l}},\vct{u},\vct{\tilde{g}}^{\brac{l}},\vct{\tilde{z}}^{\brac{l}},\Upsilon^{\brac{l}},\Psi^{\brac{l}}} , \\
\vct{\tilde{g}}^{\brac{l+1}}  & :=  \arg\min_{\vct{\tilde{g}}}  \ml{ L}_\rho \brac{ t^{\brac{l}},\vct{u}^{\brac{l}},\vct{\tilde{g}},\vct{\tilde{z}}^{\brac{l}},\Upsilon^{\brac{l}}, \Psi^{\brac{l}}} , 
\label{eq:UPDA3}\\
\vct{\tilde{z}}^{\brac{l+1}} & :=  \arg\min_{ \vct{\tilde{z}} }  \ml{ L}_\rho \brac{ t^{\brac{l}},\vct{u}^{\brac{l}},\vct{\tilde{g}}^{\brac{l}},\vct{\tilde{z}},\Upsilon^{\brac{l}}, \Psi^{\brac{l}}} , 
\label{eq:UPDA4} \\
\Psi^{\brac{l+1}} & := \arg\min_{\Psi} \ml{ L}_\rho \brac{ t^{\brac{l}},\vct{u}^{\brac{l}},\vct{\tilde{g}}^{\brac{l}},\vct{\tilde{z}}^{\brac{l}},\Upsilon^{\brac{l}},\Psi },\\
\label{eq:UPDAUpsilon}
\Upsilon^{\brac{l+1}} &:= \Upsilon^{\brac{l}} + \rho \brac{ \Psi^{\brac{l+1}}  -  \MAT{ \ml{T}\brac{ \vct{u}^{\brac{l+1}} } & \vct{\tilde{g}}^{\brac{l+1}} \\ \brac{\vct{\tilde{g}}^{\brac{l+1}}}^{\ast} & t^{\brac{l+1}}} },
\end{align}
where $l$ indicates the iteration number. We refer the interested reader to the tutorial~\cite{boyd2011distributed} and references therein for a justification of these steps and more information on ADMM. 

For the method to be practical, we need an efficient implementation of all the updates. The augmented Lagrangian is convex and differentiable with respect to $t$, $\vct{u}$ and $\vct{\tilde{g}}$, so for these variables we just need to compute their gradient and set it to zero. This yields the closed-form updates:
\begin{align}
\label{eq:UPDAtheta}
t^{\brac{l+1}} &= \Psi_{n+1}^{\brac{l}} + \frac{1}{\rho}\left(\Upsilon_{n+1}^{\brac{l}} - \frac{\xi}{2} \right),\\
\vct{u}^{\brac{l+1}} &= M \, \ml{T}^{\ast}  \brac{ \Psi_{0}^{\brac{l}} + \frac{\Upsilon_{0}^{\brac{l}}}{\rho}} - \frac{\xi}{2 \rho} \vct{e}\brac{1} ,\\
\vct{\tilde{g}}^{\brac{l+1}} &= \frac{1}{2\rho+1} \brac{ \vct{y} - \vct{\tilde{z}} ^{\brac{l}} + 2\rho \, \vct{\psi}^{\brac{l}} + 2 \vct{\upsilon}^{\brac{l}}},
\end{align}
where $\vct{e}\brac{1}: = [1,0,0,...,0]^T$, $\ml{T}^{\ast}$ outputs a vector whose $j$-th element is the trace of the $(j-1)$-th subdiagonal of the input matrix, $M$ is a diagonal matrix such that
\begin{align}
M_{j,j} =\frac{1}{{n - j + 1}}, \quad j = 1,...n,
\end{align}
and
\begin{align}
\Psi^{\brac{l}} &:= \left[ {\begin{array}{*{20}{c}}
		{\Psi_{0}^{\brac{l}}}&{\vct{\psi}^{\brac{l}}}\\
		{(\vct{\psi}^{\brac{l}})^*}&{\Psi_{n+1}^{\brac{l}}}
		\end{array}} \right], \qquad
\Upsilon^{\brac{l}} := \left[ {\begin{array}{*{20}{c}}
		{\Upsilon _{0}^{\brac{l}}}&{\vct{\upsilon}^{\brac{l}}}\\
		{(\vct{\upsilon}^{\brac{l}})^*}&{\Upsilon_{n+1}^{\brac{l}}}
	\end{array}} \right].
\end{align}
$\Psi _{0}^{\brac{l}}$ and $\Upsilon_{0}^{\brac{l}}$ are $n \times n$ matrices, $\vct{\psi}^{\brac{l}}$ and $\vct{\upsilon}^{\brac{l}}$ are $n$-dimensional vectors and $\Psi_{n+1}^{\brac{l}}$ and $\Upsilon _{n+1}^{\brac{l}}$ are scalars. 

Updating $\vct{\tilde{z}}$ requires solving the problem 
\begin{align}
 \min_{\vct{\tilde{z}}} \lambda' \|\vct{\tilde{z}}\|_1 + \frac{1}{2} \|\vct{y}-\vct{\tilde{g}}^{\brac{l}}-\vct{\tilde{z}}\|_2^2,
\end{align}
which is easily achieved by the applying a proximal operator
\begin{align}
\vct{\tilde{z}}^{\brac{l+1}} := \op{prox}_{\lambda'}(\vct{y} - \vct{\tilde{g}}^{\brac{l}}),
\end{align}
where for $1\leq j \leq n$
\begin{align}
\label{eq:prox}
\op{prox}_{\lambda'} \brac{ \vct{\tilde{z} }}_j := 
\begin{cases}
\op{sign} \brac{\vct{\tilde{z}_j}} \brac{ \abs{ \vct{\tilde{z}}_j }- \lambda'}  \quad & \text{if $\abs{ \vct{\tilde{z}}_j } > \lambda'$ } \\
0 & \text{otherwise}.
\end{cases}
\end{align}	
Finally, the update of $\Psi^{\brac{l}}$ amounts to a projection onto the positive semidefinite cone
\begin{align}
\label{eq:UPDAPhi}
\Psi^{\brac{l+1}} = \arg\min_{\Psi \succeq 0} \left \| {\Psi  - \MAT{ \ml{T}\brac{ \vct{u}^{\brac{l}} } & \vct{\tilde{g}}^{\brac{l}} \\ \brac{\vct{\tilde{g}}^{\brac{l}}}^{\ast} & t^{\brac{l}}} } + \frac{1}{\rho} \Upsilon^{\brac{l}} \right\|_F^2,
\end{align}
which can be accomplished by computing the eigenvalue decomposition of the matrix and setting all negative eigenvalues to zero. 

\end{document}